\newcommand{\A}{\forall}
\newcommand{\R}{\mathbb{R}}
\newcommand{\E}{\mathbb{E}}
\newcommand{\ps}{\mathcal{P}}
\renewcommand{\d}{\mathrm{d}}
\renewcommand{\P}{\mathbb{P}} 
\renewcommand{\L}{\mathcal{L}} 
\newcommand{\F}{\mathcal{F}}
\newcommand{\M}{\mathcal{M}}
\DeclareMathOperator{\argmin}{argmin}
\newtheorem{theorem}{Theorem}[section]
\newtheorem{proposition}[theorem]{Proposition}
\newtheorem{lemma}[theorem]{Lemma}
\newtheorem{corollary}[theorem]{Corollary}
\theoremstyle{definition}
\newtheorem{definition}[theorem]{Definition}
\theoremstyle{remark}
\newtheorem{rem}[theorem]{Remark}
\newtheorem{example}[theorem]{Example}
\theoremstyle{plain}
\newtheorem{assumptionV}{Assumption}
\title{\bf Regularity and stability for the Gibbs conditioning principle on path space via McKean-Vlasov control}
\author[1,2,3]{Louis-Pierre \textsc{Chaintron}}
\author[4]{Giovanni \textsc{Conforti}}
\affil[1]{\small
DMA, École normale supérieure, Université PSL, CNRS, 75005 Paris, France}
\affil[2]{\small
CERMICS, École des Ponts, IP Paris, Marne-La-Vall\'ee, France
}
\affil[3]{\small
Inria, Team M${\sf \Xi}$DISIM, Inria Saclay, 91128 Palaiseau, France
}
\affil[4]{\small
Università degli Studi di Padova, Via Trieste, 63, 35131 Padova, Italia
}
\date{\today}
\begin{document}

\maketitle

\abstract{
We consider a system of diffusion processes interacting through their empirical distribution. Assuming that the empirical average of a given observable can be observed at any time, we derive regularity and quantitative stability results for the optimal solutions in the associated version of the Gibbs conditioning principle.
The proofs rely on the analysis of a McKean-Vlasov control problem with distributional constraints. 
Some new estimates are derived for Hamilton-Jacobi-Bellman equations and the Hessian of the log-density of diffusion processes, which are of independent interest.
}

\tableofcontents

\section{Introduction}

Consider a system of identically distributed random variables 
\begin{equation*}
    X^1,X^2,\ldots,X^N,
\end{equation*}
taking values in a topological space $E$. For example, as it is the case in this article, $E$ may be the space of continuous trajectories over $\R^d$ and the $X^i$ are sample paths from a system of interacting diffusion processes. Furthermore, assume that the variables $X^i$ are either independent or interacting through their empirical distribution (or configuration)
\begin{equation*}
    \mu^N := \frac1N \sum_{i=1}^N  \delta_{X^i} \in \ps ( E ).
\end{equation*}
Next, imagine that some measurement is made on the system configuration revealing that $\mu^N$ belongs to a subset $\mathcal{O}$. 
Then, one is naturally led to the question of computing either the most likely configuration of the system or the law of $X^1$ conditionally on $\{\mu^N \in \mathcal{O}\}$. 
A  detailed answer to both questions can be given when the empirical distribution satisfies a large deviation principle  (henceforth, LDP) with rate function $I$.
This answer is part of a meta-principle, called \emph{Gibbs conditioning principle}, which we now briefly describe.
To this aim, recall that the LDP holds if for any closed set $A\subseteq \mathcal{P}(E)$, 
\begin{equation*}
\limsup_{N\rightarrow+\infty} N^{-1} \log \P ( \mu^N\in A ) \leq  -\inf_A I,
\end{equation*}
and the converse inequality holds for open sets. Then, an informal statement of the Gibbs conditional principle is
\begin{equation*}
\lim_{N\rightarrow +\infty } \mathcal{L}( X^1 | \mu^N\in\mathcal{O} ) =\overline{\mu},\quad \text{with} \quad \overline{\mu} := \argmin_{\mathcal{O}} I.
\end{equation*}
In words, the conditional distribution converges to the minimiser of the rate function among all distributions compatible with the observation.

Equivalently, the distribution $\overline{\mu}$ may be regarded as the most likely configuration of the system conditionally on the observation: this viewpoint is the one adopted by E. Schr\"odinger in a celebrated thought experiment \cite{Schr32} motivating the formulation of the Schr\"odinger problem, which is the prototype of a stochastic mass transport problem. 
In all physical experiments, measurements are affected by uncertainties of different nature. 
Thus, it is of paramount importance to quantify how much the sought most likely evolution or conditional law $\overline{\mu}$ is stable with respect to variations in the observed values. 
The scope of this article is to address this problem and the tightly related question of quantifying the regularity properties of $\overline{\mu}$ when the $X^i$ are $\mathbb{R}^d$-valued interacting diffusion processes such as, e.g.
\begin{equation*}
\d X^i_t = \big[ b(X^i_t)-\frac1N \sum_{j\neq i} \nabla W(X^i_t-X^j_t) \big ] \d t + \sigma(X^i_t) \d B^i_t, \quad t\in[0,T],\,\,i=1,\ldots,N,
\end{equation*}
and what is observed is whether or not at any time $t\in[0,T]$ the value of a certain observable $\Psi(\mu^N_t)$ is below a threshold $\varepsilon$, where $\mu^N_t :=\frac1N\sum_i \delta_{X^i_t}$ is the particle configuration at time $t$. To fix ideas, the observation may be the empirical average of a given observable $\psi$,
\begin{equation*}
\Psi(\mu^N_t) = \frac1N\sum_{i=1}^N\psi(X^i_t),
\end{equation*}
which can be thought of as a mean energy in a physical setting.
However, our framework is more general as we shall see below, encompassing non-linear convex constraints. 
In this setting, the LDP is known to hold with rate function in relative entropy form. 

For clarity of exposition,
let us now describe our results in the simplified setting $W \equiv 0$. 
The LDP is now directly given by Sanov's Theorem.
The most likely evolution $\overline{\mu}$ then reduces to the problem of computing the entropic projection in the sense of Cszisár \cite{csiszar1975divergence} of the path measure $\nu_{[0,T]}$ describing the law of $X^1$ onto the admissible set (clearly, $\nu_{[0,T]}$ does not depend on $N$ when $W \equiv 0$). 
We are therefore looking at the problem
\begin{equation}\label{eq:ent_proj_intro}
 \inf_{\substack{\mu_{[0,T]} \in \ps ( C ( [0,T], \R^d ) ) \\ \forall t \in [0,T], \; \Psi ( \mu_t ) \leq \varepsilon}} H(\mu_{[0,T]} \vert \nu_{[0,T]} ),
\end{equation}
where $H$ is the relative entropy on path space, defined in Section \ref{subsec:Notations} below.
Our approach takes benefit from classical duality results for entropic projections. 
In particular, these results suggest that there exists a non-negative measure $\overline\lambda^{\varepsilon}$ with finite mass on $[0,T]$ playing the role of a Lagrange multiplier and such that the density of the minimiser $\overline\mu^\varepsilon_{[0,T]}$ for \eqref{eq:ent_proj_intro} reads
\begin{equation*}
\frac{\d \overline{\mu}^{\varepsilon}_{[0,T]}}{\d \nu_{[0,T]}} (x_{[0,T]}) = Z_\varepsilon^{-1} \exp \bigg[ -\int_0^T \psi(x_t) \overline\lambda^{\varepsilon}(dt) \bigg],
\end{equation*}
where $Z_\varepsilon$ is a normalising constant.
We then rely on the classical connection between entropy minimisation and stochastic control \cite{follmer1988random,dawsont1987large,dai1991stochastic,budhiraja2012large,leonard2012girsanov}.
This connection allows us to view the optimal measure $\overline\mu^{\varepsilon}_{[0,T]}$ as an optimal state in a stochastic control problem (of McKean-Vlasov type) with distributional constraints.
The optimal control policy $(t,x) \mapsto \nabla \overline\varphi^{\varepsilon}_t(x)$ can be computed from the path-density through the Feynman-Kac formula, see e.g. \cite{leonard2022feynman}. 
Leaving all general and rigorous statements to the next section, let us informally showcase our contributions in the simple though relevant setting we have just described:
\begin{itemize}
\item \emph{\underline{Regularity:}} we show at Theorem \ref{thm:opti} and Theorem \ref{thm:density} that under suitable regularity assumptions on the coefficients:
\begin{itemize}
    \item The optimal marginal flow $(\overline\mu^{\varepsilon}_t)_{t\in[0,T]}$, together with the Lagrange multiplier $\overline\lambda^{\varepsilon}$ and the optimal control policy $(\nabla\overline\varphi^{\varepsilon}_t)_{t\in [0,T]}$ form the unique solution of a coupled forward-backward PDE system, whose precise form is given at \eqref{eq:coupled_FP_HJB} below.
    As it is customary in mean-field control, the forward equation is a Fokker-Planck equation and the backward equation is a Hamilton-Jacobi-Bellman equation (henceforth, HJB). 
    The non-standard nature of the equations we study here lies in the presence of the Lagrange multiplier $\overline\lambda^{\varepsilon}$, accounting for the distributional constraints in the underlying control problem. 
    \item The restriction of $\overline\lambda^{\varepsilon}$ to the open interval $(0,T)$ is an absolutely continuous measure with globally bounded density. Atoms may exist at the endpoints of the interval.
\end{itemize}

\item \underline{\emph{Stability:}}  strengthening the regularity assumptions of Theorem \ref{thm:density} and adding a convexity assumption on $\Psi$ we show at Theorem \ref{thm:stability}: 
\begin{itemize}
\item \emph{Entropic stability:} the relative entropy $H( \overline\mu^{0}_{[0,T]} \vert \overline\mu^{\varepsilon}_{[0,T]} )$ is $O(\varepsilon)$. 
As a corollary of Pinsker's inequality, the total variation distance is of order $O(\varepsilon^{1/2})$. Furthermore, as a consequence of our assumptions, we prove that the $1$-Wasserstein distance is also $O(\varepsilon^{1/2})$.
\item \emph{Multiplier stability:} we show that the  total variation distance between the non-negative measures $\overline\lambda^{\varepsilon}$ and $\overline\lambda^0$ is $O(\varepsilon^{1/4})$.
\item \emph{Control stability:} we establish that, uniformly in $t\in[0,T]$, the $L^{\infty}$-difference between the optimal Markov policies $\nabla\overline\varphi^{\varepsilon}_t(\cdot)$ and $\nabla\overline\varphi^0_t(\cdot)$ is $O(\varepsilon^{1/4})$.
\end{itemize}
\end{itemize}
Both the Schr\"odinger problem and \eqref{eq:ent_proj_intro} aim at computing some kind of entropic projection and originate from the Gibbs conditioning principle. 
However, they fundamentally differ in the type of available observations. 
Whereas the former problem provides the full configuration of the particle system but only at initial and final times, \eqref{eq:ent_proj_intro} only has very partial information on the particle configuration, namely  the sign of the empirical average of $\psi$, but at all times $t\in[0,T]$. 
Thus, passing from one problem to the other, there is a trade-off between how rich is the observation we have 
and the set of times at which observations are made. 
This difference has deep implications in terms of how to approach the stability problem. For example, most stability results for the Schrödinger problem (see Section \ref{subsec:litt} below for references) rely on a static equivalent formulation in terms of an optimization problem over couplings on $\mathbb{R}^d \times \mathbb{R}^d$. 
Because of the dynamic nature of the constraints we deal with in \eqref{eq:ent_proj_intro}, such reduction to a static problem is impossible and the proof approach has to be different.

The backbone of our proof strategy consists of two main ingredients. 
The first one is a careful analysis of the regularity properties of the HJB equations that arise in connection with a relaxed version of \eqref{eq:ent_proj_intro}. 
This relaxation employs a Lagrange multiplier to get rid of the distributional constraints. 
The analysis of these PDEs is rather delicate due the low regularity of the involved cost functionals, which are typically only $L^1$ in time. 
Moreover, in order to obtain quantitative stability estimates, we have to push the analysis of these HJB equations up to the third derivative in Section \ref{subsec:smoothlambda}, thus obtaining novel bounds (up to our knowledge) that are of independent interest. 
We combine these regularity bounds with new estimates (up to our knowledge) for the Hessian of the log-density of diffusion processes.
These estimates are also of independent interest; they are proved in Appendix \ref{app:timerev} using time-reversal in the spirit of \cite{haussmann1986time}, extending results from \cite{fontbona2016trajectorial}.

The second key-ingredient in Section \ref{subsec:smoothlambda} is a procedure to construct good competitors for the regularised stochastic control problems, by slightly pushing optimal processes along the gradient flow generated by the observable $\psi$ and reinterpreting the result as an admissible control. 
This construction is crucial for showing that the optimal multiplier has a bounded density. 

We conclude this introduction by highlighting that the results of this paper open the door for obtaining quantitative versions of the Gibbs conditioning principle in which one computes the rate of convergence of the conditional distribution of the $N$-particle system towards the law of the optimal solution in \eqref{eq:ent_proj_intro}.

\subsection{Literature review} \label{subsec:litt}

Some reference textbooks about the Gibbs conditioning principle are \cite{lanford1973entropy,ruelle1965correlation,dembo2009large,dupuis2011weak,ellis2006entropy}.
Some prominent contributions are \cite{borel1906principes,diaconis1987dozen,stroock1991microcanonical}, which are summarised and deepened in the aforementioned textbooks. 
The related notion of entropic projection was introduced and studied in \cite{csiszar1975divergence,csiszar1984sanov}.
For quantitative versions of the Gibbs principle in convex settings, we refer to \cite{dembo1996refinements,dembo1998re,cattiaux2007deviations}.
All theses settings study the case of a finite number of constraints.
Existence of a Lagrange multiplier for abstract equality constraints is proved in \cite{leonard2000minimizers}.
The case of an infinite number of inequality constraints is treated in \cite{ConstrainedSchrodinger}.

There has been an abundant literature on Schr\"odinger bridges, some seminal works being \cite{csiszar1975divergence,cattiaux1995large,cattiaux1996minimization}.
We also refer to the survey article \cite{leonard2013survey}, the lecture notes \cite{nutz2021introduction} and references therein. 
The analogous problem when replacing the relative entropy by the rate function of an interacting particle system (case $W \neq 0$ of the introduction) is known as the mean-field Schr\"odinger problem \cite{backhoff2020mean}.
Up to our knowledge, the first result on Schrödinger bridges with distributional constraints is \cite{ConstrainedSchrodinger}.

Recently, important progresses on the stability problem for entropic projections have been achieved \cite{eckstein2022quantitative,nutz2023stability,chiarini2023gradient,divol2024tight}, mostly if not exclusively driven by the surge of interest around the Schr\"odinger problem and its applications in machine learning. 
However, the proofs in our setting are quite different as explained in the introduction.

Stochastic control problems under distributional constraints have received some attention, mostly motivated by mathematical finance, see e.g. \cite{follmer1999quantile,guo2022calibration} and references therein.
For problems with expectation constraints, we refer to \cite{chow2020dynamic,guo2022calibration,pfeiffer2021duality} and related works.
{ In what concerns the regularity results,} 
the closest works to ours are { the recent works} \cite{daudin2020optimal,daudin2023optimal}, which study the problem of controlling the drift of a Brownian motion under convex distributional constraints {, and sparked renewed interest around constrained stochastic control problems.}  
Their approach is purely PDE-oriented:
existence { and regularity are obtained for a Lagrange multiplier using a penalisation method, 
following some constructions put forward in the deterministic setting in \cite{cannarsa2018bf}.}
The main challenge of our framework compared to these works is the addition of a non-constant diffusion matrix and the McKean-Vlasov setting. 
In particular, our results rely on novel estimates for HJB equations, whose starting points are our recent works \cite{chaintron2023existence} and \cite{ConstrainedSchrodinger}. 
The related (non-quantitative) mean-field limit is proved in \cite{daudin2023meanCV}, adapting the method of \cite{lacker2017limit} to the constrained setting. { When it comes to the stability results we obtained, they appear to be the first of this kind that apply to dynamically constrained McKean-Vlasov control problems, at least to the best of our knowledge and understanding. }

\subsection{Frequently used notations} \label{subsec:Notations}

\begin{itemize}
    \item $\ps (E)$ denotes the set of probability measures over a probability space $E$.
    \item $\mathcal{L}(X)$ denotes the law in $\ps(E)$ of a $E$-valued random variable $X$.
    \item $\delta_x$ denotes the Dirac measure at some point $x$ in $E$.  
    \item $H(\mu \vert \nu)$ is the relative entropy of $\mu, \nu \in \ps (E)$, defined by $H(\mu \vert \nu) := \int_{E} \log \frac{\d \mu}{\d \nu} \d \mu$ if $\mu \ll \nu$, and $H(\mu \vert \nu) := +\infty$ otherwise. 
    \item $\ps_1(E)$ denotes the set of measures $\mu \in \ps_1 ( E)$ such that $\int_{E} d (x,x_0) \mu (\d x) < +\infty$ for some distance $d$ on $E$ and $x_0 \in E$.
    \item $W_1$ denotes the Wasserstein distance on $\ps_1(E)$, defined by
    \[ W_1(\mu,\mu') := \inf_{X \sim \mu, \, Y \sim \mu'} \E [ d ( X , Y ) ].  \]
    \item $\lVert \cdot \rVert_{\mathrm{TV}}$ denotes the total variation distance on $\ps(E)$, defined by
    \[ \lVert \mu - \mu' \rVert_{\mathrm{TV}} := \inf_{X \sim \mu, \, Y \sim \mu'} \P ( X \neq Y ).  \]
    \item $f \ast \mu$ is the convolution of a measurable $f : E \rightarrow E'$ and $\mu \in \ps (E)$, defined by $f \ast \mu ( x ) := \int_{E} f(x-y) \mu (\d y)$.
    \item $\tfrac{\delta F}{\delta\mu}(\mu) : x \mapsto \tfrac{\delta F}{\delta\mu}(\mu,x)$ denotes the linear functional derivative at $\mu$ of a function $F : \ps(E) \rightarrow \R$, see Definition \ref{def:PAP2DiFF} below.  
    The convention is adopted that $\int_{E} \tfrac{\delta F}{\delta\mu}(\mu) \d \mu=0$. 
    \item $T > 0$ is a given real number, and $d \geq 1$ is an integer.
    \item $x_{[0,T]} \in C([0,T],\R^d)$ denotes a continuous function $x_{[0,T]} : [0,T] \rightarrow \R^d$.
    \item $\mu_{[0,T]}$ denotes a path measure in $\ps( C([0,T], \R^d)$. The related marginal measure at time $t$ will be denoted by $\mu_t$.
    \item ${\sf{X}}_t$ denotes the coordinate map $x_{[0,T]} \mapsto x_t$. It can be seen as a random variable on the canonical space $\Omega = C([0,T],\R^d)$.
    \item  $\Sigma = \big( \Omega,(\mathcal{F}_t)_{0\leq t\leq T},\P,(B_t)_{0\leq t\leq T} \big)$ denotes a reference probability system in the terminology of \cite[Chapter 4]{fleming2006controlled}: $(\Omega,\mathcal{F}_T,\P)$ is a probability space, $(\mathcal{F}_t)_{0\leq t\leq T}$ is a filtration satisfying the usual conditions, and $(B_t)_{0\leq t\leq T}$ is a $(\mathcal{F}_t)_{0\leq t\leq T}$-Brownian motion. 
    \item $\M_+([0,T])$ denotes the convex cone of positive Radon measures over $[0,T]$. In this setting, a Radon measure is a signed finite measure that is both inner and outer regular as defined in \cite[Definition 2.15]{rudin1970real}.
    \item $\cdot^\top$ and $\mathrm{Tr}[\cdot]$ respectively denote the transpose and the trace of matrices.
    \item $\vert a \vert$ denotes the Frobenius norm $\sqrt{\mathrm{Tr}[a a^\top]}$ of a matrix $a$.
    \item $\nabla \cdot a$ for a matrix field $a  = (a^{i,j})_{1 \leq i,j \leq d}$ is the vector field whose $i$ entry is the divergence of the vector field $( a^{i,j} )_{1 \leq j \leq d}$. 
    We similarly define differential operators on matrices. 
    \item When using coordinates, we will always use the summing convention of repeated indices.
\end{itemize}

\begin{definition}[Linear functional derivative] \label{def:PAP2DiFF}
Let $\mathcal{C}$ be a convex subset of $\ps(E)$, and let $\mu \in \mathcal{C}$.
A map $F : \mathcal{C} \rightarrow \R$ is differentiable at $\mu$ if there exists a measurable map 
\[
\frac{\delta F}{\delta\mu}(\mu):
\begin{cases}
E \rightarrow \R, \\
x \mapsto \frac{\delta F}{\delta\mu}(\mu,x),
\end{cases}
\]
such that for every $\mu'$ in $\mathcal{C}$, $\tfrac{\delta F}{\delta\mu}(\mu)$ is $\mu'$-integrable and satisfies
\begin{equation*} 
\varepsilon^{-1} \bigl[ F( (1-\varepsilon) \mu + \varepsilon \mu' ) - F ( \mu ) \bigr] \xrightarrow[\varepsilon \rightarrow 0^+]{} \int_{E} \frac{\delta F}{\delta \mu}(\mu) \d [ \mu' - \mu ]. 
\end{equation*} 
The map $\tfrac{\delta F}{\delta \mu}(\mu)$ being defined up to an additive constant, we adopt the usual convention that $\int_{E} \tfrac{\delta F}{\delta \mu}(\mu) \d\mu = 0$.
This map is called the \emph{linear functional derivative of $F$ at $\mu$} (w.r.t. the set of directions $\mathcal{C}$).
We notice that this definition does not depend on the behaviour of $F$ outside of an arbitrary small neighbourhood of $\mu$.
\end{definition}

By direct integration, the definition implies
\begin{equation*} 
\forall \mu, \mu' \in \mathcal{C}, \quad F(\mu)-F(\mu') = \int_0^1 \int_{E} \frac{\delta F}{\delta \mu}((1-r)\mu' + r \mu) \d [ \mu - \mu' ] \d r, 
\end{equation*}
provided that the integral on the r.h.s. is well-defined.

\begin{example}[Linear case]
In the particular case $F(\mu) = \int_{E} f \d \mu$ for some measurable $f: E \rightarrow \R$ that is $\mu$-integrable for every $\mu$ in $\mathcal{C}$, we have $\frac{\delta F}{\delta \mu}(\mu,x) = f(x) - \int_{E} f \d\mu$.
\end{example}

\section{Statement of the main results} \label{sec:PAP2results}

For $\nu_0 \in \ps_1( \R^d )$ and $\mu_{[0,T]} \in \ps_1 (C([0,T],\R^d))$, let  $\Gamma(\mu_{[0,T]})$ denote the path-law of the pathwise unique strong solution to
\[ \d X_t = b_t (X_t,\mu_t) \d t + \sigma_t (X_t) \d B_t, \quad X_0 \sim \nu_0, \]
where $(B_t)_{0 \leq t \leq T}$ is a Brownian motion and the coefficients are continuous functions $b : [0,T] \times \R^d \times \ps_1 (\R^d) \rightarrow \R^d$, $\sigma : [0,T] \times \R^d \rightarrow \R^{d \times d}$ satisfying \ref{ass:coefReg1} below.
The generalisation of \eqref{eq:ent_proj_intro} we are interested in is
\begin{equation} \label{eq:mfMinProb}
\overline{H}_\Psi := \inf_{\substack{\mu_{[0,T]} \in \ps_1( C ( [0,T], \R^d ) ) \\ \forall t \in [0,T], \; \Psi ( \mu_t ) \leq 0}} H(\mu_{[0,T]} \vert \Gamma(\mu_{[0,T]} )), 
\end{equation}
where $\Psi : \ps_1 (\R^d) \rightarrow \R$ is lower semi-continuous. 
The map $\mu_{[0,T]} \mapsto H(\mu_{[0,T]} \vert \Gamma(\mu_{[0,T]} ))$ is a large deviation rate function, which is studied in e.g. \cite{fischer2014form,backhoff2020mean}.
Under \ref{ass:ini1}-\ref{ass:coefReg1} below, this map has compact level sets in $\ps_1 ( C([0,T],\R^d)$ from \cite[Remark 5.2]{fischer2014form} or \cite[Corollary B.4]{chaintronLDP}
(it is a good rate function in the large deviation terminology).
Since $\Psi$ is lower semi-continuous, existence of a minimiser always holds for \eqref{eq:mfMinProb} if $\overline{H}_\Psi$ is finite.
We now detail the regularity assumptions that are needed for writing optimality conditions for \eqref{eq:mfMinProb}.

\begin{assumptionV}[Initial condition] \label{ass:ini1} 
For every $\alpha >0$, $x \mapsto e^{\alpha \vert x \vert}$ is $\nu_0$-integrable.
\end{assumptionV}

\begin{assumptionV}[Coefficients] \label{ass:coefReg1}
There exists $C > 0$ such that:
\begin{enumerate}[label=(\roman*),ref=(\roman*)]
\item\label{item:coefLip} $\forall (t,x,y,\mu,\mu') \in [0,T] \times \R^d \times \R^d \times \ps_1(\R^d) \times \ps_1(\R^d),$ \\
$\phantom{\forall (t,x,y,\mu,\mu') \in .}\lvert b_t (x,\mu) - b_t (y,\mu') \rvert + \lvert \sigma_t (x) - \sigma_t (y) \rvert \leq C [ \vert x - y \vert + W_1 (\mu,\mu') ]$.
\item\label{item:sigmabound} $\forall (t,x) \in [0,T] \times \R^d, \; \lvert \sigma_t (x) \rvert \leq C$, and $t \mapsto \sigma_t (x)$ is locally Hölder-continuous. 
\item\label{item:sigmaellip} $\forall (t,x,\xi) \in [0,T] \times \R^d \times \R^d, \; \xi^\top \sigma_t \sigma_t ^\top (x) \xi \geq C^{-1} \vert \xi \vert^2$.
\end{enumerate}
\end{assumptionV}

\begin{assumptionV}[Linear derivatives] \label{ass:linDiff}
The functions $\mu \mapsto \Psi(\mu)$ and $\mu \mapsto b_t(x,\mu)$ have a linear functional derivative in the sense of Definition \ref{def:PAP2DiFF}.
Moreover, $(x,\mu) \mapsto \tfrac{\delta\Psi}{\delta\mu}(\mu,x)$ and $(t,x,\mu,y) \mapsto \tfrac{\delta b_t}{\delta\mu}(x,\mu,y)$ are jointly continuous, and for every compact set $K \subset \ps_1 (\R^d)$, 
\[ \forall (t,\mu,x,y,z) \in [0,T] \times K \times (\R^d)^3, \; \big\lvert \tfrac{\delta b_t}{\delta\mu} (x,\mu,y) - \tfrac{\delta b_t}{\delta\mu} (x,\mu,z) \big\vert + \big\lvert \tfrac{\delta\Psi}{\delta\mu} (\mu,y) - \tfrac{\delta\Psi}{\delta\mu} (\mu,z) \big\vert \leq C_K \vert y - z \vert, \]
for $C_K > 0$ that only depends on $K$.
Finally, $x \mapsto \nabla \tfrac{\delta\Psi}{\delta\mu}(\mu,x)$ is $C_K$-Lipschitz for any $\mu \in K$.
\end{assumptionV}

In the statement of our results, we shall require the following assumption, called \emph{constraint qualification}, to be satisfied at a given $\overline\mu_{[0,T]} \in \ps_1(C([0,T],\R^d))$ such that $\Psi (\overline\mu_t) \leq 0$ for every $t \in [0,T]$.

\begin{assumptionV}[Qualification] \label{ass:consquali}
There exists $\tilde\varepsilon >0$, $\tilde{\mu}_{[0,T]} \in \ps_1(C([0,T],\R^d))$ such that $H(\tilde{\mu}_{[0,T]} \vert \Gamma(\overline\mu_{[0,T]})) < +\infty$ and  
\[ \forall t \in [0,T], \quad \Psi (\overline\mu_t) + \tilde\varepsilon \int_{\R^d} \frac{\delta \Psi}{\delta\mu} (\overline\mu_t) \d \tilde\mu_t < 0. \]
\end{assumptionV}

An analytical equivalent of this assumption is given by Remark \ref{rem:Suffqualif} below. 

\begin{rem} \label{rem:Suffconv}
When $\Psi$ is convex, a condition equivalent to \ref{ass:consquali} is the existence of $\tilde{\mu}_{[0,T]} \in \ps_1(C([0,T],\R^d))$ such that $H(\tilde{\mu}_{[0,T]} \vert \Gamma(\overline\mu_{[0,T]})) < +\infty$ and  
\[ \forall t \in [0,T], \quad \Psi (\tilde\mu_t) < 0, \]
see \cite[Lemma 2.17]{ConstrainedSchrodinger}.
Another sufficient condition is given by
\cite[Remark 2.15]{ConstrainedSchrodinger}.
\end{rem}

We now provide a control interpretation of \eqref{eq:mfMinProb} for which we write optimality conditions.

\begin{theorem}[Structure of optimisers] \label{thm:opti}
Under \ref{ass:ini1}-\ref{ass:coefReg1}-\ref{ass:linDiff}, let us assume that $\overline\mu_{[0,T]}$ is an optimal measure for \eqref{eq:mfMinProb} that satisfies \ref{ass:consquali}. 
\begin{enumerate}[label=(\roman*),ref=(\roman*)]
\item\label{item:Control} \underline{McKean-Vlasov control}: consider the control problem
with law constraints
\[ \overline{V}_\Psi := \inf_{\substack{(X^\alpha_t)_{0 \leq t \leq T}, \alpha \\ \forall t \in[0,T], \, \Psi( \mathcal{L}(X^\alpha_t) ) \leq 0}} H( \mathcal{L}(X^\alpha_0) \vert \nu_0 ) + \E \, \int_0^T \frac{1}{2} \vert \alpha_t \vert^2 \d t, \]
on a filtered probability space $(\Omega,\F_T,(\F_t)_{0 \leq t \leq T},\P)$ satisfying the usual conditions, where $\alpha$ is progressively measurable, $(X^\alpha_t)_{0 \leq t \leq T}$ is adapted, and $\P$-a.s. 
\[ \d X^{\alpha}_t = b_t (  X^{\alpha}_t, \mathcal{L}( X^{\alpha}_t )  ) \d t+ \sigma_t (X^{\alpha}_t) \alpha_t \d t+ \sigma_t (X^{\alpha}_t) \d B_t, \quad 0\leq t\leq T, \]
for a given $(\F_t)_{0 \leq t \leq T}$-Brownian motion $(B_t)_{0 \leq t \leq T}$ in $\R^d$.
We have the correspondence:  
\begin{equation*} 
\overline{H}_\Psi = \overline{V}_\Psi, 
\end{equation*} 
and $\overline{\mu}_{[0,T]}$ is the path-law of an optimally controlled process. 
\item\label{item:OptCond} \underline{Optimality conditions}: there exists $\overline{\lambda} \in \M_+([0,T])$ and $\overline{\varphi}:[0,T] \times \R^d \rightarrow \R$ such that $\overline{\mu}_0(dx)= \overline{Z}^{-1} e^{-\overline\varphi_0(x)}\nu_0(\d x)$ and
\begin{equation}\label{eq:coupled_FP_HJB}
\begin{cases}
\partial_t \overline{\mu}_t - \nabla\cdot [ -\overline\mu_t b_t (\cdot,\overline{\mu}_t) +  \overline{\mu}_t \sigma_t \sigma_t^\top \nabla \overline\varphi_t + \tfrac{1}{2} \nabla \cdot [ \overline\mu_t \sigma_t \sigma_t^\top ]  ] =0,  \\[5pt]
- \overline\varphi_t + \int_t^T b_s (\cdot,\overline{\mu}_s) \cdot \nabla \overline\varphi_s - \tfrac{1}{2} \big\lvert \sigma^\top_s \nabla\overline\varphi_s \big\rvert^2 + \tfrac{1}{2} \mathrm{Tr} [ \sigma_s \sigma_s^\top \nabla^2 \overline\varphi_s ] + \overline{c}_s \d s \\[5pt]
\qquad\qquad\qquad\qquad\qquad\qquad\qquad\qquad + \int_{[t,T]} \frac{\delta\Psi}{\delta\mu} (\overline{\mu}_s) \overline{\lambda}( \d s) = 0,
\end{cases}
\end{equation}
where
\[ \overline{c}_s(x) := \int_{\R^d} \frac{\delta b}{\delta\mu}(y,\overline{\mu}_s,x) \cdot \nabla \overline\varphi_s (y) \overline{\mu}_s(\d y). \]
Moreover, $\overline{V}_\Psi$ is realised by the optimal control in feed-back form $\overline{\alpha}_t = - \sigma_t \sigma_t^\top \nabla \overline\varphi_t ( X^{\overline\alpha}_t )$.
In the above, solutions to the Fokker-Planck equation have to be understood in the weak sense, and solutions to the Hamilton-Jacobi-Bellman (HJB) equation have to be understood in the sense of Definition \ref{def:mildForm}. Furthermore, the complementary slackness condition holds, 
\begin{equation} \label{eq:CompSlackmf}
\Psi (\overline{\mu}_t) = 0, \quad \text{for } \overline{\lambda}\text{-a.e. } t\in [0,T],
\end{equation}
and 
we have the pathwise representation
\begin{equation*} 
\frac{\d \overline{\mu}_{[0,T]}}{\d \Gamma(\overline{\mu}_{[0,T]})}(x_{[0,T]})=\overline{Z}^{-1}\exp \bigg[ -\int_0^T  \overline{c}_t (x_t) \d t - \int_{0}^T \frac{\delta\Psi}{\delta\mu}(\overline\mu_t, x_t) \overline\lambda (\d t) \bigg]. 
\end{equation*} 
\end{enumerate}
\end{theorem}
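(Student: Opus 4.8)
The plan is to handle the two items separately. For \ref{item:Control} I would invoke the classical dictionary between relative-entropy minimisation and stochastic control (\cite{follmer1988random,dawsont1987large,dai1991stochastic,budhiraja2012large,leonard2012girsanov}, and \cite{fischer2014form,backhoff2020mean} for the McKean--Vlasov rate function). Concretely: if $\mu_{[0,T]}$ is admissible for \eqref{eq:mfMinProb} with finite cost, the chain rule for relative entropy at time $0$ writes $H(\mu_{[0,T]}\vert\Gamma(\mu_{[0,T]}))$ as $H(\mu_0\vert\nu_0)$ plus a conditional entropy; finiteness makes the conditional path laws equivalent for $\mu_0$-a.e.\ starting point, so Girsanov's theorem realises the canonical process under $\mu_{[0,T]}$ as $\d X_t=b_t(X_t,\mu_t)\,\d t+\sigma_t(X_t)\beta_t\,\d t+\sigma_t(X_t)\,\d W_t$ with $\tfrac12\E_\mu\!\int_0^T|\beta_t|^2\,\d t$ equal to that conditional entropy; since $\mathcal{L}(X_t)=\mu_t$ the constraints transfer, giving $\overline{H}_\Psi\ge\overline{V}_\Psi$. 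Conversely, the path-law of the state $X^\alpha$ of any admissible control is a competitor for \eqref{eq:mfMinProb} with no larger cost (Girsanov again), so $\overline{H}_\Psi\le\overline{V}_\Psi$; the bounds \ref{ass:ini1}--\ref{ass:coefReg1} guarantee the required integrability and that all competitors stay in $\ps_1(C([0,T],\R^d))$. Equality of the values and the identification of $\overline\mu_{[0,T]}$ as the law of an optimally controlled process follow.

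For \ref{item:OptCond}, the first step is to produce the multiplier. Regard \eqref{eq:mfMinProb} as the minimisation of $\mathcal{F}(\mu_{[0,T]}):=H(\mu_{[0,T]}\vert\Gamma(\mu_{[0,T]}))$ over $\{G(\mu_{[0,T]})\in -C_+([0,T])\}$, where $G(\mu_{[0,T]}):=(t\mapsto\Psi(\mu_t))$, which is continuous, hence lies in $C([0,T])$, on $\{\mathcal{F}<+\infty\}$ since \ref{ass:linDiff} renders $\Psi$ continuous on compacts of $\ps_1(\R^d)$. Then \ref{ass:consquali} is precisely a Slater/Robinson-type qualification for the linearised constraint, and an abstract Lagrange-multiplier theorem for problems with infinitely many inequality constraints (in the spirit of \cite{ConstrainedSchrodinger}) furnishes $\overline\lambda\in\M_+([0,T])=C([0,T])^*$ with the complementary slackness $\langle\overline\lambda,G(\overline\mu_{[0,T]})\rangle=0$, i.e.\ \eqref{eq:CompSlackmf}, together with the first-order stationarity $\tfrac{\d}{\d\varepsilon}\big|_{0^+}\big[\mathcal{F}+\langle\overline\lambda,G\rangle\big]\big((1-\varepsilon)\overline\mu_{[0,T]}+\varepsilon\mu'_{[0,T]}\big)\ge 0$ along every perturbation with $H(\mu'_{[0,T]}\vert\Gamma(\overline\mu_{[0,T]}))<+\infty$. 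Evaluating this derivative is the technical core: freezing the reference $\gamma_{[0,T]}:=\Gamma(\overline\mu_{[0,T]})$, the entropy term contributes $\int\log\tfrac{\d\overline\mu_{[0,T]}}{\d\gamma_{[0,T]}}\,\d[\mu'_{[0,T]}-\overline\mu_{[0,T]}]$, the multiplier term contributes $\int_{[0,T]}\!\int\tfrac{\delta\Psi}{\delta\mu}(\overline\mu_t)\,\d[\mu'_t-\overline\mu_t]\,\overline\lambda(\d t)$ via \ref{ass:linDiff}, and differentiating the McKean--Vlasov dependence of $\Gamma$ through the Girsanov density produces the correction $\int_0^T\!\int\overline{c}_t\,\d[\mu'_t-\overline\mu_t]\,\d t$, with $\overline{c}_t$ as in the statement once $\nabla\overline\varphi_t$ is identified below with the Girsanov drift of $\overline\mu_{[0,T]}$ over $\gamma_{[0,T]}$. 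The resulting inequality is affine in $\mu'_{[0,T]}$; running it over a rich enough class of competitors forces the corresponding integrand to be $\overline\mu_{[0,T]}$-a.s.\ constant, which is precisely the claimed pathwise representation
\[ \frac{\d \overline\mu_{[0,T]}}{\d \Gamma(\overline\mu_{[0,T]})}(x_{[0,T]}) = \overline{Z}^{-1}\exp\!\Big[ -\!\int_0^T \overline{c}_t(x_t)\,\d t - \!\int_0^T \tfrac{\delta\Psi}{\delta\mu}(\overline\mu_t,x_t)\,\overline\lambda(\d t) \Big]. \]

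It then remains to convert this into the coupled PDE system. Since $\gamma_{[0,T]}=\Gamma(\overline\mu_{[0,T]})$ is a genuine (non-McKean--Vlasov) diffusion, I would set $h_t(x):=\E_\gamma\big[\exp\big(-\int_t^T\overline{c}_s(X_s)\,\d s-\int_{[t,T]}\tfrac{\delta\Psi}{\delta\mu}(\overline\mu_s,X_s)\,\overline\lambda(\d s)\big)\,\big|\,X_t=x\big]$, which solves the backward Kolmogorov equation for $b_t(\cdot,\overline\mu_t)\cdot\nabla+\tfrac12\mathrm{Tr}[\sigma_t\sigma_t^\top\nabla^2]$ with $L^1$-in-time source $\overline{c}_t$, with $\overline\lambda$ acting as a further (possibly atomic) endpoint source, and terminal value $h_T\equiv 1$; then $\overline\varphi_t:=-\log h_t$ turns this linear equation, via the Hopf--Cole substitution, into the HJB equation of \eqref{eq:coupled_FP_HJB}, to be understood in the mild sense of Definition \ref{def:mildForm} precisely because the data is only $L^1$ in time and $\overline\lambda$ may charge $\{0\}$ and $\{T\}$ --- here the HJB theory of \cite{chaintron2023existence,ConstrainedSchrodinger}, together with \ref{ass:ini1} and the Lipschitz/boundedness bounds (which make the exponent integrable), gives $\overline\varphi$ a rigorous meaning. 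Disintegrating the pathwise representation at $t=0$ gives $\overline\mu_0(\d x)=\overline{Z}^{-1}e^{-\overline\varphi_0(x)}\nu_0(\d x)$. On the forward side, the $h$-transform structure makes the canonical process under $\overline\mu_{[0,T]}$ solve $\d X_t=\big(b_t(X_t,\overline\mu_t)-\sigma_t\sigma_t^\top\nabla\overline\varphi_t(X_t)\big)\,\d t+\sigma_t(X_t)\,\d B_t$, whose marginal flow satisfies, by It\^o's formula, the Fokker--Planck equation of \eqref{eq:coupled_FP_HJB} in the weak sense; reading off this drift identifies the optimal feedback control and confirms the expression used for $\overline{c}_t$. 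I expect the main obstacle to be twofold: (a) producing $\overline\lambda\in\M_+([0,T])$ despite the infinitely many inequality constraints and the loss of global convexity caused by the McKean--Vlasov coupling --- which forces a first-order (KKT-type) argument under \ref{ass:consquali} rather than a clean convex-duality one --- and (b) rigorously differentiating $\mu\mapsto H(\mu\vert\Gamma(\mu))$ through the Girsanov density so as to pin down $\overline{c}_t$; giving \eqref{eq:coupled_FP_HJB} a meaning under $L^1$-in-time data and endpoint atoms of $\overline\lambda$ rests on the low-regularity HJB theory imported from \cite{chaintron2023existence,ConstrainedSchrodinger} and sharpened later in the paper.
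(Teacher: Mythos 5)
Your proposal follows the same overall skeleton as the paper's proof: for \ref{item:OptCond}, both arguments linearise the McKean--Vlasov rate function at $\overline\mu_{[0,T]}$ by freezing the reference measure $\Gamma(\overline\mu_{[0,T]})$ and differentiating the Girsanov density (this is exactly Lemma \ref{lem:NLRate}, which produces the correction term $\overline{c}$ from the abstract Girsanov drift $c_t$, identified only afterwards with $-\nabla\overline\varphi_t$), and both obtain $\overline\lambda\in\M_+([0,T])$, the complementary slackness and the pathwise density representation from a KKT-type theorem for infinitely many inequality constraints under \ref{ass:consquali} --- the paper simply delegates this to \cite[Theorem 2.12]{ConstrainedSchrodinger} after verifying its hypotheses, whereas you re-derive the stationarity/affineness argument by hand. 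Where you genuinely diverge is the passage from the density representation to the coupled system \eqref{eq:coupled_FP_HJB}: you define $h_t$ directly as a Feynman--Kac conditional expectation under $\Gamma(\overline\mu_{[0,T]})$ and set $\overline\varphi=-\log h$ via Hopf--Cole, while the paper first regularises the coefficients and the multiplier by convolution, solves the smooth HJB equations of Section \ref{sec:Bounds}, proves uniform gradient and Hessian bounds (Proposition \ref{pro:hessbound}), and passes to the limit using the stability lemmas of \cite{ConstrainedSchrodinger} to obtain the mild solution and the identification $c_t=-\nabla\overline\varphi_t(\overline X_t)$. The paper's detour buys exactly what your route would have to import wholesale: under merely Lipschitz coefficients, an $L^1$-in-time source and a multiplier with possible endpoint atoms, the raw Feynman--Kac formula does not by itself yield that $x\mapsto\overline\varphi_t(x)$ is $C^1$ with bounded gradient (as Definition \ref{def:mildForm} requires), nor the $h$-transform SDE identification; you acknowledge relying on \cite{chaintron2023existence,ConstrainedSchrodinger} for this, which is legitimate but is precisely the content of the approximation-and-stability step in the paper.

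One minor point on \ref{item:Control}: your bidirectional Girsanov argument realises an admissible $\mu_{[0,T]}$ as a controlled process only on \emph{some} reference system (the canonical one), so it proves $\overline H_\Psi=\inf_\Sigma\overline V_\Psi$, which is the content of Lemma \ref{lem:repEntr}; to conclude $\overline H_\Psi=\overline V_\Psi$ on a \emph{fixed} system one needs the value to be system-independent, which the paper gets from \ref{item:OptCond} because the optimum is attained by the feed-back control $\overline\alpha_t=-\sigma_t\sigma_t^\top\nabla\overline\varphi_t(X^{\overline\alpha}_t)$, realisable on any $\Sigma$. Your write-up of (i) should be ordered accordingly (prove (ii) first, or add this transfer step); as stated it leaves that small gap, though the idea is otherwise the same as the paper's.
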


Let us further study the Lagrange multiplier $\overline{\lambda}$.
To do so, we need to strengthen our regularity assumptions.

\begin{assumptionV}[Strengthening of \ref{ass:ini1}] 
\label{ass:ini2}
Assumption \ref{ass:ini1} holds and $\nu_0$ has a positive density w.r.t. the Lebesgue measure, still denoted by $\nu_0$.
Moreover, $C, \varepsilon_0 > 0$ exist such that
\begin{enumerate}[label=(\roman*),ref=(\roman*)]
\item\label{item:inih1} $\log \nu_0$ is (a.e.) differentiable and $\int_{\R^d} \big[ ( \log \nu_0 )^{1+\varepsilon_0}  + \vert \nabla \log \nu_0 \vert^{2+\varepsilon_0} \, \big] \d \nu_0 \leq C$.
\item\label{item:inih2} $\nabla \log \nu_0$ is differentiable and $\int_{\R^d} \big[ ( \log \nu_0 )^{1+\varepsilon_0} + \vert \nabla \log \nu_0 \vert^{4 + \varepsilon_0} + \vert \nabla^2 \log \nu_0 \vert^{2 +\varepsilon_0} \big] \d \nu_0 \leq C$.
\item\label{item:iniT1} We further have $\int_{\R^d} e^{\varepsilon_0 \vert x \vert^2} \nu_0 (\d x) \leq C$.
\end{enumerate}
\end{assumptionV}

\begin{assumptionV}[Strengthening of \ref{ass:coefReg1}-\ref{ass:linDiff}] 
\label{ass:coefReg2}
Assumptions \ref{ass:coefReg1}-\ref{ass:linDiff} hold, and
$\mu \mapsto \tfrac{\delta\Psi}{\delta\mu}(\mu,x)$ has a linear derivative $y \mapsto \tfrac{\delta^2\Psi}{\delta\mu^2}(\mu,x,y)$. 
Moreover, the functions 
\[ t \mapsto b_t (x,\mu), \; t \mapsto \sigma_t (x), \; x \mapsto \nabla \sigma_t(x), \]
\[ (\mu,x,y) \mapsto \tfrac{\delta b_t}{\delta\mu} (x,\mu,y), \; (\mu,x) \mapsto \tfrac{\delta\Psi}{\delta\mu}(\mu,x), \; x \mapsto \nabla \tfrac{\delta\Psi}{\delta\mu}(\mu,x), \; y \mapsto \tfrac{\delta^2\Psi}{\delta\mu^2}(\mu,x,y), \] 
are well-defined and Lipschitz-continuous uniformly in $(t,x,\mu)$.
\end{assumptionV}

\begin{assumptionV}[Strengthening of \ref{ass:coefReg2}] 
\label{ass:coefReg3}
Assumption \ref{ass:coefReg2} holds and there exists $C > 0$ such that for every $(t,x,\mu) \in [0,T] \times \R^d \times \ps_1 (\R^d)$,
\begin{enumerate}[label=(\roman*),ref=(\roman*)]
\item\label{item:bC2} $x \mapsto \nabla_x b_t (x,\mu)$, $x \mapsto \nabla^2 \sigma_t (x)$ and $y \mapsto \nabla_y \tfrac{\delta b_t}{\delta \mu} (x,\mu,y)$ are well-defined and $C$-Lipschitz.
\item\label{item:PsiC3} $x \mapsto \tfrac{\delta\Psi}{\delta\mu}(x,\mu)$ is $C^2$ with $C$-Lipschiz derivatives which are continuous in $(x,\mu)$. \item\label{item:PsiC4} $x \mapsto \tfrac{\delta\Psi}{\delta\mu}(\mu,x)$ is $C^4$ and $y \mapsto \tfrac{\delta^2\Psi}{\delta\mu^2}(\mu,x,y)$ is $C^2$, both with derivatives bounded by $C$ and jointly Lipschitz-continuous in $(\mu,x,y)$.
\end{enumerate}
\end{assumptionV}

We now state sufficient conditions for $\overline{\lambda}$ having a density.

\begin{theorem}[Density for the multiplier] \label{thm:density}
Under \ref{ass:consquali} at $\overline\mu_{[0,T]}$, there exists $(\overline\lambda,\overline\varphi)$ satisfying Theorem \ref{thm:opti}-$(ii)$ such that $x \mapsto \nabla \overline\varphi_t(x)$ is Lipschitz-continuous and
\[ \overline{\lambda}(\d t) = \overline{\lambda}_0 \delta_0 ( 
\d t) + \overline{\lambda}_t \d t + \overline{\lambda}_T \delta_T(\d t), \]
for $t \mapsto \overline{\lambda}_t$ in $L^\infty(0,T)$, if we assume that either \ref{ass:ini2}-\ref{item:inih1} and \ref{ass:coefReg3}-\ref{item:PsiC3} hold, or that \ref{ass:ini2}-\ref{item:inih2} and \ref{ass:coefReg3}-\ref{item:bC2} hold. 
In the second case, $x \mapsto \nabla^2 \overline{\varphi}_t(x)$ is Lipschitz-continuous and the densities of the time-marginals of $\overline{\mu}_{[0,T]}$ satisfy
\begin{equation} \label{eq:gamm3}
\bigg[ \sup_{t \in [0,T]} \int_{\R^d} \big[ \log \overline\mu_t + \vert \nabla \log \overline\mu_t \vert^4 + \vert \nabla^2 \log \overline\mu_t \vert^2 \big] \d \overline\mu_t \bigg] + \int_0^T \int_{\R^d} \lvert \nabla^3 \log \overline\mu_t \rvert^2 \d \overline\mu_t \d t < +\infty. 
\end{equation} 
\end{theorem}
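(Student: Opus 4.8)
The plan is to produce a pair $(\overline\lambda,\overline\varphi)$ with the required regularity as a limit of the optimisers of penalised \emph{unconstrained} McKean--Vlasov problems, for which the multiplier is automatically a bounded density and the HJB equation is classical. Concretely, for a smooth convex penalty $\ell \geq 0$ vanishing on $(-\infty,0]$ with $\ell'' > 0$ on $(0,\infty)$ and for $n \geq 1$, I would consider
\[ \overline{V}^n := \inf \Big[ H(\mathcal{L}(X_0)\vert\nu_0) + \E \int_0^T \tfrac12 \vert\alpha_t\vert^2 \,\d t + n \int_0^T \ell\big(\Psi(\mathcal{L}(X_t))\big)\,\d t \Big] \]
over the controlled diffusions of Theorem \ref{thm:opti}-$(i)$. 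Arguing as in \cite{ConstrainedSchrodinger,daudin2020optimal,daudin2023optimal}, this problem admits a minimiser $\overline\mu^n_{[0,T]}$ satisfying \eqref{eq:coupled_FP_HJB} with $\overline\lambda$ replaced by $\overline\lambda^n(\d t) := n\,\ell'(\Psi(\overline\mu^n_t))\,\d t$ and $\overline\varphi$ replaced by a classical solution $\overline\varphi^n$ of the associated HJB equation (its source $t\mapsto \overline\lambda^n_t\,\tfrac{\delta\Psi}{\delta\mu}(\overline\mu^n_t)$ being now bounded in time), and one checks along a subsequence that $\overline\mu^n_{[0,T]}\to\overline\mu_{[0,T]}$ in $\ps_1(C([0,T],\R^d))$ for some optimiser $\overline\mu_{[0,T]}$ of \eqref{eq:mfMinProb}, with $\overline{V}^n\to\overline{H}_\Psi$. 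Using the qualification \ref{ass:consquali} and the Slater-type duality argument of \cite[\S 2]{ConstrainedSchrodinger}, one gets $\sup_n\overline\lambda^n([0,T])<+\infty$; combined with the gradient (and, under the stronger assumptions, Hessian and third-derivative) estimates for HJB equations with $L^1$-in-time source developed in Section \ref{subsec:smoothlambda}, this yields $\sup_{n,t}\lVert\nabla\overline\varphi^n_t\rVert_\infty<+\infty$ (resp.\ also $\sup_{n,t}\lVert\nabla^2\overline\varphi^n_t\rVert_\infty<+\infty$ and a uniform control of $\nabla^3\overline\varphi^n$), so in particular the optimal feedback of the penalised problem is Lipschitz uniformly in $n$.

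The heart of the argument is the uniform interior bound $\overline\lambda^n|_{(0,T)}\in L^\infty$, obtained by a competitor construction. Fix $0<s<t<T$. Starting from $\overline\mu^n$, I would build $\mu^{n,\eta}_{[0,T]}$ by adding, on $[s,t]$ only, the extra drift $-\eta\,\sigma_r\sigma_r^\top\nabla\tfrac{\delta\Psi}{\delta\mu}(\mu^{n,\eta}_r,\cdot)$, i.e.\ letting the marginal flow descend the gradient of the observable, and keeping the feedback of $\overline\mu^n$ elsewhere. Then $\tfrac{\d}{\d r}\Psi(\mu^{n,\eta}_r)$ acquires on $[s,t]$ the term $-\eta\int\vert\sigma_r^\top\nabla\tfrac{\delta\Psi}{\delta\mu}(\mu^{n,\eta}_r,\cdot)\vert^2\,\d\mu^{n,\eta}_r\leq -\eta\,\kappa$, with $\kappa>0$ bounded below using \ref{ass:consquali} and the non-degeneracy of the marginals (they are laws of uniformly non-degenerate diffusions started from a positive density), so that $\Psi(\mu^{n,\eta}_r)\leq\Psi(\overline\mu^n_r)-\eta\kappa(r-s)+o(\eta)$ on $[s,t]$; the uniform Lipschitz bound on the feedback controls $\mu^{n,\eta}_r-\overline\mu^n_r = O(\eta(t-s))$ for $r\geq t$, hence $\Psi(\mu^{n,\eta}_r)\leq\Psi(\overline\mu^n_r)+O(\eta(t-s))$ for $r\geq t$; and the extra running cost is $O(\eta^2(t-s))$. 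Inserting $\mu^{n,\eta}$ into the optimality inequality for $\overline{V}^n$, using the slackness structure $\Psi(\overline\mu^n_r)\ell'(\Psi(\overline\mu^n_r))\simeq \ell(\Psi(\overline\mu^n_r))$, dividing by $\eta$ and letting $\eta\to0$ leaves
\[ \kappa\int_{[s,t]}(r-s)\,\overline\lambda^n(\d r) \leq C(t-s)\big[1 + \overline\lambda^n([t,T])\big], \]
and, after refining the construction with two-sided ramps so that the last factor is itself of order $(t-s)$, one concludes $\overline\lambda^n([s,t])\leq C(t-s)$ with $C$ independent of $n$ and of $[s,t]\subset(0,T)$; the endpoint atoms of the limit absorb the loss at $s=0$ and $t=T$.

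It then remains to pass to the limit: along a further subsequence $\overline\lambda^n\rightharpoonup\overline\lambda$ weakly-$*$ in $\M_+([0,T])$, and the uniform bounds force $\overline\lambda = \overline\lambda_0\delta_0 + \overline\lambda_t\,\d t + \overline\lambda_T\delta_T$ with $\overline\lambda_\cdot\in L^\infty(0,T)$, while $\nabla\overline\varphi^n\to\nabla\overline\varphi$ (and $\nabla^2\overline\varphi^n\to\nabla^2\overline\varphi$ in the second regime) with the claimed Lipschitz regularity. One checks that $(\overline\mu_{[0,T]},\overline\lambda,\overline\varphi)$ still satisfies Theorem \ref{thm:opti}-$(ii)$ by passing to the limit in the weak formulation of the Fokker--Planck equation and the mild formulation (Definition \ref{def:mildForm}) of the HJB equation, the complementary slackness \eqref{eq:CompSlackmf} and the Gibbs representation being stable under this convergence. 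Finally, for \eqref{eq:gamm3}: in the second regime $(\overline\mu_t)$ solves a Fokker--Planck equation whose drift $b_t(\cdot,\overline\mu_t)-\sigma_t\sigma_t^\top\nabla\overline\varphi_t$ has a bounded Lipschitz spatial gradient and whose diffusion is elliptic and twice differentiable; combining this with \ref{ass:ini2}-\ref{item:inih2} and the forward propagation estimates for the Hessian and third derivative of the log-density of a diffusion proved in Appendix \ref{app:timerev} via time-reversal in the spirit of \cite{haussmann1986time,fontbona2016trajectorial} yields \eqref{eq:gamm3}.

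The main obstacle is the competitor step, for two reasons: first, the circular dependence between HJB regularity (hence Lipschitzness of the feedback) and the size of the multiplier, which is broken only by working on the penalised problems and proving all estimates uniformly in $n$, pushing the HJB analysis up to the third spatial derivative; second, the design of the time-localised gradient-flow perturbation so that its effect on the constraint \emph{after} time $t$ is genuinely of order $\eta(t-s)$ and not merely of order $\eta$, while the running cost stays of order $\eta^2(t-s)$ and the constraint gain of order $\eta(t-s)$ --- it is precisely this bookkeeping, together with the uniform lower bound $\kappa>0$, that the refined HJB and log-Hessian estimates are tailored to.
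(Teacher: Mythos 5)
Your route (smooth penalisation with strength $n\to\infty$, multiplier $\overline\lambda^n_t:=n\,\ell'(\Psi(\overline\mu^n_t))$, uniform bounds, then pass to the limit) is genuinely different from the paper's, which regularises and linearises the constrained problem, takes the multiplier from the abstract duality of \cite{ConstrainedSchrodinger}, and proves the density bound directly on the (regularised) constrained problem. Unfortunately the step that carries all the weight — the uniform interior $L^\infty$ bound on $\overline\lambda^n$ — has a genuine gap. Your competitor is an additive drift $-\eta\, a\nabla\tfrac{\delta\Psi}{\delta\mu}$ switched on only on $[s,t]$, but such a perturbation is not local in its effects: it changes the marginals at \emph{all} later times by $O(\eta(t-s))$, so the penalty paid after $t$ contributes $O(\eta(t-s))\,\overline\lambda^n([t,T])$; moreover the change of running cost contains the cross term $-\eta\,\E\int_s^t \alpha^n_r\cdot\sigma_r^\top\nabla\tfrac{\delta\Psi}{\delta\mu}(\mu^{n}_r,X^n_r)\,\d r$, which is $O(\eta(t-s))$ and not $O(\eta^2(t-s))$ as you claim. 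Dividing by $\eta$ then yields exactly your displayed inequality, but that inequality only gives $\overline\lambda^n\big([\tfrac{s+t}{2},t]\big)\leq C$ for every small interval — a constant bound on the mass of small intervals, which is not a density bound and does not even exclude atoms of the limit measure. To conclude $\overline\lambda^n([s,t])\leq C(t-s)$ you need the right-hand side to gain an extra factor of $(t-s)$ relative to the weight, and a ``two-sided ramp'' cannot produce it: an additive drift cannot be undone, so the first-order effect on the law after the window does not cancel. This is precisely what the paper's construction in Proposition \ref{pro:perturbound} is built to achieve: the competitor is the push-forward $\mu^0_s\circ S^{-1}_{s,h_s}$ along the gradient flow of $\psi$ with a profile $h$ vanishing at both endpoints, so the marginals outside the window are \emph{exactly} unchanged; the surviving first-order term $-2h'_s\,\E[\sigma_s\alpha^0_s\cdot\nabla\psi_s(X^0_s)]$ is integrated by parts in time and controlled by placing the window endpoints in the contact set $\{\E[\psi_s(X^0_s)]=0\}$ (Lemma \ref{lem:NoAtom}, Proposition \ref{pro:density}); and with the quadratic bump $h_s=-\varepsilon_0(s-t_1)(s-t_1')$ the right-hand side scales like $|t'-t|^3$ against a weight of order $|t'-t|^2$, which is what gives $\overline\lambda([t,t'])\leq C|t'-t|$. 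Note also that your uniform lower bound $\kappa>0$ on $\int|\sigma^\top\nabla\tfrac{\delta\Psi}{\delta\mu}|^2\d\mu$ is only available, via \ref{ass:consquali} and Remark \ref{rem:Suffqualif}, on the contact set of the qualified optimiser, not on arbitrary $[s,t]\subset(0,T)$; and the exact cost expansion of a transported (or drift-perturbed) flow produces $\nabla\log\mu$ and $\nabla^2\log\mu$ correction terms, which is where the time-reversal estimates of Appendix \ref{app:timerev} and the third-order HJB bound of Lemma \ref{lem:thirdorder} are needed — your bookkeeping bypasses them without justification.

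A second, more structural issue: Theorem \ref{thm:density} asserts the existence of $(\overline\lambda,\overline\varphi)$ attached to the \emph{given} optimiser $\overline\mu_{[0,T]}$ at which \ref{ass:consquali} is assumed, and \eqref{eq:gamm3} concerns the marginals of that same measure. Without convexity, \eqref{eq:mfMinProb} may have several optimisers, and your penalised minimisers $\overline\mu^n_{[0,T]}$ only converge along subsequences to \emph{some} optimiser; both the identification of the limit and the uniform qualification you invoke for $\sup_n\overline\lambda^n([0,T])<+\infty$ (the Slater-type bound is stated at the qualified point) are then unjustified. The paper resolves this by adding the strictly convex marginal functional $\Phi$ penalising deviation from $(\overline\mu_t)_{0\leq t\leq T}$, which forces the approximations to converge to the prescribed optimiser; your scheme needs an analogous device. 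Finally, be aware that the exact-penalisation mechanism of \cite{daudin2023optimal}, which your plan resembles, works with a \emph{fixed} large penalisation parameter under convexity and is not the same as sending $n\to\infty$ with uniform competitor estimates, so the literature you cite does not fill these gaps in the present McKean--Vlasov, non-convex, non-constant-diffusion setting.
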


{
\begin{rem}
As highlighted in the introduction, constrained mean field control problems have recently been studied by Daudin in a series of articles. 
More specifically, optimality conditions of the form \eqref{eq:coupled_FP_HJB} and the existence of a bounded Radon-Nikodym density for Lagrange multipliers are established in \cite[Theorem 2.2]{daudin2023optimal}. These results are very similar in spirit to Theorem \ref{thm:opti} and Theorem \ref{thm:density} and have partially inspired our work. 
One should note that the control problems considered here are slightly different: in \cite{daudin2023optimal}, the objective function includes an additional mean field term in both the running and terminal costs, {the reference dynamics has zero drift and a constant diffusion matrix,} and the initial distribution is fixed. 
Here, the initial distribution is a variable over which we optimize, and the controlled dynamics corresponds to a  stochastic differential equation  of McKean-Vlasov-type with non constant diffusions coefficient.
Regarding the constraint function $\Psi$, we are able to work under weaker regularity assumptions and do not require convexity.  
We also rephrase the transversality conditions from Daudin’s work in terms of a constraint qualification condition, see Remark \ref{rem:Suffqualif}.
{Our proof approaches are rather different too.
We study regularity properties by pushing the optimal process along the gradient flow of the constraint to build suitable competitors, whereas \cite{daudin2023optimal} 
approximates the constrained problem by relaxing the constraint and penalising the flows that violate it. Then, he shows that if the penalisation strength is sufficiently large, the penalised problem and the constrained problem are equivalent.
}
\end{rem}
}

The results on the density of $\overline\lambda$ can be seen as a regularity trade-off between the constraint and the coefficients.
They rely on the new technical estimate \eqref{eq:gamm3} for diffusions, which is proved in Proposition \ref{pro:Gamma_3}.
The proof of Theorem \ref{thm:density} and Corollary \ref{cor:timeReg1} further show that $t \in (0,T] \mapsto \nabla \varphi_t (x)$ is continuous and $t \in [0,T] \mapsto \Psi ( \overline\mu_t )$ is $C^1$ under \ref{ass:coefReg3}-\ref{item:PsiC3}, whereas $t \in (0,T] \mapsto \nabla^2 \varphi_t (x)$ is continuous under \ref{ass:coefReg3}-\ref{item:bC2}.
Depending on where the regularity is assumed (constraints or coefficients), we thus get more smoothness on either the constrained curve or the optimal control.   
Requiring more derivatives on coefficients would still improve the regularity in $(t,x)$.

\begin{rem}[Equivalent qualification condition]\label{rem:Suffqualif}
Theorem \ref{thm:opti} implies that $\overline\mu_{[0,T]}$ and $\Gamma ( \overline\mu_{[0,T]} )$ are equivalent.
A consequence of \ref{ass:consquali} is then
\[ \forall t \in [0,T], \quad \Psi ( \overline\mu_t ) = 0 \Rightarrow \int_{\R^d} \bigg\lvert \nabla \frac{\delta \Psi}{\delta\mu} ( \overline\mu_t ) \bigg\rvert^2 \d \overline\mu_t \neq 0. \]
In the setting of Theorem \ref{thm:density}, this condition is equivalent to \ref{ass:consquali} from Corollary \ref{cor:Qualif}.      
\end{rem} 

We now prove quantitative stability with respect to the constraint $\Psi$.
To guarantee that uniqueness holds for \eqref{eq:mfMinProb}, we restrict ourselves to $b_t(x,\mu) = b_t(x)$ and $\Psi$ convex.
This setting still includes the entropic projection \eqref{eq:ent_proj_intro} of the introduction.
Let $\nu_{[0,T]}$ denote the path-law of the solution to
\[ d X_t = b_t(X_t) \d t + \sigma_t ( X_t) \d B_t, \quad X_0 \sim \nu_0, \]
where $(B_t)_{0 \leq t \leq T}$ is a Brownian motion. 
To alleviate notations, optimal measures will be denoted by $\mu^\varepsilon_{[0,T]}$ in the following statement, instead of $\overline\mu_{[0,T]}$.
The same goes for $\varphi^\varepsilon$ and $\lambda^\varepsilon$.

\begin{theorem}[Quantitative stability] \label{thm:stability}
We assume \ref{ass:consquali}-\ref{ass:ini2}-\ref{ass:coefReg3} and that $\Psi$ is convex. 
For $\varepsilon \geq 0$, let $\mu^\varepsilon_{[0,T]}$ denote the unique minimiser for
\[ \inf_{\substack{\mu_{[0,T]} \in \ps_1( C ( [0,T], \R^d ) ) \\ \forall t \in [0,T], \; \Psi ( \mu_t ) \leq \varepsilon}} H(\mu_{[0,T]} \vert \nu_{[0,T]} ). \]
Let $(\varphi^\varepsilon_t,\lambda^\varepsilon_t,\lambda^\varepsilon_0,\lambda^\varepsilon_T)$ be related to $\mu^\varepsilon_{[0,T]}$ by Theorems \ref{thm:opti}-\ref{thm:density}. 
We assume that the number of non-trivial intervals where $\Psi(\mu^0_t)$ is identically zero is finite.
Then, there exists $C > 0$ independent of $\varepsilon$ such that uniformly in $\varepsilon \in [0,1]$,
\begin{enumerate}[label=(\roman*),ref=(\roman*)]
\item \underline{Entropic stability:} $H( \mu^0_{[0,T]} \vert \mu^\varepsilon_{[0,T]} ) \leq C \varepsilon$.
\item\label{itempap2:stab}  \underline{Multiplier stability:} $\vert \lambda^\varepsilon_0 - \lambda^0_0 \vert + \lVert \lambda^\varepsilon - \lambda^0 \rVert_{L^1 (0,T)} + \vert \lambda^\varepsilon_T - \lambda^0_T \vert \leq C {\varepsilon^{1/4}}$.
\item \underline{Control stability:} $\sup_{(t,x) \in [0,T] \times \R^d} \lvert \nabla {\varphi}^\varepsilon_t (x) - \nabla {\varphi}^0_t (x) \rvert \leq C \varepsilon^{1/4}$.
\end{enumerate}
\end{theorem}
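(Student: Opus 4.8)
\textbf{Proof strategy for Theorem \ref{thm:stability}.}

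The plan is to exploit the convexity of $\Psi$ together with the optimality conditions of Theorems \ref{thm:opti}--\ref{thm:density}, used at both levels $\varepsilon$ and $0$, and to bootstrap the estimates in the order (i) $\Rightarrow$ (iii) $\Rightarrow$ (ii). First I would establish the entropic stability. The natural competitor for the $\varepsilon$-problem built from $\mu^0_{[0,T]}$ is obtained by a small inward push: since $\Psi(\mu^0_t)\le 0\le \varepsilon$, one can even use $\mu^0_{[0,T]}$ itself as a competitor for level $\varepsilon$, giving $H(\mu^\varepsilon_{[0,T]}\vert\nu_{[0,T]})\le H(\mu^0_{[0,T]}\vert\nu_{[0,T]})$. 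For the reverse direction, build from $\mu^\varepsilon_{[0,T]}$ an admissible competitor for level $0$ by interpolating towards the qualification measure $\tilde\mu_{[0,T]}$ from \ref{ass:consquali}: set $\mu^{\theta}_{[0,T]}=(1-\theta)\mu^\varepsilon_{[0,T]}+\theta\tilde\mu_{[0,T]}$, and use convexity of $\Psi$ and of $\mu\mapsto H(\mu\vert\nu)$ to see that a choice $\theta=O(\varepsilon)$ restores $\Psi(\mu^\theta_t)\le 0$ for all $t$ while increasing the entropy by only $O(\varepsilon)$. Comparing the two inequalities and using the identity $H(\mu^0_{[0,T]}\vert\mu^\varepsilon_{[0,T]})=H(\mu^0_{[0,T]}\vert\nu_{[0,T]})-H(\mu^\varepsilon_{[0,T]}\vert\nu_{[0,T]})-\int\log\frac{\d\mu^\varepsilon_{[0,T]}}{\d\nu_{[0,T]}}\,\d(\mu^0_{[0,T]}-\mu^\varepsilon_{[0,T]})$, the last term is handled via the explicit Gibbs density $\frac{\d\mu^\varepsilon_{[0,T]}}{\d\nu_{[0,T]}}=Z_\varepsilon^{-1}\exp[-\int_0^T\frac{\delta\Psi}{\delta\mu}(\mu^\varepsilon_t,x_t)\lambda^\varepsilon(\d t)]$, turning it into $\int_0^T[\Psi(\mu^\varepsilon_t)-\int\frac{\delta\Psi}{\delta\mu}(\mu^\varepsilon_t)\d\mu^0_t]\lambda^\varepsilon(\d t)$ modulo the normalising constants; convexity of $\Psi$ bounds $\int\frac{\delta\Psi}{\delta\mu}(\mu^\varepsilon_t)\d(\mu^0_t-\mu^\varepsilon_t)\le \Psi(\mu^0_t)-\Psi(\mu^\varepsilon_t)\le\varepsilon$, and complementary slackness $\Psi(\mu^\varepsilon_t)=\varepsilon$ $\lambda^\varepsilon$-a.e. closes the estimate provided $\lambda^\varepsilon([0,T])$ is bounded uniformly in $\varepsilon$ — a uniform mass bound that itself follows from \ref{ass:consquali} by testing the optimality condition against $\tilde\mu$.

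Next I would deduce control stability (iii) from entropic stability (i). By Pinsker and the stated $W_1$-estimate, $\sup_t W_1(\mu^\varepsilon_t,\mu^0_t)=O(\varepsilon^{1/2})$ and $\sup_t\|\mu^\varepsilon_t-\mu^0_t\|_{\mathrm{TV}}=O(\varepsilon^{1/2})$, hence the forcing terms in the two HJB equations of \eqref{eq:coupled_FP_HJB} differ by a measure whose ``density part'' is small in the appropriate norm — but this requires controlling $\frac{\delta\Psi}{\delta\mu}(\mu^\varepsilon_t)-\frac{\delta\Psi}{\delta\mu}(\mu^0_t)$ and, crucially, $\lambda^\varepsilon-\lambda^0$, which is not yet available. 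The clean way around the circularity is to run a direct Gronwall argument on $w^\varepsilon_t:=\varphi^\varepsilon_t-\varphi^0_t$ using the mild formulation of Definition \ref{def:mildForm}: the difference solves a linear backward equation driven by (a) the quadratic Hamiltonian difference, which is Lipschitz in $\nabla\varphi$ on the a priori bounded region where both gradients live by Theorem \ref{thm:density}, (b) the Hessian term, controlled by the third-derivative bounds from Section \ref{subsec:smoothlambda}, and (c) the source $\int_{[t,T]}[\frac{\delta\Psi}{\delta\mu}(\mu^\varepsilon_s)\lambda^\varepsilon(\d s)-\frac{\delta\Psi}{\delta\mu}(\mu^0_s)\lambda^0(\d s)]$. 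Splitting (c) as $\int\frac{\delta\Psi}{\delta\mu}(\mu^\varepsilon_s)(\lambda^\varepsilon-\lambda^0)(\d s)+\int[\frac{\delta\Psi}{\delta\mu}(\mu^\varepsilon_s)-\frac{\delta\Psi}{\delta\mu}(\mu^0_s)]\lambda^0(\d s)$, the second piece is $O(\varepsilon^{1/2})$ by (i) and the Lipschitz regularity in \ref{ass:coefReg3}, and the first piece is what forces the $\varepsilon^{1/4}$ rate once (ii) is in hand, so the three estimates must be closed simultaneously in a single fixed-point/Gronwall loop with exponents $\varepsilon\le\varepsilon^{1/2}\le\varepsilon^{1/4}$.

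For multiplier stability (ii), the idea is the competitor construction of Section \ref{subsec:smoothlambda}: push the optimal $\varepsilon$-process infinitesimally along the gradient flow generated by $\psi$ (more precisely by $x\mapsto\nabla\frac{\delta\Psi}{\delta\mu}(\mu_t,x)$) on the active set where $\Psi(\mu^\varepsilon_t)=\varepsilon$, reinterpret it as an admissible control for the other problem, and compare costs. This yields a two-sided inequality relating $\int_0^T(\lambda^\varepsilon_t-\lambda^0_t)\,g_t\,\d t$ (plus endpoint atoms) to quantities already controlled at rate $\varepsilon^{1/2}$ through (i) and (iii), where $g_t=\int|\nabla\frac{\delta\Psi}{\delta\mu}(\mu_t)|^2\d\mu_t$ is bounded below on the active set by the equivalent qualification condition of Remark \ref{rem:Suffqualif}; the square-root loss from $\varepsilon^{1/2}$ to $\varepsilon^{1/4}$ is the price of converting an $L^2$-type control of the active-set overlap into an $L^1$ bound on $\lambda^\varepsilon-\lambda^0$, and this is where the hypothesis that $\{\Psi(\mu^0_t)=0\}$ has finitely many non-trivial components is used, to control the symmetric difference of the active sets near their boundary by $O(\varepsilon^{1/2})$ in measure. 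I expect the main obstacle to be precisely this step: making rigorous the ``push along the gradient flow'' competitor near the endpoints $0$ and $T$ (where $\overline\lambda$ may carry atoms and where the regularity of $t\mapsto\nabla\varphi_t$ degenerates, cf. the discussion after Theorem \ref{thm:density}), and quantifying the active-set comparison without assuming any non-degeneracy of $t\mapsto\Psi(\mu^0_t)$ beyond the finiteness of flat pieces — the whole argument hinges on a lower bound for $g_t$ on the active set that survives uniformly in $\varepsilon$, which in turn leans on the uniform-in-$\varepsilon$ a priori bounds of Theorem \ref{thm:density} and on \ref{ass:consquali} being stable under the perturbation $\varepsilon\to 0$.
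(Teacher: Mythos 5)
Your outline of (i) is fine (it is essentially the convexity/competitor argument that the paper imports from \cite{ConstrainedSchrodinger}), but the core of the theorem — (ii) and (iii) — has a genuine gap: your deduction order (i) $\Rightarrow$ (iii) $\Rightarrow$ (ii) is circular, as you yourself note, and the proposed ``simultaneous fixed-point/Gronwall loop'' is not a mechanism; nothing in your sketch explains how the loop closes, and with sup-norm estimates on $\nabla\varphi^\varepsilon-\nabla\varphi^0$ feeding into $\lambda^\varepsilon-\lambda^0$ and back, there is no contraction to invoke. The missing idea is that the multiplier estimate can (and in the paper does) come \emph{before} any sup-norm control estimate: the only control-type input it needs is the $L^2(\d t\otimes\P)$ bound $\E\int_0^T|\alpha^\varepsilon_t(X^0_t)-\alpha^0_t(X^0_t)|^2\,\d t\le C\varepsilon$, which follows directly from the entropic bound (i) via Girsanov (Lemma \ref{lem:L2stab}). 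The paper's route to (ii) is also structurally different from yours: instead of re-running the gradient-flow-push competitor of Proposition \ref{pro:perturbound}, it differentiates $t\mapsto\Psi(\mu^\varepsilon_t)$ twice in time using the HJB equation (Lemma \ref{lem:timeReg2}), so that on the common active set the identity $0=\E[r^\varepsilon-s^\varepsilon+\lambda^\varepsilon_t|\sigma^\top\nabla\psi^\varepsilon_t|^2]$ lets one \emph{solve} for $\lambda^\varepsilon_t$ pointwise and compare with $\lambda^0_t$, the qualification condition (Remark \ref{rem:Suffqualif}) providing the lower bound $\eta$; the troublesome Hessian differences $\nabla^2\varphi^\varepsilon-\nabla^2\varphi^0$, which your Lipschitz-in-$\nabla\varphi$ reasoning cannot reach, are removed by integrating by parts against the density and invoking the log-density estimates (Lemma \ref{lem:Gamma_2}). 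Your push-competitor route only yields one-sided bounds on $\int h_s\lambda_s\E[|\sigma^\top\nabla\psi_s|^2]\,\d s$ for nonnegative $h$ vanishing at the endpoints of an interval; you give no argument converting such one-sided weighted inequalities into an $L^1$ bound on the \emph{signed} difference $\lambda^\varepsilon-\lambda^0$.

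Two further points. First, your explanation of the $\varepsilon^{1/4}$ rate and of the finitely-many-flat-intervals hypothesis is off: the rate does not come from ``converting an $L^2$ control of the active-set overlap into $L^1$'', but from Pinsker ($\mathrm{TV}^{1/2}\sim\varepsilon^{1/4}$ in the Cauchy--Schwarz steps of Lemma \ref{lem:L2stab}-(iii)) and from the optimisation in Lemma \ref{lem:locMax}, which bounds $\frac{\d}{\d s}\Psi(\mu^\varepsilon_s)$ at extrema of $\Psi(\mu^0_s)$ by $C\varepsilon^{1/4}$; the finiteness hypothesis is used because the excursion argument (Steps 2--3 of Proposition \ref{pro:L1estiamte}) pays an error $O(\varepsilon^{1/4})$ \emph{per} component of $\{\Psi(\mu^0_t)=0\}$, so one needs finitely many of them, not a measure bound on a symmetric difference of active sets. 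Second, the endpoint atoms $\lambda^\varepsilon_0,\lambda^\varepsilon_T$ are genuinely part of statement (ii) and your proposal does not treat them; the paper handles $t=0$ via Lemma \ref{lem:timeReg2} between $0$ and an interior extremum (or $T$), with an extra integration by parts producing $\nabla\log\nu_0$ terms controlled by \ref{ass:ini2}, and recovers $\lambda^\varepsilon_T$ from the $C^1$ regularity of $t\mapsto\Psi(\mu^\varepsilon_t)$ once the $L^1$ and atom-at-$0$ bounds are known. Finally, (iii) is then obtained non-circularly by a Feynman--Kac representation of $\varphi^\varepsilon-\varphi^0$ and the Priola--Wang gradient estimate, with the already-proved (ii) feeding the source term — which is close to your linear-equation idea, but only works once (ii) is available independently.
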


We notice that the l.h.s. of \ref{itempap2:stab} corresponds to the total variation distance between the multipliers, seen as measures over $[0,T]$. 
The Gaussian bound \ref{ass:ini2}-\ref{item:iniT1} is only used to simplify the convergence rate (otherwise, additional $\log \varepsilon$ factors would have been needed).
Similarly, the assumption on the finite number of intervals can be removed if we deteriorate the rate, see Remark \ref{rem:InfiniteNumber} below.

\begin{rem}[$L^\infty$-stability]
From the proof of Proposition \ref{pro:L1estiamte}, the $L^1$-estimate on $\lambda^\varepsilon$ is actually a $L^\infty$-one on intervals where $\Psi ( \mu^0_t) = 0$ and $\Psi ( \mu^\varepsilon_t) = \varepsilon$ simultaneously.
We cannot expect a global $L^\infty$-estimate, since moving the constraint can produce strong discontinuities for $\lambda^\varepsilon_t$ at the boundary of $\{ t \in [0,T] \, , \, \Psi (\mu^\varepsilon_t) = \varepsilon \}$.
However, under stronger assumptions on coefficients, Lemma \ref{lem:timeReg2} can provide continuity for $\lambda^\varepsilon_t$ within intervals where $\Psi ( \mu^\varepsilon_t) = \varepsilon$ identically. 
\end{rem}

The proofs are organised as follows. 
Section \ref{sec:Bounds} proves bounds on the multiplier in a linearised setting with smooth coefficients.
However, we precisely keep track of regularity constants along the estimates.
The main results are then proved in Section \ref{sec:proofs}.
The proof of Theorem \ref{thm:opti} essentially relies on results from \cite{ConstrainedSchrodinger}. 
To cope with the delicate regularity assumptions of Theorem \ref{thm:density}, a careful smoothing procedure is detailed in Section \ref{subsec:density}. 
The non-linear problem \eqref{eq:mfMinProb} is first approximated by smooth non-linear problems. 
These smooth problems are then linearised to enter the scope of Section \ref{sec:Bounds}.
Theorem \ref{thm:density} is obtained by taking the limit in the bounds therein.
Starting from Theorems \ref{thm:opti}-\ref{thm:density}, Theorem \ref{thm:stability} is proved in an independent way in Section \ref{subsec:stability}.

\section{Regularity bounds on the multiplier} \label{sec:Bounds}

This section is concerned with the strictly convex minimisation problem
\begin{equation} \label{eq:SmoothProb}
\inf_{\substack{\mu_{[0,T]} \in \ps_1 (C([0,T],\R^d)) \\ \forall t \in [0,T], \; \int_{\R^d} \psi_t \d \mu_t \leq 0}} H( \mu_{[0,T]} \vert \nu_{[0,T]} ) + \int_0^T \int_{\R^d} c_t \d \mu_t \d t,  
\end{equation}
where $\nu_{[0,T]}$ is the path-law of the pathwise unique strong solution to
\[ \d X_t = b_t (X_t ) \d t + \sigma_t (X_t) \d B_t, \quad  X_0 \sim \nu_0, \] 
under the following assumption.

\begin{assumptionV}[Smooth setting] \label{ass:SmoothSett} $\phantom{a}$
\begin{enumerate}[label=(\roman*),ref=(\roman*)]
\item\label{it:SmoothEllip} $\nu_0$ satisfies \ref{ass:ini1}, and $\sigma$ satisfies \ref{ass:coefReg1}-\ref{item:sigmabound}-\ref{item:sigmaellip}.
\item\label{it:SmoothDx} $b_t$, $\sigma_t$, $c_t$, $\psi_t$ are $C^\infty$ function of $x$, with bounded derivatives of all orders independently of $t$.
Moreover, all  derivatives of $b_t$, $\sigma_t$, $\psi_t$ are locally Hölder-continuous in $(t,x)$. 
\item\label{it:SmoothDt} $\partial_t b_t$, $\partial_t \sigma_t$ are well-defined, $C^\infty$ and bounded independently of $t$, as well as of all their derivatives. 
\item\label{it:SmoothPsi} There exists a minimiser $\overline{\mu}_{[0,T]}$ for \eqref{eq:SmoothProb}, which is necessarily unique by strict convexity of the relative entropy. 
Moreover, $\psi$ satisfies
\begin{equation} \label{eq:asspartialpsi}
\forall t \in [0,T], \quad \int_{\R^d} \partial_t \psi_t \d \overline\mu_t = 0. 
\end{equation}
and constraint qualification holds at $\overline{\mu}_{[0,T]}$: there exists $\tilde\varepsilon >0, \tilde\mu_{[0,T]} \in \ps_1(C([0,T],\R^d))$ with $H(\tilde\mu_{[0,T]} \vert \nu_{[0,T]}) < +\infty$ such that 
\begin{equation} \label{eq:smoothQualif}
\forall t \in [0,T] \quad \int_{\R^d} \psi_t \d \overline\mu_t + \tilde\varepsilon \int_{\R^d} \psi_t \d ( \tilde\mu_t - \overline\mu_t) < 0. 
\end{equation}
\end{enumerate}
\end{assumptionV}

This setting satisfies the assumptions of Theorem \cite[Theorem 2.12]{ConstrainedSchrodinger} with $E = C([0,T],\R^d)$, $\phi(x_{[0,T]})=\sup_{t \in [0,T]} \vert x_t \vert$, $\mathcal{F}(\mu_{[0,T]}) = \int_0^T \int_{\R^d} c_t \d \mu_t \d t$, the inequality constraints $\Psi_t : \mu_{[0,T]} \mapsto \int_{\R^d} \psi_t \d \mu_t$ with $\mathcal{T} = [0,T]$, and no equality constraint.
Indeed, ${c}_t$ has linear growth uniformly in $t$, implying that $x_{[0,T]} \mapsto \int_0^T {c}_t (x_t)$ has linear growth, and thus 
\cite[Assumptions (A6)-(A8)]{ConstrainedSchrodinger} are satisfied.
Moreover, \cite[Assumptions (A7)]{ConstrainedSchrodinger} is satisfied using \ref{ass:SmoothSett} and \cite[Remark 2.14]{ConstrainedSchrodinger}.
Since \eqref{eq:smoothQualif} provides the qualification condition, we obtain a Lagrange multiplier $\overline{\lambda} \in \M_+([0,T])$ such that 
\begin{equation} \label{eq:LinDensity}
\frac{\d \overline\mu_{[0,T]}}{\d \nu_{[0,T]}} ( x_{[0,T]} ) = \overline{Z}^{-1}\exp \bigg[ -\int_0^T c_t (x_t) \d t - \int_{0}^T \psi_t (x_t) \overline\lambda (\d t) \bigg], 
\end{equation} 
with the complementary slackness condition
\begin{equation} \label{eq:PAP2Compsl}
\int_{\R^d} \psi_t \d \mu_t = 0, \quad \text{for } \overline{\lambda}\text{-a.e. } t \in [0,T].
\end{equation}
This section aims to show that the restriction of $\overline\lambda$ to $(0,T)$ has a bounded density w.r.t. the Lebesgue measure.
Up to changing $\nu_0 ( \d x)$ into $Z^{-1}_0 e^{-\overline{\lambda}(\{ 0 \}) \psi_0 (x)} \nu_0 ( \d x)$, we can assume that $\overline{\lambda}(\{ 0 \}) = 0$.
Indeed, under \ref{ass:coefReg2}, $\nu_0$ still satisfies \ref{ass:ini2} after this change.
In the following, we set $\overline\lambda_T := \overline{\lambda}(\{ T \})$ and we will use the convenient notation $a := \sigma \sigma^\top$.

\subsection{Smoothed multiplier and stochastic control} \label{subsec:smoothlambda}

Let $\Sigma = (\Omega, (\F_t)_{0 \leq t \leq T},\P,(B_t)_{0 \leq t \leq T})$ be a reference probability system as defined in Section \ref{subsec:Notations}. 
For $(t,x) \in [0,T] \times \R^d$, we consider the controlled dynamics 
\begin{equation} \label{eq:relaxcontrolled}
\begin{cases}
\d X^{t,x,\alpha}_s = b_s (X^{t,x,\alpha}_s) \d s + \sigma_s (X^{t,x,\alpha}_s) \alpha_s \d s + \sigma_s ( X^{t,x,\alpha}_s ) \d B_s, \quad t \leq s \leq T,\\
X^{t,x,\alpha}_t = x,
\end{cases}
\end{equation}
where $\alpha = ( \alpha_s )_{t \leq s \leq T}$ is any progressively measurable square-integrable process on $\Sigma$.
Such a process will be called an \emph{open-loop control}.
Following \cite[Section 3, page 9]{budhiraja2012large}, strong existence and pathwise uniqueness for \eqref{eq:relaxcontrolled} is given by the Girsanov transform if $\int_0^T \vert \alpha_t \rvert^2 \d t \leq M$ a.s. for some $M >0$, and still holds otherwise using a localisation argument.
In the following, we will often consider consider \emph{feed-back controls} $\alpha_s = \alpha ( s, X^\alpha_s )$, as it is customary in control theory.
In this case, regularity properties will be needed on the function $\alpha$ to prove strong existence and pathwise uniqueness for the SDE
\begin{equation} \label{eq:feedbackSDE}
\d Y^{t,x,\alpha}_s = b_s (Y^{t,x,\alpha}_s) \d s + \sigma_s (Y^{t,x,\alpha}_s) \alpha (s, Y^{t,x,\alpha}_s) \d s + \sigma_s ( Y^{t,x,\alpha}_s ) \d B_s, \quad t \leq s \leq T,
\end{equation}
with $Y^{t,x,\alpha}_t = x$. 
If well-posedness is granted for this SDE, we will still denote its solution by $X^{t,x,\alpha}$, with a slight abuse of notations, and we may write $\alpha_s ( X^{t,x,\alpha}_s)$ instead of $\alpha (s, X^{t,x,\alpha}_s)$ when it is non-ambiguous.
Furthermore, we will use the infinitesimal generator $L^\alpha_t$ defined on $C^2$ functions $\phi : \R^d \rightarrow \R$ by
\[ L^\alpha_t \phi := b_t \cdot \nabla \phi + \sigma_t \alpha_t \cdot \nabla \phi + \frac{1}{2} \mathrm{Tr}[a_t \nabla^2 \phi ]. \]
Given any $(\mu_0,\lambda) \in \ps_1 ( \R^d) \times \M_+([0,T])$, we define the value functions:
\[ V^\lambda_\Sigma ( t,x ) := \inf_{\alpha} \E \int_t^T \frac{1}{2} \lvert \alpha_s \rvert^2 + c_s (X^{t,x,\alpha}_s) \d s + \int_{[t,T]} \psi_s ( X^{t,x,\alpha}_s) \lambda ( \d s ), \]
\[ V^\lambda_\Sigma ( \mu_0 ) := \inf_{\substack{X_0,\alpha \\ X_0 \sim \mu_0}} \E \int_0^T \frac{1}{2} \lvert \alpha_s \rvert^2 + c_s (X^{0,X_0,\alpha}_s) \d s + \int_{[0,T]} \psi_s ( X^{0,X_0,\alpha}_s) \lambda ( \d s ), \]
where we minimise over open-loop controls $\alpha$, and $\F_0$-adapted $X_0$ with law $\mu_0$ for $V^\lambda_\Sigma$.
We have to keep track of $\Sigma$ here, because the proof of Proposition \ref{pro:perturbound} below will require a change of reference system. 
Throughout this subsection, we fix $(\mu_0,\lambda)$ and we make the following assumption in addition to \ref{ass:SmoothSett}.

\begin{assumptionV} \label{ass:SmoothBound} $\phantom{a}$
\begin{enumerate}[label=(\roman*),ref=(\roman*)]
\item There exists a $C^1$ non-negative function $(\lambda_s)_{0 \leq s \leq T}$ such that $\lambda ( \d s) = \lambda_s \d s + \overline\lambda_T \delta_T (\d s)$.
\item $(t,x) \mapsto c_t(x)$ is locally Hölder-continuous, as well as all its derivatives w.r.t. $x$. 
\end{enumerate}
\end{assumptionV}

Under \ref{ass:SmoothBound} the above control problem is standard.
To solve it, we introduce the HJB equation in $[0,T] \times \R^d$:
\begin{equation} \label{eq:approxHJB}
\begin{cases}
\partial_t \varphi_t + L^0_{t} \varphi_t - \frac{1}{2} \lvert \sigma^\top_t \nabla \varphi_t \rvert^2 + c_t = -\lambda_t \psi_t, \quad 0 \leq t < T, \\
\varphi_T = \overline\lambda_T \psi_T.
\end{cases}
\end{equation}
From \cite[Theorem 2.2]{chaintron2023existence},
\eqref{eq:approxHJB} has a unique $C^{1,2}$ solution $\varphi$, and
\begin{equation} \label{eq:Approxnabla}
\sup_{t \in [0,T] } \lVert \nabla \varphi_t \rVert_\infty \leq C, 
\end{equation} 
for $C > 0$ that only depends on $\lambda([0,T])$ and the uniform norms of $\sigma$, $\sigma^{-1}$, $\nabla b$, $\nabla \sigma$, $\nabla c$, and $\nabla \psi$.
Moreover, $\varphi$ and its derivatives are Hölder-continuous, and
\begin{equation} \label{eq:LinGrowth} 
\forall (t,x) \in [0,T] \times \R^d, \quad \vert \varphi_t ( x ) \vert \leq C [ 1 + \vert x \vert ], 
\end{equation}
using \cite[Lemma 3.1]{chaintron2023existence}.
Since the coefficients are $C^\infty$, $\varphi_t$ is in fact $C^\infty$ from \cite[Corollary 5.6]{ConstrainedSchrodinger}.
We can now introduce the feed-back control $\alpha^0_s := -\sigma^\top_s \nabla \varphi_s$, because strong existence and pathwise-uniqueness holds for the related SDE \eqref{eq:feedbackSDE} using \ref{ass:SmoothSett}, \eqref{eq:Approxnabla} and the fact that $\varphi$ is $C^2$. 

\begin{lemma}[Verification] \label{lem:VerifApprox}
For every $(t,x)$ in $[0,T] \times \R^d$, $V^\lambda_\Sigma (t,x)$ is uniquely realised by the feed-back control $\alpha^0$.
As a consequence, $V^\lambda_\Sigma ( \mu_0 ) = \int_{\R^d} V_\Sigma^\lambda (0,x) \d \mu_0 (x)$ does not depend on $\Sigma$.
\end{lemma}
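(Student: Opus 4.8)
The plan is to prove Lemma \ref{lem:VerifApprox} by a standard verification argument, using the fact that \eqref{eq:approxHJB} has a smooth solution $\varphi$ with the gradient bound \eqref{eq:Approxnabla} and linear growth \eqref{eq:LinGrowth}. First I would fix $(t,x)$ and let $\alpha$ be an arbitrary admissible open-loop control on $\Sigma$ with $\E\int_t^T|\alpha_s|^2\d s<+\infty$ (controls not satisfying this have infinite cost, so there is nothing to prove for them). Applying It\^o's formula to $s\mapsto\varphi_s(X^{t,x,\alpha}_s)$ between $t$ and $T$ and using the PDE \eqref{eq:approxHJB} gives, after rearranging,
\begin{equation*}
\varphi_t(x) = \E\Big[\varphi_T(X^{t,x,\alpha}_T) + \int_t^T \Big( -L^0_s\varphi_s - \sigma_s\alpha_s\cdot\nabla\varphi_s + \tfrac12|\sigma_s^\top\nabla\varphi_s|^2 - c_s - \lambda_s\psi_s\Big)(X^{t,x,\alpha}_s)\,\d s\Big] + \text{(stochastic integral)}.
\end{equation*}
One checks that the local martingale $\int_t^\cdot \nabla\varphi_s(X^{t,x,\alpha}_s)^\top\sigma_s(X^{t,x,\alpha}_s)\d B_s$ is a true martingale: $\nabla\varphi$ is bounded by \eqref{eq:Approxnabla}, $\sigma$ is bounded by \ref{ass:SmoothSett}-\ref{it:SmoothEllip}, and standard moment estimates on \eqref{eq:relaxcontrolled} under a square-integrable control give the requisite integrability, so its expectation vanishes. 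Completing the square in the drift, $-\sigma_s\alpha_s\cdot\nabla\varphi_s + \tfrac12|\sigma_s^\top\nabla\varphi_s|^2 = \tfrac12|\alpha_s|^2 - \tfrac12|\alpha_s + \sigma_s^\top\nabla\varphi_s|^2$, and using $\varphi_T=\overline\lambda_T\psi_T$ together with $\lambda(\d s)=\lambda_s\d s+\overline\lambda_T\delta_T(\d s)$, one obtains
\begin{equation*}
\varphi_t(x) = \E\Big[\int_t^T \tfrac12|\alpha_s|^2 + c_s(X^{t,x,\alpha}_s)\,\d s + \int_{[t,T]}\psi_s(X^{t,x,\alpha}_s)\lambda(\d s)\Big] - \tfrac12\E\int_t^T|\alpha_s+\sigma_s^\top\nabla\varphi_s(X^{t,x,\alpha}_s)|^2\,\d s.
\end{equation*}
Hence $\varphi_t(x)\le$ the cost of $\alpha$, so $\varphi_t(x)\le V^\lambda_\Sigma(t,x)$, with equality and uniqueness of the minimiser forced by the strictly positive quadratic penalty term, which vanishes if and only if $\alpha_s = -\sigma_s^\top\nabla\varphi_s(X^{t,x,\alpha}_s)$ $\d s\otimes\d\P$-a.e.; this is precisely the feed-back control $\alpha^0$, whose associated SDE \eqref{eq:feedbackSDE} is well-posed as noted just before the statement, so the minimiser exists and is unique. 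Thus $V^\lambda_\Sigma(t,x)=\varphi_t(x)$.

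For the statement about $V^\lambda_\Sigma(\mu_0)$, I would condition on $X_0$: for any admissible pair $(X_0,\alpha)$ with $X_0\sim\mu_0$, the flow property of \eqref{eq:relaxcontrolled} and the tower rule give $\E[\text{cost}] = \E\big[\E[\text{cost}\mid X_0=x]\big]\ge \E[V^\lambda_\Sigma(0,X_0)] = \int_{\R^d}\varphi_0\,\d\mu_0$, and the bound is attained by $\alpha^0$, which is independent of the choice of $X_0$. Since $\varphi_0$ is a deterministic function depending only on the PDE data and not on $\Sigma$, it follows that $V^\lambda_\Sigma(\mu_0)=\int_{\R^d}\varphi_0\,\d\mu_0$ is independent of $\Sigma$.

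The main obstacle is the justification that the stochastic integral is a genuine martingale and that It\^o's formula applies up to the terminal time $T$ without boundary issues: this requires combining the global gradient bound \eqref{eq:Approxnabla}, the boundedness and regularity of $\sigma,b$, the smoothness of $\varphi$ (including the Hölder continuity of its derivatives that legitimises It\^o's formula for the time-inhomogeneous $C^{1,2}$ function), and moment estimates for $X^{t,x,\alpha}$ uniform enough to dominate $|\nabla\varphi_s(X^{t,x,\alpha}_s)|^2|\sigma_s(X^{t,x,\alpha}_s)|^2$ in $L^1$; the linear growth \eqref{eq:LinGrowth} of $\varphi$ itself is what allows passing the terminal term through the expectation. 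A minor secondary point is handling controls with only $\E\int_t^T|\alpha_s|^2\d s<\infty$ (rather than a.s.\ bounded) — a localisation/stopping-time argument along the lines of \cite[Section 3]{budhiraja2012large}, followed by Fatou, closes this gap.
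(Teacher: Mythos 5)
Your proposal is correct and follows essentially the same route as the paper's proof: It\^o's formula applied to $\varphi_s(X^{t,x,\alpha}_s)$, substitution of the HJB equation \eqref{eq:approxHJB}, the martingale property of the stochastic integral from the uniform bounds on $\sigma$ and $\nabla\varphi$, and the pointwise inequality (your completed square) that is saturated exactly when $\alpha_s=-\sigma_s^\top\nabla\varphi_s(X^{t,x,\alpha}_s)$, i.e. the feed-back $\alpha^0$. The only cosmetic differences are that the paper does not need your localisation step (boundedness of $\nabla\varphi$ and $\sigma$ already makes the stochastic integral a true martingale for any square-integrable control) and treats the passage to $V^\lambda_\Sigma(\mu_0)$ as an immediate consequence rather than via an explicit conditioning argument.
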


This result is classical in control theory, see e.g. \cite[Chapter III.8, Theorem 8.1]{fleming2006controlled}, but we still provide the proof for the sake of completeness.

\begin{proof}
Let $\alpha = ( \alpha_s )_{t \leq s \leq T}$ be an open-loop control.
The regularity of $\varphi$ allows us to apply Ito's formula:
\begin{multline*}
\varphi_T (X^{t,x,\alpha}_{T}) - \varphi_t (x) = \int_t^{T} (\partial_s + L^0_{s}) \varphi_s (X^{t,x,\alpha}_s) + \sigma_s ( X^{t,x,\alpha}_s ) \alpha_s \cdot \nabla \varphi_s ( X^{t,x,\alpha}_s ) \d s
\\
+ \int_t^{T} \nabla \varphi_s\cdot \sigma_s (X^{t,x,\alpha}_s) \d B_s . 
\end{multline*} 
The stochastic integral is a true martingale because of the uniform bounds on $\sigma$ and $\nabla \varphi$.
The HJB equation  \eqref{eq:approxHJB} yields 
\begin{multline*} 
\varphi_t (x) = \int_t^{T} -\alpha_s \cdot \sigma^\top_s\nabla \varphi_s (X^{t,x,\alpha}_s) - \frac{1}{2} \lvert \sigma^\top_s \nabla \varphi_s (X^{t,x,\alpha}_s) \rvert^2 + c_s (X^{t,x,\alpha}_s) + \psi_s (X^{t,x,\alpha}_s) \lambda_s \d s \\
- \int_t^T \nabla \varphi_s \cdot \sigma_s (X^{t,x,\alpha}_s) \d B_s + \overline\lambda_T \psi_T (X^{t,x,\alpha}_T). 
\end{multline*} 
Moreover, 
\begin{equation*}
\int_t^T - \alpha_s \cdot \sigma^\top_s\nabla \varphi_s ( X^{t,x,\alpha}_s ) - \frac{1}{2} \lvert \sigma^\top_s \nabla \varphi_s (X^{t,x,\alpha}_s) \rvert^2 \d s
\leq \int_t^{T} \frac{1}{2} \lvert \alpha_s \rvert^2 \d s \quad \text{a.s},
\end{equation*} 
with equality if and only if $\alpha_s = - a_s \nabla \varphi_s (X^{t,x,\alpha}_s)$ for a.e. $s \in [t,T]$, so that taking expectations, 
\[ \varphi_t (x) \leq \E \, \int_t^T \frac{1}{2} \lvert \alpha_s \rvert^2 + c_s (X^{t,x,\alpha}_s ) + \psi_s (X^{t,x,\alpha}_s) \lambda_s \d s + \overline\lambda_T \psi_T (X^{t,x,\alpha}_T), \]
with equality if and only if $\alpha_s = - a_s \nabla \varphi_s (X^{t,x,\alpha}_s), \; \d s \otimes \P$-a.s.
\end{proof}

We now prove further regularity bounds.
For $R \geq 0$, let $B(0,R)$ denote the centred ball of $\R^d$ with radius $R$.
For technical reasons, we need to introduce the matrix
\[ a^R_t (x) := \rho^R ( x ) a_t (x) + [ 1 - \rho^R (x) ] \mathrm{Id}, \]
where $(\rho^R)_{R \geq 0}$ is any family of smooth functions $\R^d \rightarrow [0,1]$ such that $\rho^R \vert _{B(0,R)} \equiv 1$ and $\rho^R \vert_{B^c (0,R+1)} \equiv 0$, the derivatives of $\rho^R$ being globally bounded uniformly in $R$.  
Let $\sigma^R_t$ denote the positive definite square-root of $a^R_t$.
Let $\varphi^R_t$ denote the solution of \eqref{eq:approxHJB} when replacing $\sigma$ by $\sigma^R$. 
We notice that $\sigma^R$ and $(\sigma^{R})^{-1}$ satisfy the same bounds as $\sigma$ and $\sigma^{-1}$, for regularity constants that are independent of $R$.
As a consequence, $\nabla \varphi^R$ satisfies the bound \eqref{eq:Approxnabla}-\eqref{eq:LinGrowth} independently of $R$.
Let us consider the feed-back control $\alpha^R_t := - ( \sigma^R_t )^\top \nabla \varphi^R_t$, the related SDE \eqref{eq:feedbackSDE} having a pathwise unique strong solution because $\varphi^R$ is $C^2$ with bounded $\nabla \varphi^R$. 
Since $\varphi$ is the classical solution of \eqref{eq:approxHJB}, it is also the mild solution of \eqref{eq:approxHJB} in the sense of Definition \ref{def:mildForm}, which is given by \cite[Theorem 3.6]{ConstrainedSchrodinger}.
As a consequence of \cite[Lemma 5.3]{ConstrainedSchrodinger}, for Lebesgue-a.e. $t \in [0,T]$,
\begin{equation} \label{eq:CVapproxSig}
\varphi^R_t \xrightarrow[R \rightarrow +\infty]{} \varphi_t, \qquad \nabla \varphi^R_t \xrightarrow[R \rightarrow +\infty]{} \nabla \varphi_t, 
\end{equation}
uniformly on every compact set of $\R^d$, if we show that $( \nabla \varphi^R_t )_{R \geq 0}$ is pre-compact on any compact set.
From the uniform bound on $( \nabla \varphi^R_t )_{R \geq 0}$ and the Arzelà–Ascoli theorem, a sufficient condition for this is $( \nabla^2 \varphi^R_t )_{R \geq 0}$ being uniformly bounded.

To the best of our knowledge the following estimate is new in the stochastic control literature.
Based on \cite[Proposition 3.2]{conforti2022coupling}, its key element is a gradient estimate that is proved in \cite{priola2006gradient} using reflection coupling.  

\begin{proposition}[Hessian bound] \label{pro:hessbound}
The following estimate holds 
\[ \sup_{t \in [0,T]} \lVert \nabla \varphi_t \rVert_{\infty} + \sup_{t \in [0,T]} \lVert \nabla^2 \varphi_t \rVert_{\infty} \leq C, \]
for a constant $C$ that only depends on coefficients through $\lambda([0,T])$ and the uniform norms of $\sigma$, $\sigma^{-1}$, $\nabla b$, $\nabla \sigma$, $\nabla c$, $\nabla \psi_s$ and $\nabla^2 \psi_s$.
\end{proposition}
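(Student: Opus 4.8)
The plan is to obtain the Hessian bound by a two-step argument: first prove a uniform-in-$R$ bound on $\nabla^2 \varphi^R_t$ for the regularised problems (where $a^R_t = \mathrm{Id}$ outside a ball, so everything is smooth and all manipulations are licit), and then pass to the limit $R\to\infty$ using \eqref{eq:CVapproxSig}, which — as noted right before the statement — holds precisely once we know $(\nabla^2\varphi^R_t)_{R\geq 0}$ is locally uniformly bounded, so the Arzelà–Ascoli argument closes. Since the gradient bound \eqref{eq:Approxnabla} is already available uniformly in $R$, only the Hessian needs new work, and the limiting bound will depend on coefficients only through the quantities listed (note the appearance of $\nabla^2\psi$, which is the new ingredient compared to \eqref{eq:Approxnabla}).

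For the $R$-uniform Hessian bound I would use the stochastic-control / coupling representation of $\nabla^2\varphi^R$ rather than a direct PDE Bernstein estimate, following \cite[Proposition 3.2]{conforti2022coupling}. The point is that $\varphi^R_t(x) = V^\lambda_\Sigma(t,x)$ is, by Lemma \ref{lem:VerifApprox}, the value function of the control problem, optimally driven by the feedback $\alpha^R = -(\sigma^R)^\top\nabla\varphi^R$; differentiating the Feynman–Kac/stochastic representation twice in $x$ expresses $\nabla^2\varphi^R_t(x)$ in terms of first and second derivatives in $x$ of the optimal flow $s\mapsto X^{t,x,\alpha^R}_s$ against the data $c$, $\psi_s$ and the (smooth, $C^1$) multiplier density $\lambda_s$ together with the terminal term $\overline\lambda_T\psi_T$. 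The derivatives of the flow solve linear (variational) SDEs whose coefficients are controlled by $\nabla b$, $\nabla\sigma^R$, $\nabla\alpha^R$, $\nabla^2 b$, $\nabla^2\sigma^R$, $\nabla^2\alpha^R$; the dangerous terms are those involving $\nabla\alpha^R = -(\sigma^R)^\top\nabla^2\varphi^R + (\text{lower order})$ and $\nabla^2\alpha^R$, i.e. $\nabla^2\varphi^R$ and $\nabla^3\varphi^R$ reappear on the right-hand side. Here is where the reflection-coupling gradient estimate of \cite{priola2006gradient} enters: it gives a contraction-type bound on the derivative of the semigroup generated by the optimal (possibly non-coercive, but with bounded-derivative drift) diffusion, allowing one to bound $\nabla^2\varphi^R$ in terms of the $C^1$-in-$x$ norms of the costs without ever needing an a priori bound on $\nabla^3\varphi^R$ — the gradient estimate is applied to the already-bounded gradient $\nabla\varphi^R$, effectively trading one derivative of $\varphi^R$ for one derivative of the coefficients.

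Concretely the steps are: (1) record that $\varphi^R$ is $C^\infty$ with \eqref{eq:Approxnabla}–\eqref{eq:LinGrowth} uniformly in $R$, and that the optimal process $X^{t,x,\alpha^R}$ has a drift with $x$-derivatives bounded uniformly in $R$ (because $\sigma^R,(\sigma^R)^{-1}$ obey the same bounds as $\sigma,\sigma^{-1}$ and $\nabla\varphi^R$ is uniformly bounded); (2) write the probabilistic representation $\varphi^R_t(x) = \E[\int_t^T (\tfrac12|\alpha^R_s|^2 + c_s + \lambda_s\psi_s)(X^{t,x,\alpha^R}_s)\,\d s + \overline\lambda_T\psi_T(X^{t,x,\alpha^R}_T)]$ from Lemma \ref{lem:VerifApprox}, and use the envelope/optimality property so that differentiating in $x$ only hits the explicit dependence through the flow, not through $\alpha^R$ itself at first order; (3) differentiate once to re-derive \eqref{eq:Approxnabla} in this language, then differentiate a second time and invoke the Priola-type reflection-coupling gradient estimate on the optimal semigroup to absorb the term carrying $\nabla^2\varphi^R$, obtaining $\sup_t\|\nabla^2\varphi^R_t\|_\infty \leq C$ with $C$ independent of $R$ and depending on coefficients only through $\lambda([0,T])$ and the uniform norms of $\sigma,\sigma^{-1},\nabla b,\nabla\sigma,\nabla c,\nabla\psi_s,\nabla^2\psi_s$; (4) conclude via \eqref{eq:CVapproxSig} and lower semicontinuity of the sup-norm under locally uniform convergence that the same bound holds for $\nabla^2\varphi_t$, and combine with \eqref{eq:Approxnabla} for $\nabla\varphi_t$.

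The main obstacle is step (3): making the second differentiation of the stochastic representation rigorous while keeping all constants independent of $R$, and in particular organising the terms so that the reflection-coupling estimate of \cite{priola2006gradient} can be applied to close the loop without an a priori $\nabla^3\varphi^R$ bound. The subtlety is that the running cost is only $L^1$ (indeed merely $C^1$) in time through $\lambda_s$, so one must be careful that the time-integrated estimate only uses $\lambda([0,T]) = \int_0^T\lambda_s\,\d s + \overline\lambda_T$ and not any time-regularity of $\lambda_s$; the coupling approach is well-suited to this because it produces bounds that are integrals in time of the data against a contraction kernel. A secondary technical point is the non-coercivity of the optimal drift, which is exactly why reflection coupling (rather than synchronous coupling) is needed, and why the hypotheses have been arranged so that $\nabla b,\nabla\sigma,\nabla\varphi^R$ are all bounded.
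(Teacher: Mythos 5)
Your overall architecture (uniform-in-$R$ Hessian bound for $\varphi^R$, then pass to the limit via \eqref{eq:CVapproxSig}) matches the paper, but the core of step (3) has a genuine gap. Differentiating the stochastic representation of $\varphi^R$ twice in $x$ forces you through the first and second variations of the flow $x \mapsto X^{t,x,\alpha^R}_s$, and these variational SDEs have coefficients containing $\nabla(\sigma^R\alpha^R) \sim \nabla^2\varphi^R$ and $\nabla^2(\sigma^R\alpha^R) \sim \nabla^3\varphi^R$ — exactly the quantities you do not control. The first variation is then only bounded by $\exp\big(C\int_t^T \lVert \nabla^2\varphi^R_s\rVert_\infty\,\d s\big)$, i.e.\ the unknown sits in an exponential, and no Gronwall argument closes; invoking the reflection-coupling estimate ``to absorb the term carrying $\nabla^2\varphi^R$'' does not repair this, because the whole point of \cite{priola2006gradient} is that its constant is insensitive to the Lipschitz norm of the drift, which is only exploitable if you never differentiate the flow at all. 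The paper's device is different and is the missing idea: differentiate the HJB equation \emph{once} to get the PDE \eqref{eq:DiffHJB} for $\partial_i\varphi^R$, apply It\^o to $\partial_i\varphi^R_s(X^{t,x,\alpha^R}_s)$ to obtain the Feynman--Kac identity \eqref{eq:ItoGrad}, and then bound $\lvert\partial_i\varphi^R_t(x)-\partial_i\varphi^R_t(y)\rvert$ by applying the Priola--Wang estimate \eqref{eq:PriolaWang} to differences of expectations of \emph{bounded} integrands (the $\nabla^2\varphi^R$ term only enters through $\partial_i a^R\nabla^2\varphi^R$ as a sup-norm inside the $(s-t)^{-1/2}$ kernel), so that a singular Gronwall lemma closes in $\lVert\nabla^2\varphi^R_t\rVert_\infty$ without any flow derivatives or third derivatives.

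A second, quantitative gap concerns the dependence on $\lambda$. Your claim that the coupling kernel automatically yields a bound through $\lambda([0,T])$ alone is false: running the term $\lambda_s\,\partial_i\psi_s$ through the estimate \eqref{eq:PriolaWang} produces $\int_t^T \lambda_s\,(s-t)^{-1/2}\,\d s$, which is not controlled by $\lambda([0,T])$ (this is precisely why Lemma \ref{lem:thirdorder} later carries a $\sup_s\lambda_s$ term). The paper removes this obstruction by a second It\^o pass along the \emph{uncontrolled} process $X^{t,x,0}$, writing $L^{\alpha^R}_t = L^0_t + \sigma^R_t\alpha^R_t\cdot\nabla$, and treating the multiplier term by synchronous coupling, which gives $\lvert \E[\partial_i\psi_s(X^{t,x,0}_s)]-\E[\partial_i\psi_s(X^{t,y,0}_s)]\rvert \leq C\lVert\nabla\partial_i\psi_s\rVert_\infty\lvert x-y\rvert$ with no singular kernel, so that only $\lambda([0,T])$ and $\lVert\nabla^2\psi\rVert_\infty$ enter (this is the true reason $\nabla^2\psi$ appears in the constant, not the one you give). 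Your proposal contains neither this splitting nor the preliminary qualitative step showing $\sup_t\lVert\nabla^2\varphi^R_t\rVert_\infty<\infty$ (needed to legitimise the Gronwall argument), so as written the estimate does not close with the stated dependence on the coefficients.
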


\begin{proof} 
Starting from \eqref{eq:Approxnabla}, we have to prove the Hessian bound. 
Since we do not know that $\nabla^2 \varphi_t$ is bounded beforehand, we first reason on $\varphi^R_t$.
Differentiating the HJB equation, for $1 \leq i \leq d$, 
\begin{equation} \label{eq:DiffHJB}
(\partial_t + L^{\alpha^R}_t) \partial_i \varphi^R_t = - \partial_i b_t \cdot \nabla \varphi^R_t + \frac{1}{2} \nabla \varphi^R_t \cdot \partial_i a^R_t \nabla \varphi^R_t - \frac{1}{2}\mathrm{Tr}[ \partial_i a^R_t \cdot \nabla^2 \varphi^R_t] - \partial_i c_t - \lambda_t \partial_i \psi_t. 
\end{equation} 
With a slight abuse, we still write $L^{\alpha^R}_t$ for the operator where $\sigma_t$ has been replaced by $\sigma^R_t$.
Similarly, we write $X^{t,x,\alpha^R}$ for the process \eqref{eq:feedbackSDE} controlled by the feed-back $\alpha^R$, where the diffusion matrix has been replaced by $\sigma^R_t$.
From \cite[Theorem 2.2]{chaintron2023existence}, $\varphi^R$ is $C^{1,2}$ with Hölder-continuous derivatives.
Since $\nabla b$, $\nabla c$, $\nabla \sigma^R$ and $\nabla \psi$ are Hölder-continuous, \cite[Chapter 3, Theorem 13]{friedman2008partial} guarantees that $\nabla \varphi^R$ is $C^{1,2}$ with Hölder-continuous derivatives.
Ito's formula applied to $\partial_i \varphi^R_s ( X^{t,x,\alpha^R}_s )$ then yields
\begin{multline} \label{eq:ItoGrad}
\partial_i \varphi^R_t ( x ) = \overline\lambda_T \partial_i \psi_T (X^{t,x,\alpha^R}_T) + \int_t^T \big\{ f_s + \frac{1}{2}\mathrm{Tr}[\partial_i a^R_s \nabla^2 \varphi^R_s] + \lambda_s \partial_i \psi_s \big\} (X^{t,x,\alpha^R}_s) \d s \\
+ \int_t^T ( \sigma^R_s)^\top \nabla \partial_i \varphi^R_s (X^{t,x,\alpha^R}_s) \d B_s,
\end{multline} 
where
\[ f_s := \partial_i b_s \cdot \nabla \varphi^R_s - \frac{1}{2} \nabla \varphi^R_s \cdot \partial_i a^R_s \nabla \varphi^R_s + \partial_i c_s.  \]
Using the uniform bound on $\nabla \varphi^R_s$, $f_s$ is bounded by a constant $C > 0$ independent of $(s,R)$, which satisfies the requirements of Proposition \ref{pro:hessbound}. 
In the following, $C$ may change from line to line, but still satisfying these requirements.
For $y \in \R^d$, we similarly decompose $\partial_i \varphi^R_t ( y )$ and we subtract this expression to  \eqref{eq:ItoGrad}, before taking
expectations to get rid of the martingale.
Since $\alpha^R$ is bounded continuous, we can use the estimate from \cite[Theorem 3.4 (a)]{priola2006gradient}.
Strictly speaking, \cite[Theorem 3.4 (a)]{priola2006gradient} only allows for time-homogeneous coefficients, but
the adaptation of these arguments to the time-dependent setting is straight-forward.
This yields
\begin{equation} \label{eq:PriolaWang}
\forall (x,y) \in \R^d \times \R^d, \quad \lvert \E [ f_s ( X^{t,x,\alpha^R}_s ) ] - \E [ f_s ( X^{t,y,\alpha^R}_s ) ] \rvert \leq \frac{C \vert x - y \vert}{1 \wedge \sqrt{t-s}}. 
\end{equation} 
We similarly handle the other terms to obtain that
\[ \lvert \partial_i \varphi^R_t (x) - \partial_i \varphi^R_t (y) \rvert \leq C \overline{\lambda}_T \vert x- y \rvert + \int_t^T \frac{C \lvert x- y \rvert}{1 \wedge \sqrt{s-t}} [ 1 + \lVert \partial_i a^R_s \nabla^2 \varphi^R_s \rVert_\infty + \lambda_s ] \d s, \]
where $\lVert \partial_i a^R_s \nabla^2 \varphi^R_s \rVert_\infty$ is finite because $\nabla^2 \varphi^R$ is continuous and $\partial_i a^R_s \equiv 0$ out of $B(0,R+1)$.
Dividing by $\lvert x - y \rvert$ and taking the supremum over $(x,y)$ and $i$, this proves that $\sup_{0 \leq t \leq T} \lVert \nabla^2 \varphi^R_t \rVert_\infty$ is finite.

We now perform the same reasoning, but applying Ito's formula to $\partial_i \varphi^R_s ( X^{t,x,0}_s )$  instead of $\partial_i \varphi^R_s ( X^{t,x,\alpha^R}_s )$, still with $\sigma^R$ instead of $\sigma$ in the definition of $X^{t,x,0}$. 
Using that $L^{\alpha^R}_t = L^0_t + \sigma^R_t \alpha^R_t \cdot \nabla$ in \eqref{eq:DiffHJB}, this yields
\begin{multline*} 
\partial_i \varphi^R_t ( x ) = \int_t^T \big\{ f_s + \sigma^R_s \alpha^R_s \cdot \nabla \partial_i \varphi^R_s + \frac{1}{2}\mathrm{Tr}[\partial_i a^R_s \nabla^2 \varphi^R_s] + \lambda_s \partial_i \psi_s \big\} (X^{t,x,0}_s) \d s \\
+ \overline\lambda_T \partial_i \psi_T (X^{t,x,0}_T) + \int_t^T ( \sigma^R_s )^\top \nabla \partial_i \varphi^R_s (X^{t,x,0}_s) \d B_s,
\end{multline*} 
Using \eqref{eq:PriolaWang} as previously, with $X^{t,x,0}$ instead of $X^{t,x,\alpha^R}$, we obtain
\begin{multline} \label{eq:ItoGrad2}
\lvert \partial_i \varphi^R_t (x) - \partial_i \varphi^R_t (y) \rvert \leq C \overline{\lambda}_T \vert x- y \rvert + \int_t^T \frac{C \lvert x- y \rvert}{1 \wedge \sqrt{s-t}} [ 1 + \lVert \nabla^2 \varphi^R_s \rVert_\infty ] \d s \\
+ \int_t^T \lambda_s 
\vert \E [ \partial_i \psi_s ( X^{t,x,0}_s ) ]  -  \E [ \partial_i \psi_s ( X^{t,y,0}_s ) ] \vert \d s. 
\end{multline}
Using e.g. synchronous coupling, it is standard that    
\[ \vert \E [ \partial_i \psi_s ( X^{t,x,0}_s ) ]  -  \E [ \partial_i \psi_s ( X^{t,y,0}_s ) ] \vert \leq C \lVert \nabla \partial_i \psi_s \rVert_\infty \lvert x -y \vert. \]
The last term in \eqref{eq:ItoGrad2} can then be bounded using that $\lambda ([0,T]) \leq C$. 
We now divide by $\lvert x - y \rvert$, and we take the supremum over $(x,y)$ and $i$ to get that 
\[ \forall t \in [0,T], \quad \lVert \nabla^2 \varphi^R_t \big\rVert_\infty \leq C + \int_t^T \frac{C}{1 \wedge \sqrt{s-t}} [ 1 + \lVert \nabla^2 \varphi^R_s \rVert_\infty ] \d s. \]
Using the Gronwall lemma, we eventually obtain
\[ \sup_{0 \leq t \leq T} \lVert \nabla^2 \varphi^R_t \big\rVert_\infty \leq C \bigg[ 1 + \int_0^T \frac{\d s}{1 \wedge \sqrt{s}} \bigg] \exp \bigg[ \int_0^T \frac{C}{1 \wedge \sqrt{s}} \d s \bigg], \]
where the constant $C$ is independent of $R$ and satisfies the requirements of Proposition \ref{pro:hessbound}.
This uniform
bound on $( \nabla^2 \varphi^R_t )_{R \geq 0}$ implies the convergence \eqref{eq:CVapproxSig}, for Lebesgue-a.e. $t \in [0,T]$.
From the Banach-Alaoglu theorem, $( \nabla^2 \varphi^R_t )_{R \geq 0}$ has a weakly-$\star$ converging sub-sequence as $R \rightarrow + \infty$ for the $\sigma(L^\infty,L^1)$-topology, whose limit still satisfies the above bound.
Since $\nabla^2 \varphi_t$ was already well-defined, this proves that $\nabla^2 \varphi^R_t \rightarrow \nabla^2 \varphi_t$ weakly-$\star$.
In particular, $\nabla^2 \varphi_t$ satisfies the above bound, for Lebesgue-a.e. $t \in [0,T]$. 
Since $t \mapsto \nabla^2 \varphi_t (x)$ is continuous, this concludes the proof.
\end{proof}

In the above proof, the dependence on
$\nabla^2 \psi_s$ for $C$ is needed because we want an estimate that only depends on $\lambda$ through $\lambda([0,T])$.
Otherwise, instead of synchronous coupling, we would use reflection coupling to handle the last term in \eqref{eq:ItoGrad2} as the other ones.
Still relying on the estimate from \cite{priola2006gradient}, the next result is a new third-order bound for HJB equations. 
In contrast with the previous bound, which only depends on $\lambda([0,T])$, the next bound depends on $\sup_{0 \leq t \leq T} \vert \lambda_t \rvert$. 
However, it is crucial for the sequel that this dependence comes through a multiplicative constant that can be taken as small as desired.

\begin{lemma}[Third-order bound] \label{lem:thirdorder}
For every $\varepsilon >0$, there exists $C_\varepsilon >0$ such that
\[ \sup_{t \in [0,T]} \lVert \nabla^3 \varphi_t \rVert_{\infty} \leq C_\varepsilon + \varepsilon \sup_{0 \leq t \leq T} \vert \lambda_t \rvert, \]
for a constant $C_\varepsilon$ that only depends on coefficients through $\lambda ( [0,T] )$ and the uniform norms of $\sigma$, $\sigma^{-1}$, $\nabla b$, $\nabla \sigma$, $\nabla c$, $\nabla \psi_s$, $\nabla^2 b$, $\nabla^2 \sigma$, $\nabla^2 c$ and $\nabla^2 \psi_s$.
\end{lemma}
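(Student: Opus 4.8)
The strategy is to differentiate the HJB equation \eqref{eq:approxHJB} once more and to propagate the already-established first- and second-order bounds (\eqref{eq:Approxnabla} and Proposition~\ref{pro:hessbound}) through an Itô/Feynman–Kac representation, exactly as in the proof of Proposition~\ref{pro:hessbound}, but now for the third derivative. As before I would first work with the regularised coefficients $\sigma^R$, for which $\varphi^R$ is smooth enough (by \cite[Theorem 2.2]{chaintron2023existence} together with \cite[Chapter 3, Theorem 13]{friedman2008partial}, iterated once more) that $\nabla^2 \varphi^R$ and $\nabla^3 \varphi^R$ are $C^{1,2}$ with Hölder derivatives and Itô's formula applies; all bounds will be uniform in $R$, and at the end one passes to the limit $R \to +\infty$ along a weakly-$\star$ convergent subsequence and uses continuity of $t \mapsto \nabla^3 \varphi_t(x)$, precisely as at the end of the proof of Proposition~\ref{pro:hessbound}.

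Concretely, I would differentiate \eqref{eq:DiffHJB} in a second direction $x_j$ to obtain, for $1 \le i,j \le d$, an equation of the form
\[
(\partial_t + L^{\alpha^R}_t)\, \partial^2_{ij}\varphi^R_t = g^R_{ij,t} - \lambda_t\, \partial^2_{ij}\psi_t,
\]
where $g^R_{ij,t}$ is a polynomial expression in $b,\sigma,c$ and their derivatives up to order two, and in $\nabla\varphi^R, \nabla^2\varphi^R$ (the terms $\tfrac12\mathrm{Tr}[\partial^2_{ij}a^R_t \nabla^2\varphi^R_t]$, $\partial_i a^R_t : \nabla \partial_j \varphi^R_t \otimes \nabla\varphi^R_t$, etc.). By \eqref{eq:Approxnabla} and Proposition~\ref{pro:hessbound} all the coefficients multiplying a \emph{third} derivative of $\varphi^R$ in $g^R$ are of the form $\partial_i a^R_t$ or $\partial^2_{ij} a^R_t$, which vanish outside $B(0,R+1)$, so $g^R_{ij,t}$ is a bounded function plus terms controlled by $\nabla^3\varphi^R$. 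Applying Itô's formula to $\partial^2_{ij}\varphi^R_s(X^{t,x,0}_s)$ along the \emph{uncontrolled} flow (with $\sigma^R$), using $L^{\alpha^R}_t = L^0_t + \sigma^R_t\alpha^R_t\cdot\nabla$, subtracting the same expression at $y$, taking expectations, and using the Priola–Wang gradient estimate \cite[Theorem 3.4(a)]{priola2006gradient} (in its straightforward time-dependent extension, as already invoked) to bound $|\E f(X^{t,x,0}_s) - \E f(X^{t,y,0}_s)| \le C|x-y|/(1\wedge\sqrt{s-t})$ for the bounded parts of $g^R$, and ordinary synchronous coupling for the $\lambda_s\,\nabla^2\psi_s$ term and for any remaining Lipschitz-in-$x$ terms, I would arrive at
\[
\lVert \nabla^3\varphi^R_t \rVert_\infty \le C + \varepsilon' \sup_{s}|\lambda_s| + \int_t^T \frac{C}{1\wedge\sqrt{s-t}}\big[1 + \lVert\nabla^3\varphi^R_s\rVert_\infty\big]\,\d s,
\]
after dividing by $|x-y|$ and taking the supremum over $(x,y)$, $i$, $j$. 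A Grönwall argument with the singular-but-integrable kernel $1\wedge (s-t)^{-1/2}$ (exactly as in Proposition~\ref{pro:hessbound}) closes the estimate with a constant of the advertised form, uniform in $R$.

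The point where care is genuinely needed — and the source of the $\varepsilon\sup_s|\lambda_s|$ shape of the bound — is the treatment of the term $\lambda_s\,\nabla^2\psi_s(X^{t,x,0}_s)$: synchronous coupling gives $|\E\partial^2_{ij}\psi_s(X^{t,x,0}_s) - \E\partial^2_{ij}\psi_s(X^{t,y,0}_s)| \le C\lVert\nabla^3\psi_s\rVert_\infty|x-y|$, so this contributes $\int_t^T \lambda_s\,C\,|x-y|\,\d s \le C\lambda([0,T])|x-y|$, which is absorbed into $C_\varepsilon$ with \emph{no} factor of $\sup_s|\lambda_s|$; this is why the statement only needs $\nabla^2\psi_s$ (not $\nabla^3\psi_s$) in the list for $C_\varepsilon$ — wait, one must instead integrate by parts in time / exploit that this term can be handled via the Feynman–Kac representation at the level of $\nabla^2\varphi$ rather than $\nabla^3\varphi$. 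The mechanism producing the small multiplicative constant $\varepsilon$ is the standard one: write $\sup_s|\lambda_s|\int_t^T (1\wedge\sqrt{s-t})^{-1}\,\d s$ and note that, after the Grönwall inversion, the coefficient in front of $\sup_s|\lambda_s|$ is $C\int_0^T(1\wedge\sqrt{s})^{-1}\d s \cdot e^{C\int_0^T(1\wedge\sqrt s)^{-1}\d s}$, which is \emph{not} small — so instead one localises in time: on an interval $[t,T]$ of length $h$ the kernel integral is $O(\sqrt h)$, giving a coefficient $O(\sqrt h)$ in front of $\sup_s|\lambda_s|$, and iterating over $O(T/h)$ subintervals (absorbing the boundary data, which only costs $\overline\lambda_T\lVert\nabla^3\psi_T\rVert_\infty$ into $C_\varepsilon$) one trades a larger additive constant $C_\varepsilon$ for an arbitrarily small multiplicative constant $\varepsilon$ on $\sup_s|\lambda_s|$. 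I expect this time-localisation/iteration — making sure the interface terms between subintervals do not reintroduce $\sup_s|\lambda_s|$ with an $O(1)$ constant — to be the main obstacle and the part requiring the most delicate bookkeeping; everything else is a direct, if lengthy, repetition of the second-order argument.
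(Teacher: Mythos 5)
Your overall skeleton---differentiate \eqref{eq:DiffHJB} once more, represent $\partial_{i,j}\varphi^R$ via It\^o/Duhamel along a diffusion, invoke the Priola--Wang estimate \eqref{eq:PriolaWang} for the bounded source terms, keep all constants uniform in $R$, and pass to the limit exactly as at the end of Proposition \ref{pro:hessbound}---is indeed the paper's route. The genuine gap is exactly where you flag it yourself: the treatment of the term $\lambda_s\,\partial_{i,j}\psi_s$, which is the whole point of the lemma. Your first option, synchronous coupling, costs $\lVert\nabla^3\psi_s\rVert_\infty$, which the statement explicitly excludes from $C_\varepsilon$; you notice the contradiction but only gesture at an unspecified fix (``integrate by parts in time / Feynman--Kac at the level of $\nabla^2\varphi$''). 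Your second option, time-localisation plus iteration over $O(T/h)$ subintervals, does not deliver the claimed smallness: restarting the representation at the right endpoint of each subinterval requires controlling the increment of the terminal datum $\partial_{i,j}\varphi^R_{a+h}$ in $x$, i.e.\ the very quantity $\lVert\nabla^3\varphi^R_{a+h}\rVert_\infty$ being estimated, and propagating it through $T/h$ subintervals multiplies the estimate by factors of order $(1-C\sqrt h\,)^{-T/h}\approx e^{CT/\sqrt h}$; the accumulated coefficient in front of $\sup_s|\lambda_s|$ is then of order $e^{CT/\sqrt h}$, not $O(\sqrt h)$, and cannot be made small by shrinking $h$. (Also, the paper does not need your uncontrolled-flow detour here: it runs the whole third-order argument along the controlled flow $X^{t,x,\alpha^R}$, precisely because a $\sup_s|\lambda_s|$ dependence is now tolerated.)

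The paper's device is simpler and purely local in the time integral: keep $\lambda_s\,\partial_{i,j}\psi_s$ as a bounded source inside the Priola--Wang kernel (so only $\lVert\nabla^2\psi_s\rVert_\infty$ enters), obtaining a contribution $\int_t^T \lambda_s\,(1\wedge\sqrt{s-t}\,)^{-1}\,\d s$, and split this single integral at $s=t+\varepsilon^2$:
\begin{equation*}
\int_t^T \frac{\lambda_s}{1\wedge\sqrt{s-t}}\,\d s \;\leq\; 2\varepsilon \sup_{t\leq s\leq t+\varepsilon^2}\vert\lambda_s\vert \;+\; \frac{1}{\varepsilon}\int_{t+\varepsilon^2}^T \lambda_s\,\d s \;\leq\; 2\varepsilon\,\sup_{0\leq s\leq T}\vert\lambda_s\vert \;+\; \varepsilon^{-1}\lambda([0,T]).
\end{equation*}
The far part goes into $C_\varepsilon$ (it only sees $\lambda([0,T])$), the near part carries the small factor $\varepsilon$. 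One then runs a single Gronwall argument over $[0,T]$ with the kernel $1\wedge(s-t)^{-1/2}$, whose constant is independent of $\lambda$ and $\varepsilon$, so the fixed prefactor it places in front of $\varepsilon\sup_s\vert\lambda_s\vert$ is absorbed by renaming $\varepsilon$. Your write-up is missing this singularity-splitting step, and without it the $C_\varepsilon+\varepsilon\sup_s\vert\lambda_s\vert$ structure is not established.
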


\begin{proof}
As in the proof of Proposition \ref{pro:hessbound}, we first reason on $\varphi^R$.
For $1 \leq i,j \leq d$, we differentiate \eqref{eq:DiffHJB} and we use the bounds that were proved on $\varphi^R$ to get as previously that 
\begin{equation*} 
(\partial_t + L^{\alpha^R}_t) \partial_{i,j} \varphi^R_t = - f_t - \frac{1}{2}\mathrm{Tr}[ \partial_i a^R_t \partial_j \nabla^2 \varphi^R_t] - \lambda_t \partial_{i,j} \psi_t, 
\end{equation*} 
for some measurable $f : [0,T] \times \R^d \rightarrow \R$ that is bounded by a constant $C > 0$ independent of $R$, which satisfies the requirements of Lemma \ref{lem:thirdorder}.
In the following, $C$ may change from line to line, but still satisfying these requirements.
Since $\nabla^2 b$, $\nabla^2 c$, $\nabla^2 \sigma^R$ and $\nabla^2 \psi$ are Hölder-continuous, \cite[Chapter 3, Theorem 13]{friedman2008partial} guarantees as previously that $\nabla^2 \varphi^R$ is $C^{1,2}$ with Hölder-continuous derivatives.
Ito's formula applied to $\partial_{i,j} \varphi^R_s ( X^{t,x,\alpha^R}_s )$ then yields
\begin{multline*} \label{eq:ItoHess}
\partial_{i,j} \varphi^R_t ( x ) = \overline\lambda_T \partial_{i,j} \psi_T (X^{t,x,\alpha^R}_T) + \int_t^T \big\{ f_s + \frac{1}{2}\mathrm{Tr}[\partial_i a^R_s \partial_j \nabla^2 \varphi^R_s] + \lambda_s \partial_{i,j} \psi_s \big\} (X^{t,x,\alpha^R}_s) \d s \\
+ \int_t^T ( \sigma^R_s )^\top \nabla \partial_{i,j} \varphi^R_s (X^{t,x,\alpha^R}_s) \d B_s.
\end{multline*} 
As previously, we subtract $\partial_{i,j} \varphi^R_t ( y )$, we take expectations, and we use \eqref{eq:PriolaWang} to get that 
\[ \lvert \partial_{i,j} \varphi^R_t (x) - \partial_{i,j} \varphi^R_t (y) \rvert \leq C \vert x - y \rvert + \int_t^T \frac{C \lvert x- y \rvert}{1 \wedge \sqrt{s-t}} \big[ 1 + \lVert \partial_i a^R_s \nabla^3 \varphi^R_s \rVert_\infty + \lambda_s \big] \d s, \]
where $\lVert \partial_i a^R_s \nabla^3 \varphi^R_s \rVert_\infty$ is finite because $\nabla^3 \varphi^R$ is continuous and $\partial_i a^R_s \equiv 0$ out of $B(0,R+1)$.
Dividing by $\lvert x - y \rvert$ and taking the supremum over $(x,y)$ and $(i,j)$, this proves that $\sup_{0 \leq t \leq T} \lVert \nabla^3 \varphi^R_t \rVert_\infty$ is finite.
For any $\varepsilon \in (0,1)$, we split the last integral between $t$ and $t + \varepsilon^2$:
\[ \int_t^T \frac{\lambda_s}{\sqrt{t-s}} \d s \leq 2 \varepsilon \sup_{t \leq s \leq t + \varepsilon^2} \vert \lambda_s \vert + \frac{1}{\varepsilon} \int_{t + \varepsilon^2}^T \lambda_s \d s, \]
so that
\[ \forall t \in [0,T], \quad \lVert \nabla^3 \varphi^R_t \big\rVert_\infty \leq C + \int_t^T \frac{C}{1 \wedge \sqrt{s-t}} \big[ \varepsilon^{-1} + \lVert \nabla^3 \varphi^R_s \rVert_\infty + \varepsilon \sup_{0 \leq r \leq T} \vert \lambda_r \rvert \big] \d s. \]
Using the Gronwall lemma, we eventually obtain
\[ \sup_{0 \leq t \leq T} \lVert \nabla^3 \varphi^R_t \big\rVert_\infty \leq C \big[ \varepsilon^{-1} + \varepsilon \sup_{0 \leq t \leq T} \vert \lambda_t \vert \big] \bigg[ 1 + \int_0^T \frac{\d s}{1 \wedge \sqrt{s}} \bigg] \exp \bigg[ \int_0^T \frac{C}{1 \wedge \sqrt{s}} \d s \bigg],\]
where the constant $C$ is independent of $R$ and satisfies the requirements of Lemma \ref{lem:thirdorder}.
We conclude the proof using the convergence \eqref{eq:CVapproxSig} as previously.
\end{proof}

We fix $X_0 \sim \overline\mu_0$, and we shall write $X^0 := X^{0,X_0,\alpha^0}$.
Since $\overline\mu_0 (\d x ) = \overline{Z}^{-1} e^{-\overline\varphi_0(x)} \nu_0 ( \d x)$ from \cite[Theorem 3.6]{ConstrainedSchrodinger}, and $\overline{\varphi}_0$ has linear growth from \eqref{eq:LinGrowth}, \ref{ass:ini1} implies the following useful result.

\begin{lemma}[Initial bound] \label{lem:InitalEntrop}
If any of the bounds \ref{ass:ini2}-\ref{item:inih1}-\ref{item:inih2}-\ref{item:iniT1} is satisfied by $\nu_0$ for some ${\varepsilon}_0 > 0$, then the same property holds for $\overline{\mu}_0$ with $\varepsilon_0 = 0$, for a constant $C > 0$ that satisfies the same requirements as the one in \eqref{eq:LinGrowth}. 
\end{lemma}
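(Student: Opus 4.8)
The plan is to transfer each of the three integrability/growth properties \ref{item:inih1}, \ref{item:inih2}, \ref{item:iniT1} from $\nu_0$ to $\overline\mu_0$ by exploiting the explicit relation $\overline\mu_0(\d x)=\overline Z^{-1}e^{-\overline\varphi_0(x)}\nu_0(\d x)$ together with the linear growth bound $\vert\overline\varphi_0(x)\vert\le C[1+\vert x\vert]$ from \eqref{eq:LinGrowth}. The point is that multiplying $\nu_0$ by a density that is bounded above and below by $\exp(\pm C[1+\vert x\vert])$ does not destroy any of these properties; moreover, the additive correction one picks up is always controlled by the Gaussian/exponential moment assumption \ref{ass:ini1}. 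Since the constant $C$ in \eqref{eq:LinGrowth} only depends on $\lambda([0,T])$ and the uniform norms of $\sigma,\sigma^{-1},\nabla b,\nabla\sigma,\nabla c,\nabla\psi$, the output constant inherits exactly the same dependence, as claimed.

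First I would treat the exponential moments. For \ref{item:iniT1}, write $\int e^{\varepsilon\vert x\vert^2}\overline\mu_0(\d x)=\overline Z^{-1}\int e^{\varepsilon\vert x\vert^2-\overline\varphi_0(x)}\nu_0(\d x)\le \overline Z^{-1}e^{C}\int e^{\varepsilon\vert x\vert^2+C\vert x\vert}\nu_0(\d x)$, and absorb the linear term into the quadratic one at the cost of shrinking $\varepsilon$ slightly (e.g. $C\vert x\vert\le \tfrac{\varepsilon}{2}\vert x\vert^2+\tfrac{C^2}{2\varepsilon}$), so the bound follows from \ref{ass:ini2}-\ref{item:iniT1} for $\nu_0$ with a possibly smaller $\varepsilon_0$; one also needs $\overline Z^{-1}\le e^{C}$ which follows from $\overline Z=\int e^{-\overline\varphi_0}\d\nu_0\ge e^{-C}$ by Jensen or the lower linear bound. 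The same computation with $\varepsilon_0=0$ shows $x\mapsto e^{\alpha\vert x\vert}$ is $\overline\mu_0$-integrable for all $\alpha$, which is \ref{ass:ini1} for $\overline\mu_0$.

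Next I would handle the weighted derivative integrals \ref{item:inih1} and \ref{item:inih2}. The key observation is that $\log\overline\mu_0=\log\nu_0-\overline\varphi_0-\log\overline Z$, hence $\nabla\log\overline\mu_0=\nabla\log\nu_0-\nabla\overline\varphi_0$ and $\nabla^2\log\overline\mu_0=\nabla^2\log\nu_0-\nabla^2\overline\varphi_0$; but we do not even need bounds on $\nabla\overline\varphi_0$, $\nabla^2\overline\varphi_0$ here, since Lemma \ref{lem:InitalEntrop} only asks for the integrals involving $\log\nu_0$, $\nabla\log\nu_0$, $\nabla^2\log\nu_0$ to persist when integrating against $\overline\mu_0$ in place of $\nu_0$ — wait, it asks the same property to hold for $\overline\mu_0$, i.e. with $\log\overline\mu_0$, $\nabla\log\overline\mu_0$, $\nabla^2\log\overline\mu_0$ and the measure $\overline\mu_0$. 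So I must control both the change of measure and the change of log-density. For the measure change, $\int f\,\d\overline\mu_0\le e^{C}\overline Z^{-1}\int f e^{C\vert x\vert}\,\d\nu_0$ for $f\ge0$, and by Young's inequality any polynomial-in-$(\log\nu_0,\nabla\log\nu_0,\nabla^2\log\nu_0)$ times $e^{C\vert x\vert}$ is dominated by (a constant times) the same polynomial times $e^{\varepsilon_0\vert x\vert^2/2}$ plus an exponential-moment term, so using \ref{ass:ini2}-\ref{item:iniT1} together with the assumed bound gives finiteness; note $(\log\nu_0)^{1+\varepsilon_0}$ may be negative, so one splits $\log\nu_0=(\log\nu_0)_+-(\log\nu_0)_-$ and uses that $(\log\nu_0)_-$ is controlled by $\vert x\vert^2$ up to the Gaussian moment via $\nu_0\le e^{-\varepsilon_0\vert x\vert^2}\cdot(\text{const})$, which is exactly what \ref{item:iniT1} encodes. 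For the log-density change I then add and subtract $\overline\varphi_0$ and its derivatives: the contribution of $\overline\varphi_0$ to the $\log$ term is $O(\vert x\vert)$ hence integrable against $\overline\mu_0$ by the exponential moments just established, while the contributions of $\nabla\overline\varphi_0$ and $\nabla^2\overline\varphi_0$ are handled using that $\nabla\overline\varphi_0$ is bounded by \eqref{eq:Approxnabla} and $\nabla^2\overline\varphi_0$ is bounded by Proposition \ref{pro:hessbound} — at this point I would invoke those bounds, available under \ref{ass:coefReg3}-\ref{item:bC2} which is precisely the regime where \ref{item:inih2} is needed.

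I expect the main obstacle to be the bookkeeping needed to keep the final constant $C$ with \emph{only} the advertised dependence: one must be careful that shrinking $\varepsilon_0$ and applying Young's inequality does not smuggle in dependence on anything beyond $\lambda([0,T])$ and the listed uniform coefficient norms — in particular the normalising constant $\overline Z$ must be bounded away from $0$ and $\infty$ using solely \eqref{eq:LinGrowth} and \ref{ass:ini1}. A secondary subtlety is the sign of $\log\nu_0$ and whether $(\log\overline\mu_0)^{1+\varepsilon_0}$ is even well-defined when $\log\overline\mu_0<0$; this is resolved exactly as in the statement of \ref{ass:ini2} (the convention there is that $(\log\nu_0)^{1+\varepsilon_0}$ means $((\log\nu_0)_+)^{1+\varepsilon_0}$ or one integrates its positive part), and the negative part is always controlled by the Gaussian moment. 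Everything else is routine Hölder/Young manipulation.
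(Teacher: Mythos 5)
Your strategy is the same as the paper's, which proves this lemma in essentially one sentence just before the statement: write $\overline\mu_0(\d x)=\overline Z^{-1}e^{-\overline\varphi_0(x)}\nu_0(\d x)$, bound the Radon--Nikodym density by $e^{C(1+\vert x\vert)}$ via \eqref{eq:LinGrowth} (with \eqref{eq:Approxnabla} and Proposition \ref{pro:hessbound} supplying the bounds on $\nabla\overline\varphi_0$ and $\nabla^2\overline\varphi_0$ that enter $\nabla\log\overline\mu_0$ and $\nabla^2\log\overline\mu_0$), control $\overline Z$ from below by Jensen, and absorb the factor $e^{C\vert x\vert}$ using the spare $\varepsilon_0$ together with the exponential moments of \ref{ass:ini1}. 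Your treatment of \ref{ass:ini2}-\ref{item:iniT1} and of $\overline Z$ is correct.

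Two steps, however, do not work as written. First, \ref{ass:ini2}-\ref{item:iniT1} is an integral bound and does not ``encode'' the pointwise estimate $\nu_0(x)\le \mathrm{const}\, e^{-\varepsilon_0\vert x\vert^2}$ that you invoke to control $(\log\nu_0)_-$: a Gaussian moment says nothing pointwise about the density. The negative part of the log-density should instead be handled as in the paper's Appendix \ref{app:timerev} (proof of Lemma \ref{lem:Gamma_1}): compare with a fixed reference density such as $\rho(x)=Z^{-1}e^{-\sqrt{1+\vert x\vert^2}}$ and use $H(\overline\mu_0\vert\rho)\ge 0$, so that $\int(\log\overline\mu_0)_-\,\d\overline\mu_0$ is controlled by the first moment of $\overline\mu_0$, which your change-of-measure bound already provides. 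Second, when transferring \ref{item:inih1} or \ref{item:inih2} you appeal to \ref{ass:ini2}-\ref{item:iniT1}; but the hypothesis is that \emph{any one} of the three bounds holds, so the Gaussian moment is not available there, and in any case replacing $e^{C\vert x\vert}$ by $e^{\varepsilon_0\vert x\vert^2/2}$ and then splitting against the weights $(\log\nu_0,\nabla\log\nu_0,\nabla^2\log\nu_0)$ by H\"older would demand a Gaussian moment with parameter of order $(p+\varepsilon_0)/2$, much larger than $\varepsilon_0$. The correct (and simpler) absorption is H\"older with exponents $\tfrac{p+\varepsilon_0}{p}$ and $\tfrac{p+\varepsilon_0}{\varepsilon_0}$ applied directly to $\int P\,e^{C\vert x\vert}\,\d\nu_0$, where $P$ is the relevant power of the derivatives of $\log\nu_0$: the first factor is the assumed bound and the second is finite by \ref{ass:ini1}, which is always in force. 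Finally, be aware that for \ref{item:inih2} the bound on $\nabla^2\overline\varphi_0$ from Proposition \ref{pro:hessbound} brings in $\lVert\nabla^2\psi\rVert_\infty$, so the advertised constant dependence is only as sharp as that of the Hessian estimate — a caveat the paper's own statement shares.
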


The following result is crucial to obtain density estimates on the multiplier.

\begin{proposition} \label{pro:perturbound}
There exist $\delta \in (0,1)$ and $C > 0$ such that  for any $C^1$ function $h : [t,t'] \rightarrow [0,1]$ with $0 \leq t < t'\leq T$, $h_{t} = h_{t'} =0$ and $\lVert h \rVert_{\infty} + \lVert \tfrac{\d}{\d s} h \rVert_{\infty}^2 \leq \delta$,
\[ \int_{t}^{t'} h_s \E [ \lvert \sigma_s \nabla \psi_s ( X^{0}_s ) \rvert^2 ] \lambda_s \d s \leq \int_{t}^{t'} C ( h_s + \lvert \tfrac{\d}{\d s} h_s \rvert^2 ) - 2 \big\{ \tfrac{\d}{\d s} \E [ \psi_s ( X^{0}_s ) ] - \E [ \partial_s \psi_s ( X^{0}_s ) ] \big\} \tfrac{\d}{\d s} h_s \d s.
\]
Moreover, under \ref{ass:ini2}-\ref{item:inih1}, $\delta$ and $C$ only depend on $\lambda$ through $\lambda ([0,T])$, and $C$ only depends on the other coefficients through the uniform norms of $\sigma$, $\sigma^{-1}$, $\partial_s b$, $\partial_s \sigma$, $\partial_s \psi$, $\nabla b$, $\nabla \sigma$, $\nabla c$, $\nabla \psi$, $\nabla \partial_s \psi$, $\nabla^2 \sigma$, $\nabla^2 \psi$ and $\nabla^3 \psi$.
Under \ref{ass:ini2}-\ref{item:inih2},
for every $\varepsilon >0$, there further exists $C_\varepsilon > 0$ satisfying the same requirements except that $C_\varepsilon$ depends on $\nabla^2 b$ and $\nabla^3 \sigma$ but not on $\nabla^3 \psi$, such that the bound holds for the constant $C := C_\varepsilon + \varepsilon \sup_{0 \leq s \leq T} \lvert \lambda_s \rvert$.
\end{proposition}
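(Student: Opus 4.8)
The plan is to construct an explicit admissible competitor for the stochastic control problem defining $V^\lambda_\Sigma(\overline\mu_0)$ by perturbing the optimal process $X^0$, and then to extract the desired inequality from the suboptimality of this competitor. Concretely, for a $C^1$ cut-off $h:[t,t']\to[0,1]$ vanishing at the endpoints, I would push the optimal trajectory slightly along the gradient flow generated by $-\nabla\psi_s$: roughly, consider the map $x\mapsto x - \eta h_s \nabla\psi_s(x)$ applied to $X^0_s$ for a small scaling parameter $\eta$, and reinterpret the image process as the state of a new admissible control obtained by adding a corrective drift. Because $h$ vanishes at $t$ and $t'$, the perturbed process agrees with $X^0$ outside $[t,t']$, so comparing the two controls only involves the interval $[t,t']$. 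The key point is that pushing along $-\nabla\psi_s$ \emph{decreases} $\Psi$ along the flow — here $\E[\psi_s(\cdot)]$ — so, up to first order in $\eta$, the perturbed flow still satisfies the constraint wherever it was active; one must be slightly careful near the boundary of $\{s:\Psi(\overline\mu_s)=0\}$, but the smallness of $h$ (the bound $\lVert h\rVert_\infty+\lVert \tfrac{\d}{\d s}h\rVert_\infty^2\le\delta$) together with the constraint-qualification-type nondegeneracy of $\nabla\tfrac{\delta\Psi}{\delta\mu}$ (Remark \ref{rem:Suffqualif}) is exactly what guarantees admissibility for $\delta$ small enough.

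The heart of the computation is the second-order Taylor expansion in $\eta$ of the cost of the competitor. The zeroth-order term is $V^\lambda_\Sigma(\overline\mu_0)$ itself; the first-order term must vanish by optimality of $X^0$ (this is where the HJB equation \eqref{eq:approxHJB}, its mild form, and the representation of $\nabla\overline\varphi$ enter, together with the identity \eqref{eq:asspartialpsi} $\int \partial_s\psi_s\,\d\overline\mu_s=0$ which kills a boundary/time-derivative contribution); and the nonnegativity of the second-order term gives the inequality. In expanding, the term $\int_{[t,t']}\psi_s(X^0_s)\lambda(\d s)$ contributes, after applying Ito/the flow equation, a term of the form $-\eta\int_{t}^{t'} h_s\,\E[\nabla\psi_s(X^0_s)\cdot a_s(X^0_s)\nabla\psi_s(X^0_s)]\lambda_s\,\d s$ at first order — but since this first-order term as a whole must vanish, it is balanced against $\tfrac{\d}{\d s}\E[\psi_s(X^0_s)]$ integrated against $\tfrac{\d}{\d s}h_s$ (an integration by parts in $s$ moves the derivative off $\psi$), producing exactly the $\tfrac{\d}{\d s}h_s$ terms on the right-hand side. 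The quadratic-cost term $\tfrac12\lvert\alpha\rvert^2$ produces the $\lvert\tfrac{\d}{\d s}h_s\rvert^2$ and $h_s^2\le h_s$ contributions with a constant controlled by $\lVert\sigma\rVert_\infty,\lVert\nabla\psi\rVert_\infty$, etc., while the $c_s$ term and the drift $b_s$ and the McKean-Vlasov-free reference dynamics contribute further $O(h_s)$ terms. After dividing the whole suboptimality inequality by $\eta$ and then letting $\eta\to0$ (or simply choosing $\eta$ of the right order and keeping track of the $\eta^2$ remainder), one isolates precisely the stated inequality.

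The regularity bookkeeping at the end is where Proposition \ref{pro:hessbound} and Lemma \ref{lem:thirdorder} are used: controlling the corrective drift and its spatial derivatives (needed for strong well-posedness of the perturbed feedback SDE and for the Ito expansion) requires bounds on $\nabla^2\psi$, $\nabla^3\psi$, and on $\nabla\overline\varphi$, $\nabla^2\overline\varphi$; and estimating the remainder terms that involve $\nabla^2\overline\varphi$ (appearing through the second-order expansion of $\overline\varphi$ along the pushed trajectory, and through differentiating the generator) forces either a $\nabla^3\psi$ dependence — giving the first, $\lambda([0,T])$-only statement under \ref{ass:ini2}-\ref{item:inih1} via Proposition \ref{pro:hessbound} — or, if one instead keeps $\nabla^3\overline\varphi$ and invokes Lemma \ref{lem:thirdorder} under \ref{ass:ini2}-\ref{item:inih2}, a constant of the form $C_\varepsilon+\varepsilon\sup_s\lvert\lambda_s\rvert$ depending on $\nabla^2 b$, $\nabla^3\sigma$ but not on $\nabla^3\psi$. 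Lemma \ref{lem:InitalEntrop} is what makes the initial-condition contribution $H(\mathcal L(X^0_0)\vert\nu_0)$ harmless under the perturbation (the push only affects times in $[t,t']\subseteq[0,T]$, and if $t=0$ the bounds on $\log\overline\mu_0$, $\nabla\log\overline\mu_0$ from \ref{ass:ini2} absorb the entropy cost of moving the initial law). The main obstacle I expect is the \emph{admissibility} of the competitor: ensuring the pushed flow still satisfies $\Psi(\cdot)\le 0$ at \emph{every} time in $[t,t']$, not just where the original constraint was strictly active, which is delicate exactly on the boundary of the contact set and is the reason one needs both the nondegeneracy of $\nabla\tfrac{\delta\Psi}{\delta\mu}$ on the contact set and the quantitative smallness condition $\lVert h\rVert_\infty+\lVert\tfrac{\d}{\d s}h\rVert_\infty^2\le\delta$; getting a \emph{uniform} such $\delta$ (and a uniform second-order remainder) is the technical crux.
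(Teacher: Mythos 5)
Your core construction is indeed the paper's: push the optimal flow along the gradient flow generated by $\psi$ on $[t,t']$, reinterpret the pushed flow as a controlled process, compare costs, and integrate by parts in time to produce the $\tfrac{\d}{\d s}h_s$ terms. However, you misidentify the structure of the comparison and, with it, the technical crux. The competitor is measured against the \emph{relaxed} value $V^\lambda_\Sigma(\overline\mu_0)$, in which the constraint has already been replaced by the penalty $\int_{[0,T]}\psi_s(X_s)\lambda(\d s)$; admissibility only means the control is progressively measurable and square-integrable, and it is precisely the first-order decrease of this penalty, $\psi_s(S_{s,h_s}(X^0_s))=\psi_s(X^0_s)-h_s\lvert\sigma_s^\top\nabla\psi_s(X^0_s)\rvert^2+O(h_s^2)$ weighted by $\lambda_s$, that produces the left-hand side of the inequality. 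There is no requirement that the pushed flow satisfy $\Psi\le 0$, so the qualification condition plays no role here, and the smallness parameter $\delta$ is not about feasibility but about absorbing the expansion remainders $O(h_s^2+h_sh'_s+(h'_s)^2)$ uniformly. Relatedly, the first variation does not need to vanish: the paper only uses one-sided suboptimality, the stated bound \emph{is} the first-order relation with remainders kept explicit, and neither the HJB equation nor \eqref{eq:asspartialpsi} is invoked at this stage (the latter is used later, in Lemma \ref{lem:NoAtom} and Proposition \ref{pro:density}).

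The genuine gap is that your plan never confronts the ingredient that makes the expansion closeable. Realizing the pushed marginals $\mu^h_s=\mu^0_s\circ S^{-1}_{s,h_s}$ as a controlled SDE requires a corrective drift containing the score $\nabla\log\mu^h_s$ (the term $C^h_s$ in the paper), and the subsequent integrations by parts against the density — used both to cancel the leading score contribution and to trade $\nabla^3\psi$, $\nabla^4\psi$, $\nabla^2 b$ derivatives for lower-order ones — leave expectations of $\lvert\nabla\log\mu^0_s\rvert$ and $\lvert\nabla^2\log\mu^0_s\rvert$. Bounding these is exactly where \ref{ass:ini2}-\ref{item:inih1} versus \ref{item:inih2} enter, via Lemma \ref{lem:InitalEntrop} (which transfers the integrability of $\nabla\log\nu_0$, $\nabla^2\log\nu_0$ to $\overline\mu_0$, not merely controls the initial entropy cost) and the time-reversal estimates of Lemma \ref{lem:Gamma_2} and Proposition \ref{pro:Gamma_3}; the refined constant $C_\varepsilon+\varepsilon\sup_s\lvert\lambda_s\rvert$ then comes from inserting Lemma \ref{lem:thirdorder} into the bound $\sup_s\E[\lvert\nabla^2\log\mu^0_s(X^0_s)\rvert]\le C[1+\sup_s\lVert\nabla^3\varphi_s\rVert_\infty]$, not from expanding $\overline\varphi$ along the pushed trajectory. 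One also needs the superposition/martingale-problem step (with a change of reference system) to turn the feedback $\alpha^h$ into a legitimate open-loop competitor. Without these score estimates your first-order expansion cannot be justified or its constants tracked, so the proposal, while built on the right perturbation idea, is missing the decisive part of the proof.
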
 

In order to prove the above upper bound, we are going to construct a competitor in the stochastic control problem by pushing the marginal flow of the optimal solution along the gradient flow generated by $\psi$. Such construction shares some analogies with earlier results obtained e.g. in \cite[Section 4]{erbar2015large} or \cite{monsaingeon2023dynamical} in the context of the sudy of (an abstract version of) the Schr\"odinger problem.

\begin{proof}
Let $(\mu^0_s)_{0 \leq s \leq T}$ be the flow of marginal laws of $( X^{0}_s )_{0 \leq s \leq T}$.
To prove the bound, we build a competitor $\alpha^h$ for $V^\lambda_\Sigma (\overline\mu_0)$ in \emph{\textbf{Step 1.}}, by perturbing the optimal flow on $[t,t']$ only.
The related controlled process $X^h$ is such that $\E [ \psi_s ( X^h_s ) ] < \E [ \psi_s ( X^0_s ) ]$ for $s \in (t,t')$. 
The related cost $\E \int_0^T \tfrac{1}{2} \vert \alpha^h_s ( X^h_s ) \vert^2 \d s$ is computed in \emph{\textbf{Step 2.}} and \emph{\textbf{Step 3.}}.
The comparison between $X^0$ and $X^h$ is done in \emph{\textbf{Step 4.}}, yielding the desired estimate.
Final computations are done in \emph{\textbf{Step 5.}} to obtain the wanted dependence on the coefficients.

\medskip

\emph{\textbf{Step 1.} Perturbation.}
For $s \in [0,T]$, let $( S_{s,t} )_{t \geq 0}$ be the flow defined by 
\[ \forall x \in \R^d, \forall t \geq 0, \quad
\frac{\d}{\d t} S_{s,t} (x) = - a_s \nabla \psi_s (S_{s,t} (x)), \quad S_{s,0}(x) = x,
\]
where $a_s := \sigma_s \sigma_s^\top$.
Since $\psi_s$ is $C^4$ and $a_s$ is bounded and $C^3$, both with uniformly bounded derivatives, the Cauchy-Lipschitz theory guarantees that $x \mapsto S_{s,t}(x)$ is a $C^3$-diffeomorphism.
We define the perturbed flow as
\[ \mu^h_s := \mu^0_s \circ S^{-1}_{s,h_s}, \quad 0 \leq s \leq T. \]
Writing $h'_s := \tfrac{\d}{\d s} h_s$, Ito's formula yields
\begin{equation*} \label{eq:PushedProcess}
\d S_{s,h_s}(X^0_s) = [ - h'_s a_s \nabla \psi_s ( S_{s,h_s}(X^0_s)) + (\partial_s + L^{\alpha^0}_s ) S_{s,h_s}( X^0_s ) ] \d s + \nabla S_{s,h_s}( X^0_s ) \cdot \sigma_s (X^0_s) \d B_s.  
\end{equation*}
Since $a$ is uniformly elliptic and the coefficients are smooth from \ref{ass:SmoothSett}, it is standard that $\mu^0_s$ has a positive $C^2$ density for $s > 0$.
Since $x \mapsto S_{s,t}(x)$ is a $C^3$-diffeomorphism, $\mu^h_s$ also has a positive $C^2$ density that we still write $x \mapsto \mu^h_s (x)$ with a slight abuse.
We then define 
\begin{align*} 
A^h_s &:= - h'_s a_s \nabla \psi_s, \quad B^h_s := (\partial_s + L^{\alpha^0}_s ) [S_{s,h_s}] \circ S^{-1}_{s,h_s} - b_s, \\
C^h_s &:= - \tfrac{1}{2} ( \mu^h_s )^{-1} \nabla \cdot [ \mu^h_s ( \nabla S_{s,h_s} a_s \nabla S^\top_{s,h_s} \circ S^{-1}_{s,h_s} - a_s ) ], \quad \beta^h_s := A^h_s + B^h_s + C^h_s. 
\end{align*}
We refer to the notation section \ref{subsec:Notations} for the meaning of operators applied to vector fields or matrices. 
Let us now define the feed-back $\alpha^h_s := \sigma^{-1}_s \beta^h_s$.
Since the law of $S_{s,h_s} (X^0_s)$ is precisely $\mu^h_s$, we get that
$\partial_s \mu^h_s = L^{\alpha^h}_s \mu^h_s$ in the sense of distributions.
To obtain an admissible control from $\alpha^h$, we need to show that $\alpha^h$ has a finite $L^2(\d s \otimes \mu^h_s)$-norm by computing $\E [ \lvert \alpha^h_s(S_{s,h_s}(X^0_s)) \rvert^2 ]$.
This computation is done in \emph{\textbf{Step 3.}}, after computing $\E [ \lvert \beta^h_s(S_{s,h_s}(X^0_s)) \rvert^2 ]$ in \emph{\textbf{Step 2.}}.

\medskip

\emph{\textbf{Step 2.} Expansion.}
In the following, we write $O(h_s^i)$ for quantities that are bounded by $C \lvert h_s \rvert^i$ where the constant $C$ is allowed to depend only on the uniform norm of the coefficients and their derivatives (of any order). 
For $(i,j) \in \{0,1\} \times \{0,1,2\}$, we will repeatedly use the (uniform in $x$) expansion
\[ \partial^i_s \nabla^j S_{s,h_s} = \partial^i_s \nabla^j \mathrm{Id} - h_s \nabla^{j} \partial^i_s (a_s \nabla \psi_s) + O(h^2_s ), \]
which stems from a uniform bound on $\nabla^{j+1} \partial_s^{i} ( a \nabla \psi)$ (we will always have $i = 0$ when $j = 2)$. 
Let us start the expansion of $\beta^h_s \circ S_{s,h_s}$: for $x$ in $\R^d$,
\begin{align*}
A^h_s (S_{s,h_s}(x)) &= - h'_s a_s \nabla \psi_s (x) + O (h'_s h_s ), \\
B^h_s (S_{s,h_s}(x)) &= \sigma_s \alpha^0_s (x) - h_s [(\partial_s + L^{\alpha^0}_s) [ a_s \nabla \psi_s ] - \nabla b_s a_s \nabla \psi_s] (x) + O((1 + \lvert x \rvert)h^2_s),
\end{align*} 
using that $b$ is globally Lipschitz.
Similarly,
\[ \nabla S_{s,h_s} a_s \nabla S^\top_{s,h_s} \circ S^{-1}_{s,h_s} - a_s = - h_s [ a_s [ \nabla (a_s \nabla \psi_s) ]^\top + \nabla (a_s \nabla \psi_s) a_s - \nabla a_s \cdot a_s \nabla \psi_s] + O(h^2_s), \]
using that $\nabla^2 ( a \nabla \psi)$ is uniformly bounded. 
Using the bound on $\nabla^3 ( a \nabla \psi)$, we turn this into
\begin{align*}
C^h_s (S_{s,h_s}(x)) =&\phantom{+!} \tfrac{h_s}{2} [ a_s [ \nabla (a_s \nabla \psi_s) ]^\top + \nabla (a_s \nabla \psi_s) a_s - \nabla a_s \cdot a_s \nabla \psi_s] [ (\nabla \log \mu^h_s ) \circ S_{s,h_s} ] \\
&+ \tfrac{h_s}{2} \nabla \cdot [ a_s [ \nabla (a_s \nabla \psi_s) ]^\top + \nabla (a_s \nabla \psi_s) a_s - \nabla a_s \cdot a_s \nabla \psi_s] + O(h^2_s) \\
=&\!\!: \, D^h_s [ (\nabla \log \mu^h_s ) \circ S_{s,h_s} ] + E^h_s + O(h^2_s).
\end{align*}
Gathering everything:
\begin{multline} \label{eq:ExprBeta}
\beta^h_s (S_{s,h_s}(X^0_s)) =\beta^0_s (X^0_s) - h'_s a_s \nabla \psi_s (X^0_s) - h_s [(\partial_s + L^{\alpha^0}_s) [ a_s \nabla \psi_s ] - \nabla b_s a_s \nabla \psi_s] (X^0_s) \\
+ D^h_s ( X^0_s ) \nabla \log \mu^h_s ( S_{s,h_s} (X^0_s)) + E^h_s (X^0_s ) + O((1+\lvert X^0_s \rvert)h^{2}_s + h'_s h_s).
\end{multline}
Since $\mu^h_s = \mu^0_s \circ S^{-1}_{s,h_s}$:
\begin{align*}
\nabla \log \mu^h_s (S_{s,h_s}(X^0_s)) &= \nabla S^{-1}_{s,h_s} (S_{s,h_s}(X^0_s)) \nabla \log \mu^0_s (X^0_s) + \nabla \log \lvert \mathrm{det} \nabla S^{-1}_{s,h_s} \rvert ( S_{s,h_s}(X^0_s) ) \\
&= \nabla \log \mu^0_s (X^0_s) + O((1+\lvert  \nabla \log \mu^0_s (X^0_s) \rvert)h),
\end{align*}
where we used that $\nabla^2( a \nabla \psi)$ is uniformly bounded to get the $O(h)$ term.
Using Lemma \ref{lem:InitalEntrop} and the bound on $\nabla \alpha^0$ given by Proposition \ref{pro:hessbound}, we can apply Lemma \ref{lem:Gamma_2} to $(\mu^0_{s})_{0 \leq s \leq T}$.
This provides a bound on $\E [\lvert \nabla \log \mu^0_s (X^0_s) \rvert]$ involving $\int_{\R^d} \vert \nabla \log \nu_0 \rvert^{2 +\varepsilon_0} \d \nu_0$. 
Bounding $\E [\lvert X^0_s \rvert ]$ is then standard from \ref{ass:coefReg1} and the bound on $\nabla \alpha^0$, so that
$\E [ \lvert\beta^h_s (S_{s,h_s}(X^0_s)) \rvert^2 ]$ is finite. 

\medskip

\emph{\textbf{Step 3.} Integrations by parts.}
Since $\E [\lvert \nabla \log \mu^0_s (X^0_s) \rvert]$ is bounded, we can integrate by parts the density $\mu^0_s$ to obtain that
\[ \E [ D^h_s ( X^0_s ) \nabla \log \mu^0_s ( X^0_s ) ] = - \E [ E^h_s ( X^0_s ) ]. \]
Plugging this cancellation within \eqref{eq:ExprBeta} yields
\begin{multline*}
\E [\lvert \beta^h_s (S_{s,h_s}(X^0_s)) \rvert^2 - \lvert \beta^0_s (X^0_s) \rvert^2] = - 2 h'_s \E [ \sigma_s \alpha^0_s \cdot a_s \nabla \psi_s (X^0_s)] \\
- 2 h_s \E \{ \sigma _s \alpha^0_s \cdot [ \partial_s ( a_s \nabla \psi_s) + \tfrac{1}{2} \mathrm{Tr}[ a_s \nabla^2 ( a_s \nabla \psi_s ) ] ] (X^0_s)\} + C' h_s + O(h_s^2 + (h'_s)^2 ),
\end{multline*}
where $C'$ is a constant that satisfies the requirements of Proposition \ref{pro:perturbound}.
In the following, $C'$ may change from line to line but still satisfying these requirements. 
The quantities $\nabla \partial_s \psi_s$ and $\nabla^3 \psi_s$ appear in the first term of the second line. 
To get rid of them, we use coordinates (and the summing convention of repeated indices) to rewrite this term as 
\[ - 2 h_s \E \{ \sigma_s^{i,j} (\alpha^0_s)^j [ \partial_s a_s^{i,k} \partial_k \psi_s + a_s^{i,k} \partial_k \partial_s \psi_s + \tfrac{1}{2} a_s^{l,m} \partial_{l,m} ( a_s^{i,k} \partial_k \psi_s ) ] (X^0_s) \}. \]
As above, we integrate by parts the $\partial_k$-derivative to get that 
\[ \E [\lvert \beta^h_s (S_{s,h_s}(X^0_s)) \rvert^2 - \lvert \beta^0_s (X^0_s) \rvert^2] = - 2 h'_s \E [ \sigma_s \alpha^0_s \cdot a_s \nabla \psi_s (X^0_s)] + C' h_s + O(h_s^2 + (h'_s)^2 ). \]
We can now write that
\begin{multline*}
\alpha^h_s (S_{s,h_s}(X^0_s)) = \sigma^{-1}_s (X^0_s) \beta^h_s (S_{s,h_s}(X^0_s)) - h_s \beta^h_s (S_{s,h_s}(X^0_s)) \cdot \nabla \sigma^{-1}_s a_s \nabla \psi_s (X^0_s) 
\\ + O(h^2_s \beta^h_s (S_{s,h_s}(X^0_s))), 
\end{multline*} 
using that $\nabla \sigma^{-1}$ is uniformly bounded, and the same computations as above yield
\[ \E [\lvert \alpha^h_s (S_{s,h_s}(X^0_s)) \rvert^2 ] - \E [ \lvert \alpha^0_s (X^0_s) \rvert^2] = - 2 h'_s \E [ \sigma_s \alpha^0_s \cdot \nabla \psi_s (X^0_s)] + C' h_s + O(h_s^2 + (h'_s)^2 ). \]
In particular, we obtained that $\int_0^T \E [ \lvert \alpha^h_s(S_{s,h_s}(X^0_s)) \rvert^2 ] \d s$ is finite.

\medskip

\emph{\textbf{Step 4.} Comparison.}
Using \emph{\textbf{Step 3.}} and \cite[Theorem 2.5]{TrevisanMart}, there exists a weak solution for the SDE \eqref{eq:feedbackSDE} with the feed-back control $\alpha = \alpha^h$ and the initial law $\overline\mu_0$ (\cite[Theorem 2.5]{TrevisanMart} actually shows existence of a solution for the related martingale problem, but this is equivalent, see e.g.
\cite[Proposition 2.1]{daudin2020optimal}). As a consequence, there exists a reference probability system $\Sigma'$ and an adapted process $( X^{h}_s )_{0\leq s\leq T}$ on it satisfying \eqref{eq:feedbackSDE} with $\alpha = \alpha^h$ and $\L ( X^h_0 ) = \overline\mu_0$.
In particular, $( \alpha^h_s ( X^h_s) )_{0 \leq s \leq T}$ can be seen as a specific open-loop control for \eqref{eq:relaxcontrolled}. 
Thus,
\begin{multline*}
V^\lambda_{\Sigma}(\overline\mu_0) = V^\lambda_{\Sigma'}(\overline\mu_0) \leq \E \bigg[ \int_0^T \frac{1}{2} \lvert \alpha^h_s (X^h_s) \rvert^2 + c_s (X^h_s) + \psi_s ( X^h_s) \lambda_s \d s + \overline\lambda_T \psi_T (X^{h}_T) \bigg] \\
= \int_0^T \E \big[ \frac{1}{2} \lvert \alpha^h_s ( S_{s,h_s} (X^0_s) ) \rvert^2 + c_s (S_{s,h_s} (X^0_s)) + \psi_s ( S_{s,h_s} (X^0_s)) \lambda_s \big] \d s + \overline\lambda_T \E [ \psi_T (X^0_T) ] ,
\end{multline*}
using that $X^h_s$ and $S_{s,h(s)} (X^0_s)$ have the same law $\mu^h_s$, and $X^h_T = X^0_T$.
We further expand
\[ c_s (S_{s,h_s} (X^0_s)) = c_s(X^0_s) - h_s \nabla c_s \cdot a_s \nabla \psi_s ( X^0_s ) + O(h^2_s) = C' h_s + O(h^2_s),  \]
\[ \psi_s (S_{s,h_s} (X^0_s)) = \psi_s (X^0_s) - h_s \nabla \psi_s \cdot a_s \nabla \psi_s ( X^0_s ) + O(h^2_s),  \]
using that $\nabla^2 c$ and $\nabla^2 \psi$ are uniformly bounded. Using \emph{\textbf{Step 3.}} to expand $\alpha^h_s$, we obtain $\delta > 0$ that only depends on $\lambda$ through $\lambda ([0,T])$ such that $\lVert h \rVert_{\infty} + \lVert h' \rVert_{\infty}^2 \leq \delta$ implies 
\[ \int_{t}^{t'} h_s \E [ \lvert \sigma^\top_s \nabla \psi_s ( X^0_s ) \rvert^2 ] \lambda_s \d s \leq \int_{t}^{t'} C' ( h_s + \lvert h'_s \rvert^2 ) - 2 h'_s \E [ \sigma_s \alpha^0_s \cdot \nabla \psi_s (X^0_s)] \d s. \]
The first term in the integral on the r.h.s. has the desired shape, but the second one still involves the leading order term $h'_s$, so we must integrate it. 

\medskip

\emph{\textbf{Step 5.} Last integrations by parts.}
Ito's formula gives that
\[ \E [ \sigma_s \alpha^0_s \cdot \nabla \psi_s (X^0_s)] = \frac{\d}{\d s} \E [ \psi_s ( X_s ) ] - \E [ (\partial_s + L^0_s) \psi_s (X^0_s) ], \]
and we integrate by parts in time using that $h_t = h_{t'} =0$:
\[ -\int_t^{t'} h'_s \E [ L^0_s \psi_s (X^0_s) ] \d s = \int_t^{t'} h_s \frac{\d}{\d s} \E [ L^0_s \psi_s (X^0_s) ] \d s = \int_t^{t'} h_s \E [ (\partial_s + L^{\alpha^0_s} ) L^0_s \psi_s (X^0_s) ] \d s. \]
As previously, we integrate by parts the density to get rid of the $\nabla^2 \partial_s \psi_s$ term:
\[ \E \, \mathrm{Tr}[a_s \nabla^2 \partial_s \psi_s (X^0_s) ]  = C'- \E \, [ a^{i,j}_s \partial_i \partial_s \psi_s \partial_j \log \mu^0_s (X^0_s) ].\]
Similarly, we get rid of the $\nabla^2 b_s$ and $\nabla^4 \psi_s$ terms.
The resulting $C'$ involves $\int \vert \nabla \log \nu_0 \vert^{2 + \varepsilon_0} \d \nu_0$ and a $\nabla^3 \psi_s$ term.
To get rid of this last term, we can integrate by parts one more time:
\[ \E \, \mathrm{Tr}\{ a_s \nabla^2 \mathrm{Tr} [ a_s \nabla^2 \psi_s ] \} (X^0_s) \leq C'- \E \, \mathrm{Tr} [ a_s \nabla^2 \psi_s ] \mathrm{Tr}[a_s \nabla^2 \log \mu^0_s ] (X^0_s). \]
We deduce from Proposition \ref{pro:Gamma_3} applied to $(\mu^0_s)_{0 \leq s \leq T}$ that
\[ \sup_{0 \leq s \leq T} \E [ \vert \nabla^2 \log \mu^0_s (X^0_s) \vert ] \leq C [ 1 + \sup_{0 \leq s \leq T} \lVert \nabla^3 \varphi_s \rVert_\infty ], \]
where the constant $C$ now involves $\int \vert \nabla \log \nu_0 \vert^{4 + \varepsilon_0} \d \nu_0$ and $\int \vert \nabla^2 \log \nu_0 \vert^{2 + \varepsilon_0}$ too.
The desired dependence on $\varepsilon \sup_{0 \leq s \leq T} \vert \lambda_s \vert$ eventually results from Lemma \ref{lem:thirdorder}.
\end{proof}

\begin{corollary}[Qualification condition] \label{cor:Qualif}
A sufficient condition for \eqref{eq:smoothQualif} is that
\begin{equation} \label{eq:QuallifDiff}
\forall s \in [0,T], \quad \int_{\R^d} \psi_s \d \overline\mu_s = 0 \Rightarrow \int_{\R^d} \vert \nabla \psi_s \rvert^2 \d \overline\mu_s \neq 0.    
\end{equation}
This condition is also necessary from Remark \ref{rem:Suffqualif}.
\end{corollary}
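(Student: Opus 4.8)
The plan is to verify the qualification condition \eqref{eq:smoothQualif} directly, taking as competitor the flow obtained by pushing the optimal flow $\overline\mu_{[0,T]}$ along the gradient flow $S$ of $\psi$ introduced in Step 1 of the proof of Proposition \ref{pro:perturbound}. First I would observe that it is enough to treat the case $\tilde\varepsilon = 1$ in \eqref{eq:smoothQualif}: since $\int_{\R^d}\psi_s\,\d\overline\mu_s + \int_{\R^d}\psi_s\,\d(\tilde\mu_s-\overline\mu_s) = \int_{\R^d}\psi_s\,\d\tilde\mu_s$, with this choice \eqref{eq:smoothQualif} reduces to producing a path-measure $\tilde\mu_{[0,T]}$ with $H(\tilde\mu_{[0,T]}\vert\nu_{[0,T]}) < +\infty$ and $\int_{\R^d}\psi_s\,\d\tilde\mu_s < 0$ for every $s\in[0,T]$.

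Next, for a small constant $\tau\in(0,\delta]$, with $\delta$ as in Proposition \ref{pro:perturbound}, I would set $\tilde\mu_s := \overline\mu_s\circ S^{-1}_{s,\tau}$ and let $\tilde\mu_{[0,T]}$ be the associated controlled path-law. Running the construction in the proof of Proposition \ref{pro:perturbound} with the constant function $h\equiv\tau$ — only Steps 1--3 are needed, the boundary conditions $h_t=h_{t'}=0$ intervening solely in Step 5 — realises this flow through an admissible feed-back control $\alpha^\tau$ with $\int_0^T\E[\lvert\alpha^\tau_s\rvert^2]\,\d s < +\infty$; since $\overline\mu_0 = \overline{Z}^{-1}e^{-\overline\varphi_0}\nu_0$ has finite relative entropy w.r.t. $\nu_0$ by \eqref{eq:LinGrowth} and \ref{ass:ini1}, Girsanov's formula gives $H(\tilde\mu_{[0,T]}\vert\nu_{[0,T]}) < +\infty$. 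A first-order Taylor expansion of $t\mapsto\psi_s(S_{s,t}(x))$ at $t=\tau$, using that $t\mapsto S_{s,t}(x)$ solves $\tfrac{\d}{\d t}S_{s,t} = -a_s\nabla\psi_s(S_{s,t})$ and that $\psi_s$ is $C^\infty$ with derivatives bounded uniformly in $s$ while $a_s$ is bounded with bounded derivatives, then yields
\[ \int_{\R^d}\psi_s\,\d\tilde\mu_s = \int_{\R^d}\psi_s\,\d\overline\mu_s - \tau\int_{\R^d}\lvert\sigma_s^\top\nabla\psi_s\rvert^2\,\d\overline\mu_s + R(s,\tau), \qquad \sup_{s\in[0,T]}\lvert R(s,\tau)\rvert \leq C\tau^2. \]

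It then remains to make the right-hand side negative uniformly in $s$. Write $g(s):=\int_{\R^d}\psi_s\,\d\overline\mu_s$ and $q(s):=\int_{\R^d}\lvert\sigma_s^\top\nabla\psi_s\rvert^2\,\d\overline\mu_s$; both are continuous on the compact interval $[0,T]$ (using the joint continuity of $(s,x)\mapsto\psi_s(x),\nabla\psi_s(x),\sigma_s(x)$, the weak continuity of $s\mapsto\overline\mu_s$, and the uniform exponential moments of the $\overline\mu_s$ inherited from \ref{ass:ini1}, which control the linear growth of $\psi_s$), with $g\leq 0$ by feasibility of $\overline\mu_{[0,T]}$ and $q\geq 0$. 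By uniform ellipticity $a_s\geq C^{-1}\mathrm{Id}$ we have $q(s)\geq C^{-1}\int_{\R^d}\lvert\nabla\psi_s\rvert^2\,\d\overline\mu_s$, so the hypothesis \eqref{eq:QuallifDiff} forces $q(s)>0$ whenever $g(s)=0$; hence $q\geq 2q_0>0$ on the compact set $Z:=\{g=0\}$, and by continuity $q\geq q_0$ on an open neighbourhood $U\supseteq Z$, while $g\leq-\delta_0<0$ on the compact set $[0,T]\setminus U$. Consequently $\int_{\R^d}\psi_s\,\d\tilde\mu_s \leq -\tau q_0 + C\tau^2$ for $s\in U$ and $\int_{\R^d}\psi_s\,\d\tilde\mu_s \leq -\delta_0 + C\tau^2$ for $s\in[0,T]\setminus U$, both negative once $\tau$ is small enough, which establishes \eqref{eq:smoothQualif}. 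For the necessity claim, by \eqref{eq:LinDensity} the measures $\overline\mu_{[0,T]}$ and $\nu_{[0,T]}$ are equivalent and the linear functional derivative of $\mu\mapsto\int_{\R^d}\psi_s\,\d\mu$ has gradient $\nabla\psi_s$, so \eqref{eq:QuallifDiff} is exactly the consequence of constraint qualification recorded in Remark \ref{rem:Suffqualif}. I expect the only genuine obstacle to be the construction of a \emph{finite-entropy} competitor, which is precisely what the machinery behind Proposition \ref{pro:perturbound} supplies; the remainder is the uniform-in-$s$ compactness bookkeeping above.
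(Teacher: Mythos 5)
Your proposal is correct and follows essentially the same route as the paper: the paper's proof also takes the competitor $\mu^h_s := \overline\mu_s \circ S^{-1}_{s,h}$ obtained by pushing the optimal flow along the gradient flow of $\psi$ by a small constant amount, invokes the construction of Proposition \ref{pro:perturbound} to get a square-integrable control (hence a finite-entropy path-law), and uses $\tfrac{\d}{\d h}\int_{\R^d}\psi_s\,\d\mu^h_s = -\int_{\R^d}\vert\sigma_s^\top\nabla\psi_s\rvert^2\,\d\mu^h_s$ together with ellipticity and \eqref{eq:QuallifDiff} to conclude, citing Remark \ref{rem:Suffqualif} for necessity. Your first-order Taylor expansion plus the explicit compactness argument in $s$ merely spells out the ``for $h$ small enough'' step that the paper leaves implicit.
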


\begin{proof}
We follow the proof of Proposition \ref{pro:perturbound}, defining the perturbed flow
\[ \mu^h_s := \overline\mu_s \circ S^{-1}_{s,h}, \quad 0 \leq s \leq T, \]
for $h > 0$.
As previously, we can show that for $h$ small enough$, ( \mu^h_s )_{0 \leq s \leq T}$ are the marginal laws of a process of type \eqref{eq:relaxcontrolled}, for a square-integrable control process.
Moreover,
\[ \frac{\d}{\d h} \int_{\R^d} \psi_s \d \mu^h_s = - \int_{\R^d} \vert \sigma^\top_s \nabla \psi_s \rvert^2 \d \mu^h_s \leq 0. \]
Since $\sigma^{-1}$ is bounded from \ref{ass:coefReg1}, \eqref{eq:QuallifDiff} guarantees that the derivative is negative at points where $\int_{\R^d} \psi_s \d \overline\mu_s$ (which is always non-positive) is not already negative. 
As a consequence \eqref{eq:smoothQualif} is satisfied, choosing $\tilde\nu_{[0,T]}$ as being the path-law of a controlled process with marginal laws $( \mu^h_s )_{0 \leq s \leq T}$, for $h$ small enough.
\end{proof}

\subsection{Existence of a density for the Lagrange multiplier} \label{subsec:smoothdensity}

In our smooth setting \ref{ass:SmoothSett}, \cite[Theorem 3.6]{ConstrainedSchrodinger} gives that the measure $\overline\mu_{[0,T]}$ satisfying \eqref{eq:LinDensity} is the path-law of the solution to
\[ \d \overline{X}_t = b_t (  \overline{X}_t ) \d t - a_t \nabla \overline\varphi_t (  \overline{X}_t ) \d t + \sigma_t ( \overline{X}_t ) \d B_t, \quad  \overline{X}_0 \sim \overline{Z}^{-1} e^{-\overline\varphi_0 (x)} \nu_0 (\d x), \]
in a reference system $\Sigma = (\Omega, (\F_t)_{0 \leq t \leq T},\P,(B_t)_{0 \leq t \leq T})$,
where $\overline\varphi$ is the solution to
\[ - \overline\varphi_t + \int_t^T b_s \cdot \nabla \overline\varphi_s - \frac{1}{2} \vert \sigma_s^\top \nabla \overline\varphi_s \vert^2 + c_s + \frac{1}{2} \mathrm{Tr}[a_s \nabla^2 \overline\varphi_s] \d s + \int_{[t,T]} \psi_s \overline{\lambda}(\d s) = 0, \]
in the sense of Definition \ref{def:mildForm}.

Using convolution with a smooth kernel,
let us consider approximations of $\overline{\lambda} - \overline\lambda_T \delta_T$ given by measures that have $C^1$ densities $(\lambda^k)_{k \geq 1}$ w.r.t. the Lebesgue measure.
We similarly regularise $t \mapsto c_t (x)$ into $t \mapsto c^k_t(x)$.
Since convolution in time does not affect $x$-regularity, $c^k$ still satisfies \ref{ass:SmoothSett}, with regularity constants independent of $k$.
$(\lambda^k,c^k)$ further satisfy \ref{ass:SmoothBound}.
Let $(X^k_s)_{0 \leq s \leq T}$ be the optimal process for $V_\Sigma^{\lambda^k} ( \overline\mu_0)$ obtained from the solution $\varphi^k$ of \eqref{eq:approxHJB}, and let us consider the feed-back control $\alpha^k := - \sigma^\top \nabla \varphi^k$. 
We recall that the number of atoms of the measure $\overline\lambda$ is at most countable.

\begin{lemma} \label{lem:boundPhi}
For $t=0$ and every $t \in (0,T]$ with $\overline\lambda ( \{ t \} ) = 0$,
\[ \varphi^k_t \xrightarrow[k \rightarrow +\infty]{} \overline\varphi_t, \qquad \nabla \varphi^k_t \xrightarrow[k \rightarrow +\infty]{} \nabla \overline\varphi_t, \]
uniformly on every compact set of $\R^d$. 
Moreover, the bounds in Proposition \ref{pro:hessbound} extend to $\nabla \overline \varphi$ and $\nabla^2 \overline \varphi$,
\[ \E [ \sup_{0 \leq t \leq T} \vert X^k_t - \overline{X}_t \vert ] \xrightarrow[k \rightarrow + \infty]{} 0, \]
and for every $1 \leq i \leq d$, $x \in \R^d$, and $t \in [0,T]$ with $\overline\lambda ( \{ t \} ) = 0$,
\begin{multline*}
\partial_i \overline\varphi_t (x) = \int_t^T \E \big\{ \partial_i b_s \cdot \nabla \overline\varphi_s - \frac{1}{2} \nabla \overline\varphi_s \cdot \partial_i a_s \nabla \overline\varphi_s + \frac{1}{2} \mathrm{Tr}[ \partial_i a_s \nabla^2 \overline\varphi_s] + \partial_i c_s \big\} (X^{t,x,\overline\alpha}_s) \d s \\ 
+ \int_{[t,T]} \E [ \partial_i \psi^k_s (X^{t,x,\overline\alpha}_s) ] \overline\lambda (\d s) + \overline{\lambda}_T \E [ \partial_i \psi_T (X^{t,x,\overline\alpha}_T) ]. 
\end{multline*}
\end{lemma}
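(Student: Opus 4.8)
### Proof plan for Lemma~\ref{lem:boundPhi}

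\textbf{Overall strategy.} The plan is to transfer the uniform bounds and the Feynman--Kac representation, already established for the smoothed HJB solutions $\varphi^k$ in Section~\ref{subsec:smoothlambda}, to the limiting object $\overline\varphi$ by a compactness argument. The regularisation $\lambda^k \to \overline\lambda - \overline\lambda_T\delta_T$ (by time convolution) keeps all the $x$-regularity constants from \ref{ass:SmoothSett} and \ref{ass:SmoothBound} untouched and keeps $\lambda^k([0,T])$ bounded, so Proposition~\ref{pro:hessbound} applies \emph{uniformly in $k$} to $\varphi^k$: we get $\sup_t \lVert \nabla\varphi^k_t\rVert_\infty + \sup_t \lVert\nabla^2\varphi^k_t\rVert_\infty \le C$ with $C$ independent of $k$, together with the linear growth bound \eqref{eq:LinGrowth} uniform in $k$.

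\textbf{Step 1: convergence of $\varphi^k_t$ and $\nabla\varphi^k_t$.} I would first establish that for $t=0$ and for every $t\in(0,T]$ with $\overline\lambda(\{t\})=0$, the family $(\varphi^k_t)_k$ converges to $\overline\varphi_t$ locally uniformly. The mechanism is the mild/Feynman--Kac formulation of \eqref{eq:approxHJB} from \cite[Theorem 3.6]{ConstrainedSchrodinger}: $\varphi^k_t$ and $\overline\varphi_t$ are both given by the same stochastic representation with $\lambda^k$ resp.\ $\overline\lambda$ inserted in the integral term. Since $\lambda^k \to \overline\lambda$ weakly, and since the atoms of $\overline\lambda$ are countable so that for fixed $t\notin\mathrm{atoms}(\overline\lambda)$ the measure $\overline\lambda$ puts no mass at $t$, the time-integral terms converge; the quadratic and trace terms are controlled by the $k$-uniform $C^2$-bound on $\varphi^k$, which also furnishes (via Arzel\`a--Ascoli) the local-uniform precompactness needed to upgrade weak convergence to the claimed locally uniform convergence of both $\varphi^k_t$ and $\nabla\varphi^k_t$. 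This is essentially the same argument as the one giving \eqref{eq:CVapproxSig} for $\varphi^R$, with $R$ replaced by $k$.

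\textbf{Step 2: passing the Hessian bound to the limit and identifying $\nabla^2\overline\varphi$.} Exactly as at the end of the proof of Proposition~\ref{pro:hessbound}, from the $k$-uniform bound on $(\nabla^2\varphi^k_t)_k$ and the Banach--Alaoglu theorem I extract a weak-$\star$ limit in $\sigma(L^\infty,L^1)$; Step~1 identifies this limit as $\nabla^2\overline\varphi_t$, so $\sup_t\lVert\nabla\overline\varphi_t\rVert_\infty + \sup_t\lVert\nabla^2\overline\varphi_t\rVert_\infty \le C$. Continuity of $t\mapsto\nabla^2\overline\varphi_t(x)$ (which follows from the mild formulation together with the bounds) upgrades the a.e.-in-$t$ statement to all admissible $t$.

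\textbf{Step 3: trajectorial convergence $X^k \to \overline X$.} Both $X^k$ and $\overline X$ solve SDE~\eqref{eq:feedbackSDE} with the same diffusion $\sigma$, drift $b$, and initial law $\overline\mu_0$, but with feed-back controls $\alpha^k = -\sigma^\top\nabla\varphi^k$ resp.\ $\overline\alpha = -\sigma^\top\nabla\overline\varphi$. The feed-backs are uniformly bounded and uniformly Lipschitz in $x$ (by the $k$-uniform Hessian bound), so a standard Gr\"onwall estimate on $\E[\sup_{s\le t}|X^k_s-\overline X_s|^2]$ reduces the problem to controlling $\E\int_0^T |\alpha^k_s(\overline X_s) - \overline\alpha_s(\overline X_s)|^2\,ds$; by Step~1 the integrand $\to 0$ pointwise and it is dominated, so dominated convergence gives $\E[\sup_{0\le t\le T}|X^k_t-\overline X_t|]\to 0$. (One coupling detail: $X^k$ and $\overline X$ must be built on the same probability system with the same Brownian motion, which is legitimate since strong existence/uniqueness holds for both because $\nabla\varphi^k$, $\nabla\overline\varphi$ are Lipschitz.)

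\textbf{Step 4: the Feynman--Kac identity for $\partial_i\overline\varphi_t$.} For each $k$, differentiating \eqref{eq:approxHJB} and applying It\^o's formula as in \eqref{eq:ItoGrad} (here to $X^{t,x,\overline\alpha}$, not $X^{t,x,\alpha^k}$ — using $L^{\alpha^k}=L^{\overline\alpha}+(\sigma\alpha^k-\sigma\overline\alpha)\cdot\nabla$ and absorbing the correction term, or more simply writing the representation directly along the fixed diffusion $X^{t,x,\overline\alpha}$ with the HJB source evaluated along it) yields
\[
\partial_i \varphi^k_t(x) = \int_t^T \E\big\{\cdots\big\}(X^{t,x,\overline\alpha}_s)\,\d s + \int_{[t,T]}\E[\partial_i\psi^k_s(X^{t,x,\overline\alpha}_s)]\,\lambda^k(\d s) + \overline\lambda_T\,\E[\partial_i\psi_T(X^{t,x,\overline\alpha}_T)],
\]
where the braces contain the terms shown in the statement. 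Then I let $k\to\infty$: the left side converges by Step~1; on the right side the $\d s$-integral converges because $\nabla\varphi^k_s,\nabla^2\varphi^k_s$ converge (a.e.\ $s$) and are uniformly bounded (dominated convergence); the $\overline\lambda$-integral term converges because $\lambda^k\to\overline\lambda$ weakly and, at times $t\notin\mathrm{atoms}(\overline\lambda)$, $s\mapsto\E[\partial_i\psi^k_s(X^{t,x,\overline\alpha}_s)]$ is continuous and bounded with $\psi^k\to\psi$ appropriately (here one must be slightly careful whether $\psi$ itself is regularised; if only $c$ and $\lambda$ are convolved then $\psi^k=\psi$ and this is immediate). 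This produces the displayed formula for $\partial_i\overline\varphi_t(x)$.

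\textbf{Main obstacle.} The delicate point is \textbf{Step~1 at times where $\overline\lambda$ has no atom but $\overline\lambda$ charges every neighbourhood heavily}: one must ensure the weak convergence $\int_{[t,T]}(\cdots)\,\lambda^k(\d s) \to \int_{[t,T]}(\cdots)\,\overline\lambda(\d s)$ genuinely holds, i.e.\ that the test function $s\mapsto \E[\partial_i\psi_s(X^{t,x,\overline\alpha}_s)]$ (and its analogue without the $k$) is continuous on $[t,T]$ and that no mass of $\overline\lambda$ concentrates exactly at the endpoint $t$ — this is exactly why the statement excludes $t$ in the (countable) atom set of $\overline\lambda$. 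Equivalently, one needs the mild-formulation convergence of \cite[Lemma 5.3]{ConstrainedSchrodinger}-type to be robust under time-regularisation of the measure, which is where the bulk of the technical care goes; the compactness/Banach--Alaoglu and Gr\"onwall parts (Steps 2--3) are routine given the $k$-uniform $C^2$-bound from Proposition~\ref{pro:hessbound}.
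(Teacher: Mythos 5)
Your proposal is correct and follows essentially the same route as the paper: the $k$-uniform gradient and Hessian bounds of Proposition \ref{pro:hessbound} together with the mild-formulation stability mechanism behind \eqref{eq:CVapproxSig} (i.e. \cite[Lemma 5.3]{ConstrainedSchrodinger}) give the locally uniform convergence of $\varphi^k_t,\nabla\varphi^k_t$ and the inherited bounds, a coupling/Gr\"onwall argument gives $\E[\sup_{0\le t\le T}\vert X^k_t-\overline{X}_t\vert]\to 0$, and the representation formula follows by passing to the limit in \eqref{eq:ItoGrad} using the weak convergence of $\lambda^k_s\,\d s+\overline\lambda_T\delta_T$ restricted to $[t,T]$ at non-atoms of $\overline\lambda$. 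The only cosmetic deviation is your Step 4, where you write the It\^o identity along the fixed process $X^{t,x,\overline\alpha}$ and absorb the generator mismatch, whereas the paper keeps \eqref{eq:ItoGrad} along $X^{t,x,\alpha^k}$ and exploits the process convergence; both work, and your observation that only $\lambda$ and $c$ are regularised (so the test function in the multiplier integral is $\psi$ itself) is the correct reading of the $\psi^k$ appearing in the statement.
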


\begin{proof}
For every $t \in [0,T]$, Proposition \ref{pro:hessbound} provides equi-continuity for $( \nabla \varphi^k_t )_{k \geq 1}$, and $\varphi^k$ has linear growth uniformly in $k$ from \eqref{eq:LinGrowth}.
The stability result \cite[Lemma 5.3]{ConstrainedSchrodinger} then provides the convergence of $\overline{\varphi}$ and $\nabla\overline\varphi$, and the bounds are inherited from Proposition \ref{pro:hessbound}.
Using this and our Lipschitz assumptions, a standard coupling argument (a variation of the proof of \cite[Chapter 5, Theorem 5.2]{friedman1975stochastic}) shows that  $\E [ \sup_{0 \leq t \leq T} \vert X^k_t - \overline{X}_t \vert ] \rightarrow 0$.

From \cite[Theorem 2.7]{billingsley2013convergence}, the measures $\lambda^k_s \d s + \overline{\lambda}_T \delta_T ( \d s )$ restricted to $[t,T]$ weakly converges towards the restriction of $\overline{\lambda}$ to $[t,T]$, for every $t$ with $\overline\lambda ( \{ t \} ) = 0$.
The equation on $\nabla \overline{\varphi}_t$ then results from taking the $k \rightarrow + \infty$ limit within \eqref{eq:ItoGrad}.
\end{proof}

Let us now show that $\overline\lambda$ can only have atoms at $0$ and $T$.

\begin{lemma} \label{lem:NoAtom}
$\overline{\lambda}$ has no atom in $(0,T)$.    
\end{lemma}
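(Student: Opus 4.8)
The plan is to argue by contradiction: suppose $\overline\lambda$ has an atom at some $t_0 \in (0,T)$, say $\overline\lambda(\{t_0\}) = m > 0$. The strategy is to test the crucial perturbation estimate of Proposition \ref{pro:perturbound} against a family of cutoff functions $h$ concentrating near $t_0$, and to show that the atom forces the left-hand side to blow up while the right-hand side stays bounded, which is impossible. First I would approximate $\overline\lambda - \overline\lambda_T\delta_T$ by measures with $C^1$ densities $\lambda^k$ as in the paragraph preceding Lemma \ref{lem:boundPhi}, so that Proposition \ref{pro:perturbound} applies to each $\lambda^k$ with constants $\delta, C$ that are uniform in $k$ (this uses that the bound there depends on $\lambda^k$ only through $\lambda^k([0,T])$ under \ref{ass:ini2}-\ref{item:inih1}, or through $\lambda^k([0,T])$ plus a small multiple of $\sup|\lambda^k_s|$ under \ref{ass:ini2}-\ref{item:inih2}). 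Since $t_0 \in (0,T)$, by constraint qualification \eqref{eq:smoothQualif} and the complementary slackness \eqref{eq:PAP2Compsl}, we have $\E[|\sigma_s^\top \nabla\psi_s(X^0_s)|^2] \geq \kappa > 0$ for $s$ in a neighbourhood of $t_0$ (this positivity is exactly the content of Corollary \ref{cor:Qualif} / Remark \ref{rem:Suffqualif}, using that $\psi_s$ being $C^4$ with uniformly bounded derivatives and $\sigma^{-1}$ bounded rules out degeneracy at an atom).

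Next I would pick, for small $\eta > 0$, a tent-shaped $h = h^\eta$ supported on $[t_0 - \eta, t_0 + \eta]$, equal to $1$ at $t_0$, piecewise linear, rescaled by a factor so that $\|h\|_\infty + \|h'\|_\infty^2 \leq \delta$; concretely $h^\eta(s) = c(\eta)(1 - |s - t_0|/\eta)$ with $c(\eta)$ chosen so that both $c(\eta) \leq \delta/2$ and $c(\eta)^2/\eta^2 \leq \delta/2$, i.e. $c(\eta) \sim \eta$ for small $\eta$. Plugging $h^\eta$ into the estimate of Proposition \ref{pro:perturbound} applied to $\lambda^k$: the left-hand side is $\int h^\eta_s \E[|\sigma_s\nabla\psi_s(X^0_s)|^2] \lambda^k_s\, ds$. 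On the right-hand side, $\int C(h^\eta_s + |{h^\eta_s}'|^2)\,ds = O(c(\eta)\eta) + O(c(\eta)^2/\eta) = O(\eta^2)$; and the boundary-type term $\int \{\frac{d}{ds}\E[\psi_s(X^0_s)] - \E[\partial_s\psi_s(X^0_s)]\}\frac{d}{ds}h^\eta_s\, ds$ is, by Itô's formula (Step 5 of that proof), $\int \E[\sigma_s\alpha^0_s\cdot\nabla\psi_s(X^0_s)]\, {h^\eta_s}'\, ds$; since $s\mapsto \E[\sigma_s\alpha^0_s\cdot\nabla\psi_s(X^0_s)]$ is bounded (uniform bounds on $\sigma$, $\alpha^0 = -\sigma^\top\nabla\varphi$ via \eqref{eq:Approxnabla}, $\nabla\psi$) and $\int |{h^\eta_s}'|\, ds = 2c(\eta) = O(\eta)$, this term is $O(\eta)$. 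So the whole right-hand side is $O(\eta)$, uniformly in $k$.

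Now I would take $k \to \infty$. By Lemma \ref{lem:boundPhi}, $\E[\sup_t |X^k_t - \overline X_t|] \to 0$, so $\E[|\sigma_s\nabla\psi_s(X^k_s)|^2] \to \E[|\sigma_s\nabla\psi_s(X^0_s)|^2]$ uniformly in $s$ (using the uniform Lipschitz and boundedness of $\sigma\nabla\psi$), and by weak convergence of $\lambda^k_s\, ds$ to $\overline\lambda$ on compact subintervals of $(0,T)$ containing $\mathrm{supp}\,h^\eta$ — together with $s\mapsto h^\eta_s\E[|\sigma_s\nabla\psi_s(X^0_s)|^2]$ being continuous and bounded — the left-hand side converges to $\int h^\eta_s \E[|\sigma_s\nabla\psi_s(X^0_s)|^2]\,\overline\lambda(ds) \geq \kappa\, h^\eta(t_0)\,\overline\lambda(\{t_0\}) = \kappa\, c(\eta)\, m$, which is of order $\eta$. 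This gives $\kappa\, c(\eta)\, m \leq C\eta$, i.e. a constant inequality $\kappa m \leq C'$ — not yet a contradiction. The fix: replace the tent by a sharper profile whose value at $t_0$ decays slower than its total mass, exploiting that only $h^\eta(t_0)$ (not $\int h^\eta$) multiplies the atom. For instance take $h^\eta$ with a thin spike: $h^\eta$ rises linearly on $[t_0 - \eta^2, t_0]$ from $0$ to height $\delta/2$ and falls back on $[t_0, t_0+\eta^2]$; then $h^\eta(t_0) = \delta/2$ is fixed, $\|h^\eta\|_\infty = \delta/2 \leq \delta$, but $\|{h^\eta}'\|_\infty^2 = (\delta/2)^2/\eta^4$ which violates the constraint. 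To respect $\|h'\|_\infty^2 \leq \delta$ one is forced to $c(\eta) \lesssim \eta$, so the spike cannot be made tall relative to its width — the constraint in Proposition \ref{pro:perturbound} is scale-critical and the naive argument only yields boundedness of the atom.

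The genuine mechanism — and the step I expect to be the main obstacle — is therefore not the magnitude of a single atom but its interaction with the $L^1$-density structure already established elsewhere in the section, or an iteration. The cleanest route: once Proposition \ref{pro:perturbound} is known to hold for every admissible $h$ with the \emph{uniform} constant $C$ (independent of the interval $[t,t']$), a standard localization shows that the distributional object $s \mapsto \E[|\sigma_s\nabla\psi_s(X^0_s)|^2]\lambda^k_s$ — which is $\geq \kappa\lambda^k_s$ near $t_0$ — must be controlled in $L^1_{loc}$ uniformly in $k$ by testing against approximate identities and using the $O(h_s + |h'_s|^2)$ right-hand side: choosing $h$ supported on $[a,b] \subset (0,T)$ with $h \equiv \theta$ on a slightly smaller interval and linear ramps of width $w$, the right side is $O(\theta(b-a)) + O(\theta^2/w)$, optimized at $\theta \sim w$ giving $O(w(b-a))$, while the left side is $\gtrsim \kappa\theta \int_{[a+w,b-w]}\lambda^k_s\,ds \sim \kappa w \int \lambda^k$; hence $\int_{[a+w,b-w]}\lambda^k_s\, ds \leq C'(b-a)/\kappa$ uniformly in $k$ and $w$. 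Letting $w \to 0$ then $k \to \infty$ gives $\overline\lambda([a,b] \cap (0,T)) \leq C'(b-a)/\kappa$, i.e. $\overline\lambda$ restricted to $(0,T)$ is absolutely continuous with respect to Lebesgue measure, in particular has no atom in $(0,T)$. I would verify carefully that the boundary term $\int \E[\sigma_s\alpha^0_s\cdot\nabla\psi_s(X^0_s)]\,h'_s\,ds$ also contributes only $O(\theta) = O(w)$ here (it does, by the bound $\int|h'| = O(\theta)$ and boundedness of the integrand), so it does not spoil the estimate. This absolute-continuity bound is precisely what the rest of Section \ref{subsec:smoothdensity} needs, and it immediately yields the lemma.
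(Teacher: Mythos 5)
Your diagnosis of why the tent-function test fails is correct, but the proposed fix suffers from exactly the same scale-criticality and the lemma is not proved. In the plateau-and-ramps construction, the term $-2\int\{\tfrac{d}{ds}\E[\psi_s(X^k_s)]-\E[\partial_s\psi_s(X^k_s)]\}\tfrac{d}{ds}h_s\,ds$ contributes $O(\theta)$, the same order as the factor $\kappa\theta$ multiplying $\int\lambda^k_s\,ds$ on the left; after dividing by $\kappa\theta$ you are left with a constant $O(1/\kappa)$ that does not shrink with $b-a$, so what you actually obtain is $\overline\lambda([a,b])\le C(b-a)/\kappa+C''/\kappa$ with $C''$ independent of the interval length --- a bound already implied by finiteness of $\overline\lambda([0,T])$, which gives neither absolute continuity nor the absence of atoms. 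The sentence claiming the boundary term ``does not spoil the estimate'' is precisely where the argument breaks: with $h'=\pm\theta/w$ on the ramps, $\int g_s h'_s\,ds\approx\theta[g(a)-g(b)]$ where $g_s:=\tfrac{d}{ds}\E[\psi_s(X_s)]-\E[\partial_s\psi_s(X_s)]$, and across an atom of $\overline\lambda$ the optimal drift $-a_t\nabla\varphi_t$, hence $\tfrac{d}{ds}\E[\psi_s(\overline X_s)]$, jumps by an amount proportional to the atom mass times $\E[|\sigma^\top\nabla\psi|^2]$; this boundary term therefore exactly offsets the atom's contribution to the left-hand side, and no contradiction can come out of bounding it crudely by $\|g\|_\infty\int|h'|$.

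The missing ingredient is a sign argument rather than a size argument. By complementary slackness $\E[\psi_s(\overline X_s)]\le0$ for all $s$ and $=0$ at the putative atom $t$, so $t$ is a global maximum of this map. The paper's proof chooses nested intervals $[t_l,t'_l]\ni t$ whose endpoints are running minima of $s\mapsto\E[\psi_s(\overline X_s)]$ on either side of $t$, and concave $C^2$ bumps $h^l$ vanishing at $t_l,t'_l$, peaking at $t$ with $\tfrac{d}{ds}h^l\vert_{s=t}=0$ and $h^l_t=\|h^l\|_\infty\le2\min[(t-t_l)^2,(t'_l-t)^2]$. A second integration by parts converts $-\int\tfrac{d}{ds}\E[\psi_s(X^k_s)]\tfrac{d}{ds}h^l_s\,ds$ into integrals of $\{\E[\psi_s(X^k_s)]-\E[\psi_{\cdot}(X^k_{\cdot})]\text{ at the endpoints}\}\tfrac{d^2}{ds^2}h^l_s$, which are asymptotically non-positive thanks to the choice of endpoints and $\tfrac{d^2}{ds^2}h^l\le0$, while the $\E[\partial_s\psi_s(X^k_s)]$ part vanishes in the limit by \eqref{eq:asspartialpsi}. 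The surviving right-hand side of Proposition \ref{pro:perturbound} is then only $C\int|h^l_s|\,ds\le C\|h^l\|_\infty(t'_l-t_l)$; dividing by $h^l_t=\|h^l\|_\infty$ and letting $l\to\infty$ yields $\overline\lambda_t\,\E[|\sigma_t^\top\nabla\psi_t(\overline X_t)|^2]=0$, and the qualification condition \eqref{eq:smoothQualif} (via Remark \ref{rem:Suffqualif}) forces $\overline\lambda_t=0$. Your regularisation $\lambda^k$, the uniformity of $(\delta,C)$ in $k$, and the use of the qualification condition are all fine; what is missing is this exploitation of the maximality of $t$ together with the concavity and flatness-at-$t$ of the test function, without which the perturbation estimate can only bound an atom, never exclude it.
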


\begin{proof}
For every $t \in (0,T)$, we can write that $\overline{\lambda} = \overline{\lambda}_t \delta_t + \lambda'$, for $\overline{\lambda}_t \in \R$ and $\lambda' \in \M_+([0,T])$ with $\lambda'(\{ t \}) = 0$.
From the complementary slackness condition \eqref{eq:PAP2Compsl}, we can assume that $\E [ \psi_t ( \overline{X}_t ) ] = 0$. 
Let $(t_0,t'_0)$ be in $(0,T)^2$ with $t_0 < t < t_0'$. 
By induction, we define a sequence $(t_l,t'_l)_{l \geq 0}$ in $(0,T)^2$ such that $t_l < t < t'_l$ and 
\[ t_{l+1} = \inf \big\{ s \in [\tfrac{t_l+t}{2},t], \; \forall r \in [s,t], \, \E [ \psi_s ( \overline{X}_s ) ] \leq \E [ \psi_r ( \overline{X}_r ) ] \big\}, \]
\[t'_{l+1} = \sup \big\{ s' \in [t,\tfrac{t+t'_l}{2}], \; \forall r \in [t,s'], \, \E [ \psi_{s'} ( \overline{X}_{s'} ) ] \leq \E [ \psi_r ( \overline{X}_r ) ] \big\}. \]
The iterations are well-defined because $s \mapsto \E [ \psi_s (\overline{X}_s)]$ is a non-positive continuous function and $\E [ \psi_t (\overline{X}_t)] = 0$.
Let now $h^l : [t_l,t'_l] \rightarrow [0,1]$ be a $C^2$ function such that
\[ h^l_{t_l} = h^l_{t'_l} = 0, \quad \tfrac{\d}{\d s}\big\rvert_{s = t} \, h^l_{s} = 0, \quad \text{and} \quad \forall s \in [t_l,t'_l], \;\; \tfrac{\d^2}{\d s^2} h^l_{s} \leq 0. \] 
Thus, $h^l$ has a global maximum at $t$.
Let us choose $h^l_t = \lVert h^l \rVert_{\infty} \leq 2 \min [ (t - t_l)^2, (t'_l -t)^2 ]$ and such that
$\lVert \tfrac{\d}{\d s} h^l \rVert_\infty^2 \leq 2 \lVert h^l \rVert_{\infty}$.

Let now $\delta, C >0$ be given by Proposition \ref{pro:perturbound} for $\lambda ( \d s) = \lambda^k_s \d s + \overline{\lambda}_T \delta_T ( \d s)$: these constants do not depend on $(k,l)$ because they only depend on $\lambda$ through $\lambda( [0,T] )$, which is unaffected by the regularisation procedure.
From \eqref{eq:asspartialpsi}, $\E [ \partial_s \psi_s ( \overline{X}_s ) ] = 0$ for $s \in [0,T]$, hence
\begin{equation} \label{eq:Vanishpartial}
\int_{t_l}^{t'_l} \E [ \partial_s \psi_s (X^k_s) ] \frac{\d}{\d s} h^l_s \d s \xrightarrow[k \rightarrow + \infty]{} 0. 
\end{equation} 
Integrating by parts,
\begin{multline} \label{eq:cleverslice}
-\int_{t_l}^{t'_l} \frac{\d}{\d s} \E [ \psi_s (X^k_s) ] \frac{\d}{\d s} h^l_s \d s \\
= \int_{t_l}^t \{ \E [ \psi_s (X^k_s) ] - \E [ \psi_{t_l} (X^k_{t_l}) ] \} \frac{\d^2}{\d s^2} h^l_s \d s + \int_t^{t'_l} \{ \E [ \psi_s (X^k_s) ] - \E [ \psi_{t'_l} (X^k_{t'_l}]) \} \frac{\d^2}{\d s^2} h^l_s \d s,
\end{multline}
where we used that $\tfrac{\d}{\d s} \big\rvert_{s=t} h^l_{s} =0$.
Since $\tfrac{\d^2}{\d s^2} h^l_{s} \leq 0$, and $\E [ \psi_s (\overline{X}_s) ] - \E [ \psi_{t_l} (\overline{X}_{t_l}) ] \leq 0$, $\E [ \psi_s (\overline{X}_s) ] - \E [ \psi_{t'_l} (\overline{X}_{t'_l})] \leq 0$ by definition of $(t_l,t'_l)$, the above integrals are non-positive as $k \rightarrow +\infty$.
Plugging \eqref{eq:Vanishpartial}-\eqref{eq:cleverslice} into Proposition \ref{pro:perturbound} yields
\[ \int_{(t_l,t'_l)} h^l_s \, \E [ \lvert \sigma_s^\top \nabla \psi_s ( \overline{X}_s ) \rvert^2 ] \overline{\lambda}(\d s) \leq \limsup_{k \rightarrow +\infty} \int_{t_l}^{t'_l} h^l_s \, \E [ \lvert \sigma_s \nabla \psi_s ( X^{k}_s ) \rvert^2 ] \lambda^k_s \d s \leq \int_{t_l}^{t'_l} C \lvert h^l_s \rvert \d s,
\] 
where we used that $\lVert \lambda^k \rVert_{L^1(0,T)}$ is bounded uniformly in $k$.
Moreover, the constant $C$ does not depend on $l$.
From the decomposition $\overline{\lambda} = \overline{\lambda}_t \delta_t + {\lambda'}$, the l.h.s. is lower-bounded by $h^l_t \overline\lambda_t \E [ \lvert \sigma_t \nabla \psi_t ( \overline{X}_t ) \rvert^2 ]$. 
From the qualification condition \eqref{eq:smoothQualif} and Remark \ref{rem:Suffqualif}, $\E [ \lvert \sigma_t \nabla \psi_t ( \overline{X}_t ) \rvert^2 ] \neq 0$. 
Since $h^l_t = \lVert h^l \rVert_\infty$, dividing by $h^l_t$ and sending $l \rightarrow +\infty$ shows that $\overline\lambda_t = 0$.
We thus proved that $\overline\lambda(\{t\}) = 0$, for any $t$ in $(0,T)$.
\end{proof}

\begin{proposition}[Density] \label{pro:density}
The multiplier has the following decomposition
\[ \overline{\lambda}(\d t) = \overline{\lambda}_0 \delta_0 (\d t) + \overline{\lambda}_t \d t + \overline{\lambda}_T \delta_T (\d t). \]
Moreover, $\lVert \overline\lambda \rVert_{L^\infty(0,T)} \leq C$, for $C > 0$ independent of $\overline{\lambda}$ that only depends on the coefficients through the uniform norms of $\sigma$, $\sigma^{-1}$, $\partial_s b$, $\partial_s \sigma$, $\partial_s \psi$, $\nabla b$, $\nabla \sigma$, $\nabla c$, $\nabla \psi$, $\nabla \partial_s \psi$, $\nabla^2 \sigma$, $\nabla^2 \psi$, and either $\nabla^3 \psi$ or $\nabla^2 b$ and $\nabla^3 \sigma$.
\end{proposition}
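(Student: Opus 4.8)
The plan is to establish, for every sub‑interval $[a,b]\subset(0,T)$, the bound $\overline\lambda([a,b])\le C(b-a)$ with a constant $C$ of the announced form; combining this with Lemma~\ref{lem:NoAtom} and the outer regularity of $\overline\lambda\in\M_+([0,T])$, one gets simultaneously the decomposition $\overline\lambda(\d t)=\overline\lambda_0\delta_0(\d t)+\overline\lambda_t\,\d t+\overline\lambda_T\delta_T(\d t)$ and the estimate $\lVert\overline\lambda_\cdot\rVert_{L^\infty(0,T)}\le C$ (by the Radon--Nikodym theorem). Throughout I write $e(s):=\E[\psi_s(\overline X_s)]=\int_{\R^d}\psi_s\,\d\overline\mu_s\le 0$, which is $C^1$ by Itô's formula, with $e'$ uniformly continuous, modulus $\omega$. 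Three facts will be used: by the complementary slackness \eqref{eq:PAP2Compsl} and continuity of $e$, the support of $\overline\lambda$ in $(0,T)$ is carried by the closed set $K:=\{s:e(s)=0\}$; since $e\le 0$, every point of $K$ is a maximum of $e$, hence $e'\equiv 0$ on $K$; finally, by the qualification \eqref{eq:smoothQualif} together with Corollary~\ref{cor:Qualif}, Remark~\ref{rem:Suffqualif}, the ellipticity of $\sigma_s\sigma_s^\top$ and compactness of $\{s\in[0,T]:e(s)=0\}$, we have $c_0:=\inf_{s:e(s)=0}\E[\lvert\sigma_s^\top\nabla\psi_s(\overline X_s)\rvert^2]>0$.

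Fix $[a,b]\subset(0,T)$ with $K\cap[a,b]\neq\emptyset$ (otherwise $\overline\lambda([a,b])=0$) and put $\alpha:=\min(K\cap[a,b])$, $\beta:=\max(K\cap[a,b])$, so that $\overline\lambda([a,b])=\overline\lambda([\alpha,\beta])$ and $e'(\alpha)=e'(\beta)=0$. For small $\ell>0$, let $h=h^\ell\colon[0,T]\to[0,1]$ be $C^1$, supported in $[\alpha-\ell,\beta+\ell]$, equal to $M:=\ell^2$ on $[\alpha,\beta]$, monotone on each flank $[\alpha-\ell,\alpha]$, $[\beta,\beta+\ell]$ and vanishing together with its derivative at $\alpha-\ell$ and $\beta+\ell$, with $\lVert h\rVert_\infty+\lVert h'\rVert_\infty^2\le C_0\ell^2\le\delta$. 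Applying Proposition~\ref{pro:perturbound} on $[\alpha-\ell,\beta+\ell]$ with $\lambda=\lambda^k_s\,\d s+\overline\lambda_T\delta_T(\d s)$ and the associated optimal process $X^k$, and letting $k\to+\infty$ using Lemma~\ref{lem:boundPhi} (so that $\E\sup_t\lvert X^k_t-\overline X_t\rvert\to 0$, $\nabla\varphi^k_t\to\nabla\overline\varphi_t$, $\lambda^k_s\,\d s+\overline\lambda_T\delta_T\rightharpoonup\overline\lambda$, $\E[\partial_s\psi_s(X^k_s)]\to\E[\partial_s\psi_s(\overline X_s)]=0$ by \eqref{eq:asspartialpsi}, and $\tfrac{\d}{\d s}\E[\psi_s(X^k_s)]\to e'$) yields
\[
\int_0^T h_s\,\E\bigl[\lvert\sigma_s^\top\nabla\psi_s(\overline X_s)\rvert^2\bigr]\,\overline\lambda(\d s)\ \le\ C\!\int_{\alpha-\ell}^{\beta+\ell}\bigl(h_s+\lvert h'_s\rvert^2\bigr)\,\d s\ -\ 2\!\int_{\alpha-\ell}^{\beta+\ell}e'(s)\,h'_s\,\d s .
\]
Since $\overline\lambda$ is carried by $K$ and $h\equiv M$ on $[\alpha,\beta]$, the left‑hand side is $\ge c_0M\,\overline\lambda([\alpha,\beta])$. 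On the right‑hand side, $\int(h_s+\lvert h'_s\rvert^2)\,\d s\le M(\beta-\alpha)+O(\ell^3)$; and $h'$ being supported on the two flanks, on which $\lvert e'(s)\rvert=\lvert e'(s)-e'(\alpha)\rvert\le\omega(\ell)$ (resp.\ with $e'(\beta)$), while $\int\lvert h'_s\rvert\,\d s=2M$, the last term is bounded by $4M\omega(\ell)$. Dividing by $c_0M=c_0\ell^2$ and letting $\ell\to 0$ (so $\omega(\ell)\to 0$) gives $\overline\lambda([a,b])=\overline\lambda([\alpha,\beta])\le (C/c_0)(b-a)$.

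In the first case of Proposition~\ref{pro:perturbound} (the one requiring $\nabla^3\psi$), the constant $C$ above depends only on $\lambda([0,T])$ and on the listed uniform norms, so this already proves the claim, with $C/c_0$ in place of $C$. In the second case, Proposition~\ref{pro:perturbound} only provides a constant of the form $C_\varepsilon+\varepsilon\sup_s\lvert\lambda^k_s\rvert$ for an arbitrary $\varepsilon>0$, and the argument is closed by absorption. First, \ref{ass:ini2}-\ref{item:inih2} implies \ref{ass:ini2}-\ref{item:inih1}, and in the smooth setting \ref{ass:SmoothSett} the quantity $\lVert\nabla^3\psi\rVert_\infty$ is finite, so the first‑case estimate already shows $D:=\lVert\overline\lambda_\cdot\rVert_{L^\infty(0,T)}<+\infty$. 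Running then the estimate of the previous paragraph with the second‑case constant, one has $\sup_s\lvert\lambda^k_s\rvert\to D$ as $k\to+\infty$ (mollification of a bounded density, with no boundary blow‑up since the atom at $T$ has been removed and the one at $0$ was killed), whence $\overline\lambda([a,b])\le\bigl((C_\varepsilon+\varepsilon D)/c_0\bigr)(b-a)$ for all $[a,b]\subset(0,T)$, i.e.\ $D\le (C_\varepsilon+\varepsilon D)/c_0$. Taking $\varepsilon=c_0/2$ and rearranging gives $D\le 2C_{c_0/2}/c_0$, which depends only on $c_0$ and on the norms listed for the second case (in particular not on $\nabla^3\psi$, nor on $\overline\lambda$), as claimed.

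The delicate point is this last absorption: one must keep track of the order of the two limits $k\to+\infty$ (convergence of the regularised control problems) and $\ell\to 0$ (shrinking of the competitor), and of the fact that $\sup_s\lvert\lambda^k_s\rvert$ is only under control \emph{after} the finiteness of $D$ has been secured from the first‑case estimate --- only then may the term $\varepsilon\sup_s\lvert\lambda_s\rvert$ be genuinely reabsorbed into $D$ and $\varepsilon$ chosen small. Checking that $c_0>0$ and that $e\in C^1$ with $e'$ uniformly continuous, together with the passage to the limit in Proposition~\ref{pro:perturbound}, should be comparatively routine.
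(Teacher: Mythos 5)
Your overall skeleton is the same as the paper's: regularise the multiplier, apply Proposition \ref{pro:perturbound} to a small bump $h$, pass to the limit $k\to+\infty$ using Lemma \ref{lem:boundPhi} and the weak convergence of $\lambda^k$, lower-bound the left-hand side by the qualification constant $c_0>0$ on the contact set, deduce $\overline\lambda([t,t'])\le C\lvert t'-t\rvert$, and finally remove the $\nabla^3\psi$ dependence by the absorption $D\le (C_\varepsilon+\varepsilon D)/c_0$ with $\sup_s\lambda^k_s\le \lVert\overline\lambda\rVert_{L^\infty(0,T)}$; that last self-improvement is exactly the paper's. The genuine difference, and the gap, lies in how you treat the term $-2\int\{\tfrac{\d}{\d s}\E[\psi_s(X^k_s)]-\E[\partial_s\psi_s(X^k_s)]\}\,h'_s\,\d s$. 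You place the plateau of $h$ over all of $K\cap[a,b]$ and push this term onto the two flanks, where you bound it by $4M\omega(\ell)$ using that $e(s)=\E[\psi_s(\overline X_s)]$ is $C^1$, that $e'(\alpha)=e'(\beta)=0$, and that $e'$ has a modulus of continuity $\omega$. None of this is available at this point of the argument, and it does not follow "by Itô's formula": Itô only gives that $e$ is Lipschitz with a.e.\ derivative $g(s)=\E[(\partial_s+L^{\overline\alpha}_s)\psi_s(\overline X_s)]$, and the continuity of $g$ in $s$ hinges on time-continuity of $\nabla\overline\varphi_s$, which is tied to the regularity of $\overline\lambda$ itself. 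In the paper, the $C^1$ property of $t\mapsto\Psi(\overline\mu_t)$ is Corollary \ref{cor:timeReg1}, proved \emph{after} and \emph{using} the bounded density of $\overline\lambda$; so, as written, your key estimate rests on a regularity statement that is downstream of the very proposition you are proving. The cheap substitute — Lipschitz continuity of $e$ combined with an integration by parts on the flanks — is not sufficient: it gives $\lvert e(s)-e(\alpha)\rvert\le L\ell$ against $\int\lvert h''\rvert\sim M/\ell$, i.e.\ an additive error of order $M$, which after dividing by $c_0M$ leaves an $O(1)$ constant that does not vanish as $\ell\to 0$ and destroys the linear-in-$(b-a)$ bound.

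The gap is probably fillable, but it requires a real argument rather than the closing remark that it is "routine": using Lemma \ref{lem:NoAtom} (no atoms of $\overline\lambda$ in $(0,T)$), the representation of $\nabla\overline\varphi_t$ from Lemma \ref{lem:boundPhi}, and continuity of the controlled flow with respect to its initial time, one can show that $t\mapsto\nabla\overline\varphi_t(x)$ is continuous on $(0,T)$ locally uniformly in $x$, hence that $g$ is continuous there; a qualitative modulus (possibly depending on $\overline\lambda$) indeed suffices, since $\omega(\ell)$ only needs to vanish as $\ell\to 0$ at fixed data. Note that the paper structures its proof precisely to avoid this point: it takes bumps supported in intervals $[t_1,t_1']\subset F$ and integrates by parts in time, so that only the values $\E[\psi_s(X^k_s)]$ appear, which converge to $e\equiv 0$ on $F$ — no pointwise information on $e'$ is ever needed. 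Either supply the continuity argument for $e'$ explicitly (taking care that it uses only what precedes Proposition \ref{pro:density}), or switch to the paper's integration-by-parts device on subintervals of $F$.
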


We recall that we restricted ourselves to $\overline{\lambda}(\{0\}) = 0$ at the beginning of Section \ref{sec:Bounds}, without loss of generality.

\begin{proof}
Let us define the set   
\[ F := \{ t \in [0,T] \, , \, \E [ \psi_t ( \overline{X}_t ) ] =0 \}. \]
From the complementary slackness condition \eqref{eq:PAP2Compsl}, we only need to study the restriction of $\overline\lambda$ to $F$.
Since $t \mapsto  \E [ \psi_t ( \overline{X}_t ) ]$ is continuous, $F$ is a closed set, and $F$ is a (at most) countable union of disjoint intervals. 
Since $\overline\lambda$ has no atom in $(0,T)$ from Lemma \ref{lem:NoAtom}, we only need to look at intervals $[t_0,t'_0] \subset F$ with $t_0 < t'_0$.
Let us fix such a $[t_0,t'_0]$, and let $\delta_0, C >0$ be given by Proposition \ref{pro:perturbound}, so that
\[ \int_{t_1}^{t_1'} h_s \, \E [ \lvert \sigma^\top_s \nabla \psi^k_s ( X^k_s ) \rvert^2 ] \lambda^k_s \d s \leq \int_{t_1}^{t_1'} C [ h_s + \lvert \tfrac{\d}{\d s} h_s \rvert^2 ] - 2 \tfrac{\d}{\d s} h_s \big\{ \tfrac{\d}{\d s} \E [ \psi_s ( X^{k}_s ) ] - \E [ \partial_s \psi_s ( X^{k}_s ) ] \big\} \d s,
\]
for every $C^2$ function $h : [t_1,t_1'] \rightarrow [0,1]$ with $0 \leq t_1 \leq t_1' \leq T$, $h_{t_1} = h_{t_1'} =0$ and $\lVert h \rVert_{\infty} + \lVert \tfrac{\d}{\d s} h \rVert_{\infty}^2 \leq \delta_0$.

We first allow $C$ to depend on $\nabla^3 \psi$ in the setting of Proposition \ref{pro:perturbound}, so that $(C,\delta_0)$ does not depend on $k$, because $(\Vert \lambda^k \Vert_{L^1 (0,T)})_{ k \geq 1}$ is bounded.
Similarly to \eqref{eq:cleverslice},
\begin{multline*}
-\int_{t_1}^{t_1'} \frac{\d}{\d s} \E [ \psi_s (X^k_s) ] \frac{\d}{\d s} h_s \d s
= \E [ \psi_s (X^k_{t_1}) ] \frac{\d}{\d s} \big\vert_{s = t_-} h_s - \E [ \psi_s (X^k_{t_1'}) ] \frac{\d}{\d s} \big\vert_{s = t_1'} h_s  \\
+ \int_{t_1}^{t_1'} \E [ \psi_s (X^k_s) ] \frac{\d^2}{\d s^2} h_s \d s \xrightarrow[k \rightarrow +\infty]{} 0, 
\end{multline*}
if we assume that $[t_1',t_1'] \subset F$.
From the qualification condition \eqref{eq:smoothQualif} and Remark \ref{rem:Suffqualif}, the continuous function $s \mapsto \E [ \lvert \sigma^\top_s \nabla \psi_s ( \overline{X}_s ) \rvert^2 ]$ is positive on the compact set $F$ ($F$ is a closed subset of the compact $[0,T]$), so that there exists $\eta > 0$ such that
\[ \forall s \in F, \quad 0 < \eta \leq \E [ \lvert \sigma^\top_s \nabla \psi_s (\overline{X}_s ) \rvert^2 ]. \]
Taking the $k \rightarrow +\infty$ limit and using \eqref{eq:Vanishpartial}, as in the proof of Lemma \ref{lem:NoAtom},
\[ \eta \int_{[t_1,t_1']} h_s \overline{\lambda}({\d s} ) \leq  C \int_{t_1}^{t_1'} [ h_s + \lvert \tfrac{\d}{\d s} h_s \rvert^2 ] \d s, \]
for every $h$ as above with $[t_1,t_1'] \subset F$.
For $\delta \in (0, (t'_0 - t_0)/2)$ and $[t,t'] \subset [t_0-\delta,t'_0+\delta]$ with $t' - t < \delta$, we apply this estimate to $t_1 = t - (t'-t)$ and $t_1' = t + (t'-t)$, so that $[t,t'] \subset [t_1,t_1'] \subset [t_0,t'_0]$. 
We further choose $h_s = - \varepsilon_0 ( s - t_1) ( s - t_1')$, for $\varepsilon_0 \in (0,1)$ small enough so that $\lVert h \rVert_{\infty} + \lVert \tfrac{\d}{\d s} h \rVert_{\infty}^2 \leq \delta_0$.
In particular, $h_s \leq \varepsilon_0 [2 (t'- t)]^2$, and $h_s \geq \varepsilon_0 \vert t' - t \vert^2$ for $s \in [t,t']$. 
As a consequence,
\[ \varepsilon_0 \eta \vert t' - t \vert^2 \overline{\lambda} ([t,t']) \leq \eta \int_{[t,t']} h_s \overline{\lambda}( {\d s} ) \leq C \int_{t_1}^{t_1'} [ h_s + \lvert \tfrac{\d}{\d s} h_s \rvert^2 ] \d s \leq 5 \varepsilon_0 C [ 2 ( t' - t ) ]^3. \]
We deduce that 
\[ \forall \, 0 < \delta < \tfrac{t'_0 - t_0}{2}, \; \forall \, [t,t'] \subset [t_0-\delta,t'_0+\delta], \quad t' - t < \delta \Rightarrow \overline{\lambda} ([t,t']) \leq 40 \eta^{-1} C \vert t' - t \vert. \]
The cumulative distribution function of $\overline{\lambda}$ is thus absolutely continuous on every compact subset of $(t_0,t_0')$, with a.s. derivative bounded by $40 \eta^{-1} C $.
This shows that $\overline{\lambda}$ has a density bounded by $40 \eta^{-1} C $ on $(t_0,t_0)$. Since $\overline{\lambda}(\{0\}) = 0$, Lemma \ref{lem:NoAtom} extends the result to  $[t_0,t_0')$, and then to $[t_0,t_0']$ if $t'_0 \neq T$.
Since $(\eta,C)$ does not depend on $[t_0,t_0']$, the result holds on the whole set $F \cap (0,T)$, and thus on $(0,T)$.

To refine the bound, we now allow for $C = C_\varepsilon + \varepsilon \sup_{0 \leq t \leq T} \lambda^k_s$ when applying Proposition \ref{pro:perturbound}.
Since we now know that $\overline{\lambda}$ restricted to $(0,T)$ has a bounded density $\overline{\lambda} \vert_{(0,T)} \in L^\infty (0,T)$, and the continuous function $\lambda^k$ was obtained from $\overline{\lambda} - \overline{\lambda}_T \delta_T$ by convolution, we can bound $\sup_{0 \leq s \leq T} \lambda^k_s$ by $\lVert \overline{\lambda} \vert_{(0,T)} \rVert_{L^\infty(0,T)}$. 
Since there is no more dependence on $k$ in the bounds, we then perform the same computations as above to obtain that
\[ \lVert \overline{\lambda} \vert_{(0,T)} \rVert_{L^\infty(0,T)} \leq 40 \eta^{-1} [ C_\varepsilon + \varepsilon \lVert \overline{\lambda} \vert_{(0,T)} \rVert_{L^\infty(0,T)} ].\]
By choosing $\varepsilon$ small enough beforehand, this yields the desired bound on $\lVert \overline{\lambda} \vert_{(0,T)} \rVert_{L^\infty(0,T)}$. 
\end{proof}

\section{Proof of the main results} \label{sec:proofs}

We now turn to the proofs of the results stated in Section \ref{sec:PAP2results}. 
The proof of Theorem \ref{thm:opti} essentially relies on results from \cite{ConstrainedSchrodinger}, see Section \ref{subsec:optimality} below. 
For Theorem \ref{thm:density}, a smoothing procedure is detailed in Section \ref{subsec:density} to enter the framework of Section \ref{sec:Bounds}. 
Theorem \ref{thm:stability} is proved independently in Section \ref{subsec:stability}.
We recall the convenient notation $a := \sigma \sigma^\top$.

\subsection{Optimality conditions} \label{subsec:optimality}

This section is devoted to the proof of Theorem \ref{thm:opti}.
Let $\overline\mu_{[0,T]}$ be an optimiser for \eqref{eq:mfMinProb} that satisfies \ref{ass:consquali}.
We first rewrite \eqref{eq:mfMinProb} to make it enter the framework of \cite{ConstrainedSchrodinger}.

\begin{lemma} \label{lem:NLRate}
For any $\mu_{[0,T]}$ in $\ps_1(C([0,T],\R^d))$ such that $H( \mu_{[0,T]} \lvert \Gamma( \mu_{[0,T]} ) )$, $H( \mu_{[0,T]} \lvert \Gamma(\overline\mu_{[0,T]}) )$ are finite, we define
\[ \mathcal{F}
(\mu_{[0,T]}) := H( \mu_{[0,T]} \lvert \Gamma( \mu_{[0,T]} ) ) - H( \mu_{[0,T]} \lvert \Gamma(\overline\mu_{[0,T]}) ). \]
$\F$ is differentiable at $\overline\mu_{[0,T]}$, its linear functional derivative
being given by 
\begin{equation} \label{eq:mfDerivF}
\frac{\delta\F}{\delta\mu}(\overline\mu_{[0,T]},x_{[0,T]}) = - \E_{\overline\mu_{[0,T]}} \int_0^T a^{-1}_t ( {\sf X}_t ) \tfrac{\delta b_t}{\delta\mu} ({\sf X}_t, \overline\mu_t,x_t) [\d {\sf X}_t - b_t ( {\sf X}_t, \overline\mu_t) \d t].
\end{equation}
Moreover, there exists a measurable $ \overline{c} : [0,T] \times \R^d \rightarrow \R$ such that
\[ \frac{\delta\F}{\delta\mu}(\overline\mu_{[0,T]},x_{[0,T]}) = \int_0^T \int_{\R^d} \overline{c}_t (x_t) \overline\mu_t ( \d x ) \d t, \]
and $\overline{c}_t$ is globally Lipschitz uniformly in $t$.
\end{lemma}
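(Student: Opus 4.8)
The plan is to compute the linear functional derivative of $\F$ directly from its definition, exploiting that the two relative entropies in $\F$ share the same first argument $\mu_{[0,T]}$ but differ only in the reference path-law: $\Gamma(\mu_{[0,T]})$ versus $\Gamma(\overline\mu_{[0,T]})$. First I would use the Girsanov/disintegration formula for the relative entropy of a path-measure with respect to a diffusion path-law. For $\mu_{[0,T]}$ with $H(\mu_{[0,T]}\vert\Gamma(\overline\mu_{[0,T]}))<\infty$, the measure $\mu_{[0,T]}$ is itself a weak solution of an SDE with some drift $b_t(\mathsf{X}_t,\mu_t)+\sigma_t(\mathsf{X}_t)\beta_t$, and one has the classical identity
\[
H(\mu_{[0,T]}\vert\Gamma(\nu_{[0,T]})) = H(\mu_0\vert\nu_0) + \tfrac12\,\E_{\mu_{[0,T]}}\!\int_0^T \big\lvert \sigma_t^{-1}(\mathsf{X}_t)\big[b_t(\mathsf{X}_t,\mu_t)-b_t(\mathsf{X}_t,\nu_t)\big] + \beta_t\big\rvert^2\,\d t
\]
for $\nu_{[0,T]}=\Gamma(\nu_{\cdot})$ (up to the usual care about which drift is the ``reference''). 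Subtracting the two such expressions for $\nu=\mu$ and $\nu=\overline\mu$, the $H(\mu_0\vert\nu_0)$ terms and the $\beta$-quadratic terms cancel, leaving $\F(\mu_{[0,T]})$ equal to a cross term that is affine in the difference $b_t(\cdot,\mu_t)-b_t(\cdot,\overline\mu_t)$ plus a genuinely quadratic remainder in that same difference. Equivalently, and more cleanly, I would write $\F(\mu_{[0,T]})=\E_{\mu_{[0,T]}}\log\frac{\d\Gamma(\overline\mu_{[0,T]})}{\d\Gamma(\mu_{[0,T]})}(\mathsf X_{[0,T]})$, and expand this log-Radon–Nikodym derivative via Girsanov, which gives a stochastic-integral expression against $\mu_{[0,T]}$.

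Next I would carry out the perturbation $\mu_{[0,T]}\rightsquigarrow(1-\varepsilon)\overline\mu_{[0,T]}+\varepsilon\mu'_{[0,T]}$ and differentiate at $\varepsilon=0^+$. The key simplification is that $\F(\overline\mu_{[0,T]})=0$ and, because $\overline\mu_{[0,T]}$ is a fixed point of the map $\nu\mapsto\Gamma(\nu)$ in the relevant sense, the quadratic-in-$(b(\cdot,\mu_t)-b(\cdot,\overline\mu_t))$ remainder is $O(\varepsilon^2)$ and drops out; only the term linear in $\frac{\delta b_t}{\delta\mu}(\cdot,\overline\mu_t,\cdot)$ survives. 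Using \ref{ass:linDiff} to differentiate $\mu\mapsto b_t(x,\mu)$ under the integral (with the local Lipschitz bound on $\tfrac{\delta b_t}{\delta\mu}$ in its last variable providing the domination needed to pass the limit inside the expectation, and the compact-level-set / moment control from \ref{ass:ini1}–\ref{ass:coefReg1} controlling the stochastic integral as a true martingale), I obtain exactly formula \eqref{eq:mfDerivF}: the stochastic integral $\E_{\overline\mu_{[0,T]}}\!\int_0^T a_t^{-1}(\mathsf X_t)\,\tfrac{\delta b_t}{\delta\mu}(\mathsf X_t,\overline\mu_t,x_t)\,[\d\mathsf X_t - b_t(\mathsf X_t,\overline\mu_t)\,\d t]$ against the innovation (Brownian) part of $\overline\mu_{[0,T]}$. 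I also need to check the normalisation convention $\int \tfrac{\delta\F}{\delta\mu}(\overline\mu_{[0,T]})\,\d\overline\mu_{[0,T]}=0$, which holds because the integrand is a martingale increment, so its $\overline\mu_{[0,T]}$-expectation vanishes.

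For the second part, I rewrite \eqref{eq:mfDerivF} using that under $\overline\mu_{[0,T]}$ the process $\d\mathsf X_t - b_t(\mathsf X_t,\overline\mu_t)\,\d t = \sigma_t(\mathsf X_t)\,\d W_t^{\overline\mu} + \sigma_t(\mathsf X_t)\alpha_t\,\d t$ for the optimal control, but more simply: the structural statement is that $\tfrac{\delta\F}{\delta\mu}(\overline\mu_{[0,T]},x_{[0,T]})$ depends on $x_{[0,T]}$ only through the ``final'' variable $x_t$ appearing in $\tfrac{\delta b_t}{\delta\mu}(\cdot,\overline\mu_t,x_t)$, and after taking $\E_{\overline\mu_{[0,T]}}$ over $\mathsf X$ the whole expression becomes $\int_0^T \overline c_t(x_t)\,\d t$ where
\[
\overline c_t(x) := \E_{\overline\mu_{[0,T]}}\!\Big[\text{(innovation integrand)}\Big] = \int_{\R^d}\tfrac{\delta b}{\delta\mu}(y,\overline\mu_t,x)\cdot\nabla\overline\varphi_t(y)\,\overline\mu_t(\d y)
\]
once one identifies, via the optimal-control representation of $\overline\mu_{[0,T]}$ from Theorem~\ref{thm:opti}$(i)$, that $a_t^{-1}\cdot[\d\mathsf X_t-b_t\,\d t]$ integrated against $\tfrac{\delta b_t}{\delta\mu}$ has conditional expectation given $\mathsf X_t=y$ equal to $-\tfrac{\delta b_t}{\delta\mu}(y,\overline\mu_t,x)\cdot a_t(y)a_t^{-1}(y)\nabla\overline\varphi_t(y)$ — i.e. the drift-correction term in Girsanov matches $-a_t\nabla\overline\varphi_t$. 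Finally, Lipschitz regularity of $\overline c_t$ uniformly in $t$ follows from the uniform-in-$(t,\mu)$ Lipschitz bound on $y\mapsto\tfrac{\delta b_t}{\delta\mu}(y,\mu,\cdot)$ in \ref{ass:linDiff} together with the bound $\sup_t\lVert\nabla\overline\varphi_t\rVert_\infty<\infty$ from Theorem~\ref{thm:density} (or, at this stage, from the $L^\infty$-gradient bound \eqref{eq:Approxnabla}), since $\overline c_t$ is then an average of the $C_K$-Lipschitz-in-$x$ functions $y\mapsto\tfrac{\delta b_t}{\delta\mu}(y,\overline\mu_t,x)\cdot\nabla\overline\varphi_t(y)$.

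The main obstacle I anticipate is making the differentiation under the expectation fully rigorous: the integrand in \eqref{eq:mfDerivF} involves a stochastic integral against $\mathsf X$, so the interchange of $\lim_{\varepsilon\to0^+}$, the expectation $\E$, and the perturbation of the law requires controlling the quadratic Girsanov remainder uniformly and justifying that the relevant stochastic integrals are genuine martingales under the perturbed measures — this is where the compact-level-set property of the rate function and the exponential-moment assumption \ref{ass:ini1} do the real work. The identification of $\overline c_t$ with the explicit formula involving $\nabla\overline\varphi_t$ is then a routine consequence of the Girsanov drift of $\overline\mu_{[0,T]}$, and the Lipschitz estimate is immediate from the assumptions.
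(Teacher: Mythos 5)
Your first half follows the paper's own route: write $\F(\mu_{[0,T]}) = -\E_{\mu_{[0,T]}}\log\tfrac{\d\Gamma(\mu_{[0,T]})}{\d\Gamma(\overline\mu_{[0,T]})}$, expand the log-density by Girsanov (the Novikov condition holds since $\sigma^{-1}$ is bounded and $b$ is Lipschitz in $\mu$), and differentiate along $(1-\varepsilon)\overline\mu_{[0,T]}+\varepsilon\mu'_{[0,T]}$, the quadratic remainder being of order $\varepsilon^2$ so that only the term linear in $\tfrac{\delta b_t}{\delta\mu}$ survives; this gives \eqref{eq:mfDerivF} and is correct. One small caveat: the normalisation $\int \tfrac{\delta\F}{\delta\mu}(\overline\mu_{[0,T]})\,\d\overline\mu_{[0,T]}=0$ does \emph{not} follow from a martingale argument, because under $\overline\mu_{[0,T]}$ the increment $\d{\sf X}_t-b_t({\sf X}_t,\overline\mu_t)\,\d t$ carries an extra drift and is not a martingale increment; it follows from the convention $\int_{\R^d}\tfrac{\delta b_t}{\delta\mu}({\sf X}_t,\overline\mu_t,x)\,\overline\mu_t(\d x)=0$ together with Fubini.

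The genuine gap is in the second half. Lemma \ref{lem:NLRate} is the first ingredient of the proof of Theorem \ref{thm:opti}-\ref{item:OptCond} (and hence of Theorem \ref{thm:density}), so invoking the optimal-control representation of $\overline\mu_{[0,T]}$ from Theorem \ref{thm:opti}-\ref{item:Control}, or the bound on $\nabla\overline\varphi$ from Theorem \ref{thm:density}, is circular; and \eqref{eq:Approxnabla} is no substitute at this stage, since it lives in the smooth linearised setting of Section \ref{sec:Bounds} and presupposes a multiplier with $C^1$ density, none of which is available under \ref{ass:ini1}-\ref{ass:coefReg1}-\ref{ass:linDiff} alone. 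Moreover your Lipschitz estimate for $\overline c_t$ hinges on $\sup_t\lVert\nabla\overline\varphi_t\rVert_\infty<+\infty$, which is precisely one of the outputs of the later analysis. The statement does not ask for the identification of $\overline c_t$ with $\int_{\R^d}\tfrac{\delta b}{\delta\mu}(y,\overline\mu_t,x)\cdot\nabla\overline\varphi_t(y)\,\overline\mu_t(\d y)$ — that identification is only made afterwards, in the proof of Theorem \ref{thm:opti}-\ref{item:OptCond}. What suffices, and what the paper does, is the abstract representation of \cite[Theorem 2.1]{leonard2012girsanov}: finiteness of $H(\overline\mu_{[0,T]}\vert\Gamma(\overline\mu_{[0,T]}))$ yields an adapted process $(c_t)_{0\leq t\leq T}$ with $\E_{\overline\mu_{[0,T]}}\int_0^T\lvert\sigma_t^\top({\sf X}_t)c_t\rvert^2\,\d t<+\infty$ such that $\d{\sf X}_t=b_t({\sf X}_t,\overline\mu_t)\,\d t+a_t({\sf X}_t)c_t\,\d t+\sigma_t({\sf X}_t)\,\d{\sf B}_t$ holds $\overline\mu_{[0,T]}$-a.s.; plugging this into \eqref{eq:mfDerivF} and setting $\overline c_t(x):=-\E_{\overline\mu_{[0,T]}}\big[c_t\cdot\tfrac{\delta b_t}{\delta\mu}({\sf X}_t,\overline\mu_t,x)\big]$ gives the representation, and the uniform-in-$t$ Lipschitz continuity of $\overline c_t$ then comes solely from the Lipschitz bound on the last variable of $\tfrac{\delta b_t}{\delta\mu}$ in \ref{ass:linDiff}, with no control on $\nabla\overline\varphi$ needed.
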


We recall that $({\sf X}_t)_{0 \leq t \leq T}$ denotes the canonical process on $C([0,T],\R^d)$, see Section \ref{subsec:Notations}.

\begin{proof}
The finiteness of the entropy allows us to write
\[ H( \mu_{[0,T]} \lvert \Gamma( \mu_{[0,T]} )) = H( \mu_{[0,T]} \lvert \Gamma(\overline\mu_{[0,T]}) ) - \int_{C([0,T],\R^d)} \log \frac{\d \Gamma( \mu_{[0,T]} )}{\d  \Gamma(\overline\mu_{[0,T]})} \d \mu_{[0,T]}. \]
Since $\sigma^{-1}$ is bounded and $b$ is globally Lipschitz in $\mu$ from \ref{ass:coefReg1}, the Novikov condition 
\[ \E_{\Gamma( \overline\mu_{[0,T]} )} \, \exp \bigg[ \frac{1}{2} \int_0^T \lvert \sigma^{-1}_t ({\sf X}_t ) [ b_t ({\sf X}_t, \mu_t) - b_t ({\sf X}_t, \overline\mu_t) ] \rvert^2 \d t \bigg] < +\infty,  \] 
is satisfied, and the Girsanov transform \cite[Theorem 2.3]{leonard2012girsanov} gives that
\begin{multline*} 
\frac{\d \Gamma( \mu_{[0,T]} )}{\d \Gamma(\overline\mu_{[0,T]} )} = \exp \bigg[ \int_0^T \sigma^{-1}_t ( {\sf X}_t ) [ b_t ({\sf X}_t, \mu_t) - b_t ({\sf X}_t, \overline\mu_t) ] \d {\sf B}_t \\
- \frac{1}{2} \int_0^T \lvert \sigma^{-1}_t ( {\sf X}_t ) [ b_t ({\sf X}_t, \mu_t) - b_t ({\sf X}_t, \overline\mu_t) ] \rvert^2 \d t \bigg],
\end{multline*}
where $({\sf B}_t)_{0 \leq t \leq T}$ is a Brownian motion under $\Gamma(\overline\mu_{[0,T]})$, and
$\Gamma(\overline\mu_{[0,T]})$-almost surely,
\begin{equation*} \label{eq:PathGammabar}
\d {\sf X}_t =  b_t({\sf X}_t,\overline\mu_t) \d t + \sigma_t ({\sf X}_t) \d {\sf B}_t.
\end{equation*}
Using this to rewrite $\d {\sf B}_t$,
\begin{multline*} \label{eq:mfF}
\F( \mu_{[0,T]} ) = \frac{1}{2} \int_0^T \int_{\R^d} \lvert \sigma^{-1}_t (x) [ b_t (x, \mu_t) - b_t (x, \overline\mu_t) ] \rvert^2 \mu_t ( \d x ) \d t \\
- \E_{\mu_{[0,T]}} \int_0^T a^{-1}_t ( {\sf X}_t ) [b_t ({\sf X}_t, \mu_t) - b_t ({\sf X}_t, \overline\mu_t)] [\d {\sf X}_t - b_t ( {\sf X}_t, \overline\mu_t) \d t], 
\end{multline*}
and it is direct that $\F( \overline\mu_{[0,T]} ) = 0$.
For any $\mu_{[0,T]}$ with $H( \mu_{[0,T]} \lvert \Gamma(\overline\mu_{[0,T]}) ) < + \infty$, the differentiability of $b$ yields 
\begin{multline*}
\frac{\d}{\d \varepsilon}\bigg\vert_{\varepsilon=0} \F( (1-\varepsilon)\overline\mu_{[0,T]} + \varepsilon \mu_{[0,T]}) = \\
- \E_{\overline\mu_{[0,T]}} \int_0^T a^{-1}_t ( {\sf X}_t ) \int_{\R^d} \tfrac{\delta b_t}{\delta\mu} ({\sf X}_t, \overline\mu_t, x) [ \mu_t ( \d x ) - \overline\mu_t ( \d x) ] [\d {\sf X}_t - b_t ( {\sf X}_t, \overline\mu_t) \d t].
\end{multline*}
Since $\int_{\R^d} \tfrac{\delta b_t}{\delta\mu} ({\sf X}_t, \overline\mu_t,x) \overline{\mu}_t( \d x) = 0$ by definition of $\tfrac{\delta b}{\delta \mu}$,
this proves that $\F$ admits a linear functional derivative at $\overline\mu_{[0,T]}$ given by \eqref{eq:mfDerivF}, in the sense of Definition \ref{def:PAP2DiFF}. 
Since $H( \overline\mu_{[0,T]} \lvert \Gamma(\overline\mu_{[0,T]}) )$ is finite, \cite[Theorem 2.1]{leonard2012girsanov} provides an adapted process $(c_t)_{0\leq t\leq T}$ on the canonical space $\Omega = C([0,T],\R^d)$ such that
\[
\E_{\overline\mu_{[0,T]}} \int_0^T \lvert \sigma^\top_t ({\sf X}_t) {c}_t \rvert^2 \d t < +\infty, \]
where 
\begin{equation} \label{eq:defct}
\d {\sf X}_t = b_t ( {\sf X}_t, \overline\mu_t) \d t + a_t({\sf X}_t) {c}_t \d t + \sigma_t ( {\sf X}_t ) \d {\sf B}_t, \quad \overline\mu_{[0,T]}\text{-a.s.},
\end{equation} 
the process $( {\sf B}_t )_{0\leq t\leq T}$ being a Brownian motion under $\overline\mu_{[0,T]}$.
For $x$ in $\R^d$, we then define 
\begin{equation} \label{eq:defcbart}  \overline{c}_t (x) := - \E_{\overline\mu_{[0,T]}} {c}_t \cdot \frac{\delta b_t}{\delta\mu} ({\sf X}_t, \overline\mu_t,x).  
\end{equation} 
From \ref{ass:linDiff}, $\overline{c}_t$ is globally Lipschitz in $x$ uniformly in $t$.
Plugging this into \eqref{eq:mfDerivF} concludes. 
\end{proof}

\begin{proof}[Proof of Theorem \ref{thm:opti}-\ref{item:OptCond}]
Using Lemma \ref{lem:NLRate},
\[ \overline{H}_\Psi = \inf_{\substack{\mu_{[0,T]} \in \ps_1 (C([0,T],\R^d)) \\ \A t \in [0,T], \; \Psi(\mu_t) \leq 0}} H( \mu_{[0,T]} \lvert \Gamma(\overline\mu_{[0,T]}) ) + \mathcal{F}(\mu_{[0,T]}). \]
This setting satisfies the assumptions of Theorem \cite[Theorem 2.12]{ConstrainedSchrodinger} with $E = C([0,T],\R^d)$, $\phi(x_{[0,T]})=\sup_{t \in [0,T]} \vert x_t \vert$, $\mathcal{F}$ as above, the inequality constraints $\Psi_t : \mu_{[0,T]} \mapsto \Psi ( \mu_t )$ with $\mathcal{T} = [0,T]$, and no equality constraint.
Indeed, $\mu_{[0,T]} \mapsto H ( \mu_{[0,T]} \vert \Gamma (  \mu_{[0,T]} ) )$ is lower semi-continuous, so that \cite[Assumptions (A1)-(A2)-(A3)]{ConstrainedSchrodinger} are satisfied.
From Corollary \ref{cor:finiteRate}, the domain of $\F$ is convex and $\F ( \mu_{[0,T]}) < +\infty$ implies $\F ( \mu'_{[0,T]}) < +\infty$ for any $\mu'_{[0,T]}$ that has a bounded density w.r.t. $\mu_{[0,T]}$.
Using Lemma \ref{lem:NLRate}, this implies \cite[Assumptions (A6)-(A8)]{ConstrainedSchrodinger}.
The regularity assumptions \ref{ass:coefReg1}-\ref{ass:linDiff} and the qualification condition \ref{ass:consquali} eventually imply
\cite[Assumption (B3)]{ConstrainedSchrodinger}, and thus \cite[Assumptions (A7)]{ConstrainedSchrodinger}. 
As a consequence, we obtain a Lagrange multiplier $\overline{\lambda} \in \M_+([0,T])$ such that 
\[ \frac{\d \overline{\mu}_{[0,T]}}{\d \Gamma(\overline{\mu}_{[0,T]})}(x_{[0,T]}) = \overline{Z}^{-1} \exp \bigg[ -\int_0^T \overline{c}_s (x_s) \d s - \int_{0}^T \frac{\delta\Psi}{\delta\mu}(\overline\mu_s,x_s) \overline\lambda (\d s) \bigg], \]
together with the complementary slackness condition \eqref{eq:CompSlackmf}.

By convolution with a smooth kernel with compact support, we regularise 
$(t,x) \mapsto b_t ( x, \overline{\mu}_t )$, $\overline{c}_t(x)$, $\sigma_t (x)$, $\tfrac{\delta \Psi}{\delta\mu}(\overline\mu_t,x)$ into $C^\infty$ functions $b^k$, $c^k$, $\sigma^k$, $\psi^k$.
We similarly obtain a weak approximation of $\overline\lambda$ given by a sequence $( \lambda^k )_{k \geq 1}$ of functions in $C^1([0,T],\R_+)$. 
Since we assumed \ref{ass:coefReg1}-\ref{ass:linDiff} and convolution preserves already existing regularity properties, $b^k$, $c^k$, $\sigma^k$, $\psi^k$ satisfy \ref{ass:coefReg1}-\ref{ass:linDiff} for regularity constants independent of $k$.
We thus enter the setting of Section \ref{sec:Bounds}, and let $\varphi^k$ denote the solution of the corresponding HJB equation \eqref{eq:approxHJB} with regularised coefficients.
For every $t \in [0,T]$, Proposition \ref{pro:hessbound} shows that $( \nabla \varphi^k_t )_{k \geq 1}$ and $( \nabla^2 \varphi^k_t )_{k \geq 1}$ are uniformly bounded. 
Hence, from the Arzelà-Ascoli theorem, $( \nabla \varphi^k_t )_{k \geq 1}$ is pre-compact on any compact set.
Using \cite[Lemma 5.4]{ConstrainedSchrodinger}, \cite[Lemma 5.3]{ConstrainedSchrodinger} now shows that for Lebesgue-a.e. $t \in [0,T]$, $\varphi^k_t$ converges towards $\overline\varphi_t$ on any compact set, where $\overline\varphi$ is the mild solution of \eqref{eq:coupled_FP_HJB} in the sense of Definition \ref{def:mildForm}.  
Moreover, $\overline{\mu}_{[0,T]}$ is the law of the solution to
\[ \d \overline{X}_t = b_t ( \overline{X}_t ) \d t - a_t \nabla \varphi_t ( \overline{X}_t ) \d t + \sigma_t ( \overline{X}_t ) \d B_t, \quad \overline{X}_0 \sim \overline{Z}^{-1} e^{-\overline{\varphi}_0(x)} \nu_0(\d x), \]
which we now identify to \eqref{eq:defct}. 
This yields $c_t = -\nabla \overline\varphi_t (\overline{X}_t)$ a.s., giving the desired expression for $\overline{c}_t$ using \eqref{eq:defcbart}.
\end{proof}

From \cite[Remark 4.4]{ConstrainedSchrodinger}, we further have
\begin{equation} \label{eq:LambdaMass}
\overline{\lambda}([0,T]) \leq \tilde\varepsilon \eta^{-1} \int_{\ps_1(C([0,T],\R^d)} \bigg[ \log \frac{\d \overline{\mu}_{[0,T]}}{\d \Gamma(\overline{\mu}_{[0,T]})} + \frac{\delta\F}{\delta\mu} (\overline{\mu}_{[0,T]}) \bigg] \d (\tilde{\mu}_{[0,T]}-\overline{\mu}_{[0,T]}),
\end{equation}
where $\eta := -\sup_{0 \leq t \leq T} \Psi (\overline{\mu}_t) + \int \tfrac{\delta\Psi}{\delta\mu} (\overline\mu_t) \d \tilde\mu_t$.

\begin{proof}[Proof of Theorem \ref{thm:opti}-\ref{item:Control}]
From Lemma \ref{lem:repEntr}, 
\[ \overline{H}_\Psi = \inf_\Sigma \overline{V}_\Psi. \]
From the above proof of Theorem \ref{thm:opti}-\ref{item:OptCond}, this infimum is realised on any reference system $\Sigma$ by the initial law $\overline\mu_0$ and the feed-back control $\overline{\alpha}_t = - a_t \nabla \overline\varphi_t$.
Consequently, $\overline{H}_\Psi = \overline{V}_\Psi = H(\overline\mu_0 \vert \nu_0) + V_\Sigma (\overline\mu_0)$.
\end{proof}

\subsection{Existence of a density for the Lagrange multiplier} \label{subsec:density}

Under \ref{ass:coefReg2} and \ref{ass:ini2}-\ref{item:inih1}, let $\overline\mu_{[0,T]}$ be an optimiser for \eqref{eq:mfMinProb} that satisfies \ref{ass:consquali}.
We are going to smooth Problem \eqref{eq:mfMinProb} to make it enter the framework of Section \ref{sec:Bounds}.
Let $\rho : \R^d \rightarrow [0,1]$ be a $C^\infty$ symmetric function with compact support and $\int_{\R^d} \rho (x) \d x = 1$.
For $k \geq 1$, we define $\rho^k(x) := k^d \rho (k x)$, and the regularised coefficients
\[ b_t^k (x,\mu) :=  [ b_t^k (\cdot, \rho^k \ast \mu) ] \ast \rho^k (x), \quad \sigma^k_t (x) := \sigma^k_t \ast \rho^k (x), \quad \Psi^k ( \mu ) := \Psi ( \rho^k \ast \mu ). \]
The convolution against vectors or matrices is done component-wise.
This mollifying procedure of the measure argument well behaves w.r.t. linear derivatives, as recalled in Lemma \ref{lem:mollif}.
Since the coefficients were already Lipschitz-continuous, $b^k$, $\sigma^k$, $\Psi^k$, $\tfrac{\delta}{\delta\mu} b^k$  and $\tfrac{\delta}{\delta\mu} \Psi^k$ converge towards $b$, $\sigma$, $\Psi$, $\tfrac{\delta}{\delta\mu} b$ and $\tfrac{\delta}{\delta\mu} \Psi$ uniformly on $[0,T] \times \R^d \times \ps_1 ( \R^d)$.
Moreover, since convolution preserves already existing regularity properties, \ref{ass:coefReg2} holds for $b^k$, $\sigma^k$, $\tfrac{\delta}{\delta\mu} b^k$ and $\tfrac{\delta}{\delta\mu} \Psi^k$ with regularity bounds independent of $k$.
For $\mu_{[0,T]}$ in $\ps_1 (C([0,T],\R^d))$, let $\Gamma^k (\mu_{[0,T]})$ denote the path-law of the pathwise unique strong solution to 
\[ \d Y^k_t = b^k_t (Y^k_t,\mu_t) \d t + \sigma^k_t ( Y^k_t ) \d B_t, \quad Y^k_0 \sim \nu_0, \]
where $( B_t )_{0 \leq t \leq T}$ is a Brownian motion. 
From Theorem \ref{thm:opti}, on
the canonical space $\Omega = C([0,T],\R^d)$, there exists an essentially bounded process $( \overline\alpha_t )_{0 \leq t \leq T}$ such that the canonical process satisfies 
\[ \d {\sf X}_t = b_t ( {\sf X}_t, \overline \mu_t) \d t + \sigma_t ( {\sf X}_t ) \overline\alpha_t \d t + \sigma_t ( {\sf X}_t) \d {\sf B}_t, \quad \overline\mu_{[0,T]}\text{-a.s.}, \]
where $( {\sf B}_t )_{0 \leq t \leq T}$ is a Brownian motion under $\overline\mu_{[0,T]}$.
In our Lipschitz setting, $\overline\alpha$ being essentially bounded, the following McKean-Vlasov equation
\[ d X^{k}_t = b^k_t ( X^{k}_t, \mathcal{L}(X^{k}_t)) \d t + \sigma^k_t ( X^{k}_t ) \overline\alpha_t \d t + \sigma^k_t ( X^{k}_t ) \d {\sf B}_t, \quad X^{k}_0 = {\sf X}_0, \]
has a pathwise-unique strong solution, and let $\mu^k_{[0,T]}$ denote its law. 
Using the Girsanov transform,
\begin{equation} \label{eq:CompetEntrop}
H( \mu^k_{[0,T]} \vert \Gamma ( \mu^k_{[0,T]} ) ) = H( \overline\mu_{[0,T]} \vert \Gamma(\overline\mu_{[0,T]} ) ) = H( \overline\mu_0 \vert \nu_0) + \E_{\overline\mu_{[0,T]}} \int_0^T \frac{1}{2} \vert \overline\alpha_t \vert^2 \d t. 
\end{equation} 
Using the uniform convergence of coefficients,
a standard coupling argument yields
\begin{equation} \label{eq:CVCompet}
\E_{\overline\mu_{[0,T]}} \big[ \sup_{0 \leq t \leq T} \vert {\sf X}_t - X^{k}_t \vert \big] \xrightarrow[k \rightarrow +\infty]{} 0. 
\end{equation}
As a consequence, $\Psi^k (\mu^k_t )$ converges towards $\Psi (\overline\mu_t )$, and we can assume that
\begin{equation} \label{eq:AdmCompet} 
\forall k \geq 1, \, \forall t \in [0,T], \quad \Psi^k ( \mu^k_t ) \leq \Psi ( \overline\mu_t ) \leq 0, 
\end{equation}
up to subtracting a small constant to $\Psi^k$ that vanishes as $k \rightarrow + \infty$.
Let now $( \phi_i )_{i \geq 1}$ be a countable set of $C^\infty$ functions $\R^d \rightarrow \R$ that separates measures, with $\lVert \nabla^j \phi_i \rVert_\infty \leq 1$ for any $j \geq 0$. 
We further define 
\[ \forall \mu_{[0,T]} \in \ps_1 ( C([0,T],\R^d) ), \quad {\Phi}( \mu_{[0,T]} ) := \sum_{i \geq 1} 2^{-i-1} \int_0^T \bigg( \int_{\R^d} \phi_i \, \d ( \mu_t - \overline\mu_t ) \bigg)^2 \d t.  \]
This positive term only vanishes when $(\mu_t)_{0 \leq t \leq T} = (\overline\mu_t)_{0 \leq t \leq T}$, hence 
\[ \overline{H}_\Psi = \inf_{\substack{\mu_{[0,T]} \in \ps_1( C ( [0,T], \R^d ) ) \\ \forall t \in [0,T], \; \Psi ( \mu_t ) \leq 0}} H(\mu_{[0,T]} \vert \Gamma(\mu_{[0,T]} )) + \Phi(\mu_{[0,T]}), \]
and the r.h.s. is uniquely realised by $\overline\mu_{[0,T]}$.
Indeed, $\Gamma(\overline\mu_{[0,T]} )$ only depends on $(\overline\mu_t)_{0 \leq t \leq T}$, and there is only one path-measure with given time-marginals and minimal entropy (because $H$ is strictly convex).
We now approximate this problem.

\begin{lemma}[Non-linear approximation] \label{lem:NLApprox}
For large enough $k \geq 1$,
\[ \overline{H}^k_\Psi := \inf_{\substack{\mu_{[0,T]} \in \ps_1( C ( [0,T], \R^d ) ) \\ \forall t \in [0,T], \; \Psi^k ( \mu_t ) \leq 0}} H(\mu_{[0,T]} \vert \Gamma^k(\mu_{[0,T]} )) + \Phi(\mu_{[0,T]}), \] 
is realised by at least one measure $\overline{\mu}^k_{[0,T]}$ at which the qualification constraint \ref{ass:consquali} holds.
Moreover, $( \overline{\mu}^k_{[0,T]} )_{k \geq 1}$ converges towards $\overline{\mu}_{[0,T]}$ in $\ps_1 ( C ( [0,T] , \R^d ))$.
\end{lemma}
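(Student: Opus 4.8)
The plan is to prove the three assertions in turn: existence of $\overline\mu^k_{[0,T]}$ by the direct method, the convergence $\overline\mu^k_{[0,T]} \to \overline\mu_{[0,T]}$ by a $\Gamma$-convergence argument built on the competitor $\mu^k_{[0,T]}$, and the qualification condition at $\overline\mu^k_{[0,T]}$ by transporting the competitor of \ref{ass:consquali} onto the $k$-th reference dynamics.

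\emph{Existence.} First, the competitor $\mu^k_{[0,T]}$ constructed above is admissible for the $k$-th problem by \eqref{eq:AdmCompet}, has finite entropy by \eqref{eq:CompetEntrop}, and satisfies $\Phi(\mu^k_{[0,T]}) \leq 2T$ (since $\lVert \phi_i \rVert_\infty \leq 1$), so $\overline H^k_\Psi < +\infty$. Because $b^k,\sigma^k$ satisfy \ref{ass:coefReg1} with $k$-independent constants, the map $\mu_{[0,T]} \mapsto H(\mu_{[0,T]} \vert \Gamma^k(\mu_{[0,T]}))$ is a good rate function uniformly in $k$ (as recalled below \eqref{eq:mfMinProb}), so $\mu_{[0,T]} \mapsto H(\mu_{[0,T]} \vert \Gamma^k(\mu_{[0,T]})) + \Phi(\mu_{[0,T]})$ is lower semi-continuous with compact sublevel sets; moreover $\Psi$ is continuous under \ref{ass:linDiff}, whence $\{\mu_{[0,T]} : \forall t,\ \Psi^k(\mu_t) \leq 0\}$ is closed and the infimum defining $\overline H^k_\Psi$ is attained at some $\overline\mu^k_{[0,T]}$.

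\emph{Convergence.} From \eqref{eq:CompetEntrop} and the optimality of $\overline\mu_{[0,T]}$ (with $\Phi(\overline\mu_{[0,T]})=0$) one gets $\overline H^k_\Psi \leq H(\mu^k_{[0,T]}\vert\Gamma^k(\mu^k_{[0,T]})) + \Phi(\mu^k_{[0,T]}) = \overline H_\Psi + \Phi(\mu^k_{[0,T]})$, and $\Phi(\mu^k_{[0,T]}) \to 0$ by \eqref{eq:CVCompet} and continuity of $\Phi$; hence $\limsup_k \overline H^k_\Psi \leq \overline H_\Psi$. In particular $\sup_k H(\overline\mu^k_{[0,T]}\vert\Gamma^k(\overline\mu^k_{[0,T]})) < +\infty$, so $(\overline\mu^k_{[0,T]})_k$ is relatively compact in $\ps_1(C([0,T],\R^d))$ by the uniform compactness of the sublevel sets. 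Let $\mu^*_{[0,T]}$ be a subsequential limit. Then $\mu^*_{[0,T]}$ is admissible for $\overline H_\Psi$: since $\Psi^k \to \Psi$ uniformly on $\ps_1(\R^d)$ and $\Psi$ is lower semi-continuous, $\Psi(\mu^*_t) \leq \liminf_k \Psi^k(\overline\mu^k_t) \leq 0$ for all $t$. Moreover $H(\mu^*_{[0,T]}\vert\Gamma(\mu^*_{[0,T]})) + \Phi(\mu^*_{[0,T]}) \leq \liminf_k \overline H^k_\Psi$, by continuity of $\Phi$ together with the joint lower semi-continuity of $\mu_{[0,T]} \mapsto H(\mu_{[0,T]}\vert\Gamma^k(\mu_{[0,T]}))$ under the simultaneous convergence of $\overline\mu^k_{[0,T]}$ and of the coefficients $(b^k,\sigma^k)$, which follows from the control representation of the entropy cost and a standard weak-limit (martingale-problem) argument for controlled McKean–Vlasov SDEs, in the spirit of \cite{fischer2014form,lacker2017limit}. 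Combined with $\limsup_k \overline H^k_\Psi \leq \overline H_\Psi$ and the fact that $\overline\mu_{[0,T]}$ is the \emph{unique} minimiser of $H(\cdot\vert\Gamma(\cdot)) + \Phi$ over admissible measures, this forces $\mu^*_{[0,T]} = \overline\mu_{[0,T]}$ and $\overline H^k_\Psi \to \overline H_\Psi$; since every subsequential limit is $\overline\mu_{[0,T]}$, the whole sequence converges.

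\emph{Qualification.} Let $(\tilde\varepsilon,\tilde\mu_{[0,T]})$ realise \ref{ass:consquali} at $\overline\mu_{[0,T]}$, so that $t \mapsto \Psi(\overline\mu_t) + \tilde\varepsilon \int_{\R^d} \tfrac{\delta\Psi}{\delta\mu}(\overline\mu_t)\,\d\tilde\mu_t$ is continuous and bounded above on $[0,T]$ by $-2\eta < 0$. Since $H(\tilde\mu_{[0,T]} \vert \Gamma(\overline\mu_{[0,T]})) < +\infty$, the entropy–control correspondence of \cite{leonard2012girsanov} writes $\tilde\mu_{[0,T]}$ as the law of a diffusion with coefficients $b(\cdot,\overline\mu),\sigma$ and an additional square-integrable drift; truncating that drift at level $m$, then replacing $(b(\cdot,\overline\mu),\sigma)$ by $(b^k(\cdot,\overline\mu^k),\sigma^k)$ and keeping the initial law, produces path laws $\tilde\mu^k_{[0,T]}$ with $H(\tilde\mu^k_{[0,T]} \vert \Gamma^k(\overline\mu^k_{[0,T]})) < +\infty$ and, by a synchronous-coupling estimate using $\overline\mu^k_{[0,T]} \to \overline\mu_{[0,T]}$ (hence $\sup_t W_1(\overline\mu^k_t,\overline\mu_t)\to 0$) and the uniform convergence of the coefficients, with $\sup_{t} W_1(\tilde\mu^k_t,\tilde\mu_t)$ arbitrarily small after first choosing $m$ and then $k$ large. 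Since also $\Psi^k \to \Psi$ and $\tfrac{\delta\Psi^k}{\delta\mu} \to \tfrac{\delta\Psi}{\delta\mu}$ uniformly, one deduces $\sup_{t}\big[\Psi^k(\overline\mu^k_t) + \tilde\varepsilon \int_{\R^d}\tfrac{\delta\Psi^k}{\delta\mu}(\overline\mu^k_t)\,\d\tilde\mu^k_t\big] \leq -\eta < 0$ for $k$ large, which is exactly \ref{ass:consquali} at $\overline\mu^k_{[0,T]}$.

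\emph{Main obstacle.} The delicate point is the joint lower semi-continuity of $\mu_{[0,T]} \mapsto H(\mu_{[0,T]} \vert \Gamma^k(\mu_{[0,T]}))$ as $k \to \infty$: the reference diffusion matrices $\sigma^k$ vary with $k$, so the laws $\Gamma^k(\cdot)$ are mutually singular across $k$ and cannot be compared through Girsanov; one is forced to argue via the control representation of the entropy and a weak-convergence limit of controlled McKean–Vlasov processes. The same obstruction is why, in the qualification step, the competitor $\tilde\mu_{[0,T]}$ must be transported onto the $k$-th dynamics rather than re-used verbatim, and why its possibly unbounded extra drift must first be truncated.
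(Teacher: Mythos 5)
Your overall architecture coincides with the paper's: existence by the direct method using the competitor $\mu^k_{[0,T]}$, identification of the limit through the value comparison $\overline{H}^k_\Psi \leq \overline{H}_\Psi + \Phi(\mu^k_{[0,T]})$ plus uniqueness of the minimiser of $H(\cdot\vert\Gamma(\cdot))+\Phi$, and qualification by transporting $\tilde\mu_{[0,T]}$ onto the $k$-th dynamics exactly as the paper does in its Step~2. The genuine divergence is in the key lower-semicontinuity step. The paper does \emph{not} need any martingale-problem or relaxed-control weak-limit argument: it shows by a synchronous coupling (uniform Lipschitz constants, uniform convergence of $b^k,\sigma^k$, and $\overline\mu^k_{[0,T]}\to\mu^*_{[0,T]}$ in $\ps_1$) that $\Gamma^k(\overline\mu^k_{[0,T]})\to\Gamma(\mu^*_{[0,T]})$, and then invokes the classical \emph{joint} lower semicontinuity of $(\mu_{[0,T]},\nu_{[0,T]})\mapsto H(\mu_{[0,T]}\vert\nu_{[0,T]})$ in both arguments. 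Your "main obstacle" diagnosis is therefore off: mutual singularity of the $\Gamma^k$ across $k$ is harmless, because joint lower semicontinuity of relative entropy requires no absolute-continuity relation between the reference measures, so one is not "forced" into the control-representation route. Your alternative (tightness of controlled McKean--Vlasov processes and relaxed controls in the spirit of Fischer and Lacker) can be made to work, but it is precisely the heavy machinery the paper's two-line argument avoids, and you leave it at the level of a citation, so the heart of the convergence proof is delegated rather than carried out.

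A second, smaller point: your relative compactness of $(\overline\mu^k_{[0,T]})_k$ rests on an asserted "uniform in $k$" compactness of sublevel sets of $H(\cdot\vert\Gamma^k(\cdot))$. This is plausible given the $k$-independent constants, but the paper actually proves precompactness of the minimising sequence, via the entropy--control representation of Lemma \ref{lem:repEntr} (a square-integrable control $\overline\alpha^k$ with uniformly bounded $L^2$ cost), a Gronwall moment bound uniform in $k$, and the compactness criterion of \cite[Lemma B.2, Remark B.3]{chaintronLDP}; some such argument is needed to make your assertion rigorous. Your qualification step (truncating the square-integrable drift of $\tilde\mu_{[0,T]}$, replacing the coefficients by $b^k(\cdot,\overline\mu^k),\sigma^k$, and using uniform convergence of $\Psi^k$ and $\tfrac{\delta\Psi^k}{\delta\mu}$ to keep a uniform negative margin) is a correct and slightly more detailed rendering of what the paper does.
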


\begin{proof}
We first prove existence and convergence, before to show constraint qualification.

\medskip

\emph{\textbf{Step 1.} Convergence of minimisers.}
From \eqref{eq:CompetEntrop}-\eqref{eq:AdmCompet}, the competitor $\mu^k_{[0,T]}$ that we previously built shows that $\overline{H}^k_\Psi$ is finite and bounded uniformly in $k$.
Since $\Phi$ is non-negative continuous and $\mu_{[0,T]} \mapsto H( \mu_{[0,T]} \vert \Gamma^k ( \mu_{[0,T]} ) )$ has compact level sets in $\ps_1 ( C ( [0,T] , \R^d ))$ (see e.g. \cite[Corollary B.4]{chaintronLDP}), existence holds for at least one minimiser $\overline{\mu}^k_{[0,T]}$.  
Since $H( \overline{\mu}^k_{[0,T]} \vert \Gamma^k ( \overline{\mu}^k_{[0,T]} ) )$ is finite, there exists a square-integrable process $(\overline{\alpha}^k_t)_{0 \leq t \leq T}$ on the canonical space such that
\[ \d {\sf X}_t = b^k_t ( {\sf X}_t, \overline\mu^k_t) \d t + \sigma^k_t ( {\sf X}_t ) \overline\alpha^k_t \d t + \sigma^k_t ( {\sf X}_t) \d {\sf B}^k_t, \quad \overline{\mu}^k_{[0,T]}\text{-a.s.}, \] 
where $( {\sf B}^k_t )_{0 \leq t \leq T}$ is a Brownian motion under $\overline\mu^k_{[0,T]}$ (this argument was detailed in the proof of Lemma \ref{lem:NLRate}).
This is a McKean-Vlasov equation whose coefficients are Lipschitz uniformly in $k$.
Since $\E [ \int_0^T \vert \overline\alpha^k_t \vert^2 \d t ]$ and $\sigma^k_t$ are furthermore uniformly bounded, a standard Gronwall argument yields
\[ \sup_{k \geq 1} \E_{\overline\mu^k_{[0,T]}} \big[ \sup_{0 \leq t \leq T} \vert {\sf X}_t \vert \big] < + \infty, \]
hence the Lipschitz function $x \mapsto b^k_t (x, \overline\mu^k_t)$ has linear growth uniformly in $k$.
Using \cite[Lemma B.2 and Remark B.3]{chaintronLDP}, we deduce that $( \overline\mu^k_{[0,T]} )_{k \geq 1}$ is pre-compact in $\ps_1 (C([0,T],\R^d))$.
Up to re-indexing, we can assume that it converges towards some $\overline\mu^\infty_{[0,T]}$.
Using a coupling argument as for \eqref{eq:CVCompet}, we get that $\Gamma^k ( \overline\mu^k_{[0,T]} )$ converges towards $\Gamma ( \overline\mu^\infty_{[0,T]} )$ in $\ps_1 ( C ( [0,T] , \R^d ))$. 
Moreover, by optimality of $\overline\mu^k_{[0,T]}$,
\[ \forall k \geq 1, \quad H ( \overline\mu^k_{[0,T]} \vert \Gamma^k ( \overline\mu^k_{[0,T]} ) ) + \Phi ( \overline\mu^k_{[0,T]} ) \leq H ( \mu^k_{[0,T]} \vert \Gamma^k ( \mu^k_{[0,T]} ) ) + \Phi ( \mu^k_{[0,T]} ). \]
From \eqref{eq:CompetEntrop}-\eqref{eq:CVCompet}, the r.h.s. converges to $\overline{H}_\Psi$ as $k \rightarrow + \infty$.
Since $(\mu_{[0,T]},\nu_{[0,T]}) \mapsto H( \mu_{[0,T]} \vert \nu_{[0,T]} )$ is lower semi-continuous, we get
\[ H ( \overline\mu^\infty_{[0,T]} \vert \Gamma ( \overline\mu^\infty_{[0,T]} ) ) + \Phi ( \overline\mu^\infty_{[0,T]} ) \leq \overline{H}_\Psi, \]
and using the uniform convergence of $\Psi^k$:
\[ \forall t \in [0,T], \quad \Psi^k( \overline\mu^\infty_t ) \leq 0. \]
By uniqueness for the problem defining $\overline{H}_\Psi$, we get $\overline\mu^\infty_{[0,T]} = \overline\mu_{[0,T]}$.
By uniqueness of limit points, the full sequence $( \overline\mu^k_{[0,T]} )_{k \geq 1}$ thus converges towards $\overline\mu_{[0,T]}$.

\medskip

\emph{\textbf{Step 2.} Constraint qualification.}
From \ref{ass:consquali}, we have existence of $\tilde\mu_{[0,T]}$ with finite $H(\tilde{\mu}_{[0,T]} \vert \Gamma(\overline\mu_{[0,T]}))$ such that  
\[ \forall t \in [0,T], \quad \Psi (\overline\mu_t) + \tilde\varepsilon \int_{\R^d} \frac{\delta \Psi}{\delta\mu} (\overline\mu_t) \d [ \tilde\mu_t - \overline\mu_t ] < 0. \]
Reasoning as we did to build $\mu^k_{[0,T]}$, we can produce $\tilde\mu^k_{[0,T]}$ with $\sup_{k \geq 1}  H(\tilde{\mu}^k_{[0,T]} \vert \Gamma^k (\overline\mu^k_{[0,T]})) < +\infty$ such that $( \tilde\mu^k_{[0,T]} )_{k \geq 1}$ converges towards $\tilde\mu_{[0,T]}$ in $\ps_1 (C([0,T],\R^d))$. 
The uniform convergence of $\Psi^k$ and $\frac{\delta}{\delta\mu} \Psi^k$ then implies that 
\begin{equation} \label{eq:unifqualif}
\sup_{k \geq 1}  H(\tilde{\mu}^k_{[0,T]} \vert \Gamma^k (\overline\mu^k_{[0,T]})) < +\infty, \qquad \sup_{k \geq k_0} \sup_{t \in [0,T]} \Psi^k (\overline\mu^k_t) + \tilde\varepsilon \int_{\R^d} \frac{\delta \Psi^k}{\delta\mu} (\overline\mu^k_t) \d \tilde\mu^k_t < 0,
\end{equation}
for a large enough $k_0$, hence constraint qualification holds at $\overline\mu^k_{[0,T]}$.
\end{proof}

We now set $\psi^k_t (x) := \Psi^k ( \overline{\mu}^k_t ) + \tfrac{\delta\Psi^k}{\delta\mu}(\overline\mu^k_t,x)$ and $\overline\psi_t (x) := \Psi ( \overline{\mu}_t ) + \tfrac{\delta\Psi}{\delta\mu}(\overline\mu_t,x)$. 
We then linearise the problem defining $\overline{H}^k_\Psi$ at $\overline\mu^k_{[0,T]}$ using Lemma \ref{lem:NLRate} and \cite[Lemma 4.3]{ConstrainedSchrodinger}.
We obtain that $\overline\mu^k_{[0,T]}$ is the unique minimiser of the strictly convex problem
\begin{equation} \label{eq:linprob}
\inf_{\substack{\mu_{[0,T]} \in \ps_1( C ( [0,T], \R^d ) ) \\ \forall t \in [0,T], \; \int_{\R^d} {\psi}^k_t \d \mu_t \leq 0}} H ( \mu_{[0,T]} \vert \Gamma^k ( \overline\mu^k_{[0,T]} ) ) + \int_0^T \int_{\R^d} c^k_t \d \mu_t \d t, 
\end{equation} 
where
\[ c^k_t (x) := \overline{c}^k_t (x) + \Phi^k_t(x), \qquad \Phi^k_t(x) := \sum_{i \geq 1} 2^{-i} \bigg( \int_{\R^d} \phi_i \, \d (\overline\mu^k_t - \overline\mu_t) \bigg) \phi_i (x). \]
The $\overline{c}^k_t$ term is produced using Lemma \ref{lem:NLRate} applied to $\overline\mu^k_{[0,T]}$.
The second term results from computing the linear derivative of $\Phi$.
Noticing that $\Psi^k (\overline\mu^k_t) = \int_{\R^d} \psi^k_t \d \overline\mu^k_t$, differentiating yields that $\int_{\R^d} \partial_t \psi^k_t \d \overline\mu^k_t = 0$, as required by \eqref{eq:asspartialpsi}.
Since our regularised coefficients satisfy \ref{ass:SmoothSett},
Problem \eqref{eq:linprob} now enters the scope of Section \ref{sec:Bounds} for the drift coefficient $b^k_t(x) := b^k_t (x,\overline\mu^k_t)$.
Let $(\overline{\lambda}^k, \overline{\varphi}^k)$ be related to $\overline{\mu}^k_{[0,T]}$ as in \eqref{eq:LinDensity}: the path-density reads
\begin{equation} \label{eq:densityk}
\frac{\d \overline\mu^k_{[0,T]}}{\d \Gamma^k ( \overline\mu^k_{[0,T]})} ( x_{[0,T]} ) = \overline{Z}_k^{-1} \exp \bigg[ -\int_0^T c^k_t (x_t) \d t - \int_{0}^T \psi^k_t (x_t) \overline\lambda^k (\d t) \bigg], 
\end{equation}
and $\overline{\varphi}^k$ is the solution to
\[ -\overline\varphi^k_t + \int_t^T b_s^k \cdot \nabla \overline\varphi^k_s - \frac{1}{2} \big\lvert ( \sigma^k_s )^\top \nabla\overline\varphi^k_s \big\rvert^2 + c^k_s  + \frac{1}{2} \mathrm{Tr}[ \sigma^k_s ( \sigma^k_s )^\top \nabla^2 \overline\varphi^k_s] \d s
+ \int_{[t,T]} \psi^k_s \, \overline{\lambda}^k ( \d s) = 0, \]
in the sense of Definition \ref{def:mildForm}. 
Moreover, reasoning as in the proof of Theorem \ref{thm:opti}-\ref{item:OptCond},
\begin{equation} \label{eq:NLclosure}
\overline{c}^k_s (x) = \int_{\R^d} \frac{\delta b^k_s}{\delta\mu} ( y, \overline\mu^k_s, x ) \cdot \nabla \overline{\varphi}^k_s \overline{\mu}^k_s ( \d y).
\end{equation}
Let us further define $\alpha^k_t := -( \sigma^k_t )^\top \nabla \overline\varphi^k_t$, $a^k_t := \sigma^k_t ( \sigma^k_t )^\top$ and
\begin{equation} \label{eq:InfGen}
\forall \phi \in C^2 ( \R^d), \quad L^{\alpha^k}_t \phi := b^k_t \cdot \nabla \phi + \sigma^k_t \alpha^k_t \cdot \nabla \phi + \frac{1}{2} \mathrm{Tr}[ a^k_t \nabla^2 \phi ],
\end{equation} 
the operator $L^{\alpha_k}_t$ being the infinitesimal generator of the process with path-law $\overline\mu^k_{[0,T]}$.

\begin{proof}[Proof of Theorem \ref{thm:density}]
Since $\overline{\mu}^k_{[0,T]}$ converges to $\overline{\mu}_{[0,T]}$ in $\ps_1 ( C ( [0,T], \R^d) )$, $\Phi^k_t (x)$ converges to $0$ uniformly in $(t,x)$.
Similarly, using our Lipschitz assumptions, $\psi^k$ uniformly converges towards $\overline\psi$.
Since convolution preserves already existing regularity properties, the regularity properties w.r.t. $x$ assumed by Theorem \ref{thm:density} hold for $b^k$, $\sigma^k$, $\tfrac{\delta}{\delta\mu} b^k$ and $\tfrac{\delta}{\delta\mu} \Psi^k$, with regularity bounds independent of $k$.
To take the $k \rightarrow +\infty$ limit within Proposition \ref{pro:density}, we still have to check that the time-derivatives of the coefficients are bounded uniformly in $k$. 
This is clear for $\sigma^k_t$ because $x$-convolution does not affect time-regularity. 
Moreover, from e.g. \cite[Proposition 1.3]{daudin2023optimal}, 
\[ \frac{\d}{\d t} b^k_t (x,\overline\mu^k_t) = \partial_t b^k_t ( x,\overline{\mu}^k_t ) + \int_{\R^d} L_{t}^{\alpha^k} \big[ y \mapsto \frac{\delta b_t}{\delta\mu} (x, \mu_t, y) \big] \d \overline\mu_t,\]
\[ \partial_t {\psi}^k_t (x) = \int_{\R^d} \frac{\delta\Psi}{\delta\mu} (\overline\mu^k_t) + L_{t}^{\alpha^k} \big[ y \mapsto \frac{\delta^2 \Psi}{\delta\mu} (\mu_t, x,y) \big] \d \overline\mu^k_t. \]
From Lemma \ref{lem:boundPhi}, $\alpha^k$ is bounded and Lipschitz uniformly in $k$.
Using the bounds on $\varphi^k$, we similarly deduce that  $\overline\mu^k_{0}$ satisfies Lemma \ref{lem:InitalEntrop} with bounds independent of $k$.
Applying Lemma \ref{lem:Gamma_2} to $(\overline\mu^k_{t})_{0 \leq t \leq T}$
thus provides a bound on $\E [\lvert \nabla \log \overline\mu^k_t (X^0_t) \rvert]$ that is uniform in $k$. 
Integrating by parts, we get rid of second order derivatives in $y$, so that $\partial_t b^k_t$ and $\partial_t \psi^k_t$ are bounded uniformly in $k$ using \ref{ass:coefReg2}.
From Proposition \ref{pro:density}, we can now decompose
\[ \overline{\lambda}^k (\d t) = \overline{\lambda}^k_0 \delta_0 ( 
\d t) + \overline\lambda^k_t \d t + \overline{\lambda}^k_T \delta_T(\d t), \]
for some bounded measurable $t \mapsto \overline{\lambda}^k$, whose $L^\infty$-norm only depends on $\overline\lambda^k ([0,T])$ and the regularity constants allowed by Theorem \ref{thm:density}.

In the proof of Lemma \ref{lem:NLApprox}, qualification was obtained using $\tilde\mu^k_{[0,T]}$ converging to $\tilde\mu_{[0,T]}$ with bounded $H( \tilde\mu^k_{[0,T]} \vert \Gamma^k ( \overline\mu^k_{[0,T]} ) )$ uniformly in $k$.
Using \eqref{eq:LambdaMass} and \eqref{eq:unifqualif}, there exists $\eta >0$ independent of $k$ such that
\begin{equation*} 
\overline{\lambda}^k ([0,T]) \leq \tilde\varepsilon \eta^{-1} \int_{\ps_1(C([0,T],\R^d)} \bigg[ \log \frac{\d \overline{\mu}^k_{[0,T]}}{\d \Gamma^k (\overline{\mu}^k_{[0,T]})} + \frac{\delta\F^k}{\delta\mu} (\overline{\mu}^k_{[0,T]}) \bigg] \d \tilde{\mu}^k_{[0,T]}.
\end{equation*}
Since $\tilde{\mu}^k_{[0,T]}$ converges in $\ps_1(C([0,T],\R^d))$ and $\tfrac{\delta\F^k}{\delta\mu}$ has linear growth uniformly in $k$ from Lemma \ref{lem:NLRate} and \eqref{eq:NLclosure}, $\int \tfrac{\delta\F^k}{\delta\mu} (\overline{\mu}^k_{[0,T]}) \d \tilde\mu^k$ is bounded uniformly in $k$.
From \cite[Lemma 1.4-(b)]{nutz2021introduction},
\begin{equation*} 
\int_{\ps_1(C([0,T],\R^d)} \log \frac{\d \overline{\mu}^k_{[0,T]}}{\d \Gamma^k (\overline{\mu}^k_{[0,T]})} \d \tilde{\mu}^k_{[0,T]} \leq H( \tilde{\mu}^k_{[0,T]} \vert \Gamma^k (\overline{\mu}^k_{[0,T]}) ),
\end{equation*}
which is bounded uniformly in $k$.
This implies a bound on $( \overline\lambda^k ( [0,T] ) )_{k \geq 1}$.
From (a variant of) the Prokhorov theorem, up to re-indexing, we can thus assume that $\overline\lambda^k$ converges towards some $\overline\lambda$ in $\M_+ ( [0,T] )$.
In particular, we assume that $( \overline{\lambda}^k_0 )_{k \geq 1}$ and $( \overline{\lambda}^k_T )_{k \geq 1}$ respectively converge towards $\overline{\lambda}(\{0\})$ and $\overline{\lambda}(\{T\})$. 
This shows that
\[ \overline{\lambda}(\d t) = \overline{\lambda}_0 \delta_0 ( 
\d t) + \lambda_t \d t + \overline{\lambda}_T \delta_T(\d t), \]
for some bounded measurable function $t \mapsto \overline\lambda_t$, which is the weak-$\star$ limit of $( \overline\lambda^k )_{k \geq 1}$ (for the $\sigma (L^\infty,L^1)$-topology).

From Lemma \ref{lem:boundPhi}, $\nabla\overline{\varphi}^k_t$ and $\nabla^2 \overline{\varphi}^k_t$ are bounded uniformly in $(t,k)$.
Moreover, for every $1 \leq i \leq d$ and $0<t \leq T$,
\begin{multline} \label{eq:gradC0}
\partial_i \overline\varphi^k_t (x) = \overline{\lambda}^k_T \E [ \partial_i \psi^k_T (X^{t,x,\alpha^k}_T) ] + \int_t^T \E \big\{ \partial_i b^k_s \cdot \nabla \overline\varphi^k_s - \frac{1}{2} \nabla \overline\varphi^k_s \cdot \partial_i a^k_s \nabla \overline\varphi^k_s + \frac{1}{2} \mathrm{Tr}[ \partial_i a^k_s \nabla \overline\varphi^k_s] \\
+ \partial_i c^k_s + \overline\lambda^k_s \partial_i \frac{\delta\Psi^k}{\delta\mu} (\overline\mu^k_s) \big\} (X^{t,x,\alpha^k}_s) \d s. 
\end{multline}
Since $( \overline\lambda^k )_{k \geq 1}$ is now bounded in $L^\infty (0,T)$, we deduce that $( \nabla \overline\varphi^k )_{k \geq 1}$ is equi-continuous on every compact set of $(0,T] \times \R^d$.
Up to-reindexing, the Arzelà-Ascoli allows us to assume that $\overline\varphi^k$ converges towards $\overline{\varphi} \in C^1 ((0,T] \times \R^d)$ uniformly on every compact set.
Similarly, we can define $\overline{\varphi}_0$ so that $\nabla\overline{\varphi}^k_0$ uniformly converges towards $\nabla\overline{\varphi}_0$ on every compact set of $\R^d$.
Going back to \eqref{eq:NLclosure}, $\overline{c}^k$ uniformly converges towards $\overline{c}$ defined by
\[ \overline{c}_t(x) := \int_{\R^d} \frac{\delta b_s}{\delta\mu} ( y, \overline\mu_s, x ) \cdot \nabla \overline{\varphi}_s ( y ) \overline{\mu}_s ( \d y). \]
From the stability result \cite[Proposition 5.2]{ConstrainedSchrodinger}, we get that $\overline{\varphi}$ is a solution of \eqref{eq:coupled_FP_HJB}, so that $(\overline{\mu}_{[0,T]}, \overline{\lambda}, \overline{\varphi})$ satisfies Theorem \ref{thm:opti}-\ref{item:OptCond}.

Taking the $k \rightarrow +\infty$ limit in Lemma \ref{lem:boundPhi} applied to $\overline{\varphi}^k$, we get that $\nabla \overline\varphi_t$ is bounded and Lipschitz uniformly in $t$. 
Under \ref{ass:ini2}-\ref{item:inih2} and \ref{ass:coefReg3}-\ref{item:bC2}, we can further take the limit in Lemma \ref{lem:thirdorder} and get that $\nabla^2 \overline\varphi_t$ is globally Lipschitz-continuous.
Similarly, the analogous of \eqref{eq:gradC0} yields that $t \mapsto \nabla^2 \overline\varphi_t (x) $ is continuous on $(0,T]$.
The bound \eqref{eq:gamm3} results from Proposition \ref{pro:Gamma_3}. 
\end{proof}

As a consequence we get the following result, $( \overline{X}_{t})_{0 \leq t \leq T}$ denoting the optimally controlled process for the feed-back $\overline{\alpha} := -\sigma^\top \nabla \overline\varphi$.

\begin{corollary}[First order regularity] \label{cor:timeReg1} 
\begin{enumerate}[label=(\roman*),ref=(\roman*)] Under \ref{ass:coefReg3}-\ref{item:PsiC3}, 
\item\label{item:gradC0} $t \mapsto \varphi_t (x)$ is continuous on $(0,T]$. Moreover, $( \nabla \overline\varphi_t (\overline{X}_t) )_{0 < t \leq T}$ is almost surely continuous, and for $1 \leq i \leq d$,  
\begin{multline*} 
\d \partial_i \overline\varphi_t ( \overline{X}_t ) = \{ - \partial_i b_t \cdot \nabla \overline\varphi_t + \tfrac{1}{2} \nabla \overline\varphi_t \cdot \partial_i a_t \nabla \overline\varphi_t -  \tfrac{1}{2}\mathrm{Tr}[ \partial_i a_t \nabla^2 \overline\varphi_t] \\
-\partial_i \overline{c}_t -\overline\lambda_t \partial_i \tfrac{\delta\Psi}{\delta\mu}(\overline\mu_t) \} (\overline{X}_t) \d t - \sigma_t \nabla \partial_i \overline\varphi_t (\overline{X}_t) \d B_t. 
\end{multline*}
\item\label{item:PsiC1} $t \mapsto \Psi(\overline{\mu}_t)$ is $C^1$ on $[0,T]$, with bounded derivative given by
\[ \frac{\d}{\d t} \Psi(\overline{\mu}_t) = \E \bigg[ L^{\overline{\alpha}}_t \frac{\delta \Psi}{\delta\mu}(\overline\mu_t, \overline{X}_t) \bigg], \quad t \in (0,T]. \]
\end{enumerate}    
\end{corollary}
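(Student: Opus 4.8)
The plan is to derive both items by passing to the limit $k\to\infty$ in the regularised problems constructed in the proof of Theorem \ref{thm:density}, where the HJB solution $\overline\varphi^k$ is genuinely $C^{1,2}$ (in fact $C^\infty$ in $x$), the multiplier $\overline\lambda^k$ has a $C^1$ density, and $\overline\mu^k_{[0,T]}$ is the law of a bona fide diffusion $\overline X^k$ with bounded Lipschitz coefficients, so that classical Itô calculus and the chain rule for flows of measures apply with no subtlety. The uniform inputs I would use are all available there: $\nabla\overline\varphi^k$ and $\nabla^2\overline\varphi^k$ are bounded uniformly in $(t,k)$ by Proposition \ref{pro:hessbound}; $(\overline\lambda^k)$ is bounded in $L^\infty(0,T)$ (Proposition \ref{pro:density} together with the $k$-uniform mass bound, as used in the proof of Theorem \ref{thm:density}); $\alpha^k:=-(\sigma^k)^\top\nabla\overline\varphi^k$ is bounded and Lipschitz uniformly in $k$; $\overline\varphi^k\to\overline\varphi$ and $\nabla\overline\varphi^k\to\nabla\overline\varphi$ locally uniformly on $(0,T]\times\R^d$ with $\nabla\overline\varphi^k_0\to\nabla\overline\varphi_0$; $\overline\lambda^k\to\overline\lambda_\cdot$ weakly-$\star$ in $L^\infty(0,T)$; and $\E[\sup_{t\in[0,T]}|\overline X^k_t-\overline X_t|]\to 0$ by the coupling argument of Lemma \ref{lem:boundPhi}. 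I shall also use the differentiated HJB equation, which in the regularised setting reads $(\partial_t+L^{\alpha^k}_t)\partial_i\overline\varphi^k_t=R^k_{i,t}$ with $R^k_{i,t}:=-\partial_i b^k_t\cdot\nabla\overline\varphi^k_t+\tfrac12\nabla\overline\varphi^k_t\cdot\partial_i a^k_t\nabla\overline\varphi^k_t-\tfrac12\mathrm{Tr}[\partial_i a^k_t\nabla^2\overline\varphi^k_t]-\partial_i c^k_t-\overline\lambda^k_t\partial_i\psi^k_t$, together with the Feynman--Kac representation \eqref{eq:gradC0}.

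For item \ref{item:gradC0}, continuity of $t\mapsto\overline\varphi_t(x)$ on $(0,T]$ follows from the value-function identity $\overline\varphi^k_t(x)=V^{\lambda^k}_\Sigma(t,x)$: since $\alpha^k$ is bounded, $\nabla\overline\varphi^k$ bounded, and the running cost is bounded uniformly in $(s,k)$, the standard estimate gives that $t\mapsto\overline\varphi^k_t(x)$ is Hölder-$\tfrac12$ uniformly in $k$ on each compact of $\R^d$, a property inherited in the limit (the restriction to $(0,T]$ is genuine: a possible atom of $\overline\lambda$ at $0$ creates a jump of $\overline\varphi_t$ as $t\to 0^+$, since then $\int_{[t,T]}\tfrac{\delta\Psi}{\delta\mu}(\overline\mu_s)\overline\lambda(\d s)$ jumps). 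Almost-sure continuity of $t\mapsto\nabla\overline\varphi_t(\overline X_t)$ on $(0,T]$ follows by bounding $\sup_{s\in[\varepsilon,T]}|\nabla\overline\varphi^k_s(\overline X^k_s)-\nabla\overline\varphi_s(\overline X_s)|$ by $\lVert\nabla^2\overline\varphi^k\rVert_\infty\sup_s|\overline X^k_s-\overline X_s|$ plus $\sup_{s\in[\varepsilon,T]}|\nabla\overline\varphi^k_s(\overline X_s)-\nabla\overline\varphi_s(\overline X_s)|$: the first tends to $0$ in $L^1$, and since $\overline X$ has continuous paths (hence lies in a random compact on $[\varepsilon,T]$) and $\nabla\overline\varphi^k\to\nabla\overline\varphi$ locally uniformly, the second tends to $0$ almost surely, so the continuous processes $\nabla\overline\varphi^k_\cdot(\overline X^k_\cdot)$ converge uniformly on each $[\varepsilon,T]$ in probability and the limit is continuous there; the deterministic fact that $t\mapsto\nabla\overline\varphi_t(x)$ is continuous on $(0,T]$ comes out as a byproduct from \eqref{eq:gradC0} (whose integrand is bounded uniformly in $k$ once $\overline\lambda^k\in L^\infty$ and $\nabla^2\overline\varphi^k$ are bounded). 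The Itô formula is obtained by applying Itô's formula to $\partial_i\overline\varphi^k_s(\overline X^k_s)$ — licit since $\nabla\overline\varphi^k$ is $C^{1,2}$ with Hölder derivatives by the Schauder step in the proof of Proposition \ref{pro:hessbound} — yielding, for $0<t<t'\le T$, the increment $\partial_i\overline\varphi^k_{t'}(\overline X^k_{t'})-\partial_i\overline\varphi^k_t(\overline X^k_t)=\int_t^{t'}R^k_{i,s}(\overline X^k_s)\,\d s+\int_t^{t'}(\sigma^k_s)^\top\nabla\partial_i\overline\varphi^k_s(\overline X^k_s)\,\d B_s$, and letting $k\to\infty$: the left-hand side converges in $L^1$, and in $R^k_{i,s}$ all terms pass to the limit by local uniform convergence and domination (the $\overline\lambda^k_s\partial_i\psi^k_s(\overline X^k_s)$ term by pairing the boundedly $L^1(t,t')$-convergent process $\partial_i\psi^k_s(\overline X^k_s)$ against the weakly-$\star$ convergent $\overline\lambda^k$, pathwise then in $L^1(\P)$), except for the $\nabla^2\overline\varphi^k$-term treated below.

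For item \ref{item:PsiC1}, the chain rule for flows (e.g. \cite[Proposition 1.3]{daudin2023optimal}, applicable because $\overline\mu^k_t=\mathcal{L}(\overline X^k_t)$ solves the Fokker--Planck equation with generator $L^{\alpha^k}_t$ and $\tfrac{\delta\Psi^k}{\delta\mu}(\mu,\cdot)$ is $C^2$ with uniformly bounded derivatives under \ref{ass:coefReg3}-\ref{item:PsiC3}) gives $\Psi^k(\overline\mu^k_t)=\Psi^k(\overline\mu^k_0)+\int_0^t\E[L^{\alpha^k}_s\tfrac{\delta\Psi^k}{\delta\mu}(\overline\mu^k_s,\overline X^k_s)]\,\d s$; the integrand is bounded uniformly in $(s,k)$ and converges, for each $s\in(0,T]$, to $\E[L^{\overline\alpha}_s\tfrac{\delta\Psi}{\delta\mu}(\overline\mu_s,\overline X_s)]$ (using $\alpha^k\to\overline\alpha$, $\overline X^k\to\overline X$, uniform convergence of coefficients), so dominated convergence and $\Psi^k(\overline\mu^k_\cdot)\to\Psi(\overline\mu_\cdot)$ yield $\Psi(\overline\mu_t)=\Psi(\overline\mu_0)+\int_0^t\E[L^{\overline\alpha}_s\tfrac{\delta\Psi}{\delta\mu}(\overline\mu_s,\overline X_s)]\,\d s$; finally $s\mapsto\E[L^{\overline\alpha}_s\tfrac{\delta\Psi}{\delta\mu}(\overline\mu_s,\overline X_s)]$ is continuous on $(0,T]$ by dominated convergence, using continuity of $s\mapsto(\overline\mu_s,\overline X_s)$, of the coefficients, and crucially the almost-sure continuity of $\overline\alpha_s=-\sigma_s^\top\nabla\overline\varphi_s(\overline X_s)$ from item \ref{item:gradC0}, and it has a right-limit at $0$ (the velocity field $-\sigma_0^\top\lim_{s\to 0^+}\nabla\overline\varphi_s$, which exists by the mild form of the HJB equation), so $t\mapsto\Psi(\overline\mu_t)$ is $C^1$ on $[0,T]$ with the stated bounded derivative on $(0,T]$. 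The step I expect to be the main obstacle is the passage to the limit in the term $\tfrac12\mathrm{Tr}[\partial_i a^k_s\nabla^2\overline\varphi^k_s](\overline X^k_s)$ of the drift and, correlatively, in the stochastic integral $\int_t^{t'}(\sigma^k_s)^\top\nabla^2\overline\varphi^k_s(\overline X^k_s)\,\d B_s$, since under \ref{ass:coefReg3}-\ref{item:PsiC3} one only controls $\nabla^2\overline\varphi^k$ in $L^\infty$ uniformly, with $\nabla^2\overline\varphi^k_s\to\nabla^2\overline\varphi_s$ merely weakly-$\star$ for a.e.\ $s$. I would resolve this by combining the weak-$\star$ convergence with the strong $L^1_{\mathrm{loc}}$-convergence of the marginal densities $\overline\mu^k_s$ — a consequence of the Fisher-information-type bounds behind \eqref{eq:gamm3}, namely Lemma \ref{lem:Gamma_2} applied to $(\overline\mu^k_s)$ with $k$-uniform constants as in the proof of Theorem \ref{thm:density}, which give local $H^1$-compactness — so that the drift term passes to the limit, and then identifying the martingale part (a well-defined element of $L^2(\d s\otimes\d\P)$, since $\overline X_s$ has a density, so its value is independent of the a.e.\ representative of $\nabla^2\overline\varphi_s$) as the weak $L^2$-limit of the approximating martingales, read off as the difference of the other convergent terms. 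Under the alternative hypothesis \ref{ass:coefReg3}-\ref{item:bC2} this difficulty disappears, because $\nabla^3\overline\varphi^k$ is then bounded uniformly by Lemma \ref{lem:thirdorder}, so $\nabla^2\overline\varphi^k\to\nabla^2\overline\varphi$ locally uniformly on $(0,T]\times\R^d$ and $t\mapsto\nabla^2\overline\varphi_t(x)$ is moreover continuous there.
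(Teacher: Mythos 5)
Your overall strategy is the paper's: regularise exactly as in the proof of Theorem \ref{thm:density}, exploit the $k$-uniform bounds from Proposition \ref{pro:hessbound}, Proposition \ref{pro:density} and Lemma \ref{lem:boundPhi}, and pass to the limit; your proof of item \ref{item:PsiC1} (chain rule for $\Psi^k(\overline\mu^k_t)$, dominated convergence, continuity of $\nabla\overline\varphi_s(\overline X_s)$ from item \ref{item:gradC0}, continuous prolongation at $t=0$) is the paper's proof almost verbatim. The divergence is in item \ref{item:gradC0}. The paper does not pass to the limit in the Itô decomposition of $\partial_i\overline\varphi^k_t(\overline X^k_t)$: it takes $k\to+\infty$ in the deterministic Feynman--Kac identity \eqref{eq:gradC0} and then evaluates at $x=\overline X_t$. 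There the troublesome term $\mathrm{Tr}[\partial_i a^k_s\nabla^2\overline\varphi^k_s]$ sits inside an expectation over $X^{t,x,\alpha^k}_s$, so the pairing you propose (weak-$\star$ convergence of $\nabla^2\overline\varphi^k_s$ against strongly $L^1$-convergent densities) is exactly what is needed and suffices; the drift of $\partial_i\overline\varphi_t(\overline X_t)$ then follows from the Markov property, since the limiting identity says that $\partial_i\overline\varphi_t(\overline X_t)+\int_0^t\{\cdots\}(\overline X_s)\,\d s$ is a martingale, and no convergence of stochastic integrals is ever required.

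Along your route the same pairing is not enough as stated: it yields convergence of time-marginals, i.e.\ of $\E\big[\mathrm{Tr}[\partial_i a^k_s\nabla^2\overline\varphi^k_s](\overline X^k_s)\big]$, whereas item \ref{item:gradC0} is a statement about processes, so you must identify the limit of the random variable $\int_t^{t'}\mathrm{Tr}[\partial_i a^k_s\nabla^2\overline\varphi^k_s](\overline X^k_s)\,\d s$ (say weakly in $L^1(\P)$, tested against arbitrary bounded $\F_{t'}$-measurable $Z$) and of the martingales $\int_t^{t'}(\sigma^k_s)^\top\nabla\partial_i\overline\varphi^k_s(\overline X^k_s)\,\d B_s$ as the specific stochastic integral in the statement; ``reading it off as the difference of the convergent terms'' only shows that the limit is some martingale. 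Under \ref{ass:coefReg3}-\ref{item:PsiC3} --- the regime of the corollary --- there is no equicontinuity of $\nabla^2\overline\varphi^k$ (your remark that \ref{item:bC2} together with Lemma \ref{lem:thirdorder} removes the difficulty is correct, but that is not the stated hypothesis), so testing against $Z$ leads to pairings of $\nabla^2\overline\varphi^k_s$ with the $k$-dependent functions $\E[Z\mid\overline X^k_s=x]\,\overline\mu^k_s(x)$, whose strong $L^1$ convergence you have not established; the coupling bound $\E[\sup_t|\overline X^k_t-\overline X_t|]\to0$ does not substitute for it precisely because $\nabla^2\overline\varphi^k$ has no uniform modulus of continuity. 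To close the argument, either carry out this conditional version of your pairing (using convergence of the transition densities of $\overline X^k$, available since the drifts converge locally uniformly and the diffusions are uniformly elliptic and Lipschitz), or simply adopt the paper's route through \eqref{eq:gradC0}, which bypasses the issue; note that the explicit form of the martingale part is in any case the only piece that resists an expectation-level argument, and it is the drift and its expectation that are used downstream (Lemma \ref{lem:timeReg2}, Theorem \ref{thm:stability}).
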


\begin{proof}
We follow the regularisation procedure in the above proof of Theorem \ref{thm:density}. 
Since $\overline{\lambda}$ has a bounded density,
the continuity of $t \mapsto \overline\varphi_t(x)$ can be read directly on \eqref{eq:coupled_FP_HJB}.
To get \ref{item:gradC0}, we simply take the $k \rightarrow + \infty$ limit within \eqref{eq:gradC0} initialised at $x = \overline{X}_t$.

For \ref{item:PsiC1}, we differentiate $\Psi^k ( \overline\mu^k_s )$ to get that for every $0 < t < t' < T$,
\[ \Psi^k_{t'} ( \overline\mu^k_{t'} ) - \Psi^k_{t} ( \overline\mu^k_{t} ) = \E \int_t^{t'} L^{\alpha^k}_s \frac{\delta\Psi^k}{\delta\mu} (\overline\mu^k_s, X^{\alpha^k}_s) \d s.  \]
Taking the $k \rightarrow +\infty$ limit and using that $( \nabla \overline\varphi_s ( \overline{X}_s) )_{0 < s \leq T}$ is continuous on $(0,T]$, $t \mapsto \Psi (\overline\mu_t)$ is $C^1$ on $(0,T]$ with derivative given by \ref{item:PsiC1}. 
From \ref{item:gradC0}, we see that $s \mapsto \E [ L^{\overline\alpha}_s \tfrac{\delta\Psi}{\delta\mu} (\overline\mu_s, \overline{X}_s) ]$ has a finite limit as $s \rightarrow 0$.
By continuous prolongation, $t \mapsto \Psi(\overline{\mu}_t)$ is thus differentiable on the full $[0,T]$ with continuous derivative.
\end{proof}

\begin{rem}[Linearising before regularising]
In the proof of Theorem \ref{thm:density}, we regularised the non-linear problem before to linearise it and to take the limit in the optimality conditions.
An alternative (and easier)
way is to linearise before to regularise.
In particular, this avoids using Lemma \ref{lem:mollif}.
However, this approach requires additional derivatives w.r.t. $\mu$ to bound the time-derivatives $\partial_s b^k_s$ and $\partial_s \psi^k_s$ uniformly in $k$.
\end{rem}

\subsection{Quantitative stability} \label{subsec:stability}

We can apply Theorem \ref{thm:opti} to $\mu^\varepsilon_{[0,T]}$ and the constraints $\Psi - \varepsilon$, because $\mu^0_{[0,T]}$ provides constraint qualification using Remark \ref{rem:Suffconv}.
Let us fix a reference probability system $\Sigma = ( \Omega,(\mathcal{F}_t)_{0\leq t\leq T},\P,(B_t)_{0\leq t\leq T} )$.
From Theorem \ref{thm:opti}, $\mu^\varepsilon_{[0,T]}$ is the law of the solution to the SDE
\[ \d X^\varepsilon_t = b_t ( X^\varepsilon_t ) \d t + \sigma_t \alpha^\varepsilon_t ( X^\varepsilon_t ) \d t + \sigma_t (X^\varepsilon_t ) \d B_t, \]
where $\alpha^\varepsilon_t = -\sigma^\top_t \nabla \varphi^\varepsilon_t$ is the optimal feed-back control. 
In the following, $C$ is a positive constant that may change from line to line, but always independent of $\varepsilon$.
From \eqref{eq:LambdaMass}, we thus have $\lambda^\varepsilon ([0,T]) \leq C$.
Using Theorem \ref{thm:density}, we further get 
\begin{equation} \label{eq:BoundsUnifeps}
\sup_{0 \leq t \leq T} \lVert \nabla \varphi^\varepsilon_t \rVert_\infty + \lVert \nabla^2 \varphi^\varepsilon_t \rVert_\infty \leq C, \quad \sup_{0 \leq t \leq T} \lVert \alpha^\varepsilon_t \rVert_\infty + \lVert \nabla \alpha^\varepsilon_t \rVert_\infty \leq C, 
\end{equation}
and a straightforward consequence is that $\E [ \sup_{0 \leq t \leq T} \vert X^\varepsilon_t \vert^2 ] \leq C$. 

\begin{lemma}[$L^2$-stability] \label{lem:L2stab}
\begin{enumerate}[label=(\roman*),ref=(\roman*)] $\phantom{a}$
\item\label{item:entropBound} $H(\mu^0_{[0,T]} \vert \mu^\varepsilon_{[0,T]} ) = H(\mu^0_0 \vert \mu^\varepsilon_0) + \E \int_0^T \tfrac{1}{2} \lvert \alpha^\varepsilon_t ( 
X^0_t) - \alpha^0_t (  X^0_t ) \rvert^2 \d t \leq C \varepsilon$.
\item\label{item:W1Bound} $\sup_{0 \leq t \leq T} W_1 (\mu^\varepsilon_t, \mu^0_t) \leq C \varepsilon^{1/2}$.
\item\label{item:L2Bound} Let $r^\varepsilon_t (x) := f_t (x) g_t ( x ) h_t(\alpha^\varepsilon_t (x))$, for measurable $f,g,h : [0,T] \times \R^d \rightarrow \R$.
If $f$ is bounded, $g$ is globally Lipschitz in $x$ and $h$ is locally-Lipschitz (uniformly in $t$), 
\[ \E \int_0^T \lvert r^\varepsilon_t (X^\varepsilon_t) - r^0_t (X^0_t ) \rvert \d t \leq C \varepsilon^{1/4}. \] 
\end{enumerate}    
\end{lemma}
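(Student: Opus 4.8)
For \ref{item:entropBound} the plan is as follows. Since $\mu^0_{[0,T]}$ and $\mu^\varepsilon_{[0,T]}$ are the path-laws of the two controlled diffusions written just above, which share the diffusion matrix $\sigma$ and differ only through their drifts $\sigma_t\alpha^0_t$, $\sigma_t\alpha^\varepsilon_t$ and their initial laws, the chain rule for the relative entropy combined with Girsanov's theorem yields the stated identity
\[ H(\mu^0_{[0,T]} \vert \mu^\varepsilon_{[0,T]}) = H(\mu^0_0 \vert \mu^\varepsilon_0) + \tfrac12 \, \E \int_0^T \lvert \alpha^\varepsilon_t(X^0_t) - \alpha^0_t(X^0_t) \rvert^2 \d t, \]
all terms being finite since the relevant densities are exponentials of functions of linear growth and $\mu^0_{[0,T]}$ has finite moments of every order. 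It then suffices to bound the left-hand side by $C\varepsilon$. Using the density representation of Theorem \ref{thm:opti}-\ref{item:OptCond} for $\mu^0_{[0,T]}$ and $\mu^\varepsilon_{[0,T]}$ (the term $\overline c$ vanishing here because $b$ does not depend on $\mu$) together with the convention $\int_{\R^d}\tfrac{\delta\Psi}{\delta\mu}(\mu)\d\mu=0$, a direct computation of the normalising constants gives, for every $\varepsilon\geq0$,
\[ H(\mu^0_{[0,T]}\vert\mu^\varepsilon_{[0,T]}) = \big[ H(\mu^0_{[0,T]}\vert\nu_{[0,T]}) - H(\mu^\varepsilon_{[0,T]}\vert\nu_{[0,T]}) \big] + \int_0^T \Big( \int_{\R^d} \tfrac{\delta\Psi}{\delta\mu}(\mu^\varepsilon_t) \, \d[\mu^0_t-\mu^\varepsilon_t] \Big) \lambda^\varepsilon(\d t), \]
and the last term is $\leq 0$: convexity of $\Psi$ bounds the inner integral by $\Psi(\mu^0_t)-\Psi(\mu^\varepsilon_t)$, which is $\leq-\varepsilon$ for $\lambda^\varepsilon$-a.e. $t$ by complementary slackness ($\Psi(\mu^\varepsilon_t)=\varepsilon$) and admissibility ($\Psi(\mu^0_t)\leq0$). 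It remains to prove $H(\mu^0_{[0,T]}\vert\nu_{[0,T]}) - H(\mu^\varepsilon_{[0,T]}\vert\nu_{[0,T]}) \leq C\varepsilon$, for which I would use a convex-combination competitor: since $\Psi$ is convex, Remark \ref{rem:Suffconv} provides $\tilde\mu_{[0,T]}$ with $H(\tilde\mu_{[0,T]}\vert\nu_{[0,T]})<+\infty$ and $\sup_{t\in[0,T]}\Psi(\tilde\mu_t)=:-2\eta<0$, the supremum being attained by continuity of $t\mapsto\Psi(\tilde\mu_t)$; setting $\theta_\varepsilon:=\varepsilon/(\varepsilon+2\eta)$, the measure $\mu^{\theta_\varepsilon}_{[0,T]}:=(1-\theta_\varepsilon)\mu^\varepsilon_{[0,T]}+\theta_\varepsilon\tilde\mu_{[0,T]}$ satisfies $\Psi(\mu^{\theta_\varepsilon}_t)\leq(1-\theta_\varepsilon)\varepsilon-2\theta_\varepsilon\eta=0$ for all $t$, so it competes in the $\varepsilon=0$ problem, and convexity of $H(\cdot\vert\nu_{[0,T]})$ together with $H(\mu^\varepsilon_{[0,T]}\vert\nu_{[0,T]})\geq0$ gives $H(\mu^0_{[0,T]}\vert\nu_{[0,T]})\leq H(\mu^{\theta_\varepsilon}_{[0,T]}\vert\nu_{[0,T]})\leq H(\mu^\varepsilon_{[0,T]}\vert\nu_{[0,T]})+\tfrac{\varepsilon}{2\eta}H(\tilde\mu_{[0,T]}\vert\nu_{[0,T]})$. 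This proves \ref{item:entropBound}, and in particular $\E\int_0^T\lvert\alpha^\varepsilon_t(X^0_t)-\alpha^0_t(X^0_t)\rvert^2\d t\leq C\varepsilon$.

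For \ref{item:W1Bound} I would combine \ref{item:entropBound} with a transport--entropy inequality. The data-processing inequality gives $H(\mu^0_t\vert\mu^\varepsilon_t)\leq H(\mu^0_{[0,T]}\vert\mu^\varepsilon_{[0,T]})\leq C\varepsilon$ for every $t$. Moreover $\mu^\varepsilon_0$ is proportional to $e^{-\varphi^\varepsilon_0}\nu_0$ with $\varphi^\varepsilon_0$ of linear growth uniformly in $\varepsilon$ by \eqref{eq:BoundsUnifeps}, so \ref{ass:ini2}-\ref{item:iniT1} yields a uniform bound on $\int_{\R^d}e^{\delta\vert x\vert^2}\mu^\varepsilon_0(\d x)$ for $\delta$ small, and propagating this through the SDE defining $X^\varepsilon$ (linear-growth drift, bounded diffusion, all uniform in $\varepsilon$) gives $\sup_{\varepsilon\in[0,1],\,t\in[0,T]}\int_{\R^d}e^{\delta'\vert x\vert^2}\mu^\varepsilon_t(\d x)<+\infty$. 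By the classical characterisation of the $T_1$ transport--entropy inequality through square-exponential integrability, $\mu^\varepsilon_t$ then satisfies $T_1$ with a constant uniform in $(t,\varepsilon)$, whence $W_1(\mu^0_t,\mu^\varepsilon_t)\leq C\sqrt{H(\mu^0_t\vert\mu^\varepsilon_t)}\leq C\sqrt\varepsilon$ for every $t$.

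For \ref{item:L2Bound} I would work under the maximal coupling of the path-laws: by Pinsker's inequality and \ref{item:entropBound} one has $\lVert\mu^0_{[0,T]}-\mu^\varepsilon_{[0,T]}\rVert_{\mathrm{TV}}\leq C\sqrt\varepsilon$, so there is a coupling of $(X^0_{[0,T]},X^\varepsilon_{[0,T]})$ with $p:=\P(X^0_{[0,T]}\neq X^\varepsilon_{[0,T]})\leq C\sqrt\varepsilon$, and I would split the integral over this event and its complement. On $\{X^0_{[0,T]}=X^\varepsilon_{[0,T]}\}$ one has $r^\varepsilon_t(X^\varepsilon_t)-r^0_t(X^0_t)=f_t(X^0_t)g_t(X^0_t)\,[h_t(\alpha^\varepsilon_t(X^0_t))-h_t(\alpha^0_t(X^0_t))]$; since $f$ is bounded, $\vert g_t(x)\vert\leq C(1+\vert x\vert)$, the controls $\alpha^0_t,\alpha^\varepsilon_t$ take values in a fixed ball by \eqref{eq:BoundsUnifeps} on which $h_t$ is Lipschitz uniformly in $t$, this is $\leq C(1+\vert X^0_t\vert)\,\vert\alpha^\varepsilon_t(X^0_t)-\alpha^0_t(X^0_t)\vert$, and Cauchy--Schwarz with $\E[\sup_t\vert X^0_t\vert^2]\leq C$ and \ref{item:entropBound} bounds its contribution by $C\sqrt\varepsilon$. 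On $\{X^0_{[0,T]}\neq X^\varepsilon_{[0,T]}\}$ I would use the crude bound $\vert r^\varepsilon_t(X^\varepsilon_t)-r^0_t(X^0_t)\vert\leq C(1+\vert X^\varepsilon_t\vert)+C(1+\vert X^0_t\vert)$ ($f,h$ bounded, $g$ of linear growth), so that Cauchy--Schwarz, the uniform moment bounds on $\sup_t\vert X^\varepsilon_t\vert$ and $\sup_t\vert X^0_t\vert$, and $p\leq C\sqrt\varepsilon$ bound this contribution by $C\sqrt p\leq C\varepsilon^{1/4}$. Summing the two contributions gives \ref{item:L2Bound}.

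I expect the main obstacle to be the \emph{linear} rate $O(\varepsilon)$ in \ref{item:entropBound}: everything downstream follows by soft manipulations, the square roots merely reflecting Pinsker- and transport-type inequalities, but obtaining $O(\varepsilon)$ rather than $O(\sqrt\varepsilon)$ for the entropy requires both the identity reducing $H(\mu^0_{[0,T]}\vert\mu^\varepsilon_{[0,T]})$ to the excess $H(\mu^0_{[0,T]}\vert\nu_{[0,T]})-H(\mu^\varepsilon_{[0,T]}\vert\nu_{[0,T]})$ and the first-order information carried by the qualification competitor $\tilde\mu_{[0,T]}$. The Gaussian integrability \ref{ass:ini2}-\ref{item:iniT1} is what keeps the rates in \ref{item:W1Bound}--\ref{item:L2Bound} free of logarithmic corrections.
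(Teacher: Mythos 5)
Your proposal is correct, but it takes a genuinely different route from the paper at each of the three points, so a comparison is in order. For \ref{item:entropBound} the paper simply computes the entropy identity by Girsanov and then invokes the stability result of \cite[Proposition 2.18 and Remark 2.19]{ConstrainedSchrodinger}; you reprove that result from scratch: the identity $H(\mu^0_{[0,T]}\vert\mu^\varepsilon_{[0,T]})=H(\mu^0_{[0,T]}\vert\nu_{[0,T]})-H(\mu^\varepsilon_{[0,T]}\vert\nu_{[0,T]})+\int_{[0,T]}\big(\int\tfrac{\delta\Psi}{\delta\mu}(\mu^\varepsilon_t)\,\d[\mu^0_t-\mu^\varepsilon_t]\big)\lambda^\varepsilon(\d t)$ (using $\log Z_\varepsilon=-H(\mu^\varepsilon_{[0,T]}\vert\nu_{[0,T]})$, which your convention indeed yields), the nonpositivity of the multiplier term via convexity and complementary slackness, and the convex-combination competitor built from the qualification measure $\tilde\mu_{[0,T]}$ of Remark \ref{rem:Suffconv}; this is essentially the content of the cited result, so your argument is a valid self-contained substitute and correctly captures why the rate is linear in $\varepsilon$. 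For \ref{item:W1Bound} the paper uses a transport--entropy inequality only at time $0$ (via Lemma \ref{lem:InitalEntrop} and \ref{ass:ini2}-\ref{item:iniT1}) and then propagates by a synchronous coupling, Gronwall and the $L^2$ control bound of \ref{item:entropBound}; you instead apply data processing at every $t$ and a $T_1$ inequality for each marginal $\mu^\varepsilon_t$, which obliges you to establish square-exponential integrability of all marginals uniformly in $(t,\varepsilon)$ — true (bounded diffusion, uniformly linear-growth drift, plus the uniform initial Gaussian bound), but an extra propagation step the paper's route avoids. For \ref{item:L2Bound} the paper decomposes $r^\varepsilon_t(X^\varepsilon_t)-r^0_t(X^0_t)$ into three differences, handled by Cauchy--Schwarz with Pinsker/total variation, the $L^2$ bound of \ref{item:entropBound}, and the $W_1$ bound of \ref{item:W1Bound}; your single maximal coupling of the path laws, split on the coincidence event, reaches the same $\varepsilon^{1/4}$ rate and is arguably cleaner. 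Two shared, harmless caveats: the left-hand side of \ref{item:L2Bound} depends on the coupling of $(X^\varepsilon,X^0)$, so a convenient coupling must be chosen (legitimate, since all downstream uses only involve the marginal quantities $\vert\E[r^\varepsilon_t(X^\varepsilon_t)]-\E[r^0_t(X^0_t)]\vert$); and both arguments implicitly use that $h_t\circ\alpha^\varepsilon_t$ is bounded uniformly in $t$, which follows from the uniform bound \eqref{eq:BoundsUnifeps} on $\alpha^\varepsilon$ together with the uniform local Lipschitz property of $h$.
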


\begin{proof}
First, \ref{item:entropBound} is a direct application of the stability result proved \cite[Proposition 2.18 and Remark 2.19]{ConstrainedSchrodinger}, the relative entropy being computed using the Girsanov transform. 
Let us bound $W_1(\mu^\varepsilon_0,\mu^0_0)$. Since $\varphi^\varepsilon_0$ has linear growth independently of $\varepsilon$, Lemma \ref{lem:InitalEntrop} implies that the bound \ref{ass:ini2}-\ref{item:iniT1} holds for $\overline{\mu}^\varepsilon$ with a constant independent of $\varepsilon$. 
This Gaussian bound is known to imply a transport-entropy inequality: $C > 0$ independent of $\varepsilon$ exists such that 
\[ \forall \mu' \in \ps_1 ( \R^d), \quad W^2_1( \mu', \overline\mu_0^\varepsilon ) \leq C H ( \mu' \vert \overline\mu_0^\varepsilon).
\]
Together with \ref{item:entropBound}, this implies $W_1(\mu^\varepsilon_0,\mu^0_0) \leq C \varepsilon^{1/2}$.
Using Ito's formula, our Lipschitz assumptions on the coefficients and the uniform bound on $\alpha^\varepsilon_t$:
\[ \E [ \vert X^\varepsilon_t - X^0_t \vert ] \leq \E [ \vert X^\varepsilon_0 - X^0_0 \vert ] + C \int_0^t \E [ \vert X^\varepsilon_s - X^0_s \vert ] \d s + C \int_0^t \E [ \vert \alpha^\varepsilon_s - \alpha^0_s \vert ] \d s. \]
Using \ref{item:entropBound} and Gronwall's Lemma, we establish \ref{item:W1Bound}.
To prove \ref{item:L2Bound}, we decompose:
\begin{multline*}
\E \int_0^T \lvert r^\varepsilon_t (X^\varepsilon_t) - r^0_t (X^0_t ) \rvert \d t \leq \E \int_0^T \lvert g_t (X^\varepsilon_t) [ f_t (X^\varepsilon_t) h_t ( \alpha^\varepsilon_t ( X^\varepsilon_t) ) - f_t (X^0_t) h_t ( \alpha^\varepsilon_t ( X^0_t) ) ] \rvert \d t \\
+ \E \int_0^T \lvert f_t (X^0_t) g_t (X^\varepsilon_t) [ h_t (\alpha^\varepsilon_t (X^0_t)) - h_t (\alpha^0_t (X^0_t)) ] \rvert + \lvert f_t (X^0_t) h_t ( \alpha^0_t ( X^0_t) )[ g_t (X^\varepsilon_t) - g_t (X^0_t ) ] \rvert \d t. 
\end{multline*}
Using the Cauchy-Schwarz inequality, the first term is bounded by 
\[ C \bigg[ \E \int_0^T \vert g_t (X^\varepsilon_t) \vert^2 \d t \bigg]^{1/2} \sup_{0 \leq t \leq T} \lVert (h_t \circ \alpha^\varepsilon_t) f_t \rVert_\infty \lVert \mu^\varepsilon_t - \mu^0_t \rVert_{\mathrm{TV}}^{1/2}. \]
This term is bounded by $C \varepsilon^{1/4}$ using \eqref{eq:BoundsUnifeps}, \ref{item:entropBound} and Pinsker's inequality.
Similarly, we bound the second term by
\[ C \bigg[ \E \int_0^T \vert g_t (X^\varepsilon_t) \vert^2 \d t \bigg]^{1/2} \bigg[ \E \int_0^T \vert \alpha^\varepsilon_t (X^0_t) - \alpha^0_t (X^0_t) \vert^2 \d t \bigg]^{1/2}, \]
using that $h_t$ is locally Lispchitz and $\alpha^\varepsilon$ is bounded. 
This quantity is smaller than $C \varepsilon^{1/2}$ using \ref{item:entropBound} and the fact that $\E [ \sup_{0 \leq t \leq T} \vert X^\varepsilon_t \vert^2 ] \leq C$. 
Since $g_t$ is Lipschitz and $(h_t \circ \alpha^0_t)f_t$ is bounded, the third term is bounded by $C \sup_{0 \leq t \leq T} W_1 (\mu^\varepsilon_t, \mu^0_t)$, which we bound using \ref{item:W1Bound}.
\end{proof}

In the following, we use the notation $\psi^\varepsilon_t := \Psi (\mu^\varepsilon_t) + \varepsilon + \tfrac{\delta\Psi}{\delta\mu}(\mu^\varepsilon_t)$.
We recall that $\tfrac{\delta\Psi}{\delta\mu}(\mu^\varepsilon_t) : x \mapsto \tfrac{\delta\Psi}{\delta\mu}(\mu^\varepsilon_t,x)$ is defined in Definition \ref{def:PAP2DiFF}.
The Lipschitz assumption \ref{ass:coefReg3}-\ref{item:PsiC4} and Lemma \ref{lem:L2stab}-\ref{item:L2Bound} imply
that $\Psi$ is Lipschitz in $W_1$ (see e.g. \cite[Section 2.2.1]{cardaliaguet2019master}). 
From Lemma \ref{lem:L2stab}-\ref{item:W1Bound}, we get that 
\begin{equation} \label{eq:StabPsi}
\sup_{0 \leq s \leq T} \lvert \Psi (\mu^\varepsilon_s) - \Psi (\mu^0_s) \rvert  \leq C \varepsilon^{1/2}.
\end{equation}
Similarly,
\begin{equation} \label{eq:StabPartisalPsi}
\max_{0 \leq i \leq 4} \sup_{0 \leq s \leq T} \lVert \nabla^i \psi^\varepsilon_s - \nabla^i \psi^0_s \rVert_\infty \leq C \varepsilon^{1/2}.
\end{equation}
As a consequence, we can readily adapt the proof of Lemma \ref{lem:L2stab}-\ref{item:L2Bound} to include the derivatives of $\psi^\varepsilon_t$.

\begin{lemma} \label{lem:L2stabImp}
Lemma \ref{lem:L2stab}-\ref{item:L2Bound} still holds if \[ r^\varepsilon_t (x) = f_t(x) \psi^\varepsilon_t(x) h_t (\alpha^\varepsilon_t(x)) \quad \text{or} \quad r^\varepsilon_t (x) = f_t(x) g_t(x) h_t (\alpha^\varepsilon_t(x)) k^\varepsilon_t(x), \] 
where $k^\varepsilon_t$ is any of the four derivatives of $\psi^\varepsilon_t$.
\end{lemma}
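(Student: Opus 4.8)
The plan is to re-run the telescoping decomposition used in the proof of Lemma \ref{lem:L2stab}-\ref{item:L2Bound}, now treating the $\varepsilon$-dependent factor $\psi^\varepsilon_t$ exactly as the Lipschitz factor $g$ was treated there (and the factor $k^\varepsilon_t$ as an extra bounded and Lipschitz factor), and controlling the one genuinely new contribution — the one coming from replacing $\psi^\varepsilon_t$ (resp. $k^\varepsilon_t$) by $\psi^0_t$ (resp. $k^0_t$) at the argument $X^0_t$ — by means of the sup-norm bound \eqref{eq:StabPartisalPsi}. The key observation is that $\psi^\varepsilon_t$ and its derivatives $\nabla^i \psi^\varepsilon_t$, $1 \leq i \leq 4$, enjoy the same uniform-in-$(\varepsilon,t)$ regularity that the factors $f$, $g$ and $h_t \circ \alpha^\varepsilon_t$ enjoyed in Lemma \ref{lem:L2stab}-\ref{item:L2Bound}: $\psi^\varepsilon_t$ is globally Lipschitz in $x$ with at most linear growth, while each $\nabla^i \psi^\varepsilon_t$ is bounded and globally Lipschitz in $x$, all uniformly in $(\varepsilon,t)$, by Assumptions \ref{ass:linDiff} and \ref{ass:coefReg3}-\ref{item:PsiC4} together with the uniform bound \eqref{eq:BoundsUnifeps} on $\alpha^\varepsilon$.

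For $r^\varepsilon_t = f_t \psi^\varepsilon_t (h_t \circ \alpha^\varepsilon_t)$, I would apply the decomposition of the proof of Lemma \ref{lem:L2stab}-\ref{item:L2Bound} verbatim with $g := \psi^\varepsilon_t$; all three resulting telescoping terms are then estimated exactly as there (Cauchy--Schwarz, Pinsker's inequality, Lemma \ref{lem:L2stab}-\ref{item:entropBound}, Lemma \ref{lem:L2stab}-\ref{item:W1Bound} and $\E[\sup_t |X^\varepsilon_t|^2] \leq C$, noting $\E\int_0^T |\psi^\varepsilon_t(X^\varepsilon_t)|^2 \d t \leq C$ by the linear growth of $\psi^\varepsilon_t$), yielding a bound $C \varepsilon^{1/4}$ that is uniform in $\varepsilon$ because the constants produced depend only on the $\varepsilon$-independent Lipschitz and linear-growth bounds of $\psi^\varepsilon_t$. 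The only additional term is
\[
\E\int_0^T \bigl| f_t(X^0_t)\,(h_t \circ \alpha^0_t)(X^0_t)\,[\psi^\varepsilon_t - \psi^0_t](X^0_t) \bigr| \d t \leq \Bigl( \sup_t \lVert \psi^\varepsilon_t - \psi^0_t \rVert_\infty \Bigr) \E\int_0^T \bigl| f_t(X^0_t)\,(h_t \circ \alpha^0_t)(X^0_t) \bigr| \d t,
\]
where the last expectation is finite since $f$ and $\alpha^0$ are bounded by \eqref{eq:BoundsUnifeps}, and the supremum is $\leq C \varepsilon^{1/2}$ by the $i=0$ case of \eqref{eq:StabPartisalPsi}. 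Collecting the two bounds gives $\leq C \varepsilon^{1/4}$.

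For $r^\varepsilon_t = f_t g_t (h_t \circ \alpha^\varepsilon_t) k^\varepsilon_t$ with $k^\varepsilon_t \in \{ \nabla \psi^\varepsilon_t, \dots, \nabla^4 \psi^\varepsilon_t \}$, I would absorb $k^\varepsilon_t$ into the bounded factor (so that $f_t k^\varepsilon_t$ plays the role of $f$), keep $g_t$ as the Lipschitz factor, and run the same decomposition; the estimates of Lemma \ref{lem:L2stab}-\ref{item:L2Bound} again give $C \varepsilon^{1/4}$, uniformly in $\varepsilon$ thanks to the $\varepsilon$-independent bounds on $\lVert k^\varepsilon_t \rVert_\infty$ and on the Lipschitz constant of $k^\varepsilon_t$. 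The extra term is now
\[
\E\int_0^T \bigl| f_t(X^0_t)\,g_t(X^0_t)\,(h_t \circ \alpha^0_t)(X^0_t)\,[k^\varepsilon_t - k^0_t](X^0_t) \bigr| \d t \leq \Bigl( \sup_t \lVert k^\varepsilon_t - k^0_t \rVert_\infty \Bigr) \E\int_0^T C\bigl( 1 + |X^0_t| \bigr) \d t,
\]
using the boundedness of $f$ and $\alpha^0$ and the linear growth of $g$; since $\E[\sup_t |X^0_t|^2] \leq C$ and $\sup_t \lVert k^\varepsilon_t - k^0_t \rVert_\infty \leq C \varepsilon^{1/2}$ by \eqref{eq:StabPartisalPsi}, this is $\leq C \varepsilon^{1/2}$. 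Summing up yields the claimed bound $C \varepsilon^{1/4}$.

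I do not expect a real obstacle here: the argument is structurally identical to that of Lemma \ref{lem:L2stab}-\ref{item:L2Bound}, and the only points deserving care are (i) verifying that the $\varepsilon$-dependent factors $\psi^\varepsilon_t$ and $\nabla^i \psi^\varepsilon_t$ satisfy boundedness, Lipschitz and linear-growth bounds that are uniform in $(\varepsilon,t)$ — which is precisely what \ref{ass:linDiff}, \ref{ass:coefReg3}-\ref{item:PsiC4} and \eqref{eq:BoundsUnifeps} provide — and (ii) that their sup-norm discrepancy with the $\varepsilon = 0$ versions is $O(\varepsilon^{1/2})$, which is exactly \eqref{eq:StabPartisalPsi}, whose statement (with $\max_{0 \leq i \leq 4}$) already includes the value $i = 0$ needed for $\psi^\varepsilon_t$ itself.
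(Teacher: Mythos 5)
Your proposal is correct and follows exactly the route the paper intends: it states \eqref{eq:StabPsi}--\eqref{eq:StabPartisalPsi} and then asserts that the proof of Lemma \ref{lem:L2stab}-\ref{item:L2Bound} "readily adapts", which is precisely your decomposition — treat $\psi^\varepsilon_t$ as the (uniformly in $\varepsilon$) Lipschitz factor, absorb the bounded derivative $k^\varepsilon_t$ into the bounded factor, and control the single new term by the sup-norm estimate \eqref{eq:StabPartisalPsi}, giving $O(\varepsilon^{1/2})$ which is dominated by the $O(\varepsilon^{1/4})$ terms. Your write-up simply makes explicit the uniform-in-$\varepsilon$ bounds (Lipschitz constant, linear growth, boundedness of the derivatives via \ref{ass:coefReg3}-\ref{item:PsiC4} and \eqref{eq:BoundsUnifeps}) that the paper leaves implicit.
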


To make $\lambda^\varepsilon_t$ explicit, we differentiate twice $\Psi ( \mu^\varepsilon_t )$.
We recall the definition \eqref{eq:InfGen} of the infinitesimal generator $L_t^{\alpha^\varepsilon}$.

\begin{lemma}[Second order regularity] \label{lem:timeReg2} 
Under \ref{ass:coefReg3} there exists (a linear combination of terms) $r^\varepsilon$ as in Lemma \ref{lem:L2stabImp} such that for every $0 \leq t < t' \leq T$,
\begin{multline*}
\E [ L^{\alpha^\varepsilon}_{t'} \psi^\varepsilon_{t'}({X}^\varepsilon_{t'}) ] - \E [ L^{\alpha^\varepsilon}_{t} \psi^\varepsilon_t ({X}^\varepsilon_{t}) ]
= \mathbbm{1}_{t = 0}{\lambda}^\varepsilon_0 \E [ \vert \sigma_0^\top \nabla \psi^\varepsilon_0 \vert^2 ( X^\varepsilon_0 ) ]  
+ \mathbbm{1}_{t' = T} \lambda^\varepsilon_T \E [ \vert \sigma_T^\top \nabla \psi^\varepsilon_T \vert^2 ( X^\varepsilon_T ) ] \\
+\int_t^{t'} \E [ r^\varepsilon_s ( X^\varepsilon_s ) - s^\varepsilon_s ( X^\varepsilon_s ) + \lambda^\varepsilon_s \vert \sigma_s^\top \nabla \psi^\varepsilon_s \vert^2 ( X^\varepsilon_s ) ] \d s,    
\end{multline*}
where
\[ s^\varepsilon_s = a^{i,j}_s \nabla \partial_i \psi^\varepsilon_s \cdot a_s \nabla \partial_j \varphi^\varepsilon_s + \partial_i \psi^\varepsilon_s \nabla a_t^{i,j} \cdot a_s \nabla \partial_j \varphi^\varepsilon_s - \tfrac{1}{2} a^{i,j}_s \partial_j a^{k,l}_s \partial_i \psi^\varepsilon_s \partial^2_{k,l} \varphi^\varepsilon_s. \]
As a consequence of Corollary \ref{cor:timeReg1}-\ref{item:PsiC1}, $\E [ L^{\alpha^\varepsilon}_{t} \psi^\varepsilon_{t}({X}^\varepsilon_{t}) ] = \frac{\d}{\d t}  \Psi ( \mu^\varepsilon_{t} )$ when $t >0$, hence $t \mapsto \Psi({\mu}^\varepsilon_t)$ is a.e. twice differentiable, and $C^2$ on intervals where $\lambda^\varepsilon_t =0$ identically.
\end{lemma}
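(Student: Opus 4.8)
The plan is to differentiate $t\mapsto\Psi(\mu^\varepsilon_t)$ a second time. The entry point is Corollary~\ref{cor:timeReg1}-\ref{item:PsiC1} (applicable to $\mu^\varepsilon_{[0,T]}$ since the stability hypotheses include \ref{ass:coefReg3}): combined with the identity $\psi^\varepsilon_t = \big(\Psi(\mu^\varepsilon_t)+\varepsilon\big)+\tfrac{\delta\Psi}{\delta\mu}(\mu^\varepsilon_t,\cdot)$ and the fact that $L^{\alpha^\varepsilon}_t$ kills additive constants, it gives $\tfrac{\d}{\d t}\Psi(\mu^\varepsilon_t)=\E[L^{\alpha^\varepsilon}_t\psi^\varepsilon_t(X^\varepsilon_t)]=:g^\varepsilon(t)$ for $t\in(0,T]$, so the whole lemma amounts to a formula for $g^\varepsilon(t')-g^\varepsilon(t)$. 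The structural observation that makes this tractable is that $L^{\alpha^\varepsilon}_t$ is affine in the control and $\alpha^\varepsilon_t=-\sigma_t^\top\nabla\varphi^\varepsilon_t$: writing $A^\varepsilon_t:=b_t\cdot\nabla\psi^\varepsilon_t+\tfrac12\mathrm{Tr}[a_t\nabla^2\psi^\varepsilon_t]$ and $B^\varepsilon_t:=a_t\nabla\psi^\varepsilon_t$, which do not involve $\varphi^\varepsilon$, one has $L^{\alpha^\varepsilon}_t\psi^\varepsilon_t=A^\varepsilon_t-B^\varepsilon_t\cdot\nabla\varphi^\varepsilon_t$, hence $g^\varepsilon(t)=\E[A^\varepsilon_t(X^\varepsilon_t)]-\E[B^\varepsilon_t(X^\varepsilon_t)\cdot\nabla\varphi^\varepsilon_t(X^\varepsilon_t)]$.

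The core step is an Itô computation. I would apply Itô's formula to $A^\varepsilon_t(X^\varepsilon_t)$ and to each component $B^{\varepsilon,j}_t(X^\varepsilon_t)$ along the optimal SDE $\d X^\varepsilon_t=b_t(X^\varepsilon_t)\,\d t+\sigma_t\alpha^\varepsilon_t(X^\varepsilon_t)\,\d t+\sigma_t(X^\varepsilon_t)\,\d B_t$, and combine the outcome, via the Itô product rule, with the semimartingale dynamics of $\nabla\varphi^\varepsilon_t(X^\varepsilon_t)$ supplied by Corollary~\ref{cor:timeReg1}-\ref{item:gradC0} (in our setting $b$ does not depend on the measure, so $\overline c\equiv0$ there); that dynamics is exactly the differentiated HJB equation read along the optimal process, carrying the source $-\lambda^\varepsilon_t\nabla\psi^\varepsilon_t$ and the term $-\tfrac12\mathrm{Tr}[\partial_j a_t\nabla^2\varphi^\varepsilon_t]$ in the drift of $\partial_j\varphi^\varepsilon_t(X^\varepsilon_t)$. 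Two contributions are then the essential ones: (i) the source $-\lambda^\varepsilon_t\nabla\psi^\varepsilon_t$, contracted against $B^\varepsilon_t=a_t\nabla\psi^\varepsilon_t$ and with the minus sign of $g^\varepsilon=\E A^\varepsilon-\E(B^\varepsilon\cdot\nabla\varphi^\varepsilon)$, yields precisely $\lambda^\varepsilon_t\,(a_t\nabla\psi^\varepsilon_t)\cdot\nabla\psi^\varepsilon_t=\lambda^\varepsilon_t|\sigma_t^\top\nabla\psi^\varepsilon_t|^2$; (ii) the quadratic covariation between $B^{\varepsilon,j}_t(X^\varepsilon_t)$ and $\partial_j\varphi^\varepsilon_t(X^\varepsilon_t)$ — both driven by the same $B$, with martingale parts $\sigma_t^\top\nabla B^{\varepsilon,j}_t$ and (up to sign conventions) $\sigma_t^\top\nabla\partial_j\varphi^\varepsilon_t$ — together with the $\tfrac12\mathrm{Tr}[\partial_j a_t\nabla^2\varphi^\varepsilon_t]$ drift term, assembles into $-s^\varepsilon_t$ after substituting $\nabla B^{\varepsilon,j}_t=(\nabla a^{j,i}_t)\partial_i\psi^\varepsilon_t+a^{j,i}_t\nabla\partial_i\psi^\varepsilon_t$ and using the symmetry of $a$. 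All remaining terms produced by the product rule are finite sums of products of a coefficient bounded uniformly in $(t,x,\varepsilon)$, a function of $x$ Lipschitz uniformly in $(t,\varepsilon)$, a locally Lipschitz function of $\alpha^\varepsilon_t(x)$, and either $\psi^\varepsilon_t$ or one of its first four spatial derivatives, i.e. terms as in Lemma~\ref{lem:L2stabImp}; the $\partial_t\psi^\varepsilon_t$ contributions are handled by rewriting $\partial_t\psi^\varepsilon_t(x)=\int_{\R^d}L^{\alpha^\varepsilon}_t\big[y\mapsto\tfrac{\delta^2\Psi}{\delta\mu^2}(\mu^\varepsilon_t,x,y)\big]\,\mu^\varepsilon_t(\d y)$ (the formula already used in the proof of Theorem~\ref{thm:density}), a function of $x$ that is bounded and Lipschitz uniformly in $(t,\varepsilon)$ by \ref{ass:coefReg3}. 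Collecting these into $r^\varepsilon$, taking expectations (all stochastic integrals are true martingales, their $\d B$-coefficients involving $\nabla^2\varphi^\varepsilon$, $\nabla^3\varphi^\varepsilon$ and $\nabla^j\psi^\varepsilon$, bounded by Theorem~\ref{thm:density}, Lemma~\ref{lem:thirdorder} and \ref{ass:coefReg3}, while $\E[\sup_t|X^\varepsilon_t|^2]<+\infty$) and integrating over $[t,t']$ gives the claimed identity, up to the contributions of the atoms of $\overline\lambda^\varepsilon$ at the endpoints.

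The $\mathbbm{1}_{t=0}$ and $\mathbbm{1}_{t'=T}$ terms account for the atoms $\lambda^\varepsilon_0\delta_0$ and $\lambda^\varepsilon_T\delta_T$ of $\overline\lambda^\varepsilon$ (Theorem~\ref{thm:density}), which fall outside the Lebesgue density $\lambda^\varepsilon_s\,\d s$ appearing under the interior integral: the mild HJB equation at $t=0$ carries the extra term $\lambda^\varepsilon_0\psi^\varepsilon_0$, so $\nabla\varphi^\varepsilon$ jumps by $\lambda^\varepsilon_0\nabla\psi^\varepsilon_0$ at $t=0$, whence $\lim_{t\downarrow0}g^\varepsilon(t)-g^\varepsilon(0)=\lambda^\varepsilon_0\,\E[|\sigma_0^\top\nabla\psi^\varepsilon_0|^2(X^\varepsilon_0)]$; the terminal atom enters through the terminal condition $\varphi^\varepsilon_T=\lambda^\varepsilon_T\psi^\varepsilon_T$ and is pulled out as the $\mathbbm{1}_{t'=T}$ term in the same way (equivalently, one writes the $\lambda^\varepsilon$-source in the identity as the measure $|\sigma_\cdot^\top\nabla\psi^\varepsilon_\cdot|^2\,\overline\lambda^\varepsilon(\d\cdot)$ and separates its endpoint atoms). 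For $0<t<t'<T$ no such correction is present. Finally, since $g^\varepsilon=\tfrac{\d}{\d t}\Psi(\mu^\varepsilon_\cdot)$ on $(0,T]$ and the integrand $r^\varepsilon_s-s^\varepsilon_s+\lambda^\varepsilon_s|\sigma_s^\top\nabla\psi^\varepsilon_s|^2$ belongs to $L^\infty(0,T)$ (using $\lambda^\varepsilon\in L^\infty(0,T)$ and the uniform bounds above), $g^\varepsilon$ is absolutely continuous on $(0,T)$, so $t\mapsto\Psi(\mu^\varepsilon_t)$ is a.e. twice differentiable; on any interval where $\lambda^\varepsilon_s\equiv0$ the HJB equation has no source, so $\varphi^\varepsilon$, $r^\varepsilon$ and $s^\varepsilon$ depend continuously on $s$ there and $g^\varepsilon$ is $C^1$, i.e. $\Psi(\mu^\varepsilon_\cdot)$ is $C^2$.

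The main obstacle is the regularity bookkeeping. On the one hand, Itô's formula must be applied to $A^\varepsilon_t(X^\varepsilon_t)$ and $B^{\varepsilon,j}_t(X^\varepsilon_t)$, which requires these functions to be $C^{1,2}$ with derivatives controlled uniformly in $(t,\varepsilon)$; since they involve $\partial_t\psi^\varepsilon_t=\partial_t\tfrac{\delta\Psi}{\delta\mu}(\mu^\varepsilon_\cdot,\cdot)$, expressed through an integral of $L^{\alpha^\varepsilon}$ applied to $\tfrac{\delta^2\Psi}{\delta\mu^2}$, this is precisely what the strong higher-order assumptions on the linear derivatives of $\Psi$ in \ref{ass:coefReg3}-\ref{item:PsiC4} are designed to provide — and, to sidestep such estimates, the cleanest alternative is to run the entire computation on the $C^\infty$ approximations $(\mu^{\varepsilon,k},\varphi^{\varepsilon,k},\lambda^{\varepsilon,k})$ built in the proof of Theorem~\ref{thm:density} and pass to the limit $k\to\infty$ using the convergences established there. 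On the other hand, one must verify patiently that each of the many terms generated by the (triple) product rule does fit the template of Lemma~\ref{lem:L2stabImp}; this is mechanical, the only non-routine points being the treatment of the $\partial_t\psi^\varepsilon$ contributions just described and the careful tracking of signs in the covariation terms so that they recombine into exactly $-s^\varepsilon_t$.
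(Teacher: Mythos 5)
Your proposal is correct and follows essentially the same route as the paper: regularise the multiplier in time, compute the second derivative of $t\mapsto\Psi(\mu^\varepsilon_t)$ by an It\^o computation along the optimal process using the differentiated HJB equation (your Corollary \ref{cor:timeReg1}-\ref{item:gradC0} is exactly that), identify the $\lambda^\varepsilon_s\vert\sigma_s^\top\nabla\psi^\varepsilon_s\vert^2$ term from the HJB source and $-s^\varepsilon_s$ from the covariation and $\mathrm{Tr}[\partial_j a\,\nabla^2\varphi^\varepsilon]$ contributions, absorb the rest into $r^\varepsilon$ via the template of Lemma \ref{lem:L2stabImp} (with $\partial_t\psi^\varepsilon$ rewritten through $\tfrac{\delta^2\Psi}{\delta\mu^2}$), and pass to the limit to recover the endpoint atoms. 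The only differences are organisational — you apply the product rule to $A^\varepsilon_t-B^\varepsilon_t\cdot\nabla\varphi^\varepsilon_t$ whereas the paper expands $(\partial_s+L^{\alpha^\varepsilon}_s)L^{\alpha^\varepsilon}_s\psi^\varepsilon_s$ directly in coordinates, and your endpoint-atom discussion (best read through your ``separate the atoms of the measure $\vert\sigma^\top\nabla\psi^\varepsilon\vert^2\,\overline\lambda^\varepsilon(\d\cdot)$'' formulation, since at $t'=T$ the mechanism is the non-convergence of $\nabla\varphi^{k}_T$ at the atom rather than a jump of $\varphi^\varepsilon$) matches the paper's equally brief limiting argument.
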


\begin{proof}
Using Lemma \ref{lem:boundPhi}, we can compute on a regularised version of the multiplier before to take the limit (and then recover the atoms at $t \in \{0,T \}$). 
We thus assume that $\lambda^\varepsilon$ is $C^1$, and we use Ito's formula to write that
\begin{equation*}
\E [ L^{\alpha^\varepsilon}_{t'} \psi^\varepsilon_{t'}({X}^\varepsilon_{t'}) ] - \E [ L^{\alpha^\varepsilon}_{t} \psi^\varepsilon_t ({X}^\varepsilon_{t}) ] = \int_{t}^{t'} \E [ (\partial_s + L^{\alpha^\varepsilon}_{s} ) L^{\alpha^\varepsilon}_{s} \psi^\varepsilon_s (X^\varepsilon_s) ]  \d s. 
\end{equation*}
In the following, $r^\varepsilon_s$ is a remainder of the desired shape which may change from line one to another.
Decomposing $L^{\alpha^\varepsilon}_s = L^0_s + \sigma_s \alpha^\varepsilon_s \cdot \nabla$,
\begin{align*}
(\partial_s + L^{\alpha^\varepsilon}_{s} ) L^{\alpha^\varepsilon}_{s} \psi^\varepsilon_s &= ( \partial_s + L^{\alpha^\varepsilon}_s ) \sigma_s \alpha^\varepsilon_s \cdot \nabla \psi^\varepsilon_s + ( \partial_s + L^{\alpha^\varepsilon}_s ) L^0_s \psi^\varepsilon_s \\
&= ( \partial_s + L^{\alpha^\varepsilon}_s ) \sigma_s \alpha^\varepsilon_s \cdot \nabla \psi^\varepsilon_s + r^\varepsilon_s.
\end{align*}
We now use coordinates: 
\begin{align*}
( \partial_s + L^{\alpha^\varepsilon}_s ) \sigma_s \alpha^\varepsilon_s \cdot &\nabla \psi^\varepsilon_s = ( \partial_s + L^{\alpha^\varepsilon}_s ) ( \sigma_s \alpha^\varepsilon_s )^i \partial_i \psi^\varepsilon_s \\
&= \partial_i \psi^\varepsilon_s ( \partial_s + L^{\alpha^\varepsilon}_s ) ( \sigma_s \alpha^\varepsilon_s )^i + ( \sigma_s \alpha^\varepsilon_s )^i (\partial_s + L^{\alpha^\varepsilon}_s ) \partial_i \psi^\varepsilon_s + \nabla \partial_i \psi^\varepsilon_s \cdot a_s \nabla ( \sigma_s \alpha^\varepsilon_s )^i \\
&= (\partial_s + L^{\alpha^\varepsilon}_s ) ( \sigma_s \alpha^\varepsilon_s )^i + r^\varepsilon_s.
\end{align*}
Since $( \sigma_s \alpha^\varepsilon_s )^i = - ( a_s \nabla \varphi^\varepsilon_s )^i = -a^{i,j}_s \partial_j \varphi^\varepsilon_s$, we further get
\begin{align*}
(\partial_s + L^{\alpha^\varepsilon}_s ) ( \sigma_s \alpha^\varepsilon_s )^i &= -\partial_j \varphi^\varepsilon_s ( \partial_s + L_s^{\alpha^\varepsilon} ) a_s^{i,j} - a_s^{i,j} ( \partial_s + L_s^{\alpha^\varepsilon} ) \partial_j \varphi^\varepsilon_s - \nabla a_s^{i,j} \cdot a_s \nabla \partial_j \varphi^\varepsilon_s \\
&= - a_s^{i,j} ( \partial_s + L_s^{\alpha^\varepsilon} ) \partial_j \varphi^\varepsilon_s + r^\varepsilon_s.
\end{align*}
We eventually differentiate she HJB equation satisfied by $\varphi^\varepsilon$:
\[ (\partial_s + L^{\alpha^\varepsilon}_s) \partial_j \varphi^\varepsilon_s = - \partial_j b_s \cdot \nabla \varphi^\varepsilon_s + \frac{1}{2} \nabla \varphi^\varepsilon_s \partial_j a_s \nabla \varphi^\varepsilon_s - \frac{1}{2}\mathrm{Tr}[ \partial_j a_s \nabla^2 \varphi^\varepsilon_s] - \lambda^\varepsilon_s \partial_j \psi^\varepsilon_s. \]
Gathering everything, we get $r^\varepsilon$
such that
\begin{multline*}
(\partial_s + L^{\alpha^\varepsilon}_{s} ) L^{\alpha^\varepsilon}_{s} \psi^\varepsilon_s = r^\varepsilon_s - \nabla \partial_i \psi^\varepsilon_s \cdot a_s \nabla ( a_s \nabla \varphi^\varepsilon_s )^i - \partial_i \psi^\varepsilon_s \nabla a_s^{i,j} \cdot a_s \nabla \partial_j \varphi^\varepsilon_s \\
+ \frac{1}{2} a^{i,j}_s \partial_i \psi^\varepsilon_s \mathrm{Tr}[ \partial_j a_s \cdot \nabla^2 \varphi^\varepsilon_s] + \lambda^\varepsilon_s a^{i,j}_s \partial_i \psi^\varepsilon_s \partial_j \psi^\varepsilon_s.
\end{multline*}  
We eventually obtain the desired $s^\varepsilon$ after a slight modification of $r^\varepsilon$.
\end{proof}

We will repeatedly use the following result. 

\begin{lemma} \label{lem:locMax}
For any global maximum or minimum $t_0 \in (0,T)$ of $s \mapsto \Psi (\mu^0_{s})$, we have 
\[ \bigg\vert \frac{\d}{\d s} \big\vert_{s = t_0} \Psi ( \mu^\varepsilon_s ) \bigg\vert \leq C \varepsilon^{1/4}, \]
for $C > 0$ that does not depend on $\varepsilon$ nor $t_0$.
\end{lemma}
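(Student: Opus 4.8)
The plan is to combine the fact that $s\mapsto\Psi(\mu^0_s)$ has a critical point at the interior extremum $t_0$ with a Landau--Kolmogorov type interpolation inequality, exploiting that $\Psi(\mu^\varepsilon_\cdot)$ is uniformly $O(\varepsilon^{1/2})$-close to $\Psi(\mu^0_\cdot)$ while having a second derivative bounded uniformly in $\varepsilon$. First, by Corollary~\ref{cor:timeReg1}-\ref{item:PsiC1} applied to $\mu^0_{[0,T]}$, the function $s\mapsto\Psi(\mu^0_s)$ is $C^1$ on $[0,T]$; since $t_0\in(0,T)$ is a global (hence local) extremum, $\tfrac{\d}{\d s}\vert_{s=t_0}\Psi(\mu^0_s)=0$. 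Setting $g^\varepsilon(s):=\Psi(\mu^\varepsilon_s)-\Psi(\mu^0_s)$, it therefore suffices to bound $\vert\tfrac{\d}{\d s}\vert_{s=t_0}g^\varepsilon(s)\vert$.

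Next I collect the two a priori inputs. From \eqref{eq:StabPsi}, $\lVert g^\varepsilon\rVert_{L^\infty([0,T])}\le C\varepsilon^{1/2}$. For the second derivative, recall that Lemma~\ref{lem:timeReg2} gives $\tfrac{\d}{\d s}\Psi(\mu^\varepsilon_s)=\E[L^{\alpha^\varepsilon}_s\psi^\varepsilon_s(X^\varepsilon_s)]$ for $s>0$ and identifies its a.e. derivative on $(0,T)$ as $\E[r^\varepsilon_s-s^\varepsilon_s+\lambda^\varepsilon_s\vert\sigma^\top_s\nabla\psi^\varepsilon_s\vert^2(X^\varepsilon_s)]$. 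Each term here is bounded uniformly in $(s,\varepsilon)$: the remainders $r^\varepsilon$ and $s^\varepsilon$ only involve $\alpha^\varepsilon$, $\nabla\varphi^\varepsilon$, $\nabla^2\varphi^\varepsilon$, $\psi^\varepsilon$ and its $x$-derivatives up to order four, and the derivatives of $a$ and $b$, all of which are bounded uniformly in $\varepsilon$ by \eqref{eq:BoundsUnifeps}, \eqref{eq:StabPartisalPsi} and \ref{ass:coefReg3}; the last term is controlled by the uniform bound $\sup_{\varepsilon\in[0,1]}\lVert\lambda^\varepsilon\rVert_{L^\infty(0,T)}<+\infty$, a consequence of Proposition~\ref{pro:density} (its constant depends only on coefficient norms that are $\varepsilon$-independent, using \eqref{eq:StabPartisalPsi} and that $\mu^0_{[0,T]}$ supplies constraint qualification for every $\varepsilon\in[0,1]$ with a margin independent of $\varepsilon$, cf. Remark~\ref{rem:Suffconv}). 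Since $s\mapsto\tfrac{\d}{\d s}\Psi(\mu^\varepsilon_s)$ is also continuous on $[0,T]$, it is Lipschitz on $[0,T]$ with a constant independent of $\varepsilon$; doing the same for $\mu^0_{[0,T]}$, we conclude that $g^\varepsilon\in C^1([0,T])$ with $(g^\varepsilon)'$ Lipschitz, $\lVert(g^\varepsilon)''\rVert_{L^\infty(0,T)}\le C$ uniformly in $\varepsilon$, and also $\sup_{s\in[0,T]}\vert\tfrac{\d}{\d s}\Psi(\mu^\varepsilon_s)\vert\le C$.

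Finally I conclude by Taylor's formula with integral remainder: for any $h>0$ with $[t_0,t_0+h]\subset[0,T]$ (using $[t_0-h,t_0]$ instead when needed; since $t_0\in(0,T)$ one may always take $h$ up to $\max(t_0,T-t_0)\ge T/2$),
\[
\Bigl\vert\tfrac{\d}{\d s}\big\vert_{s=t_0}g^\varepsilon(s)\Bigr\vert\le\frac{2\lVert g^\varepsilon\rVert_{L^\infty}}{h}+\frac{h}{2}\lVert(g^\varepsilon)''\rVert_{L^\infty(0,T)}\le\frac{2C\varepsilon^{1/2}}{h}+\frac{Ch}{2}.
\]
Choosing $h=\min(\varepsilon^{1/4},T/2)$: if $\varepsilon^{1/4}\le T/2$ this is $\le C\varepsilon^{1/4}$, while if $\varepsilon^{1/4}>T/2$ one uses instead the uniform bound $\vert\tfrac{\d}{\d s}\vert_{s=t_0}\Psi(\mu^\varepsilon_s)\vert\le C\le\tfrac{2C}{T}\varepsilon^{1/4}$. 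Either way $\vert\tfrac{\d}{\d s}\vert_{s=t_0}\Psi(\mu^\varepsilon_s)\vert=\vert\tfrac{\d}{\d s}\vert_{s=t_0}g^\varepsilon(s)\vert\le C\varepsilon^{1/4}$, with $C$ depending only on $T$ and the structural constants, not on $\varepsilon$ or $t_0$. The main obstacle is the second step: establishing that $s\mapsto\Psi(\mu^\varepsilon_s)$ has a second derivative bounded uniformly in $\varepsilon$, which hinges on the uniform $L^\infty$-bound on the multipliers $\lambda^\varepsilon$ and on the uniform control of the $x$-derivatives of $\psi^\varepsilon$ and $\varphi^\varepsilon$ up to the orders appearing in Lemma~\ref{lem:timeReg2}; once these are in hand, the conclusion is a soft interpolation argument.
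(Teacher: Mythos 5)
Your proof is correct, and it takes a genuinely different route from the paper's. The paper does not use that the derivative of $s \mapsto \Psi(\mu^0_s)$ vanishes at $t_0$; instead it perturbs by a quadratic, observing via \eqref{eq:StabPsi} that $s \mapsto \Psi(\mu^\varepsilon_s) - \gamma(s-t_0)^2$ attains a global maximum at some $t_0' \in [t_0-\beta, t_0+\beta]$ whenever $\gamma\beta^2 \geq 2C\varepsilon^{1/2}$, computes $\tfrac{\d}{\d s}\vert_{s=t_0'}\Psi(\mu^\varepsilon_s) = 2\gamma(t_0'-t_0)$, and transfers this to $t_0$ using the uniform-in-$\varepsilon$ bound on $\tfrac{\d^2}{\d s^2}\Psi(\mu^\varepsilon_s)$ from Lemma \ref{lem:timeReg2}, before optimising over $(\beta,\gamma)$. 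You instead exploit the $C^1$ regularity of $\Psi(\mu^0_\cdot)$ from Corollary \ref{cor:timeReg1}-\ref{item:PsiC1} to kill the derivative at the interior extremum, and then run a Landau--Kolmogorov/Taylor interpolation on the difference $g^\varepsilon$, using $\lVert g^\varepsilon\rVert_\infty \leq C\varepsilon^{1/2}$ and $\lVert (g^\varepsilon)''\rVert_\infty \leq C$ with $h \sim \varepsilon^{1/4}$. Both arguments hinge on exactly the same key input, namely the $\varepsilon$-uniform a.e. second-derivative bound on $\Psi(\mu^\varepsilon_\cdot)$ (hence the $\varepsilon$-uniform $L^\infty$ control of $\lambda^\varepsilon$, which the paper also invokes implicitly and which you justify correctly via Proposition \ref{pro:density}, the uniform qualification margin and Remark \ref{rem:Suffconv}), and both produce the rate through the same $\varepsilon^{1/2}/h + h$ optimisation. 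The trade-off: the paper's quadratic-penalisation argument needs only extremality of $\Psi(\mu^0_\cdot)$ at $t_0$ (no differentiability or second-derivative bound for the $\varepsilon=0$ curve), while yours additionally uses the $C^1$ regularity and the second-derivative bound at $\varepsilon=0$ — harmless here since $\varepsilon=0$ belongs to the same family — and in exchange is arguably more elementary and handles the boundary-of-interval issue ($h$ possibly exceeding $T/2$) explicitly, which the paper leaves implicit in the optimisation.
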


The result still holds if we exchange the roles of $\Psi (\mu^\varepsilon_s)$ and $\Psi (\mu^0_s)$.

\begin{proof}
We prove the result for a maximum, the case of a minimum being very similar.
For any $\beta, \gamma >0$,
\[ \forall s \in [0,T] \setminus [t_0 - \beta, t_0 + \beta], \quad \Psi( \mu^0_{t_0} ) \geq \Psi( \mu^0_s ) - \gamma ( s - t_0 )^2 + \gamma \beta^2. \]
Using \eqref{eq:StabPsi}, this implies
\[ \forall s \in [0,T] \setminus [t_0 - \beta, t_0 + \beta], \quad \Psi( \mu^\varepsilon_{t_0} ) \geq \Psi( \mu^\varepsilon_s ) - \gamma ( s - t_0 )^2 + \gamma \beta^2 - 2 C \varepsilon^{1/2}. \]
If $\gamma\beta^2 \geq 2 C \varepsilon^{1/2}$, this proves that $s \mapsto \Psi( \mu^\varepsilon_s ) - \gamma ( s - t_0 )^2$ has a global maximum at some point $t'_0 \in [t_0-\beta,t_0 + \beta]$. 
As a consequence $ \tfrac{\d}{\d s} \vert_{s = t'_0} \Psi( \mu^\varepsilon_s ) = 2 \gamma ( t'_0 - t_0)$.
From Lemma \ref{lem:timeReg2}, 
$\tfrac{\d^2}{\d s^2} \Psi( \mu^\varepsilon_s )$ is bounded on $[0,T]$ by some $K >0$ independent of $\varepsilon$, hence
\[ \bigg\vert \frac{\d}{\d s} \big\vert_{s = t_0} \Psi ( \mu^\varepsilon_s ) \bigg\vert \leq 2 \beta \gamma + K \beta,  \]
for any $\beta, \gamma > 0$ such that $\gamma \beta^2 \geq 2 C \varepsilon^{1/2}$.
Optimising over $(\beta,\gamma)$ concludes.
\end{proof}

By continuity of $t \mapsto \Psi (\mu^\varepsilon_t )$, the sets $F_\varepsilon := \{ t \in [0,T] \; , \; \Psi (\mu^\varepsilon_t ) = \varepsilon \}$ are closed. 
Moreover, $F_\varepsilon$ is a countable union of singletons and non-trivial intervals, and we know that $F_\varepsilon$ contains the support of $\lambda^\varepsilon$ from the complementary slackness condition \eqref{eq:CompSlackmf} with $\Psi - \varepsilon$ in place of $\Psi$.
From \ref{ass:consquali} and Remark \ref{rem:Suffqualif}, the continuous function $s \mapsto \E [ \lvert \sigma^\top_s \nabla \psi^0_s ( \overline{X}_s ) \rvert^2 ]$ is positive on the closed set $F_0$, hence lower-bounded by a positive number.
Using this and Lemma \ref{lem:L2stab}-\ref{item:W1Bound}, there exists $\eta >0$ such that for every small enough $\varepsilon$,
\begin{equation} \label{eq:Qualifeps}
\forall t \in F_\varepsilon, \quad \E [ \lvert \sigma^\top_t \nabla \psi^\varepsilon_t \rvert^2 (X^\varepsilon_t ) ] \geq \eta > 0.   
\end{equation}
We now prove the main stability estimate on $\lambda^\varepsilon$.

\begin{proposition} \label{pro:L1estiamte}
There exists $C >0$ independent of $\varepsilon$ such that $\lVert \lambda^\varepsilon - \lambda^0 \rVert_{L^1(0,T)} \leq C \varepsilon^{1/4}$.
\end{proposition}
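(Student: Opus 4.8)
\emph{Proof plan.} The plan is to turn the pointwise content of Lemma \ref{lem:timeReg2} into an explicit formula for the densities $\lambda^\varepsilon_s$ on the active sets, and then compare these formulas. Recall $F_\varepsilon = \{t\in[0,T]:\Psi(\mu^\varepsilon_t)=\varepsilon\}$ and that, by the complementary slackness condition \eqref{eq:CompSlackmf} applied to the constraint $\Psi-\varepsilon$, the restriction of $\lambda^\varepsilon$ to $(0,T)$ is carried by $F_\varepsilon$; in particular $\lambda^\varepsilon_s=0$ for a.e.\ $s\notin F_\varepsilon$. On $F_\varepsilon\cap(0,T)$ the map $s\mapsto\Psi(\mu^\varepsilon_s)$, which is $C^1$ on $[0,T]$ by Corollary \ref{cor:timeReg1}-\ref{item:PsiC1} and attains its maximum value $\varepsilon$ along $F_\varepsilon$, has vanishing first derivative; being moreover a.e.\ twice differentiable by Lemma \ref{lem:timeReg2}, a Lebesgue density-point argument shows $\tfrac{\d^2}{\d s^2}\Psi(\mu^\varepsilon_s)=0$ at a.e.\ $s\in F_\varepsilon$. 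Inserting this into the differential form of the identity of Lemma \ref{lem:timeReg2} and using the qualification lower bound \eqref{eq:Qualifeps} yields, for a.e.\ $s\in F_\varepsilon$,
\[ \lambda^\varepsilon_s=\frac{\E[(s^\varepsilon_s-r^\varepsilon_s)(X^\varepsilon_s)]}{\E[\,\lvert\sigma^\top_s\nabla\psi^\varepsilon_s\rvert^2(X^\varepsilon_s)\,]},\qquad\text{with denominator}\ \geq\eta>0, \]
and the analogous identity holds on $F_0$ with $\varepsilon=0$.

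Next I would split $\lVert\lambda^\varepsilon-\lambda^0\rVert_{L^1(0,T)}=\int_{F_0\cap F_\varepsilon}\lvert\lambda^\varepsilon_s-\lambda^0_s\rvert\,\d s+\int_{F_0\setminus F_\varepsilon}\lambda^0_s\,\d s+\int_{F_\varepsilon\setminus F_0}\lambda^\varepsilon_s\,\d s$. On the matched set $F_0\cap F_\varepsilon$, a.e.\ point is a density point of both $F_0$ and $F_\varepsilon$, so both formulas hold; subtracting them, using that $\lambda^0_s$ is bounded uniformly in $\varepsilon$ (Theorem \ref{thm:density} and \eqref{eq:BoundsUnifeps}) and that $\eta$ bounds the denominators from below, one bounds $\lvert\lambda^\varepsilon_s-\lambda^0_s\rvert$ by a constant times
\[ \bigl\lvert\E[(s^\varepsilon_s-r^\varepsilon_s)(X^\varepsilon_s)]-\E[(s^0_s-r^0_s)(X^0_s)]\bigr\rvert+\bigl\lvert\E[\lvert\sigma^\top_s\nabla\psi^\varepsilon_s\rvert^2(X^\varepsilon_s)]-\E[\lvert\sigma^\top_s\nabla\psi^0_s\rvert^2(X^0_s)]\bigr\rvert. \]
Integrating in $s$, the $r^\varepsilon$-term and the $\lvert\sigma^\top\nabla\psi^\varepsilon\rvert^2$-term are $O(\varepsilon^{1/4})$ directly by Lemma \ref{lem:L2stabImp}. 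The term built from $s^\varepsilon$ involves the Hessian $\nabla^2\varphi^\varepsilon$, which is not of the form allowed in Lemma \ref{lem:L2stabImp}; to handle it I would integrate by parts in the space variable against the optimal marginal $\mu^\varepsilon_s$ (resp.\ $\mu^0_s$), transferring these second derivatives onto the bounded Lipschitz coefficients and onto $\nabla\log\mu^\varepsilon_s$, the latter being controlled uniformly in $\varepsilon$ by \eqref{eq:gamm3}; this again brings the remaining quantities within the scope of Lemma \ref{lem:L2stabImp}, so this contribution is $O(\varepsilon^{1/4})$ as well.

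It remains to control the two terms over the mismatch region, where $F_0$ and $F_\varepsilon$ disagree. From \eqref{eq:StabPsi} one has $\lvert\Psi(\mu^\varepsilon_s)-\Psi(\mu^0_s)\rvert\leq C\varepsilon^{1/2}$, whence $F_\varepsilon\setminus F_0\subseteq\{-C\varepsilon^{1/2}\leq\Psi(\mu^0_s)<0\}$; since every point of $F_\varepsilon$ is a global maximum of $s\mapsto\Psi(\mu^\varepsilon_s)$, the symmetric form of Lemma \ref{lem:locMax} gives in addition $\lvert\tfrac{\d}{\d s}\Psi(\mu^0_s)\rvert\leq C\varepsilon^{1/4}$ on $F_\varepsilon$ (and symmetrically on $F_0$). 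Using that $F_0$ consists of finitely many non-trivial intervals, the $C^1$-regularity of $s\mapsto\Psi(\mu^0_s)$ (which forces $\tfrac{\d}{\d s}\Psi(\mu^0_s)$ to vanish at the endpoints of $F_0$) and the uniform bound on $\tfrac{\d^2}{\d s^2}\Psi(\mu^0_s)$ from Lemma \ref{lem:timeReg2}, one confines $F_\varepsilon\setminus F_0$ and $F_0\setminus F_\varepsilon$ to an $O(\varepsilon^{1/4})$-neighbourhood of the finitely many endpoints of $F_0$; hence these sets have Lebesgue measure $O(\varepsilon^{1/4})$. Multiplying by the uniform $L^\infty$-bound on $\lambda^\varepsilon_s$ and $\lambda^0_s$ from Theorem \ref{thm:density} shows the two mismatch terms are $O(\varepsilon^{1/4})$, which completes the estimate.

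I expect the main obstacle to be the mismatch region: the active set $F_\varepsilon$ of the perturbed constraint is not, in any naive sense, a small perturbation of $F_0$ — its connected components may fragment near the endpoints of $F_0$ — and the quantitative control of the measure of $F_\varepsilon\,\triangle\,F_0$, for which the finiteness hypothesis on the intervals of $F_0$ and Lemma \ref{lem:locMax} are essential, is the delicate point. A secondary, purely technical difficulty is the integration-by-parts argument needed to absorb the Hessian term $s^\varepsilon$ into the $L^1$-stability estimates of Lemma \ref{lem:L2stabImp}.
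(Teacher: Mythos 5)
Your treatment of the matched set $F_0\cap F_\varepsilon$ is essentially the paper's \emph{Step 1}: the a.e.\ identity $\lambda^\varepsilon_s\,\E[\lvert\sigma^\top_s\nabla\psi^\varepsilon_s\rvert^2(X^\varepsilon_s)]=\E[(s^\varepsilon_s-r^\varepsilon_s)(X^\varepsilon_s)]$ obtained by killing $\tfrac{\d^2}{\d s^2}\Psi(\mu^\varepsilon_s)$ on the contact set, the lower bound \eqref{eq:Qualifeps}, and Lemma \ref{lem:L2stabImp} are exactly the ingredients used there. One caveat on the Hessian term $s^\varepsilon$: after your integration by parts the comparison produces $\nabla\log\mu^\varepsilon_s(X^\varepsilon_s)$ versus $\nabla\log\mu^0_s(X^0_s)$, and neither Lemma \ref{lem:L2stabImp} nor \eqref{eq:gamm3} controls the $\varepsilon$-\emph{stability} of the score (only its uniform integrability). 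The paper avoids this by only ever letting the score of $\mu^0$ appear, multiplied by $(\nabla\varphi^\varepsilon_s-\nabla\varphi^0_s)(X^0_s)$, which is $O(\varepsilon^{1/2})$ in $L^2(\d s\otimes\P)$ by Lemma \ref{lem:L2stab}-\ref{item:entropBound}: it rewrites $\E[m^0_s\cdot\nabla\partial_j\varphi^0_s(X^0_s)]$ in terms of $\nabla\partial_j\varphi^\varepsilon_s(X^0_s)$ plus such errors, and only then invokes Lemma \ref{lem:L2stabImp}. Your sketch as written does not close this point, though it is fixable.

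The genuine gap is the mismatch region, precisely the step you flag as delicate. The claim that $F_\varepsilon\,\triangle\,F_0$ is confined to an $O(\varepsilon^{1/4})$-neighbourhood of the finitely many endpoints of $F_0$, hence has Lebesgue measure $O(\varepsilon^{1/4})$, does not follow from \eqref{eq:StabPsi}, Lemma \ref{lem:locMax} and the bound on $\tfrac{\d^2}{\d s^2}\Psi(\mu^0_s)$, and it is false in general: nothing prevents $s\mapsto\Psi(\mu^0_s)$ from lingering at a level $-c\varepsilon^{1/2}$ with vanishing derivative on a long interval far from $F_0$, on which $\Psi(\mu^\varepsilon_s)\equiv\varepsilon$ is compatible with all the cited bounds (so $F_\varepsilon\setminus F_0$ is large); symmetrically, $\Psi(\mu^\varepsilon_s)$ may sit at $\varepsilon/2$ on a sizeable portion of a non-trivial interval of $F_0$ (so $F_0\setminus F_\varepsilon$ is large). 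Hence the bound ``$\lVert\lambda\rVert_\infty\times\lvert F_\varepsilon\triangle F_0\rvert$'' cannot deliver the estimate. The paper never bounds the measure of the symmetric difference; it bounds the \emph{multiplier mass} of the mismatch: on each gap interval it integrates the identity of Lemma \ref{lem:timeReg2} between two points where the relevant time-derivative (nearly) vanishes — supplied by Lemma \ref{lem:locMax} together with the $C^1$ regularity of Corollary \ref{cor:timeReg1} — and uses non-negativity of the multipliers and complementary slackness (one of $\lambda^0,\lambda^\varepsilon$ vanishes a.e.\ there) to get the one-sided bounds $\int_I\lambda^0\,\d s\leq C\varepsilon^{1/4}+\int_I\lambda^\varepsilon\,\d s$ and $\int_{b_k}^{a_{k+1}}\lambda^\varepsilon\,\d s\leq C\varepsilon^{1/4}$, the finiteness of the intervals of $F_0$ entering only to sum the latter. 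You would need to replace your measure-theoretic confinement step by an argument of this integral, one-sided type; as it stands, that step would fail.
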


\begin{proof} We decompose the norm using $F_\varepsilon$ and $F_0$.

\medskip

\emph{\textbf{Step 1.} On $F_\varepsilon \cap F_0$.} We first write that
\begin{multline*}
\vert \E [ \lambda^\varepsilon_t \lvert \sigma^\top_t \nabla \psi^\varepsilon_t (X^\varepsilon_t) \rvert^2 - \lambda^0_t \lvert \sigma^\top_t \nabla \psi^\varepsilon_t (X^\varepsilon_t) \rvert^2 ] \vert \leq \vert \E [ \lambda^\varepsilon_t \lvert \sigma^\top_t \nabla \psi^\varepsilon_t (X^\varepsilon_t) \rvert^2 - \lambda^0_t \lvert \sigma^\top_t \nabla \psi^0_t (X^0_t) \rvert^2 ] \vert \\
+ \lambda^0_t \vert \E [ \lvert \sigma^\top_t \nabla \psi^\varepsilon_t (X^\varepsilon_t) \rvert^2 - \lvert \sigma^\top_t \nabla \psi^0_t (X^0_t) \rvert^2 ] \vert.
\end{multline*}
Moreover,
\begin{multline*}
\vert \E [ \lvert \sigma^\top_t \nabla \psi^\varepsilon_t (X^\varepsilon_t) \rvert^2 - \lvert \sigma^\top_t \nabla \psi^0_t (X^0_t) \rvert^2 ] \vert \leq \vert \E [ \lvert \sigma^\top_t \nabla \psi^\varepsilon_t (X^\varepsilon_t) \rvert^2 - \lvert \sigma^\top_t \nabla \psi^\varepsilon_t (X^0_t) \rvert^2 ] \vert \\
+ \vert \E [ \lvert \sigma^\top_t \nabla \psi^\varepsilon_t (X^0_t) \rvert^2 - \lvert \sigma^\top_t \nabla \psi^0_t (X^0_t) \rvert^2 ] \vert \leq C \varepsilon^{1/2}, 
\end{multline*}
combining Lemma \ref{lem:L2stab}-\ref{item:W1Bound} and \eqref{eq:StabPartisalPsi} for the second inequality.
Using \eqref{eq:Qualifeps} and the bound on $\lambda^0_t$ given by Theorem \ref{thm:density}, this implies
\[ \eta \lvert \lambda^\varepsilon_t - \lambda^0_t \rvert \leq \lvert \E [ \lambda^\varepsilon_t \lvert \sigma^\top_t \nabla \psi^\varepsilon_t (X^\varepsilon_t) \rvert^2 - \lambda^0_t \lvert \sigma^\top_t \nabla \psi^0_t (X^0_t) \rvert^2 ] \rvert + C \varepsilon^{1/2}. \]
By continuity of $\Psi$,
$F_\varepsilon \cap F_0$ is a (at most) countable union of closed disjoint intervals.
$\Psi (\mu^\varepsilon_s)$ and and $\Psi (\mu^0_s)$ are identically $0$ on $F_\varepsilon \cap F_0$, hence their second derivative is a.e. $0$ on $F_\varepsilon \cap F_0$. From Lemmata \ref{cor:timeReg1} and \ref{lem:timeReg2}, 
\[ 0 = \frac{\d^2}{\d s^2} \Psi (\mu^\varepsilon_s) = \E [ r^\varepsilon_s (X^\varepsilon_s) - s^\varepsilon_s (X^\varepsilon_s) + \lambda^\varepsilon_s \vert \sigma_s^\top \nabla \psi^\varepsilon_s \vert^2 (X^\varepsilon_s) ] \quad \text{for a.e. } s \in F_0 \cap F_\varepsilon, \]
hence
\begin{equation} \label{eq:IntermStep1}
\eta \lvert \lambda^\varepsilon_t - \lambda^0_t \rvert \leq \vert \E [ r^\varepsilon_t (X^\varepsilon_t) - r^0_t (X^0_t) ] \vert + \vert \E [ s^\varepsilon_t (X^\varepsilon_t) - s^0_t (X^0_t) ] \vert.
\end{equation}
The term $s^\varepsilon_s$ can be written as a sum of terms of shape
$m^\varepsilon_s \cdot \nabla \partial_j \varphi^\varepsilon_s$, where $m^\varepsilon$ is uniformly bounded with bounded $\partial_j$-derivative.
However, $\E[ \nabla \partial_j \varphi^\varepsilon_s ( X^\varepsilon_s ) - \nabla \partial_j \varphi^0_s ( X^0_s ) ]$ cannot be controlled using Lemma \ref{lem:L2stabImp}. 
To circumvent this, we integrate by parts: 
\begin{align*}
\E &[ m^0_s \cdot \nabla \partial_j \varphi^0_s (X^0_s) ] = - \E [ \partial_j m^0_s \cdot \nabla \varphi^0_s (X^0_s) ] - \E [ (m^0_s \cdot \nabla \varphi^0_s) \partial_j \log \mu^0_s (X^0_s) ] \\
&= - \E [ \partial_j m^0_s \cdot \nabla \varphi^0_s (X^0_s) ] - \E [ (m^0_s \cdot \nabla \varphi^\varepsilon_s) \partial_j \log \mu^0_s (X^0_s) ] + \E [ m^0_s \cdot ( \nabla \varphi^\varepsilon_s - \nabla \varphi^0_s ) \partial_j \log \mu^0_s (X^0_s) ] \\
&= \E [ m^0_s \cdot \nabla \partial_j \varphi^\varepsilon_s (X^0_s) ] + \E [ \partial_j m^0_s \cdot (\nabla \varphi^\varepsilon_s - \nabla \varphi^0_s ) (X^0_s) ] + \E [ m^0_s \cdot ( \nabla \varphi^\varepsilon_s - \nabla \varphi^0_s ) \partial_j \log \mu^0_s (X^0_s) ].
\end{align*} 
Using the Cauchy-Schwarz inequality and Lemmata \ref{lem:InitalEntrop}-\ref{lem:Gamma_2} to bound $\E [ \vert \nabla \log \mu^0_s (X^0_s) \rvert^2 ]$, 
\begin{multline*}
\int_0^T \lvert \E [ m^0_s \cdot ( \nabla \varphi^\varepsilon_s - \nabla \varphi^0_s ) \partial_j \log \mu^0_s (X^0_s) ] \rvert \d t \\
\leq \lVert m^0 \rVert_\infty \bigg\{ \int^T_0 \E [ \vert \nabla \log \mu^0_s (X^0_s) \rvert^2 ] \d s \bigg\}^{1/2} \bigg\{ \int^T_0 \E [ \vert \nabla \varphi^\varepsilon_s (X^0_s) - \nabla \varphi^0_s (X^0_s) \rvert^2 ] \d s \bigg\}^{1/2} \leq C \varepsilon^{1/2},
\end{multline*} 
Lemma \ref{lem:L2stab}-\ref{item:entropBound} giving the last bound.
We similarly show that
\[ \int_0^T \lvert \E [ \partial_j m^0_s \cdot (\nabla \varphi^\varepsilon_s - \nabla \varphi^0_s ) (X^0_s) ] \vert \d s \leq C \varepsilon^{1/2}. \]
We can now use Lemma \ref{lem:L2stabImp} to write 
\[ \int_0^T \vert \E [ m^\varepsilon_s \cdot \nabla \partial_j \varphi^\varepsilon_s (X^\varepsilon_s) - m^0_s \cdot \nabla \partial_j \varphi^\varepsilon_s (X^0_s) ] \vert \d s \leq C \varepsilon^{1/4}. \]
Gathering everything, we showed that
\[ \int_0^T \vert \E [ m^\varepsilon_s \cdot \nabla \partial_j \varphi^\varepsilon_s (X^\varepsilon_s) - m^0_s \cdot \nabla \partial_j \varphi^0_s (X^0_s) ] \vert \d s \leq C \varepsilon^{1/4}, \]
hence the same holds when replacing $m^\varepsilon_t$ by $s^\varepsilon_t$.
Going back to \eqref{eq:IntermStep1}, Lemma \ref{lem:L2stabImp} now gives that
$\int_{F_\varepsilon \cap F_0} \lvert \lambda^\varepsilon_s - \lambda^0_s \rvert \d s \leq C \varepsilon^{1/4}$.

\medskip

\emph{\textbf{Step 2.} On $F^c_\varepsilon \cap F_0$.} Using the complementary slackness condition
\begin{equation} \label{eq:DecompLambdaF}
\int_{( F_\varepsilon \cap F_0 )^c} \lvert \lambda^\varepsilon_s - \lambda^0_s \rvert \d s = \int_{F^c_\varepsilon \cap F_0} \lambda^0_s \d s + \int_{F_\varepsilon \cap F^c_0} \lambda^\varepsilon_s \d s. \end{equation} 
Since $t \mapsto \Psi(\mu^\varepsilon_t)$ and $t \mapsto \Psi(\mu^0_t)$ are continuous, $[0,T] \setminus (F_\varepsilon \cap F_0)$ is a (at most) countable union of disjoint intervals.
Let $I = (t,t')$ be one them that is not trivial. 

If $0 < t < t' < T$, then $\Psi ( \mu^\varepsilon_t ) = \Psi ( \mu^\varepsilon_{t'} ) = \varepsilon$ and $\Psi ( \mu^0_t ) = \Psi ( \mu^0_{t'} ) = 0$ by definition of $(F_\varepsilon \cap F_0)^c$. 
From Lemma \ref{cor:timeReg1}, $s \mapsto \tfrac{\d}{\d s} \Psi ( \mu^\varepsilon_s )$ and $s \mapsto \tfrac{\d}{\d s} \Psi ( \mu^0_s )$ are $C^1$ at $t$ and $t'$, hence they vanish there ($t$ and $t'$ are local maxima). We then set $t_0 := t$ and $t'_0 := t'$.
Otherwise, if $t'= 0$ or $t = T$,
we consider $t_0, t'_0 \in I$ such that $0 < t_0 < t'_0 < T$ and $\Psi (\mu^0_{t_0}) = \Psi (\mu^0_{t'_0}) =0$. 

Let us apply Lemma \ref{lem:locMax} at $t_0$ and $t'_0$.
Using Lemma \ref{lem:timeReg2} on $[t_0,t'_0]$, reasoning as in \emph{\textbf{Step 1.}} yields
\[ \int_{t_0}^{t'_0} \lambda^0_s \d s - \int_{t_0}^{t'_0} \lambda^\varepsilon_s \d s \leq C \varepsilon^{1/4}, \]
hence 
\[ \int_{t_0}^{t'_0} \lambda^0_s \d s \leq C \varepsilon^{1/4} + \int_I \lambda^\varepsilon_s \d s. \]
Since this holds for every $[t_0,t'_0] \subset I$ with $\Psi (\mu^0_{t_0}) = \Psi (\mu^0_{t'_0}) = 0$, we deduce that 
\[ \int_I \lambda^0_s \d s \leq C \varepsilon^{1/4} + \int_I \lambda^\varepsilon_s \d s, \]
recalling the complementary slackness condition \eqref{eq:CompSlackmf}.
To bound $\int_{F^c_\varepsilon \cap F_0} \lambda^0_s \d s$, it is thus sufficient to control $\int_{F_\varepsilon \cap F^c_0} \lambda^\varepsilon_s \d s$. 
This is the content of the next step. 

\medskip

\emph{\textbf{Step 3.} On $F_\varepsilon \cap F^c_0$.} 
We now use the assumption that there are a finite number of non-trivial intervals where $\Psi(\mu^0_s) = 0$ identically.
Let $[a_k,b_k]$ be these intervals in an increasing order, with $a_k < b_k$ and $1 \leq k \leq m$. 
We further set $b_0 := 0$ and $a_{m+1} := T$ so that
\[ \int_{F_\varepsilon \cap F^c_0} \lambda^\varepsilon_s \d s = \int_{F_0^c} \lambda^\varepsilon_s \d s = \sum_{k=0}^{m} \int_{b_k}^{a_{k+1}} \lambda^\varepsilon_s \d s. \]
For $0 \leq k \leq m$, let us consider $b_k < t_k < t'_k < a_{k+1}$ with $\Psi (\mu^\varepsilon_{t_k}) = \Psi (\mu^\varepsilon_{t'_k}) = \varepsilon$. 
Indeed, if such a $(t_k,t'_k)$ does not exist, $\int_{b_k}^{a_{k+1}} \lambda^\varepsilon_s \d s = 0$ from the complementary slackness condition \eqref{eq:CompSlackmf} with $\Psi -\varepsilon$ in place of $\Psi$.
We then write
\[ \int_{t_k}^{t'_k} \lambda^\varepsilon_s \d s = \int_{t_k}^{t'_k} \lambda^\varepsilon_s \d s - \int_{t_k}^{t'_k} \lambda^0_s \d s, \]
and we apply Lemma \ref{lem:locMax} at $t_k$ and $t'_k$, as in \emph{\textbf{Step 2.}}.
Using Lemma \ref{lem:timeReg2} on $[t_k,t'_k]$, reasoning as in \emph{\textbf{Step 1.}} shows that the r.h.s. is bounded by $C \varepsilon^{1/4}$.
Since this holds for every $[t_k,t'_k] \subset [b_k,a_{k+1}]$ with $\Psi (\mu^\varepsilon_{t_k}) = \Psi (\mu^\varepsilon_{t'_k}) = \varepsilon$, we deduce that 
\[ \int_{b_k}^{a_{k+1}} \lambda^\varepsilon_s \d s \leq C \varepsilon^{1/4}, \]
recalling the complementary slackness condition.
Gluing the steps together in \eqref{eq:DecompLambdaF} concludes. 
\end{proof}

\begin{rem}[Infinite number of intervals] \label{rem:InfiniteNumber}
Without assuming that the number of $[a_k,b_k]$ is finite, we can still write that
\[ \int_{F_\varepsilon \cap F^c_0} \lambda^\varepsilon_s \d s = \sum_{k \geq 0} \int_{I_k} \lambda^\varepsilon_s \d s, \] 
the sum being possibly countable with no more required ordering on the $I_k$. 
For each $k$, let us consider a maximal $[t_k,t'_k] \subset I_k$ such that $\Psi (\mu^\varepsilon_{t_k}) = \Psi (\mu^\varepsilon_{t'_k}) = \varepsilon$. 
Using Lemma \ref{lem:timeReg2} and \emph{\textbf{Step 1.}}, the problem boils down to controlling
\begin{equation} \label{eq:sumInfinInterv}
\sum_{k \geq 0} \frac{\d}{\d s}\big\rvert_{s = t_k} \Psi ( \mu^0_{s} ) - \frac{\d}{\d s}\big\rvert_{s = t'_k} \Psi ( \mu^0_{s} ).
\end{equation} 
From Lemma \ref{lem:timeReg2}, $s \mapsto \Psi (\mu^0_s)$ is $C^2$ on each $I_k$.
Let us assume that $\tfrac{\d^2}{\d s^2} \Psi ( \mu^0_s )$ has a uniform continuity modulus $m_\Psi$ on the union of these intervals: for any $\eta > 0$,
\[ \forall t,t' \in \cup_k I_k, \qquad \vert t - t' \rvert \leq m_\Psi ( \eta ) \Rightarrow \big\vert \tfrac{\d^2}{\d s^2} \vert_{s = t} \Psi ( \mu^0_s ) - \tfrac{\d^2}{\d s^2} \vert_{s = t'} \Psi ( \mu^0_s ) \big\vert \leq \eta. \]
We can then split the sum \eqref{eq:sumInfinInterv} between intervals of size smaller and bigger than $m_\Psi(\eta)$.
On intervals $I_k$ smaller than $m_\Psi(\eta)$, $\vert \tfrac{\d^2}{\d s^2} \Psi( \mu^0_s ) \vert$ is smaller than $\eta$ because $ \tfrac{\d^2}{\d s^2} \Psi( \mu^0_s ) $ vanishes within $I_k$; this is indeed a consequence of Rolle's theorem because $\tfrac{\d}{\d s} \Psi( \mu^0_s )$ vanishes at the boundary of $I_k$ (except possibly for the first and the last $I_k)$.  
On intervals bigger than $m_\Psi (\eta)$, we apply the error bound of \emph{\textbf{Step 3}}.
Since there are at most $\lfloor T m^{-1}_\Psi (\eta) \rfloor +1$ such intervals within $[0,T]$, we can bound \eqref{eq:sumInfinInterv} by
\begin{equation} \label{eq:sumInfinInterv2}
\frac{C}{m_\Psi (\eta)} \varepsilon^{1/4} + \sum_{a_{k+1} - b_k \leq m_\Psi(\eta)} \eta [ t'_{k} - t_k ] \leq C [ m^{-1}_\Psi (\eta) \varepsilon^{1/4} + \eta ] .
\end{equation} 
Such a continuity modulus $m_\Psi$ can be obtained from Lemma \ref{lem:timeReg2} by differentiating once again to get third-order time-regularity.
Strengthening a bit the regularity assumptions on coefficients, we could thus bound $\tfrac{\d^3}{\d s^3} \Psi ( \mu^0_s )$ and optimise over $\eta$ in \eqref{eq:sumInfinInterv2}, to get that \eqref{eq:sumInfinInterv} is of order $\varepsilon^{1/8}$.
\end{rem}

\begin{proof}[Proof of Theorem \ref{thm:stability}]
Let us first handle the convergence of the atom at $0$. 
If $\Psi( \mu^0_0 ) < 0$, then the same holds for $\Psi( \mu^\varepsilon_0 ) < 0$ for $\varepsilon$ small enough, hence $\lambda^\varepsilon_0 = \lambda^0_0 = 0$ and there is nothing to do.
We thus assume $\Psi( \mu^0_0 ) = 0$. 
Reasoning as in \emph{\textbf{Step 1.}} in the proof of Proposition \ref{pro:L1estiamte}, 
\[ \eta \lvert \lambda^\varepsilon_0 - \lambda^0_0 \rvert \leq \lvert \E [ \lambda^\varepsilon_0 \lvert \sigma^\top_0 \nabla \psi^\varepsilon_0 (X^\varepsilon_0) \rvert^2 - \lambda^0_0 \lvert \sigma^\top_0 \nabla \psi^0_0 (X^0_0) \rvert^2 ] \rvert + C \varepsilon^{1/2}. \]
The first term on the r.h.s can be computed using Lemma \ref{lem:timeReg2}, so that
\begin{multline} \label{eq:atom0}
\eta \lvert \lambda^\varepsilon_0 - \lambda^0_0 \rvert \leq \vert \E [ L^{\alpha^\varepsilon}_0 \psi^\varepsilon_0 ( X^\varepsilon_0 ) - L^{\alpha^0}_0 \psi^0_0 ( X^0_0 ) ] \vert + \bigg\vert \frac{\d}{\d t} \big\vert_{t =t'} \Psi(\mu^\varepsilon_t) \bigg\vert +\int_0^{t'} \vert \E [ r^\varepsilon_s ( X^\varepsilon_s ) - r^0_s ( X^0_s ) ] \vert \d s \\ + C \varepsilon^{1/2} + \int_0^{t'} \vert \E [ s^\varepsilon_s ( X^\varepsilon_s ) - s^0_s ( X^0_s ) ] \vert 
+ \vert \E [ \lambda^\varepsilon_s \vert \sigma_s^\top \nabla \psi^\varepsilon_s \vert^2 ( X^\varepsilon_s ) - \lambda^0_s \vert \sigma_s^\top \nabla \psi^0_s \vert^2 ( X^0_s ) ] \vert \d s, 
\end{multline}
for any $t' \in (0,T)$ with $\tfrac{\d}{\d t} \vert_{t =t'} \Psi(\mu^0_t) = 0$.
The $\vert \E [ r^\varepsilon_s ( X^\varepsilon_s ) - r^0_s ( X^0_s ) ] \vert$ term can be bounded using Lemma \ref{lem:boundPhi}-\ref{item:L2Bound}.
We deal with the $\vert \E [ s^\varepsilon_s ( X^\varepsilon_s ) - s^0_s ( X^0_s ) ] \vert$ as previously in \emph{\textbf{Step 1.}}. 
For the remaining terms, we distinguish between two cases.
\begin{enumerate}
\item Let us assume that $t \mapsto \Psi( \mu^0_t )$ has a global minimum at $t' \in (0,T)$, so that $\tfrac{\d}{\d t} \vert_{t =t'} \Psi(\mu^0_t) = 0$.
Using Lemma \ref{lem:locMax}, we get that $\vert \tfrac{\d}{\d t} \vert_{t =t'} \Psi(\mu^\varepsilon_t) \vert \leq C \varepsilon^{1/4}$.
It remains to control the difference $\E [ L^{\alpha^\varepsilon}_{0} \psi^\varepsilon_{0}({X}^\varepsilon_{0}) ] - \E [ L^{\alpha^0}_{0} \psi^0_{0}({X}^0_{0}) ]$. Since
\[ L^{\alpha^\varepsilon}_0 \psi^\varepsilon_0 = L^0_0 \psi^\varepsilon_0 + a_0 \nabla \varphi^\varepsilon_0 \cdot \nabla \psi^\varepsilon_0, \qquad X^\varepsilon_0 \sim Z^{-1}_\varepsilon e^{-\varphi^\varepsilon_0} \d \nu_0, \]
we notice that
\begin{align*}
\E [ L^{\alpha^\varepsilon}_{0} \psi^\varepsilon_{0}({X}^\varepsilon_{0}) ] = \E [ L^{0}_{0} \psi^\varepsilon_{0}({X}^\varepsilon_{0}) ] + Z_\varepsilon^{-1} \int_{\R^d} - a_0 \nabla [ e^{- \varphi^\varepsilon_0} ] \cdot \nabla \psi^\varepsilon_0 \, \d \nu_0. 
\end{align*}
Integrating by parts:
\[ Z_\varepsilon^{-1} \int_{\R^d} - a_0 \nabla [ e^{- \varphi^\varepsilon_0} ] \cdot \nabla \psi^\varepsilon_0 \, \d \nu_0 = \E [ \nabla \cdot ( a_0 \nabla \psi^\varepsilon_0 )({X}^\varepsilon_{0}) ] + \E [ a_0 \nabla \psi^\varepsilon_0 \cdot \nabla \log \nu_0({X}^\varepsilon_{0}) ]. \]
The first term on the r.h.s generates the difference $\E [ \nabla \cdot ( a_0 \nabla \psi^\varepsilon_0 ) ( X^\varepsilon_0 ) - \nabla \cdot ( a_0 \nabla \psi^0_0 ) ( X^0_0 ) ]$ which enters the scope of Lemma \ref{lem:L2stabImp}.
To handle the difference of the second second terms, we first write that
\[ \E [ \vert \nabla \log \nu_0 (X^\varepsilon_0) - \nabla \log \nu_0 (X^0_0) \vert \mathbbm{1}_{\vert \nabla \log \nu_0 (X^\varepsilon_0) \vert + \vert \nabla \log \nu_0 (X^0_0) \vert \leq M} ] \leq C M \varepsilon^{1/2}, \]
using Lemma \ref{lem:L2stab}-\ref{item:entropBound} and Pinsker's inequality. We then use that
\begin{multline*}
\E [ \vert \nabla \log \nu_0 (X^\varepsilon_0) - \nabla \log \nu_0 (X^0_0) \vert \mathbbm{1}_{\vert \nabla \log \nu_0 (X^\varepsilon_0) \vert + \vert \nabla \log \nu_0 (X^0_0) \vert > M} ] \\
\leq M^{-1} \{ \E [ \vert \nabla \log \nu_0 (X^\varepsilon_0) \vert^2 + \vert \nabla \log \nu_0 (X^0_0) \vert^2  ] \}^{3/2}, 
\end{multline*}
using the Cauchy-Schwarz and the Markov inequalities.
We further bound the expectation using \ref{ass:ini2} and Lemma \ref{lem:InitalEntrop}. 
Optimising in $M$, we get that
\[ \E [ \vert \nabla \log \nu_0 (X^\varepsilon_0) - \nabla \log \nu_0 (X^0_0) \vert ] \leq C \varepsilon^{1/4}. \]
Gathering everything, we deduce that 
\[ \vert \E [ L^{\alpha^\varepsilon}_0 \psi^\varepsilon_0 ( X^\varepsilon_0 ) - L^{\alpha^0}_0 \psi^0_0 ( X^0_0 ) ] \vert \leq C \varepsilon^{1/4}. \]
Going back to \eqref{eq:atom0} and
using Lemma \ref{lem:L2stabImp}, we eventually obtain that $\vert \lambda^\varepsilon_0 - \lambda^0_0 \vert \leq C \varepsilon^{1/4}$.
\item If such a minimiser $t'$ does not exist in $(0,T)$, $t \mapsto \Psi( \mu^0_t )$ is necessarily (strictly) decreasing hence $\Psi(\mu^0_T) < 0$, and $\Psi(\mu^\varepsilon_T) < 0$ for small enough $\varepsilon$. 
We can then set $t' = T$, and \eqref{eq:atom0} still holds using Lemma \ref{lem:timeReg2} because
\[ \frac{\d}{\d t} \big\vert_{t =T} \Psi(\mu^\varepsilon_t) = \lambda^\varepsilon_T \E [ \vert \sigma_T^\top \nabla \psi^\varepsilon_T \vert^2 ( X^\varepsilon_T ) ] = 0, \] 
from the complementary slackness.
Reasoning as above then yields $\vert \lambda^\varepsilon_0 - \lambda^0_0 \vert \leq C \varepsilon^{1/4}$.
\end{enumerate}
Since we now control $\vert \lambda^\varepsilon_0 - \lambda^0_0 \vert$ and $\lVert \lambda^\varepsilon - \lambda^0 \rVert_{L^1(0,T)}$, Corollary \ref{cor:timeReg1}-\ref{item:PsiC1} and Lemma \ref{lem:timeReg2} yield
\[ \forall t \in [0,T), \quad \bigg\lvert \frac{\d}{\d t} \Psi (\mu^\varepsilon_t) - \frac{\d}{\d t} \Psi (\mu^0_t) \bigg\rvert \leq C \varepsilon^{1/4}, \]
for $C$ independent of $(t,\varepsilon)$.
From Corollary \ref{cor:timeReg1}-\ref{item:PsiC1}, $t \mapsto \Psi (\mu^\varepsilon_t)$ is $C^1$ hence this identity still holds for $t = T$. 
Since $\tfrac{\d}{\d t} \big\vert_{t =T} \Psi(\mu^\varepsilon_t) = \lambda^\varepsilon_T \E [ \vert \sigma_T^\top \nabla \psi^\varepsilon_T \vert^2 ( X^\varepsilon_T ) ]$, we obtain using \eqref{eq:Qualifeps} that $\vert \lambda^\varepsilon_T - \lambda^\varepsilon_0 \vert \leq C \varepsilon^{1/4}$.

Since $\lambda^\varepsilon - \lambda^0$ is now fully estimated, we can introduce $u^\varepsilon := \varphi^\varepsilon - \varphi^0$, which satisfies
\[ - u^\varepsilon_t + \int_t^T b^\varepsilon_s \cdot \nabla u^\varepsilon_s + \frac{1}{2} \mathrm{Tr}[a_s \nabla^2 u^\varepsilon_s] \d s + \int_{[t,T]} \psi^\varepsilon_s \lambda^\varepsilon ( \d s) - \int_{[t,T]} \psi^0_s \lambda^0 ( \d s) = 0, \]
in the sense of Definition \ref{def:mildForm},
where $b^\varepsilon_t := b_t - \tfrac{1}{2} a_t \nabla [ \varphi^\varepsilon_t + \varphi^0_t ]$.
As for Proposition \ref{pro:hessbound}, we can use a regularisation procedure to obtain the Feynman-Kac formula
\[ u^\varepsilon_t (x) = \E \bigg[ \int_{[t,T]} \psi^\varepsilon_s ( Y^{\varepsilon,t,x}_s ) \lambda^\varepsilon ( \d s ) - \int_{[t,T]} \psi^0_s ( Y^{\varepsilon,t,x}_s ) \lambda^0 ( \d s ) \bigg], \]
using the solution to the following SDE,
\[ \d Y^{\varepsilon,t,x}_s = b^\varepsilon_s ( Y^{\varepsilon,t,x}_s ) \d s + \sigma_s ( Y^{\varepsilon,t,x}_s ) \d B_s, \quad Y^{\varepsilon,t,x}_t = x. \]
We can then estimate $\vert u^\varepsilon_t (x) - u^\varepsilon_t (y) \vert$ using the gradient estimate from \cite{priola2006gradient} as in Proposition \ref{pro:hessbound}.
Indeed, $\nabla b^\varepsilon_t$ is bounded uniformly in $(\varepsilon, t)$ using \eqref{eq:BoundsUnifeps}, hence we get uniform in $\varepsilon$ estimates.
Using \eqref{eq:StabPartisalPsi} and our estimate of $\lambda^\varepsilon - \lambda^0$, this yields $\sup_{(t,x) \in [0,T] \times \R^d} \vert \nabla u^\varepsilon_t (x) \vert \leq C \varepsilon^{1/4}$ as desired.
\end{proof}

\appendix

\section{Time-reversal and density estimates} \label{app:timerev}

In this section, we prove bounds on the density $\mu_t$ of the solution to
\[ \d X_t = b_t ( X_t ) \d t + \sigma_t ( X_t ) \d B_t.  \]
Lemmata \ref{lem:Gamma_1} and \ref{lem:Gamma_2} can be rephrased in terms of entropy and Fisher information: they are non-quantitative versions of results from \cite{bakry2014analysis}.
Contrary to the classical semi-group approach, our estimates rely on semi-martingale decompositions in the spirit of \cite{fontbona2016trajectorial}. 
They provide probabilistic counterparts to PDE results of \cite[Chapter 7.4]{bogachev2022fokker}.
To the best of our knowledge, the third order estimate in Proposition \ref{pro:Gamma_3} is new.

We work under global Lipschitz assumptions which are not optimal, see \cite{haussmann1986time,fontbona2016trajectorial} for sharper discussions.
From \cite[Theorem 3.1]{haussmann1986time}, \ref{ass:coefReg1} is enough for existence of a density $\mu_t \in H^1(\R^d)$.
In the forthcoming proofs, we assume that coefficients are smooth so that $\mu_t$ is the smooth positive solution of $\partial_t \mu_t = L^\star_t \mu_t$.
To get the estimates with the desired regularity, it is classically sufficient to regularise coefficients and to take the limit.  

We define $\overleftarrow{X}_t := X_{T-t}$.
From \cite[Theorem 2.1]{haussmann1986time}, $( \overleftarrow{X}_t )_{0 \leq t \leq T}$ is a time-inhomogeneous Markov process with generator
\[  \overleftarrow{L}_t := \overleftarrow{b}^j_t \partial_j + \frac{1}{2} \overleftarrow{a}^{j,k}_t \partial^2_{j,k}, \]
where
\[ \overleftarrow{b}^j_t := - b^j_{T-t} + \mu_{T-t}^{-1} \partial_k ( a_{T-t}^{j,k} \mu_{T-t} ), \qquad \overleftarrow{a}^{j,k}_t := a^{j,k}_{T-t}. \]
Let $\overleftarrow{\mu}_t := \mu_{T-t}$ be the density of the law of $\overleftarrow{X}_t$ at time $t$.
We then set $\eta_t := \log \overleftarrow{\mu}_t$.
After simplifications, the Fokker-Planck equation $\partial_t \overleftarrow{\mu}_t = \overleftarrow{L}^\star_t \overleftarrow{\mu}_t$ yields
\begin{equation}\label{eq:FKeta}
 (\partial_t + \overleftarrow{L}_t) \eta_t = \partial_j b^j_{T-t} - \frac{1}{2} \partial_{j,k} \overleftarrow{a}^{j,k}_t + \frac{1}{2} \overleftarrow{a}^{j,k}_t \partial_j \eta_t \partial_k \eta_t. 
\end{equation} 
We then define $Y^i_t := \partial_i \eta_t (\overleftarrow{X}_t)$, for $1 \leq i \leq d$. 
In all what follows, we underly evaluation at $\overleftarrow{X}_t$, writing e.g. $\d \eta_t$ instead of $\d \eta_t (\overleftarrow{X}_t)$.

\begin{lemma} \label{lem:Gamma_1}
Under \ref{ass:coefReg1}, if $\mu_0$ satisfies \ref{ass:ini1} then 
\[ \sup_{t \in [0,T]} \int_{\R^d} \log \mu_t \, \d \mu_t +  \int_0^T \int_{\R^d} \lvert \nabla \log \mu_t \rvert^2 \d \mu_t \d t \leq C, \]
where $C>0$ only depends on coefficients through their uniform norm or the one or their derivatives.
\end{lemma}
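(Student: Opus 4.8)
The plan is to obtain a rigorous \emph{de~Bruijn}-type identity from the time-reversed diffusion already set up in the excerpt, turning the formal entropy-dissipation relation $\tfrac{\d}{\d r}\int\log\mu_r\,\d\mu_r=-\tfrac{1}{2}\int a^{j,k}_r\,\partial_j\log\mu_r\,\partial_k\log\mu_r\,\d\mu_r+(\text{bounded terms})$ into an identity between true expectations. As announced before the lemma, I first assume the coefficients smooth with bounded derivatives, so that $\mu_t$ is a smooth positive Fokker--Planck solution, $\overleftarrow{\mu}_t=\mu_{T-t}$ is smooth and positive, $\eta_t=\log\overleftarrow{\mu}_t$ is smooth, $\overleftarrow{X}$ is a diffusion with generator $\overleftarrow{L}_t$, and \eqref{eq:FKeta} holds; the general case then follows by regularising the coefficients and passing to the limit, since the bound will involve only uniform norms of $b$, $\sigma$, $\sigma^{-1}$ and of finitely many of their derivatives.

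First I would apply It\^o's formula to $s\mapsto\eta_s(\overleftarrow{X}_s)$ and substitute \eqref{eq:FKeta}; the second-order part of $\eta$ cancels and one gets
\[
\d\,\eta_s(\overleftarrow{X}_s)=\Bigl[\partial_j b^j_{T-s}-\tfrac{1}{2}\partial_{j,k}\overleftarrow{a}^{j,k}_s+\tfrac{1}{2}\,\overleftarrow{a}^{j,k}_s\,Y^j_sY^k_s\Bigr]\d s+\d M_s,
\]
with $Y_s=\nabla\eta_s(\overleftarrow{X}_s)$ and $M$ a continuous local martingale with $\d\langle M\rangle_s=\overleftarrow{a}^{j,k}_sY^j_sY^k_s\,\d s$. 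Two structural facts will be used: $\tfrac{1}{2}\overleftarrow{a}^{j,k}_sY^j_sY^k_s\ge\tfrac{1}{2C}|Y_s|^2\ge 0$ by the ellipticity assumption \ref{ass:coefReg1}-\ref{item:sigmaellip}, and $\E[\overleftarrow{a}^{j,k}_sY^j_sY^k_s]=\int_{\R^d} a^{j,k}_{T-s}\,\partial_j\log\mu_{T-s}\,\partial_k\log\mu_{T-s}\,\d\mu_{T-s}$.

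Since $\langle M\rangle_T$ is precisely the quantity under study, $M$ is a priori only a local martingale, so I would localise with $\tau_n:=\inf\{s:|\overleftarrow{X}_s|\vee|Y_s|\ge n\}\wedge T$, take expectations on $[0,\cdot\wedge\tau_n]$, and use the nonnegativity of the quadratic term together with monotone convergence in $n$. Reverting to forward time and using $\E[\eta_0(\overleftarrow{X}_0)]=\int\log\mu_T\,\d\mu_T$ (and the analogous identities obtained by reversing on a shorter horizon), one reaches, for every $0\le r\le T$,
\[
\int_{\R^d}\log\mu_r\,\d\mu_r=\int_{\R^d}\log\mu_0\,\d\mu_0-\tfrac{1}{2}\int_0^r I_a(\mu_u)\,\d u-\int_0^r\!\!\int_{\R^d}\Bigl[\partial_j b^j_u-\tfrac{1}{2}\partial_{j,k}a^{j,k}_u\Bigr]\d\mu_u\,\d u,
\]
where $I_a(\mu_u):=\int_{\R^d} a^{j,k}_u\,\partial_j\log\mu_u\,\partial_k\log\mu_u\,\d\mu_u\ge\tfrac{1}{C}\int_{\R^d}|\nabla\log\mu_u|^2\,\d\mu_u$. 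Here one integrates by parts once, $\int\partial_{j,k}a^{j,k}_u\,\d\mu_u=-\int\partial_j a^{j,k}_u\,\partial_k\log\mu_u\,\d\mu_u$, and absorbs an $\varepsilon|Y|^2$ fraction into the good term via Young's inequality, so that only $\|b\|_{\mathrm{Lip}}$ and $\|\nabla\sigma\|_\infty$ enter, not $\|\nabla^2\sigma\|_\infty$; legitimising this last manipulation with true expectations uses the a priori finiteness already gained from the localisation. Dropping the nonnegative Fisher term gives $\int\log\mu_r\,\d\mu_r\le\int\log\mu_0\,\d\mu_0+CT$ for all $r$, while taking $r=T$ gives $\tfrac{1}{2}\int_0^T I_a(\mu_u)\,\d u\le\int\log\mu_0\,\d\mu_0-\int\log\mu_T\,\d\mu_T+CT$; bounding $-\int\log\mu_T\,\d\mu_T$ from above by the maximum-entropy property of the Gaussian, together with the uniform second-moment bound $\sup_t\int|x|^2\,\d\mu_t\le C$ (a routine Gronwall estimate from the linear growth in \ref{ass:coefReg1} and the moment assumption \ref{ass:ini1}), yields $\int_0^T\!\int|\nabla\log\mu_u|^2\,\d\mu_u\,\d u\le C$. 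One finally removes the smoothness assumption by regularisation, the bounds being uniform along it.

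I expect the main obstacle to be the localisation step: the stochastic integral $M$ is genuinely only a local martingale, since its bracket is the Fisher integral we are trying to control, so the $n\to\infty$ passage — in particular handling the boundary term $\E[\eta_{\cdot\wedge\tau_n}(\overleftarrow{X}_{\cdot\wedge\tau_n})]$ and combining the localisation with the integration by parts that eliminates the second derivative of $\sigma$ — must be done carefully; this is where the smoothness of $\mu_t$ and the uniform bounds on the coefficients are really used. A minor caveat is that the estimate inevitably takes the form $\sup_r\int\log\mu_r\,\d\mu_r\le\int\log\mu_0\,\d\mu_0+CT$, so it tacitly requires $\int\log\mu_0\,\d\mu_0<\infty$, which holds in every regime where the lemma is applied (it is contained in \ref{ass:ini2}).
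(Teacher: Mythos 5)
Your proposal is correct and follows essentially the same route as the paper: Itô's formula for the log-density along the time-reversed diffusion, an integration by parts to remove the second derivatives of $a$, Young's inequality to absorb the cross terms into the elliptic Fisher term, and a comparison with a reference density to control the terminal entropy. The only (inessential) differences are that the paper works with $\log\frac{\d\overleftarrow{\mu}_t}{\d\rho}$ for the fixed reference $\rho\propto e^{-\sqrt{1+|x|^2}}$ so that nonnegativity of the relative entropy replaces your Gaussian maximum-entropy plus second-moment argument, and that you make the localisation of the stochastic integral explicit where the paper leaves it implicit.
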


\begin{proof} 
Using Ito's formula and \eqref{eq:FKeta}, 
\begin{equation} \label{eq:diffeta}
\d \eta_t = [  \partial_j b^j_{T-t} + \tfrac{1}{2} \overleftarrow{a}^{j,k}_t Y^j_t Y^k_t - \tfrac{1}{2} \partial_{j,k} \overleftarrow{a}^{j,k}_t ] \d t + Y^j_t \overleftarrow{\sigma}^{j,k}_t \d B^k_t. 
\end{equation}
Let $\rho (x):= Z^{-1} \exp [-\sqrt{x^2 +1}]$ be a probability density w.r.t. Lebesgue. 
A similar computation yields
\begin{multline*}
\d \log \tfrac{\d \overleftarrow{\mu}_t}{\d \rho} = \tfrac{1}{2} \overleftarrow{a}^{j,k}_t Y^j_t Y^k_t \d t + \partial_j \log \tfrac{\d \overleftarrow{\mu}_t}{\d \rho} \overleftarrow{\sigma}^{j,k}_t \d B^k_t \\
+ [  \partial_j b^j_{T-t} - \tfrac{1}{2} \partial_{j,k} \overleftarrow{a}^{j,k}_t + b^j_{T-t} \partial_j \log \rho - \tfrac{1}{2} \partial_k \overleftarrow{a}^{j,k}_t \partial_j \log \rho - \overleftarrow{a}^{j,k}_t Y^k_t \partial_j \log \rho
+ \tfrac{1}{2} \overleftarrow{a}^{j,k}_t \partial_{j,k} \log \rho] \d t.  
\end{multline*} 
We then integrate in time and we take expectations. 
We integrate by parts to get rid of second order derivatives on $\overleftarrow{a}^{j,k}_t$:
\[ \E[- \tfrac{1}{2} \partial_{j,k} \overleftarrow{a}^{j,k}_t ] = \E[ \tfrac{1}{2} \partial_j \overleftarrow{a}^{j,k}_t Y^k_t ] \geq - \tfrac{\varepsilon}{2} \E [ Y^k_t Y^k_t] -\tfrac{\varepsilon^{-1}}{2} \E [ \partial_j \overleftarrow{a}^{j,k}_t \partial_j \overleftarrow{a}^{j,k}_t ], \]
for $\varepsilon$ small enough.
Using \ref{ass:ini1} on $\mu_0$, $\int \log \frac{\d \overleftarrow{\mu}_T}{\d \rho} \d \overleftarrow{\mu}_T =  \int \log \frac{\d {\mu}_0}{\d \rho} \d {\mu}_0$ is finite.
Similarly,
\begin{equation} \label{eq:Young}
\E [ - \overleftarrow{a}^{j,k}_t  Y^k_t \partial_j \rho ] \geq - \tfrac{\varepsilon}{2} \E [ Y^k_t Y^k_t ] - \tfrac{\varepsilon^{-1}}{2} \E [ \overleftarrow{a}^{j,k}_t  \partial_j \rho \overleftarrow{a}^{j,k}_t \partial_j \rho ]. 
\end{equation} 
Using \ref{ass:coefReg1}-$(iii)$, $\overleftarrow{a}^{j,k}_t Y^j_t Y^k_t \geq C^{-1} Y^k_tY^k_t$ for some $C > 0$, and we choose $\varepsilon$ such that $C^{-1} > 2 \varepsilon$: we thus obtain a bound on $\E [ \overleftarrow{a}^{j,k}_t Y^j_t Y^k_t]$.
This in turn gives a bound on $\int \log \frac{\d \overleftarrow{\mu}_0}{\d \rho} \d \overleftarrow{\mu}_0 =  \int \log \frac{\d {\mu}_T}{\d \rho} \d {\mu}_T$ follows, and then on $\int \log \mu_T \d \mu_T$.
Integrating and taking expectation in \eqref{eq:diffeta}, the bound on $\int \log \mu_t \d \mu_t = \E [ \eta_{T-t}]$ follows by a Gronwall argument.
\end{proof}

We now differentiate \eqref{eq:FKeta}.
After simplifications:
\begin{multline} \label{eq:DiffFK}
(\partial_t + \overleftarrow{L}_t) \partial_i \eta_t = \partial_i b^j_{T-t} \partial_{j} \eta_t -\partial_{i,k} \overleftarrow{a}^{j,k}_t \partial_{j} \eta_t - \tfrac{1}{2} \partial_i \overleftarrow{a}^{j,k}_t \partial_{k} \eta_t \partial_j \eta_t
- \tfrac{1}{2} \partial_i \overleftarrow{a}^{j,k}_t \partial_{j,k} \eta_t \\
+
\partial_{i,j} b^j_{T-t} - \tfrac{1}{2} \partial_{i,j,k} \overleftarrow{a}^{j,k}_t. 
\end{multline}

Let us define $Z^{j,k}_t := \partial_{j,k} \eta_t (\overleftarrow{X}_t)$.

\begin{lemma} \label{lem:Gamma_2}
Under \ref{ass:coefReg1} and \ref{ass:coefReg2}-(i), if $\mu_0$ satisfies \ref{ass:ini1} then 
\[ \sup_{t \in [0,T]} \int_{\R^d} \vert \nabla \log \mu_t \vert^2 \, \d \mu_t +  \int_0^T \int_{\R^d} \lvert \nabla^2 \log \mu_t \rvert^2 \d \mu_t \d t \leq C, \]
where $C>0$ only depends on coefficients through their uniform norm or the one or their derivatives. 
\end{lemma}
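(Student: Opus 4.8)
The plan is to push the trajectorial, time-reversal method of Lemma~\ref{lem:Gamma_1} one derivative higher, now estimating the process $|Y_t|^2 = \sum_i (Y^i_t)^2$ with $Y^i_t = \partial_i\eta_t(\overleftarrow{X}_t)$ and using the auxiliary quantity $Z^{j,k}_t = \partial_{j,k}\eta_t(\overleftarrow{X}_t)$. First I would apply Itô's formula to $|Y_t|^2$, using the evolution equation \eqref{eq:DiffFK} for $\partial_i\eta_t$ and $\d\overleftarrow{X}_t = \overleftarrow{b}_t\,\d t + \overleftarrow{\sigma}_t\,\d B_t$. The quadratic-variation part contributes $\sum_i \overleftarrow{a}^{j,k}_t Z^{i,j}_t Z^{i,k}_t\,\d t$, which by the uniform ellipticity \ref{ass:coefReg1}-(iii) is bounded below by $C^{-1}|Z_t|^2\,\d t$: this is the single \emph{good} term that will produce the $\int_0^T\!\int |\nabla^2\log\mu_t|^2\,\d\mu_t\,\d t$ bound. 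The drift part is $2\sum_i Y^i_t(\partial_t+\overleftarrow{L}_t)\partial_i\eta_t\,\d t$, into which I substitute \eqref{eq:DiffFK}; after taking expectations (the local martingale being a true martingale by a localisation argument and the moment bound $\E[|Y_t|^2]<+\infty$ furnished by Lemma~\ref{lem:Gamma_1}), it remains to control the resulting expectations.

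The terms $Y^i_t\,\partial_i b^j_{T-t}\,Y^j_t$ and $Y^i_t\,\partial_{i,k}\overleftarrow{a}^{j,k}_t\,Y^j_t$ are directly $O(|Y_t|^2)$ using \ref{ass:coefReg1} and \ref{ass:coefReg2}-(i), and $Y^i_t\,\partial_i\overleftarrow{a}^{j,k}_t\,Z^{j,k}_t$ is handled by $|Y||Z|\le \varepsilon|Z|^2+C_\varepsilon|Y|^2$. The delicate ones are the cubic term $Y^i_t\,\partial_i\overleftarrow{a}^{j,k}_t\,Y^j_t Y^k_t$ and the top-order coefficient terms $Y^i_t\,\partial_{i,j}b^j_{T-t}$ and $Y^i_t\,\partial_{i,j,k}\overleftarrow{a}^{j,k}_t$, which a priori involve $\nabla^2 b$ and $\nabla^3\overleftarrow a$ that are \emph{not} assumed bounded. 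For each of these I would integrate by parts against the density $\overleftarrow\mu_t$ in a suitable coordinate — writing e.g. $(\partial_k\eta_t)\overleftarrow\mu_t = \partial_k\overleftarrow\mu_t$ for the cubic term — which lowers one differential order on the coefficient (leaving only $\nabla b$, $\nabla\overleftarrow a$, $\nabla^2\overleftarrow a$) at the price of extra factors carrying at most one $Z$ and boundedly many $Y$'s. Every surviving $|Y||Z|$ product is then absorbed by Young's inequality into $\varepsilon|Z_t|^2+C_\varepsilon|Y_t|^2$, and $\E[|Y_t|^2]$ is bounded via the already-known moments.

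Collecting everything, with $g(t):=\E[|Y_t|^2]$ and $h(t):=\E[|Z_t|^2]$, I obtain a differential inequality of the form
\[
\tfrac{\d}{\d t}\,g(t) + \delta\, h(t) \;\leq\; C\bigl(1 + g(t)\bigr),
\]
for some $\delta>0$ depending only on the ellipticity constant and the chosen $\varepsilon$. Integrating this in time, and controlling the initial data exactly as in the proof of Lemma~\ref{lem:Gamma_1} (i.e. using \ref{ass:ini1} together with the $L^1$-in-time Fisher-information bound already established there), yields both $\sup_t g(t)\le C$ and $\int_0^T h(t)\,\d t\le C$; undoing the time-reversal, $g(t)=\int_{\R^d}|\nabla\log\mu_{T-t}|^2\,\d\mu_{T-t}$ and $\int_0^T h=\int_0^T\!\int|\nabla^2\log\mu_s|^2\,\d\mu_s\,\d s$, which is the claim. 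As for Lemma~\ref{lem:Gamma_1}, the whole computation is first carried out for smooth coefficients and positive smooth $\mu_t$, and one then passes to the limit by regularising.

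The main obstacle I anticipate is precisely the integration-by-parts bookkeeping in the second step: one must check that, after lowering the order of every coefficient term, all leftover products are either $O(|Y|^2)$ (absorbed by Gronwall) or $O(|Y||Z|)$ (absorbed into the good term with arbitrarily small constant) — in particular nothing of degree $|Z|^2$ without a small prefactor, nor $|Y|^2|Z|$, nor any derivative of a coefficient beyond $\nabla b$ and $\nabla^2\sigma$. The cubic term $\partial_i\overleftarrow a^{j,k}Y^iY^jY^k$ is where this trade-off must be organised most carefully, since the naive bound $O(|Y|^3)$ is not directly absorbable.
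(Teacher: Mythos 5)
Your proposal is correct in substance and follows the same time-reversal strategy as the paper: apply It\^o's formula to the squared gradient of $\log\overleftarrow{\mu}_t$ along $\overleftarrow{X}_t$, extract the good term $\overleftarrow{a}^{j,k}_tZ^{i,j}_tZ^{i,k}_t\geq C^{-1}\lvert Z_t\rvert^2$ from the quadratic variation, lower the order of the unbounded coefficient derivatives ($\nabla^2 b$, $\nabla^3 a$) by integrating by parts against the density, absorb $\lvert Y\rvert\lvert Z\rvert$ by Young, and close with Gronwall as in Lemma~\ref{lem:Gamma_1}. The one genuine difference is your treatment of the cubic term $\partial_i\overleftarrow{a}^{j,k}_tY^i_tY^j_tY^k_t$, which you rightly single out as the crux. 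The paper avoids it algebraically: it works with $\overline{Y}^i_t:=\overleftarrow{\sigma}^{j,i}_tY^j_t$ rather than $Y^i_t$ itself, and after expanding $\d[\overline{Y}^i_t\overline{Y}^i_t]$ the cubic contribution $\overline{Y}^i_t\overline{\mathbf{Y}}^i_t$ vanishes identically thanks to the identity $\overleftarrow{\sigma}^{m,i}_t\overleftarrow{\sigma}^{j,i}_t\partial_j\overleftarrow{a}^{k,l}_t=\overleftarrow{a}^{j,m}_t(\partial_j\overleftarrow{\sigma}^{k,i}_t\overleftarrow{\sigma}^{l,i}_t+\overleftarrow{\sigma}^{k,i}_t\partial_j\overleftarrow{\sigma}^{l,i}_t)$, at the cost of extra $\nabla\sigma$-terms in the martingale bracket (whence the lower bound $-C\lvert Y\rvert^2+D\lvert Z\rvert^2$ in \eqref{eq:GronwallY}). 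Your alternative — writing $\partial_k\eta_t\,\overleftarrow{\mu}_t=\partial_k\overleftarrow{\mu}_t$ and integrating by parts in $x_k$ — turns the cubic term into $\E[\partial_{i,k}\overleftarrow{a}^{j,k}_tY^iY^j+\partial_i\overleftarrow{a}^{j,k}_t(Z^{i,k}Y^j+Y^iZ^{j,k})]$, which only involves $\nabla a$, $\nabla^2a$ (bounded under \ref{ass:coefReg1}, \ref{ass:coefReg2}) and is absorbable exactly as you say; this is a perfectly valid, slightly more hands-on substitute for the paper's cancellation, and it keeps the constant within the same class of coefficient norms. Two minor caveats: the justification of these integrations by parts (and the true-martingale property) should be phrased in the smooth regularised setting, since Lemma~\ref{lem:Gamma_1} only gives $\int_0^T\E[\lvert Y_t\rvert^2]\,\d t<\infty$, not pointwise-in-$t$ moments; and your final Gronwall step, like the paper's own ``conclude as in Lemma~\ref{lem:Gamma_1}'', really requires control of the initial Fisher information $\int\lvert\nabla\log\mu_0\rvert^2\,\d\mu_0$ (the reversed-time terminal value), which the $L^1$-in-time Fisher bound cannot supply near $t=0$; in the paper this is provided in applications through Lemma~\ref{lem:InitalEntrop} and \ref{ass:ini2}, so your proof is no worse off than the original on this point.
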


\begin{proof}
Using Ito's formula and \eqref{eq:DiffFK}, 
\begin{multline*}
\d Y^i_t =  [ \partial_i b^j_{T-t} Y^j_t + \partial_{i,j} b^j_{T-t} - \tfrac{1}{2} \partial_{i,j,k} \overleftarrow{a}^{j,k}_t - \partial_{i,k} \overleftarrow{a}^{j,k}_t Y^j_t ] \d t + Z^{i,j}_t \overleftarrow{\sigma}^{j,k}_t \d B^k_t \\
- \tfrac{1}{2} [ \partial_i \overleftarrow{a}^{j,k}_t Z^{j,k}_t + \partial_i \overleftarrow{a}^{j,k}_t Y^{j}_t Y^{k}_t ] \d t.
\end{multline*}
On the other hand
\begin{multline*}
\d \overleftarrow{\sigma}^{j,i}_t = [ \partial_t \overleftarrow{\sigma}^{j,i}_t - b^k_{T-t} \partial_k \overleftarrow{\sigma}^{j,i}_t + \partial_l \overleftarrow{a}^{k,l}_t \partial_k \overleftarrow{\sigma}^{j,i}_t + \overleftarrow{a}^{k,l}_t Y^l_t \partial_k \overleftarrow{\sigma}^{j,i}_t + \tfrac{1}{2} \overleftarrow{a}^{k,l}_t \partial_{k,l} \overleftarrow{\sigma}^{i,j}_t ] \d t \\ + \partial_k \overleftarrow{\sigma}^{j,i}_t \overleftarrow{\sigma}_{k,l} \d B_l. 
\end{multline*} 
We then define $\overline{Y}^i_t := \overleftarrow{\sigma}^{j,i}_t Y^j_t$, so that
\begin{align*}
\d \overline{Y}^i_t &= Y^j_t \d \overleftarrow{\sigma}^{j,i}_t + \overleftarrow{\sigma}^{j,i}_t \d Y^j_t + \d [ \overleftarrow{\sigma}^{j,i} , Y^j ]_t \\
&= [ \partial_t \overleftarrow{\sigma}^{j,i}_t Y^j_t + \textbf{B}^i_t + {\textbf{Y}}^i_t + \overline{\textbf{Y}}^i_t + \textbf{Z}^i_t ] \d t + [ Y^j_t \partial_k \overleftarrow{\sigma}^{j,i}_t \overleftarrow{\sigma}^{k,l}_t + \overleftarrow{\sigma}^{j,i}_t Z^{j,k}_t \overleftarrow{\sigma}^{k,l}_t ] \d B^l_t,    
\end{align*}
with 
\[ \textbf{B}^i_t := - b^k_{T-t} \partial_k \overleftarrow{\sigma}^{j,i}_t Y^j + \overleftarrow{\sigma}_t^{j,i} [ \partial_j b^k_{T-t} Y^k_t + \partial_{j,k} b^k_{T-t} ], \]
\[ \textbf{Z}^i_t := \partial_k \overleftarrow{\sigma}^{j,i}_t \overleftarrow{\sigma}_{k,l} Z^{j,m} \overleftarrow{\sigma}_t^{m,l} - \tfrac{1}{2} \overleftarrow{\sigma}_t^{j,i} \partial_j \overleftarrow{a}^{k,l}_t Z^{k,l}_t,  \quad \overline{\textbf{Y}}^i_t := Y^j_t \overleftarrow{a}^{k,l}_t Y^l_t \partial_k \overleftarrow{\sigma}^{j,i}_t - \tfrac{1}{2} \overleftarrow{\sigma}^{j,i}_t  \partial_j \overleftarrow{a}^{k,l}_t Y^{k}_t Y^{l}_t, \]
\[ {\textbf{Y}}^i_t := Y^j_t [ \partial_l \overleftarrow{a}^{k,l}_t \partial_k \overleftarrow{\sigma}^{j,i}_t + \tfrac{1}{2} \overleftarrow{a}^{k,l}_t \partial_{k,l} \overleftarrow{\sigma}^{i,j}_t ] - \tfrac{1}{2} \overleftarrow{\sigma}^{j,i}_t \partial_{j,k,l} \overleftarrow{a}^{k,l}_t - \overleftarrow{\sigma}^{j,i}_t \partial_{j,k} \overleftarrow{a}^{k,l}_t Y^l_t. \]
We now want to compute
\[ d [ \overline{Y}^i_t \overline{Y}^i_t ] = 2 \overline{Y}^i_t \d \overline{Y}^i_t + \sum_{i,l = 1}^d [ Y^j_t \partial_k \overleftarrow{\sigma}^{j,i}_t \overleftarrow{\sigma}^{k,l}_t + \overleftarrow{\sigma}^{j,i}_t Z^{j,k}_t \overleftarrow{\sigma}^{k,l}_t ]^2. \]
We first observe that
\[ \overline{Y}^i_t \overline{\textbf{Y}}^i_t = \partial_k \overleftarrow{\sigma}^{j,i}_t \overleftarrow{\sigma}^{m,i}_t \overleftarrow{a}^{k,l}_t Y^j_t Y^l_t Y^m_t - \tfrac{1}{2} \overleftarrow{\sigma}^{m,i}_t \overleftarrow{\sigma}^{j,i}_t \partial_j \overleftarrow{a}^{k,l}_t Y^{k}_t Y^{l}_t Y^m_t. \]
We then write that $\overleftarrow{\sigma}^{m,i}_t \overleftarrow{\sigma}^{j,i}_t \partial_j \overleftarrow{a}^{k,l}_t = \overleftarrow{a}^{j,m}_t ( \partial_j \overleftarrow{\sigma}^{k,i}_t \overleftarrow{\sigma}^{l,i}_t + \overleftarrow{\sigma}^{k,i}_t \partial_j \overleftarrow{\sigma}^{l,i}_t)$, and we re-arrange terms to get that $\overline{Y}^i_t \overline{\textbf{Y}}^i_t = 0$.
Using \ref{ass:coefReg1}-$(iii)$, we get 
\[ \sum_{i,l = 1}^d [ Y^j_t \partial_k \overleftarrow{\sigma}^{j,i}_t \overleftarrow{\sigma}^{k,l}_t + \overleftarrow{\sigma}^{j,i}_t Z^{j,k}_t \overleftarrow{\sigma}^{k,l}_t ]^2 \geq - C Y^j_t Y^j_t + D Z^{j,l}_t Z^{j,l}_t, \]
for some $C, D > 0$. 
As for proving Lemma \ref{lem:Gamma_1}, we take expectation and we integrate by parts to lower derivative orders:
\begin{multline*} 
\E [- \overline{{Y}}^i_t \overleftarrow{\sigma}_t^{j,i} [ \partial_j b^k_{T-t} Y^k_t + \partial_{j,k} b^k_{T-t} ] ] = \E [ \overline{{Y}}^i_t \partial_{k} \overleftarrow{\sigma}^{j,i}_t \partial_{j} b^{k}_{T-t} + \partial_k \overleftarrow{\sigma}^{m,i}_t Y^m_t \overleftarrow{\sigma}^{j,i}_t \partial_{j} \overleftarrow{b}^{k}_{T-t} \\
+ \overleftarrow{\sigma}^{m,i}_t Z^{j,k}_t \overleftarrow{\sigma}^{j,i}_t \partial_{j} \overleftarrow{b}^{k}_{T-t}],
\end{multline*}
and
\begin{multline*} 
\E [- \overline{{Y}}^i_t \overleftarrow{\sigma}^{j,i}_t \partial_{j,k,l} \overleftarrow{a}^{k,l}_t ] = \E [ \overline{{Y}}^i_t \overleftarrow{\sigma}^{j,i}_t \partial_{j,k} \overleftarrow{a}^{k,l}_t Y^l_t + \overline{{Y}}^i_t \partial_{l} \overleftarrow{\sigma}^{j,i}_t \partial_{j,k} \overleftarrow{a}^{k,l}_t + \partial_l \overleftarrow{\sigma}^{m,i}_t Y^m_t \overleftarrow{\sigma}^{j,i}_t \partial_{j,k} \overleftarrow{a}^{k,l}_t \\
+ \overleftarrow{\sigma}^{m,i}_t Z^{m,l}_t \overleftarrow{\sigma}^{j,i}_t \partial_{j,k} \overleftarrow{a}^{k,l}_t ]. 
\end{multline*}
To handle product terms, we repeatedly use $\overline{Y}^i_t Z^{j,k}_t \geq -\tfrac{\varepsilon}{2} (Z^{j,k}_t)^2 + \tfrac{\varepsilon^{-1}}{2} (\overline{Y}^i_t)^2$ as in \eqref{eq:Young}, for $\varepsilon$ small enough compared to $D$.
Using bounds \ref{ass:coefReg1} and \ref{ass:coefReg2}-$(i)$ on coefficients, we eventually get
\begin{equation} \label{eq:GronwallY}
\tfrac{\d}{\d t} \E [ \overline{Y}^i_t \overline{Y}^i_t ] \geq - \alpha ( 1 + \E [ Y^i_t Y^i_t ] ) + \beta \E [ Z^{j,l}_t Z^{j,l}] \geq - \gamma ( 1 + \E [ \overline{Y}^i_t \overline{Y}^i_t ] ) + \beta \E [ Z^{j,l}_t Z^{j,l}], 
\end{equation} 
for some $\alpha, \beta, \gamma > 0$.
Integrating in time, we conclude as in the proof of Lemma \ref{lem:Gamma_1}. 
\end{proof}

\begin{lemma} \label{lem:Gamma_2.5}
Under \ref{ass:coefReg1}, \ref{ass:coefReg2}-(i), if $\mu_0$ satisfies \ref{ass:ini1} and $\int \vert \nabla \log \mu_0 \rvert^4 \d \mu_0 < +\infty$ then 
\begin{itemize}
\item [(i)] $\sup_{t \in [0,T]} \E [ \vert Y_t \rvert^4 ] = \sup_{t \in [0,T]} \int \vert \nabla \log \mu_t \rvert^4 \d \mu_t \leq C$,
\item[(ii)] $\E [ \int_0^T  Y^j_t Y^j_t Z^{i,l}_t Z^{i,l}_t ] \leq C$,
\end{itemize}
where $C>0$ only depends on coefficients through their uniform norm or the one or their derivatives. 
\end{lemma}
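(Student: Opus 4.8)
The plan is to iterate the argument of Lemma \ref{lem:Gamma_2} one moment higher, working with the process $R_t := \overline Y^i_t \overline Y^i_t = Y_t^\top \overleftarrow a_t Y_t$. By the ellipticity \ref{ass:coefReg1}-$(iii)$, $R_t$ is comparable to $Y^j_t Y^j_t$, so that $\E[R_t^2] \asymp \int \lvert \nabla\log\mu_{T-t}\rvert^4 \, \d\mu_{T-t}$ and $\E[R_t Z^{i,l}_t Z^{i,l}_t] \asymp \int \lvert\nabla\log\mu_{T-t}\rvert^2 \lvert\nabla^2\log\mu_{T-t}\rvert^2 \, \d\mu_{T-t}$, hence it suffices to control the first of these uniformly in $t$ and the second after integration in $t$. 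As in the rest of the appendix, I would first assume the coefficients smooth (so $\mu_t$ is a smooth positive density), derive the estimate with constants depending only on the uniform norms of the coefficients and their derivatives, and recover the general case by regularisation and passage to the limit.

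First I would recall from the proof of Lemma \ref{lem:Gamma_2} the semimartingale decomposition $\d\overline Y^i_t = (\cdots)\,\d t + M^i_{l,t}\,\d B^l_t$ with $M^i_{l,t} = Y^j_t \partial_k\overleftarrow\sigma^{j,i}_t\overleftarrow\sigma^{k,l}_t + \overleftarrow\sigma^{j,i}_t Z^{j,k}_t\overleftarrow\sigma^{k,l}_t$, the algebraic cancellation $\overline Y^i_t \overline{\mathbf Y}^i_t = 0$, and the two-sided bound $D\,Z^{j,l}_t Z^{j,l}_t - C\,Y^j_tY^j_t \le \sum_{i,l}(M^i_{l,t})^2 \le C\,(Y^j_tY^j_t + Z^{j,l}_tZ^{j,l}_t)$ coming again from \ref{ass:coefReg1}-$(iii)$. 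Then I would apply Itô to $R_t^2$: $\d R_t^2 = 2 R_t\, \d R_t + \d[R]_t$, where $\d R_t = 2\overline Y^i_t\,\d\overline Y^i_t + \sum_{i,l}(M^i_{l,t})^2\,\d t$ and $\d[R]_t = 4\,\overline Y^i_t\overline Y^{i'}_t M^i_{l,t}M^{i'}_{l,t}\,\d t \ge 0$. Taking expectations (after a standard localisation that kills the stochastic integral and grants a priori integrability, using Lemma \ref{lem:Gamma_2} for the lower-order moments), the term $2R_t\sum_{i,l}(M^i_{l,t})^2$ produces the good sink $2D\,R_t Z^{j,l}_tZ^{j,l}_t$ together with a harmless $-2C\,R_t Y^j_tY^j_t \ge -2C R_t^2$; the cubic-in-$Y$ contribution drops thanks to $R_t\,\overline Y^i_t\overline{\mathbf Y}^i_t = 0$; and I would repeat verbatim the integrations by parts of Lemma \ref{lem:Gamma_2} (carrying the extra factor $R_t$) to trade the second and third derivatives of $b$ and $\overleftarrow a$ for terms at worst linear in $Z$, then absorb them via Young, $R_t\,\overline Y^i_t\lvert Z_t\rvert \le \tfrac{\varepsilon}{2} R_t Z^{j,l}_tZ^{j,l}_t + \tfrac{C}{\varepsilon} R_t^2$, into the sink term with $\varepsilon$ small compared with $D$. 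Everything else is bounded by $C R_t^2$ or by constants, which gives
\begin{equation*}
\tfrac{\d}{\d t}\E[R_t^2] \ \ge\ -\gamma\bigl(1 + \E[R_t^2]\bigr) + \beta\,\E[R_t Z^{j,l}_t Z^{j,l}_t],
\end{equation*}
for some $\beta,\gamma>0$ depending only on the uniform norms of the coefficients and their derivatives.

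To conclude, observe that since $\mu_0$ satisfies \ref{ass:ini1} and $\int\lvert\nabla\log\mu_0\rvert^4\,\d\mu_0 < +\infty$, one has $\E[R_T^2] < +\infty$ (because $\overleftarrow X_T\sim\mu_0$ and $\overleftarrow\sigma_T=\sigma_0$ is bounded). Running the differential inequality backward in time, the function $s\mapsto\E[R_{T-s}^2]$ satisfies $\tfrac{\d}{\d s}\E[R_{T-s}^2] \le \gamma\bigl(1+\E[R_{T-s}^2]\bigr)$, so Grönwall gives $\sup_{t\in[0,T]}\E[R_t^2] \le C$, which is $(i)$; integrating the inequality over $[0,T]$ and using $\E[R_0^2]\ge 0$ then gives $\beta\int_0^T\E[R_t Z^{j,l}_tZ^{j,l}_t]\,\d t \le \E[R_T^2] + \gamma\int_0^T(1+\E[R_t^2])\,\d t \le C$, which is $(ii)$. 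I expect the main obstacle to be the term-by-term bookkeeping after multiplying the Lemma \ref{lem:Gamma_2} computation by $R_t$: one has to check that every newly created quartic error — those coming from $\d[R]_t$, from $R_t$ times the quadratic-in-$Y$ drift pieces, and from $R_t$ times the $Z$-linear pieces produced by the integrations by parts — is either nonnegative, absorbable into $\beta\,\E[R_t Z^{j,l}_tZ^{j,l}_t]$ with an arbitrarily small constant, or dominated by $\E[R_t^2]$, so that the Grönwall loop closes. This hinges on the ellipticity constant $D$ being strictly positive and is the only point requiring genuine care; the rest is a mechanical extension of the second-order estimate.
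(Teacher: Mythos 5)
Your proposal is correct and follows essentially the same route as the paper: apply Itô to the fourth-order quantity $\overline Y^j_t\overline Y^j_t\overline Y^l_t\overline Y^l_t$, drop the nonnegative quadratic-variation term, redo the Lemma \ref{lem:Gamma_2} computations with the extra factor (using the cancellation $\overline Y^i_t\overline{\mathbf Y}^i_t=0$, integrations by parts, and Young's inequality to absorb the $Z$-linear terms into the sink), and close with a backward Grönwall argument from the terminal moment bound furnished by $\int\vert\nabla\log\mu_0\rvert^4\,\d\mu_0<+\infty$, with (ii) obtained a posteriori by integrating the differential inequality.
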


\begin{proof}
We apply the same scheme as before writing that
\[ \d [ \overline{Y}^j_t \overline{Y}^j_t \overline{Y}^l_t \overline{Y}^l_t ] = 2 \overline{Y}^j_t \overline{Y}^j_t \d [ \overline{Y}^l_t \overline{Y}^l_t ] + \d [ \overline{Y}^j \overline{Y}^j , \overline{Y}^l \overline{Y}^l ]_t \geq 2 \overline{Y}^j_t \overline{Y}^j_t \d [ \overline{Y}^l_t \overline{Y}^l_t ],  \]
and we follow the same computations that yield to \eqref{eq:GronwallY} in the proof of Lemma \ref{lem:Gamma_2}. 
In particular, we lower bound product terms by writing that $(\overline{Y}^j_t)^2 Z^{i,l}_t \geq - \tfrac{\varepsilon^{-1}}{2} (\overline{Y}^j_t)^4 + \tfrac{\varepsilon}{2} (Z^{i,l}_t)^2$ for small enough $\varepsilon > 0$.
We end up to
\begin{equation*}
\tfrac{\d}{\d t} \E [ \overline{Y}^j_t \overline{Y}^j_t \overline{Y}^l_t \overline{Y}^l_t ] \geq - \alpha \E [ ( 1 + Y^i_t Y^i_t )^2 ]  + \beta \E [ Y^j_t Y^j_t Z^{i,l}_t Z^{i,l}], 
\end{equation*} 
for some $\alpha, \beta > 0$.
A Gronwall argument now gives the bound on $\E [ \overline{Y}^j_t \overline{Y}^j_t \overline{Y}^l_t \overline{Y}^l_t ]$.
The bound on $\E [ Y^j_t Y^j_t Z^{i,l}_t Z^{i,l}_t ]$ a posteriori follows.
\end{proof}

We now differentiate \eqref{eq:DiffFK}.
After computations, we get that $(\partial_t + \overleftarrow{L}_t) \partial_{i,l} \eta_t$ equals
\begin{multline*}\label{eq:DDiffFKeta}
- (\partial_l \overleftarrow{L}_t) \partial_i \eta_t + \partial_{i,j,l} b^j_{T-t} + \partial_{i,l} b^j_{T-t} \partial_{j} \eta_t + \partial_{i} b^j_{T-t} \partial_{l,j} \eta_t - \tfrac{1}{2} \partial_{i,j,k,l} \overleftarrow{a}^{j,k}_t - \partial_{i,k} \overleftarrow{a}^{j,k}_t \partial_{j,l} \eta_t \\
-\partial_{i,k,l} \overleftarrow{a}^{j,k}_t \partial_{j} \eta_t
- \tfrac{1}{2} \partial_{i,l} \overleftarrow{a}^{j,k}_t \partial_{k} \eta_t \partial_j \eta_t
-\partial_i \overleftarrow{a}^{j,k}_t \partial_{j,l} \eta_t \partial_k \eta_t -\tfrac{1}{2} \partial_i \overleftarrow{a}^{j,k}_t \partial_{j,k,l} \eta_t - \tfrac{1}{2} \partial_{i,l} \overleftarrow{a}^{j,k}_t \partial_{j,k} \eta_t, 
\end{multline*} 
where
\begin{multline*}
- (\partial_l \overleftarrow{L}_t) \partial_i \eta_t = \partial_l b^k_{T-t} \partial_{i,k} \eta_t - \partial_{j,l} \overleftarrow{a}^{j,k}_t \partial_{i,k} \eta_t - \partial_l \overleftarrow{a}^{j,k}_t \partial_j \eta_t \partial_{i,k} \eta_t - \tfrac{1}{2} \partial_l \overleftarrow{a}^{j,k}_t \partial_{i,j,k} \eta_t \\
- \overleftarrow{a}^{j,k}_t  \partial_{j,l} \eta_t \partial_{i,k} \eta_t.
\end{multline*} 
We define $\Gamma^{i,j,k}_t := \partial_{i,j,k} \eta_t (\overleftarrow{X}_t)$.

\begin{proposition} \label{pro:Gamma_3}
Under \ref{ass:ini2} and \ref{ass:coefReg2}, for every $t \in [0,T]$,
\[ \sup_{s \in [0,t]} \int_{\R^d} \vert \nabla^2 \log \mu_s \vert^2 \, \d \mu_s +  \int_0^t \int_{\R^d} \lvert \nabla^3 \log \mu_s \rvert^2 \d \mu_s \d s \leq C \big[ 1 + \sup_{s \in [0,t]} \lVert \nabla^2 b_s \rVert_\infty^2 \big], \]
where $C>0$ does not depend on 
$\lVert \nabla^2 b_s \rVert_\infty$, and $C$ only depends on coefficients through their uniform norm or the one or their derivatives.
\end{proposition}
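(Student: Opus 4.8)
The plan is to run the time-reversal / semimartingale scheme of Lemmata \ref{lem:Gamma_1}, \ref{lem:Gamma_2} and \ref{lem:Gamma_2.5} one derivative order higher. As there, I work first under the assumption that the coefficients are smooth, so that $\overleftarrow{\mu}_s$ is a smooth positive solution and every It\^o computation below is licit, and recover the claimed bound under the stated hypotheses by a final mollification. Fix $t\in[0,T]$ and reverse time on $[0,t]$, so that $\overleftarrow{\mu}_t=\mu_0$ carries the good initial data \ref{ass:ini2}, while the quantities to be controlled are $\sup_{s\in[0,t]}\E[Z^{i,j}_sZ^{i,j}_s]=\sup_{s\in[0,t]}\int_{\R^d}\vert\nabla^2\log\mu_s\vert^2\,\d\mu_s$ and $\E\int_0^t\Gamma^{i,j,k}_s\Gamma^{i,j,k}_s\,\d s=\int_0^t\int_{\R^d}\vert\nabla^3\log\mu_s\vert^2\,\d\mu_s\,\d s$.

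First I would differentiate the Feynman--Kac identity \eqref{eq:DiffFK} once more to get the evolution of $\partial_{i,l}\eta_t$ displayed above, apply It\^o's formula to $Z^{i,l}_s=\partial_{i,l}\eta_s(\overleftarrow{X}_s)$, and then --- exactly mirroring the passage from $Y^i_t$ to $\overline{Y}^i_t$ in the proof of Lemma \ref{lem:Gamma_2} --- conjugate by $\overleftarrow{\sigma}$ on \emph{both} indices, $\overline{Z}^{i,l}_s:=\overleftarrow{\sigma}^{j,i}_s\overleftarrow{\sigma}^{k,l}_s Z^{j,k}_s$, computing $\d\overline{Z}^{i,l}_s$ by the product rule together with the dynamics of $\overleftarrow{\sigma}$ already written in the proof of Lemma \ref{lem:Gamma_2}. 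The martingale part of $\d\overline{Z}^{i,l}_s$ is $\overleftarrow{\sigma}^{j,i}_s\overleftarrow{\sigma}^{k,l}_s\Gamma^{j,k,m}_s\overleftarrow{\sigma}^{m,n}_s\,\d B^n_s$ plus terms of order $Z$ and $Y$, so in $\d[\overline{Z}^{i,l}_s\overline{Z}^{i,l}_s]=2\overline{Z}^{i,l}_s\,\d\overline{Z}^{i,l}_s+\d[\overline{Z}^{i,l},\overline{Z}^{i,l}]_s$ the bracket term dominates, by the uniform ellipticity \ref{ass:coefReg1}-$(iii)$ applied three times, a quantity $D\,\Gamma^{i,j,k}_s\Gamma^{i,j,k}_s\,\d s$ with $D>0$, up to a remainder bounded by $C(1+Z^{i,j}_sZ^{i,j}_s+Y^i_sY^i_s)\,\d s$.

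Next I take expectations and integrate in $s$, organising the drift of $2\overline{Z}^{i,l}_s\,\d\overline{Z}^{i,l}_s$ into families treated by different devices: (i) the terms cubic in the Hessian (schematically $\overline{Z}\cdot\nabla\overleftarrow{a}\cdot Z\cdot Z$ and $\overline{Z}\cdot\overleftarrow{a}\cdot Z\cdot Z$) should \emph{cancel} after re-symmetrisation, through the identity $\overleftarrow{\sigma}^{m,i}_s\overleftarrow{\sigma}^{j,i}_s\partial_j\overleftarrow{a}^{k,l}_s=\overleftarrow{a}^{j,m}_s(\partial_j\overleftarrow{\sigma}^{k,i}_s\overleftarrow{\sigma}^{l,i}_s+\overleftarrow{\sigma}^{k,i}_s\partial_j\overleftarrow{\sigma}^{l,i}_s)$ and re-arrangement, exactly as $\overline{Y}^i_t\overline{\textbf{Y}}^i_t=0$ in the proof of Lemma \ref{lem:Gamma_2}; (ii) the terms mixing $Y$ and $Z$ (schematically $\overline{Z}\cdot\nabla\overleftarrow{a}\cdot Y\cdot Z$) are controlled after time integration by Cauchy--Schwarz, $\sup_s\E[\overline{Z}^{i,l}_s\overline{Z}^{i,l}_s]<+\infty$ from Lemma \ref{lem:Gamma_2}, and the estimate $\E\int_0^t Y^jY^jZ^{i,l}Z^{i,l}\le C$ of Lemma \ref{lem:Gamma_2.5}-(ii); (iii) the terms carrying the highest derivatives of $b$ and of $\overleftarrow{a}$ are first integrated by parts in space against $\overleftarrow{\mu}_s$, trading one derivative for a factor $Y$ and reducing them to $\nabla^2 b$ --- resp. bounded derivatives of $\sigma$ --- times either $\Gamma$ or $Y\cdot Z$; (iv) the genuinely third-order cross term $\overline{Z}\cdot\partial_i\overleftarrow{a}\cdot\Gamma$, together with the $\nabla^2 b\cdot\Gamma$ produced in (iii), are absorbed into $D\,\E[\Gamma^{i,j,k}_s\Gamma^{i,j,k}_s]$ by Young's inequality with a small parameter chosen relative to $D$ --- and it is precisely here that a factor proportional to $\sup_{s}\lVert\nabla^2 b_s\rVert_\infty^2$ is generated; (v) the remaining terms are bounded by $C(1+\overline{Z}^{i,l}_s\overline{Z}^{i,l}_s+Y^i_sY^i_s)$ using \ref{ass:coefReg1} and \ref{ass:coefReg2}, with $\sup_s\E[Y^i_sY^i_s]\le C$ from Lemma \ref{lem:Gamma_2}.

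Collecting these contributions, I would obtain, for $0\le s\le t$, an inequality of the form $e^{\gamma s}\E[\overline{Z}^{i,l}_s\overline{Z}^{i,l}_s]+\tfrac{D}{2}\int_s^t e^{\gamma r}\E[\Gamma^{i,j,k}_r\Gamma^{i,j,k}_r]\,\d r\le e^{\gamma t}\E[\overline{Z}^{i,l}_t\overline{Z}^{i,l}_t]+C\big[1+\sup_{r\in[0,t]}\lVert\nabla^2 b_r\rVert_\infty^2\big]$, where $\gamma,C>0$ depend on coefficients only through the allowed norms and the left endpoint carries $\E[\overline{Z}^{i,l}_t\overline{Z}^{i,l}_t]=\int_{\R^d}\vert\nabla^2\log\mu_0\vert^2\,\d\mu_0$, finite by \ref{ass:ini2}-\ref{item:inih2}. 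Taking the supremum over $s$ gives the first half of the claim; plugging that back and setting $s=0$ gives the second half; the mollification argument then removes the smoothness assumption. The main obstacle is steps (iii)--(iv): correctly bookkeeping the many terms of $\d\overline{Z}^{i,l}_s$, confirming the cubic-Hessian cancellation of (i), and checking that after the integrations by parts only $\nabla^2 b$ --- and no higher derivative of $b$ --- survives, so that exactly the advertised dependence $1+\sup\lVert\nabla^2 b\rVert_\infty^2$ appears and no worse.
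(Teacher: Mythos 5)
Your overall scheme (push the time-reversal/semimartingale argument of Lemmata \ref{lem:Gamma_1}--\ref{lem:Gamma_2.5} one order higher, integrate by parts against the density to trade $\nabla^3 b$, $\nabla^3 a$ for a factor $Y$, absorb the resulting $\nabla^2 b\cdot\Gamma$ terms by Young with a small parameter so that only $\sup_s\lVert\nabla^2 b_s\rVert_\infty^2$ survives, and close with a Gronwall-type argument whose data $\int_{\R^d}\vert\nabla^2\log\mu_0\vert^2\,\d\mu_0$ is finite by \ref{ass:ini2}-\ref{item:inih2}) is the right one and matches the paper's proof of Proposition \ref{pro:Gamma_3}. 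But your step (i) contains a genuine gap, and it sits on the single most dangerous term. The drift of $Z^{i,l}_s$ contains the quadratic term $-\overleftarrow{a}^{j,k}_s Z^{j,l}_s Z^{k,i}_s$, in which the diffusion matrix appears \emph{undifferentiated}. After multiplying by $Z^{i,l}_s$ (or by your conjugated $\overline{Z}^{i,l}_s$) this produces a signless term of the schematic form $\mathrm{Tr}[(\overleftarrow{a}_s Z_s)^3]$, cubic in the Hessian and carrying no derivative of $\sigma$ or $a$. The cancellation $\overline{Y}^i_t\overline{\textbf{Y}}^i_t=0$ in the proof of Lemma \ref{lem:Gamma_2} pairs two cubic-in-$Y$ terms that each carry exactly one derivative of $\overleftarrow{\sigma}$ or $\overleftarrow{a}$ (one coming from the drift of $\overleftarrow{\sigma}(\overleftarrow{X}_t)$, the other from the nonlinearity $\partial_i\overleftarrow{a}^{j,k}Y^jY^k$); the double conjugation $\overline{Z}=\overleftarrow{\sigma}^\top Z\overleftarrow{\sigma}$ only generates compensating terms of the type $Z\cdot\nabla\overleftarrow{\sigma}\cdot\overleftarrow{a}\,Y$, i.e.\ linear in $Z$, linear in $Y$, with one derivative of $\sigma$ --- nothing of the shape $\mathrm{Tr}[(\overleftarrow{a}Z)^3]$. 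So the re-symmetrisation identity you invoke has nothing to cancel this term against, and it cannot be absorbed into $C(1+Y\cdot Y)Z\cdot Z+\varepsilon\,\Gamma\cdot\Gamma$ by Young either, being cubic in $Z$ with no a priori $L^\infty$ control on $Z$.

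The paper's way out is different and you already have the tools for it: do not try to cancel $\E[-\overleftarrow{a}^{j,k}_sZ^{j,l}_sZ^{i,k}_sZ^{i,l}_s]$, but integrate it by parts against the density (write $Z^{j,l}=\partial_jY^l$ and use $\partial_j\overleftarrow{\mu}_s=Y^j_s\overleftarrow{\mu}_s$), which yields terms of the form $\partial_j\overleftarrow{a}^{j,k}Y^lZ^{i,k}Z^{i,l}$, $\overleftarrow{a}^{j,k}Y^l\Gamma^{i,j,k}Z^{i,l}$, $\overleftarrow{a}^{j,k}Y^lZ^{i,k}\Gamma^{i,j,l}$ and $\overleftarrow{a}^{j,k}Y^jY^lZ^{i,k}Z^{i,l}$; the $\Gamma$-terms are absorbed by Young into $\varepsilon\,\E[\Gamma^{i,j,k}\Gamma^{i,j,k}]$, and the $(1+Y\cdot Y)Z\cdot Z$ terms are exactly where Lemma \ref{lem:Gamma_2.5}-(ii) is needed after time integration (you currently invoke that lemma only for the drift terms of type $\overline{Z}\cdot\nabla\overleftarrow{a}\cdot Y\cdot Z$). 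With that replacement your steps (ii)--(v) go through essentially as in the paper; note also that the double conjugation is unnecessary, since the paper works with $Z^{i,l}$ directly and gets the coercive term from the quadratic variation $\Gamma^{i,j,l}\overleftarrow{\sigma}^{j,k}\Gamma^{i,m,l}\overleftarrow{\sigma}^{m,k}\geq C\,\Gamma^{i,j,l}\Gamma^{i,j,l}$ via \ref{ass:coefReg1}-\ref{item:sigmaellip}, which spares you the extra bookkeeping that the conjugation introduces.
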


\begin{proof}
Using Ito's formula and the above expression for $(\partial_t + \overleftarrow{L}_t) \partial_{i,l} \eta_t$,
\[ \d Z^{i,l}_t = [ \partial_{i,j,l} b^j_{T-t} -\tfrac{1}{2} \partial_{i,j,k,l} a^{j,k}_t + \mathbf{Y}^{i,l}_t + \mathbf{Z}^{i,l}_t + \mathbf{\Gamma}^{i,l}_t - \overleftarrow{a}^{j,k}_t Z^{j,l}_t Z^{k,i}_t ] \d t + \Gamma^{i,j,l}_t \overleftarrow{\sigma}^{j,k}_t \d B^k_t, \]
where
\begin{multline*}
\mathbf{Y}^{i,l}_t := \partial_{i,l} b^j_{T-t} Y^j_t-\tfrac{1}{2} \partial_{i,l} \overleftarrow{a}^{j,k}_t Y^j_t Y^k_t - \partial_{i,k,l} \overleftarrow{a}^{j,k}_t Y^j, \quad \mathbf{\Gamma}^{i,l}_t = -\tfrac{1}{2} \partial_i \overleftarrow{a}^{j,k}_t \Gamma^{j,k,l}_t - \tfrac{1}{2} \partial_l \overleftarrow{a}^{j,k}_t \Gamma^{i,j,k}_t, \\
\mathbf{Z}^{i,l}_t := \partial_{i} b^j_{T-t} Z^{j,l}_t + \partial_l b^k_{T-t} Z^{i,k}_t -\partial_{j,l} \overleftarrow{a}^{j,k}_t Z^{i,k}_t - \partial_l \overleftarrow{a}^{j,k}_t Y^j_t Z^{i,k}_t - \partial_i \overleftarrow{a}^{j,k}_t Z^{j,l}_t Y^k_t \\
- \partial_{i,k} \overleftarrow{a}^{j,k}_t Z^{j,l}_t - \tfrac{1}{2} \partial_{i,l} \overleftarrow{a}^{j,k}_t Z^{j,k}_t. 
\end{multline*}
Therefore,
\begin{multline*} 
\d Z^{i,l}_t Z^{i,l}_t = 2 Z^{i,l}_t [ \partial_{i,j,l} b^j_{T-t} -\tfrac{1}{2} \partial_{i,j,k,l} a^{j,k}_t + \mathbf{Y}^{i,l}_t + \mathbf{Z}^{i,l}_t + \mathbf{\Gamma}^{i,l}_t - \overleftarrow{a}^{j,k}_t Z^{j,l}_t Z^{k,i}_t ] \d t \\
+ \Gamma^{i,j,l}_t \overleftarrow{\sigma}^{j,k}_t \Gamma^{i,l,m} \overleftarrow{\sigma}^{m,k} \d t + 2 \Gamma^{i,j,l}_t \overleftarrow{\sigma}^{j,k}_t \d B^k_t.
\end{multline*}
Using \ref{ass:coefReg1}-$(iii)$, $\Gamma^{i,j,l}_t \overleftarrow{\sigma}^{j,k}_t \Gamma^{i,l,m} \overleftarrow{\sigma}^{m,k} \geq C \Gamma^{i,j,l}_t \Gamma^{i,j,l}_t$
for some $C>0$. Then using \ref{ass:coefReg2},
\[ 2 Z^{i,l}_t [ \mathbf{Y}^{i,l}_t +\mathbf{Z}^{i,l}_t ] \geq - D [ 1 + Y^j_t Y^j_t] Z^{i,l}_t Z^{i,l}_t, \quad \text{and} \quad 2 Z^{i,l}_t \mathbf{\Gamma}^{i,l}_t \geq - \varepsilon \Gamma^{i,j,k}_t \Gamma^{i,j,k}_t - \varepsilon^{-1} Z^{i,l}_t Z^{i,l}_t, \]
for $D >0$ and $\varepsilon > 0$ small enough.
We now take expectation.
To lower derivative orders, we integrate by parts:
\begin{multline*}
\E \{ Z^{i,l}_t [ \partial_{i,j,l} b^j_{T-t} - \tfrac{1}{2} \partial_{i,j,k,l} a^{j,k}_t ] \} = -\E \{ [\Gamma^{i,j,l}_t + Y^j_t Z^{i,l}_t ][ \partial_{i,l} b^j_{T-t} - \tfrac{1}{2} \partial_{i,k,l} \overleftarrow{a}^{j,k}_t ] \} \\
\geq - \E [ \varepsilon \Gamma^{i,j,l}_t \Gamma^{i,j,l}_t + \varepsilon^{-1} D \lVert \nabla^2 b_{T-t} \rVert^2_\infty + \varepsilon^{-1} D [ 1 + Y^j_t Y^j_t] Z^{i,l}_t Z^{i,l}_t ].  
\end{multline*}
for some $D > 0$ that does not depend on $\lVert \nabla^2 b_{T-t} \rVert_\infty$.
Similarly, integrating $\E [ -\overleftarrow{a}^{j,k}_t Z^{j,l}_t Z^{i,k}_t Z^{i,l}_t ]$ by parts gives 
\begin{multline*}
\E [ \partial_j \overleftarrow{a}^{j,k}_t Y^{l}_t Z^{i,k}_t Z^{i,l}_t + \overleftarrow{a}^{j,k}_t Y^{l}_t \Gamma^{i,j,k}_t Z^{i,l}_t + \overleftarrow{a}^{j,k}_t Y^{l}_t Z^{i,k}_t \Gamma^{i,j,l}_t + \overleftarrow{a}^{j,k}_t Y^j_t Y^{l}_t Z^{i,k}_t Z^{i,l}_t ]    \\
\geq - \E [ \varepsilon \Gamma^{i,j,l}_t \Gamma^{i,j,l}_t + \varepsilon^{-1} D [ 1 + Y^j_t Y^j_t] Z^{i,l}_t Z^{i,l}_t ].
\end{multline*}
Choosing $\varepsilon$ small enough compared to $C$, we obtain that
\[ \tfrac{\d}{\d t } \E [ Z^{i,l}_t Z^{i,l}_t ] \geq -\alpha \sup_{s \in [t,T]} \lVert \nabla^2 b_{T-s} \rVert^2_\infty - \alpha \E [ [ 1 + Y^j_t Y^j_t] Z^{i,l}_t Z^{i,l}_t ] + \beta \E [ \Gamma^{i,j,l}_t \Gamma^{i,j,l}_t ], \]
for some $\alpha, \beta > 0$. Using Lemma \ref{lem:Gamma_2.5}, a Gronwall-type argument concludes as previously.
\end{proof}

\section{Relative entropy and regularity on measures} \label{app:RepEntr}

We recall that the notion of reference system $\Sigma = (\Omega,(\F_t)_{t \leq 0 \leq T}, \P, (B_t)_{0 \leq t \leq T})$ is defined in Section \ref{subsec:Notations}.
For such a $\Sigma$ and any square-integrable progressively measurable process $\alpha = (\alpha_t)_{0\leq t\leq T}$ on $\Sigma$, we say that $X^{\alpha}_{[0,T]} := ( X^{\alpha}_t )_{0 \leq t \leq T}$ is a solution of the McKean-Vlasov SDE
\begin{equation} \label{eq:PAP2controlled_process}
\d X^{\alpha}_s = b_{s} ( X^{\alpha}, \mathcal{L}( X^{\alpha} ) ) \d s + \sigma_{s} (X^{\alpha}, \mathcal{L}( X^{\alpha}) ) \alpha_s \d s + \sigma_{s} (X^{\alpha}, \mathcal{L}( X^{\alpha}) )\d B_s,
\end{equation} 
if $( X^{\alpha}_t )_{0 \leq t \leq T}$ is $(\F_t)_{t \leq 0 \leq T}$-adapted and the integrated version of \eqref{eq:PAP2controlled_process} holds $\P$-a.s.
The following result is \cite[Lemma B.1]{chaintronLDP}.

\begin{lemma} \label{lem:repEntr}
Under \ref{ass:ini1}-\ref{ass:coefReg1}, for every measure $\mu_{[0,T]}$ in $\ps_1 (C([0,T],\R^d) )$, 
\[ H ( \mu_{[0,T]} \vert \Gamma ( \mu_{[0,T]} ) ) = \inf_\Sigma \inf_{\mathcal{L}(X^\alpha_{[0,T]}) = \mu_{[0,T]}} H(\L(X^\alpha_0) \vert \nu_0) + \E \int_0^T \frac{1}{2} \vert \alpha_t \rvert^2 \d t. \]    
where we minimise over $(X^{\alpha}_{[0,T]},\alpha)$ satisfying \eqref{eq:PAP2controlled_process} in the reference system $\Sigma$, with the convention that an infimum over an empty set equals $+\infty$. 
\end{lemma}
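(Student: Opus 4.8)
The statement is the classical ``entropy equals minimal control cost'' representation on path space; although the paper simply cites \cite[Lemma B.1]{chaintronLDP}, one can prove it directly, and the plan is to establish the two inequalities separately, the only substantive inputs being Girsanov's theorem and the additivity (chain rule) of relative entropy along the time-$0$ disintegration. Throughout, well-posedness of the reference dynamics defining $\Gamma(\mu_{[0,T]})$ is granted by \ref{ass:ini1}-\ref{ass:coefReg1}, and the key observation is that $\Gamma(\mu_{[0,T]})$ is driven by the \emph{same} diffusion coefficient $\sigma_t(\cdot,\mu_t)$ as any controlled process $X^\alpha$ with $\mathcal{L}(X^\alpha_{[0,T]})=\mu_{[0,T]}$ (since then $\mathcal{L}(X^\alpha_t)=\mu_t$), so the two path-laws share the same quadratic-variation structure and Girsanov applies.

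First I would prove that the left-hand side is bounded above by the right-hand side. If no admissible pair $(X^\alpha,\alpha)$ with $\mathcal{L}(X^\alpha_{[0,T]})=\mu_{[0,T]}$ exists, the right-hand side is $+\infty$ by convention and there is nothing to prove; otherwise, fix such a pair on a reference system $\Sigma$. Since $\mathcal{L}(X^\alpha_t)=\mu_t$, the process $X^\alpha$ solves the SDE of $\Gamma(\mu_{[0,T]})$ with the extra drift $\sigma_t(\cdot,\mu_t)\alpha_t$. Disintegrating both path-laws according to their time-$0$ marginals ($\mu_0$ and $\nu_0$) and applying Girsanov's theorem gives, $\mu_{[0,T]}$-a.s.,
\[ \frac{\d\mu_{[0,T]}}{\d\Gamma(\mu_{[0,T]})}({\sf X})=\frac{\d\mu_0}{\d\nu_0}({\sf X}_0)\,\exp\Big[\int_0^T\alpha_t\cdot\d\tilde B_t-\tfrac12\int_0^T|\alpha_t|^2\,\d t\Big], \]
where $\tilde B$ is the innovation Brownian motion under $\mu_{[0,T]}$ defined by $\sigma_t^{-1}(\d{\sf X}_t-b_t\,\d t)=\alpha_t\,\d t+\d\tilde B_t$ (this is where the ellipticity and boundedness of $\sigma$ in \ref{ass:coefReg1} enter, via $\sigma^{-1}$). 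Taking logarithms and expectations under $\mu_{[0,T]}$, and controlling the stochastic-integral term by a localization/Fatou argument (it is only a local martingale for a merely square-integrable $\alpha$), yields $H(\mu_{[0,T]}\mid\Gamma(\mu_{[0,T]}))\le H(\mu_0\mid\nu_0)+\E\int_0^T\tfrac12|\alpha_t|^2\,\d t$; taking the infimum over $(\Sigma,X^\alpha,\alpha)$ finishes this half.

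For the reverse inequality, assume $H(\mu_{[0,T]}\mid\Gamma(\mu_{[0,T]}))<+\infty$ (otherwise trivial). Then $\mu_{[0,T]}\ll\Gamma(\mu_{[0,T]})$, and the Girsanov-theory characterization of the Radon--Nikodym derivative of a finite-entropy path-measure --- exactly the results of \cite{leonard2012girsanov} already invoked in Lemma \ref{lem:NLRate} --- provides on the canonical space $\Omega=C([0,T],\R^d)$ an adapted process $\alpha$ with $\E_{\mu_{[0,T]}}\int_0^T|\sigma_t^\top\alpha_t|^2\,\d t<+\infty$ such that $\d{\sf X}_t=b_t({\sf X}_t,\mu_t)\,\d t+\sigma_t({\sf X}_t,\mu_t)\alpha_t\,\d t+\sigma_t({\sf X}_t,\mu_t)\,\d{\sf B}_t$ holds $\mu_{[0,T]}$-a.s. for a $\mu_{[0,T]}$-Brownian motion ${\sf B}$, together with the exact decomposition $H(\mu_{[0,T]}\mid\Gamma(\mu_{[0,T]}))=H(\mu_0\mid\nu_0)+\E_{\mu_{[0,T]}}\int_0^T\tfrac12|\alpha_t|^2\,\d t$. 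By the bounds on $\sigma$ and $\sigma^{-1}$ in \ref{ass:coefReg1} one also gets $\int_0^T|\alpha_t|^2\,\d t<+\infty$, so $\Sigma:=(\Omega,(\F_t)_{0\le t\le T},\mu_{[0,T]},{\sf B})$ with the augmented canonical filtration is an admissible reference system and $(X^\alpha,\alpha):=({\sf X},\alpha)$ is an admissible pair solving \eqref{eq:PAP2controlled_process} with $\mathcal{L}(X^\alpha_{[0,T]})=\mu_{[0,T]}$; since $\mathcal{L}(X^\alpha_0)=\mu_0$, this pair realizes the value $H(\mu_{[0,T]}\mid\Gamma(\mu_{[0,T]}))$, whence the right-hand side is $\le$ the left-hand side.

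The \textbf{main obstacle} is the upper bound for controls that are merely square-integrable: the Dol\'eans--Dade exponential above need not be a true martingale, so one must truncate $\alpha$ (or localize along exit times of large balls), apply Girsanov on the localized problem, and pass to the limit by Fatou while checking that the limiting density is still $\d\mu_{[0,T]}/\d\Gamma(\mu_{[0,T]})$. A secondary but essential point is the consistency $\mathcal{L}(X^\alpha_t)=\mu_t$ that makes the drift of $\Gamma(\mu_{[0,T]})$ the correct frozen drift $b_t(\cdot,\mu_t)$; and for the reverse inequality, the full strength of the finite-entropy Girsanov decomposition of \cite{leonard2012girsanov}, which already underlies Lemma \ref{lem:NLRate}, is what does the real work.
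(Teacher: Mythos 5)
The paper does not prove this lemma at all (it simply cites \cite[Lemma B.1]{chaintronLDP}), so your blind attempt is really a reconstruction of the standard ``entropy equals minimal control energy'' argument, and its overall architecture --- chain rule at time $0$, Girsanov for the upper bound, Léonard's finite-entropy Girsanov decomposition for the lower bound --- is the right one; the lower-bound half as you wrote it is essentially complete and correct.

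There is, however, one genuine flaw in the upper-bound half. The identity you assert, namely that $\tfrac{\d\mu_{[0,T]}}{\d\Gamma(\mu_{[0,T]})}$ equals $\tfrac{\d\mu_0}{\d\nu_0}({\sf X}_0)$ times the stochastic exponential of $\int_0^T\alpha_t\cdot\d\tilde B_t$, is false in general: $\alpha$ is an open-loop control, adapted to the (possibly much larger) filtration of $\Sigma$, whereas the Radon--Nikodym density of the path law $\mu_{[0,T]}=\mathcal{L}(X^\alpha_{[0,T]})$ is a functional of the path alone. Taking logarithms and expectations of your formula would yield the \emph{equality} $H(\mu_{[0,T]}\vert\Gamma(\mu_{[0,T]}))=H(\mu_0\vert\nu_0)+\E\int_0^T\tfrac12\vert\alpha_t\vert^2\,\d t$ for every admissible pair, which is wrong as soon as the control is genuinely randomized (the entropy can be strictly smaller than the energy of $\alpha$). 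The missing step is to first replace $\alpha_t$ by its optional projection $\hat\alpha_t=\E[\alpha_t\,\vert\,\mathcal{F}^{X^\alpha}_t]$, which turns $X^\alpha$ into a solution, in its own filtration, of the same SDE with feedback drift $\sigma_t\hat\alpha_t$; Girsanov (with your truncation/Fatou localization, or a data-processing argument) then gives $H(\mu_{[0,T]}\vert\Gamma(\mu_{[0,T]}))\leq H(\mu_0\vert\nu_0)+\E\int_0^T\tfrac12\vert\hat\alpha_t\vert^2\,\d t$, and conditional Jensen upgrades this to the bound with $\alpha$ in place of $\hat\alpha$. With that projection-plus-Jensen step inserted, your proof goes through and coincides with the standard argument behind the cited lemma.
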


This results allows us to extend \cite[Lemma 5.2]{backhoff2020mean} to our setting.

\begin{corollary}[Range of finiteness] \label{cor:finiteRate}
Under \ref{ass:ini1}-\ref{ass:coefReg1}, let us fix any $\nu_{[0,T]} \in \ps_1 (C ( [0,T] , \R^d ) )$. 
Let $W^\nu_{[0,T]}$ denote the path-law of the solution to
\[ \d Y_t = b_t (Y_t, \nu_t) \d t + \sigma_t ( Y_t ) \d B_t, \qquad Y_0 \sim \nu_0. \]
Then for every $\mu_{[0,T]} \in \ps_1(C([0,T],\R^d))$, 
\[ H( \mu_{[0,T]} \vert \Gamma ( \mu_{[0,T]} )) < + \infty \quad \Longleftrightarrow \quad H( \mu_{[0,T]} \vert W^{\nu}_{[0,T]} ) < + \infty. \]
\end{corollary}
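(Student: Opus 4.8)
The plan is to deduce the equivalence from the control (variational) representation of relative entropy provided by Lemma \ref{lem:repEntr}, combined with a \emph{shear} of admissible controls that converts the reference drift $b_t(\cdot,\mu_t)$ into $b_t(\cdot,\nu_t)$ at the price of an additive, uniformly bounded correction. The starting observation is that $W^\nu_{[0,T]}$ is nothing but the law of the uncontrolled diffusion associated with the modified coefficient $\tilde b_t(x,m) := b_t(x,\nu_t)$, which is constant in $m$; since $b$ is Lipschitz in its measure argument, $(\tilde b,\sigma)$ still satisfies \ref{ass:coefReg1}, so Lemma \ref{lem:repEntr} applies with $(\tilde b,\sigma)$ in place of $(b,\sigma)$ and yields, for \emph{every} $\mu_{[0,T]}$, the identity $H(\mu_{[0,T]}\vert W^\nu_{[0,T]}) = \inf_\Sigma \inf\, H(\mathcal{L}(X^\alpha_0)\vert\nu_0) + \E\int_0^T\tfrac12\vert\alpha_t\vert^2\,\d t$, the inner infimum running over pairs $(X^\alpha_{[0,T]},\alpha)$ with $\mathcal{L}(X^\alpha_{[0,T]}) = \mu_{[0,T]}$ solving $\d X^\alpha_t = b_t(X^\alpha_t,\nu_t)\,\d t + \sigma_t(X^\alpha_t)\alpha_t\,\d t + \sigma_t(X^\alpha_t)\,\d B_t$. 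In parallel, Lemma \ref{lem:repEntr} with the original coefficients gives the analogous formula for $H(\mu_{[0,T]}\vert\Gamma(\mu_{[0,T]}))$, where the reference drift is $b_t(\cdot,\mathcal{L}(X^\beta_t)) = b_t(\cdot,\mu_t)$ on the constraint set $\{\mathcal{L}(X^\beta_{[0,T]}) = \mu_{[0,T]}\}$.

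I would then carry out the following steps. First, record that $K := \sup_{t\in[0,T]}\lVert\sigma_t^{-1}(\cdot)[b_t(\cdot,\nu_t)-b_t(\cdot,\mu_t)]\rVert_\infty < +\infty$: this uses that $\sigma^{-1}$ is bounded (from \ref{ass:coefReg1}-\ref{item:sigmabound}-\ref{item:sigmaellip}), that $\lvert b_t(x,\nu_t)-b_t(x,\mu_t)\rvert \le C\,W_1(\nu_t,\mu_t)$ (from \ref{ass:coefReg1}-\ref{item:coefLip}), and that $\sup_t W_1(\nu_t,\mu_t) < +\infty$ since $\mu_{[0,T]},\nu_{[0,T]}\in\ps_1(C([0,T],\R^d))$. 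Second, given an admissible pair $(X^\alpha_{[0,T]},\alpha)$ with finite cost for the $W^\nu_{[0,T]}$-representation, set $\beta_t := \alpha_t + \sigma_t^{-1}(X^\alpha_t)[b_t(X^\alpha_t,\nu_t)-b_t(X^\alpha_t,\mu_t)]$; this is progressively measurable and square-integrable on the same $\Sigma$, the trajectory $X^\alpha_{[0,T]}$ is unchanged, hence its law is still $\mu_{[0,T]}$ and, because $\mathcal{L}(X^\alpha_t) = \mu_t$, it now solves the McKean--Vlasov equation $\d X^\alpha_t = b_t(X^\alpha_t,\mathcal{L}(X^\alpha_t))\,\d t + \sigma_t(X^\alpha_t)\beta_t\,\d t + \sigma_t(X^\alpha_t)\,\d B_t$; its cost satisfies $H(\mathcal{L}(X^\alpha_0)\vert\nu_0) + \E\int_0^T\tfrac12\vert\beta_t\vert^2\,\d t \le H(\mathcal{L}(X^\alpha_0)\vert\nu_0) + \E\int_0^T\vert\alpha_t\vert^2\,\d t + T K^2 < +\infty$, so $H(\mu_{[0,T]}\vert\Gamma(\mu_{[0,T]})) < +\infty$. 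Third, run the mirror argument, subtracting instead of adding the correction ($\alpha_t := \beta_t + \sigma_t^{-1}(X^\beta_t)[b_t(X^\beta_t,\mu_t)-b_t(X^\beta_t,\nu_t)]$), to obtain the converse implication.

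The only genuinely delicate point — and the one I would dwell on — is verifying that the sheared pair is truly \emph{admissible} in the target representation: that the shifted control remains progressively measurable and square-integrable, and that the modified SDE it drives is exactly the McKean--Vlasov (resp.\ frozen) equation appearing in Lemma \ref{lem:repEntr}, which relies on the fact that replacing $\alpha$ by $\beta$ leaves the trajectory, hence its flow of marginals and its initial law, untouched. Everything else (the bound on $K$, the cost comparison via $(a+b)^2 \le 2a^2 + 2b^2$) is routine. An alternative route would establish directly, through the Girsanov transform exactly as in the proof of Lemma \ref{lem:NLRate}, that $\Gamma(\mu_{[0,T]})$ and $W^\nu_{[0,T]}$ are mutually equivalent with a density whose logarithm has finite exponential moments under both laws, then split $\log\frac{\d\mu_{[0,T]}}{\d\Gamma(\mu_{[0,T]})} = \log\frac{\d\mu_{[0,T]}}{\d W^\nu_{[0,T]}} + \log\frac{\d W^\nu_{[0,T]}}{\d\Gamma(\mu_{[0,T]})}$ and use the Gibbs variational inequality to control the cross term; this works too but requires slightly more care to guarantee $\mu_{[0,T]}$-integrability of that cross term.
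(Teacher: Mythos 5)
Your proposal is correct and follows essentially the same route as the paper: both hinge on the control representation of the entropy (Lemma \ref{lem:repEntr}) and on shifting the control by the uniformly bounded correction $\sigma_t^{-1}(\cdot)\left[ b_t(\cdot,\mu_t)-b_t(\cdot,\nu_t)\right]$, whose boundedness comes from \ref{ass:coefReg1} and $\sup_t W_1(\mu_t,\nu_t)<+\infty$. The only cosmetic difference is that the paper extracts the controlled semimartingale decomposition of $\mu_{[0,T]}$ directly from Léonard's Girsanov theorem, whereas you extract a near-optimal control from the variational formula itself; the two are interchangeable here.
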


\begin{proof}
If $H( \mu_{[0,T]} \vert \Gamma ( \mu_{[0,T]} )) < + \infty$, \cite[Theorem 2.1]{leonard2012girsanov} provides an adapted square-integrable process $(c_t)_{0\leq t\leq T}$ on the canonical space $\Omega = C([0,T],\R^d)$ such that
\begin{equation*} 
\d {\sf X}_t = b_t ( {\sf X}_t, \mu_t) \d t + \sigma_t({\sf X}_t) {c}_t \d t + \sigma_t ( {\sf X}_t ) \d {\sf B}_t, \quad \mu_{[0,T]}\text{-a.s.},
\end{equation*} 
the process $( {\sf B}_t )_{0\leq t\leq T}$ being a Brownian motion under $\mu_{[0,T]}$.
We then apply Lemma \ref{lem:repEntr} with $x \mapsto b_t (x,\nu_t)$ instead of $b$ and $\alpha_t := c_t + \sigma^{-1}_t ( {\sf X}_t ) [ b_t ( {\sf X}_t,\mu_t) - b_t ( {\sf X}_t,\nu_t) ] $.
Since $b$ is Lipschitz-continuous in $\mu$ independently of $x$, this ensures that $H( \mu_{[0,T]} \vert W^{\nu}_{[0,T]}  )< + \infty$.

Reciprocally if $H( \mu_{[0,T]} \vert W^{\nu}_{[0,T]} ) < + \infty$, we similarly obtain a square-integrable $(c_t)_{0\leq t\leq T}$ such that
\begin{equation*} 
\d {\sf X}_t = b_t({\sf X}_t, \nu_t) \d t + \sigma_t({\sf X}_t) {c}_t \d t + \sigma_t ( {\sf X}_t ) \d {\sf B}_t, \quad \mu_{[0,T]}\text{-a.s.},
\end{equation*} 
where $( {\sf B}_t )_{0\leq t\leq T}$ is a Brownian motion under $\mu_{[0,T]}$.
Setting $\alpha_t := c_t + \sigma^{-1}_t ( {\sf X}_t) [ b_t({\sf X}_t, \nu_t) - b_t ( {\sf X}_t , \mu_t ) ]$, $H( \mu_{[0,T]} \vert \Gamma ( \mu_{[0,T]} )) < + \infty$ then results from Lemma \ref{lem:repEntr}.
\end{proof}

The following result is used to regularise linear derivatives of functions defined on $\ps_1 ( \R^d )$.
It is a slight variation of \cite[Lemma 4.2]{daudin2023optimalrate}.

\begin{lemma}[Mollification] \label{lem:mollif}
Let $\F : \ps_1 ( \R^d ) \rightarrow \R$ be Lispchitz. Let $\rho : \R^d \rightarrow [0,1]$ be a $C^\infty$ symmetric function with compact support and $\int_{\R^d} \rho(x) \d x = 1$. For $k \geq 1$, we set
\[ \forall x \in \R^d, \; \rho^k(x) := k^d \rho (k x), \quad \forall \mu \in \ps_1 ( \R^d ), \; \F^k(\mu) := \F (\rho^k \ast \mu). \]
Then $\F^k$ is Lipschitz-continuous uniformly in $k$ and
\[ \sup_{\mu \in \ps_1 (\R^d)} \vert \F^k ( \mu ) - \F(\mu) \vert \leq C k^{-1}, \]
for $C >0$ independent of $k$. Moreover, if $\F$ has a jointly continuous linear derivative $\frac{\delta\F}{\delta\mu}$, then so has $\F^k$ and
\[ \forall (x,\mu) 
\in \R^d \times \ps_1 (\R^d), \quad \frac{\delta\F^k}{\delta\mu}(\mu,x) = \bigg[ \frac{\delta\F^k}{\delta\mu}(\rho^k \ast \mu,\cdot) \ast \rho^k \bigg] (x). \]
As a consequence, $\frac{\delta\F^k}{\delta\mu}(\mu) : \R^d \rightarrow \R$ is $C^\infty$.
\end{lemma}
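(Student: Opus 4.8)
The plan is to verify the three claims in order, all of which follow from the definition of the mollification $\F^k(\mu) = \F(\rho^k * \mu)$ together with standard properties of convolution.

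\emph{Step 1: Uniform Lipschitz continuity and the quantitative estimate.} First I would observe that the map $\mu \mapsto \rho^k * \mu$ is a $W_1$-contraction: for $\mu, \mu' \in \ps_1(\R^d)$, coupling $X \sim \mu$, $X' \sim \mu'$ optimally and adding an independent $Z$ with law $\rho^k(z)\,\d z$, the pair $(X+Z, X'+Z)$ couples $\rho^k * \mu$ and $\rho^k * \mu'$, so $W_1(\rho^k * \mu, \rho^k * \mu') \le W_1(\mu,\mu')$. Hence $|\F^k(\mu) - \F^k(\mu')| \le \mathrm{Lip}(\F)\, W_1(\mu,\mu')$, uniformly in $k$. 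For the sup-bound, I would estimate $W_1(\rho^k * \mu, \mu)$: again using the coupling $(X+Z, X)$, this is at most $\E[|Z|] = \int |z| \rho^k(z)\,\d z = k^{-1}\int |y|\rho(y)\,\d y =: C k^{-1}$, finite since $\rho$ has compact support. Therefore $|\F^k(\mu) - \F(\mu)| \le \mathrm{Lip}(\F)\, C k^{-1}$, giving the claim with $C$ replaced by $\mathrm{Lip}(\F)\,C$.

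\emph{Step 2: The formula for the linear derivative.} Assuming $\F$ has a jointly continuous linear derivative, I would compute $\frac{\delta \F^k}{\delta\mu}$ directly from Definition \ref{def:PAP2DiFF}. For $\mu,\mu' \in \ps_1(\R^d)$, linearity of convolution gives $\rho^k * ((1-\varepsilon)\mu + \varepsilon\mu') = (1-\varepsilon)(\rho^k * \mu) + \varepsilon(\rho^k * \mu')$, so
\[
\varepsilon^{-1}\big[\F^k((1-\varepsilon)\mu + \varepsilon\mu') - \F^k(\mu)\big] = \varepsilon^{-1}\big[\F((1-\varepsilon)(\rho^k*\mu) + \varepsilon(\rho^k*\mu')) - \F(\rho^k*\mu)\big],
\]
which converges as $\varepsilon \to 0^+$ to $\int_{\R^d} \frac{\delta\F}{\delta\mu}(\rho^k*\mu,y)\,\d[\rho^k*\mu' - \rho^k*\mu](y)$. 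By Fubini (justified since $\frac{\delta\F}{\delta\mu}(\rho^k*\mu,\cdot)$ is continuous and, for $\F$ Lipschitz in $W_1$, grows at most linearly, while $\rho^k$ has compact support), this equals $\int_{\R^d} \big[\frac{\delta\F}{\delta\mu}(\rho^k*\mu,\cdot) * \rho^k\big](x)\,\d[\mu' - \mu](x)$, using the symmetry of $\rho^k$ to move the convolution onto $\frac{\delta\F}{\delta\mu}$. This identifies $\frac{\delta\F^k}{\delta\mu}(\mu,x) = \big[\frac{\delta\F}{\delta\mu}(\rho^k*\mu,\cdot) * \rho^k\big](x)$; I would also check it satisfies the normalization $\int \frac{\delta\F^k}{\delta\mu}(\mu)\,\d\mu = 0$, which may require subtracting the appropriate constant, but since $\frac{\delta\F}{\delta\mu}(\rho^k*\mu,\cdot)$ already integrates to zero against $\rho^k*\mu$ and convolution is self-adjoint with $\rho^k$ symmetric, the constant vanishes. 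Joint continuity of $\frac{\delta\F^k}{\delta\mu}$ in $(\mu,x)$ follows from joint continuity of $\frac{\delta\F}{\delta\mu}$, $W_1$-continuity of $\mu\mapsto\rho^k*\mu$, and smoothness of $\rho^k$.

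\emph{Step 3: Smoothness.} Finally, since $\rho^k \in C^\infty$ with all derivatives bounded, the convolution $g * \rho^k$ of any locally integrable (here continuous, linearly growing) function $g$ is $C^\infty$; applying this with $g = \frac{\delta\F}{\delta\mu}(\rho^k*\mu,\cdot)$ shows $\frac{\delta\F^k}{\delta\mu}(\mu) \in C^\infty(\R^d)$, with derivatives obtained by differentiating $\rho^k$. The main (very mild) obstacle is bookkeeping in Step 2: ensuring the integrability needed for Fubini and for passing the limit inside the integral, which is where the Lipschitz/linear-growth control on $\frac{\delta\F}{\delta\mu}$ and the compact support of $\rho$ are used; everything else is routine. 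I would remark that this is essentially \cite[Lemma 4.2]{daudin2023optimalrate} with the minor variation that we track the convolution being applied on both the measure argument and the spatial variable.
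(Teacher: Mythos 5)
Your argument is sound, and it is worth noting that the paper itself gives no proof of this lemma: it simply refers to \cite[Lemma 4.2]{daudin2023optimalrate}, so your direct verification supplies exactly the argument the authors leave implicit, and it follows the natural route. Step 1 (mollification is a $W_1$-contraction, and $W_1(\rho^k\ast\mu,\mu)\leq k^{-1}\int\vert y\vert\rho(y)\,\d y$ by the coupling $(X+Z,X)$) is correct. In Step 2 you implicitly correct what is evidently a typo in the statement: the right-hand side should read $\tfrac{\delta\F}{\delta\mu}(\rho^k\ast\mu,\cdot)\ast\rho^k$, not $\tfrac{\delta\F^k}{\delta\mu}$, and this is the formula you prove. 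Two small points could be tightened. First, the Fubini exchange $\int g\,\d(\rho^k\ast\nu)=\int (g\ast\rho^k)\,\d\nu$ (using symmetry of $\rho^k$) does not need any linear-growth estimate on $\tfrac{\delta\F}{\delta\mu}$: Definition \ref{def:PAP2DiFF} already requires $\tfrac{\delta\F}{\delta\mu}(\rho^k\ast\mu,\cdot)$ to be integrable against every measure in $\ps_1(\R^d)$, in particular against $\rho^k\ast\mu'$ and $\rho^k\ast\mu$, which is exactly the absolute integrability Fubini needs; this also gives $\mu'$-integrability of the candidate derivative via $\int\vert g\ast\rho^k\vert\,\d\mu'\leq\int\vert g\vert\,\d(\rho^k\ast\mu')$. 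Second, the joint continuity of $(\mu,x)\mapsto[g_\mu\ast\rho^k](x)$ with $g_\mu:=\tfrac{\delta\F}{\delta\mu}(\rho^k\ast\mu,\cdot)$ deserves one more line: since the convolution integral runs over the compact set $x-\mathrm{supp}\,\rho^k$ and $\tfrac{\delta\F}{\delta\mu}$ is jointly continuous, uniform continuity on the compact set $\{\rho^k\ast\mu_n\}_n\cup\{\rho^k\ast\mu\}\times B$ upgrades pointwise convergence of $g_{\mu_n}$ to locally uniform convergence, which passes through the integral. Your normalisation check (the constant vanishes because $\int g_\mu\,\d(\rho^k\ast\mu)=0$ and convolution with the symmetric kernel is self-adjoint) and Step 3 (smoothness by differentiating the kernel) are both correct.
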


\section{PDE notion of solution results} \label{app:PDE}

The following notion of solution is extracted from \cite[Section 3.3]{ConstrainedSchrodinger}.

\begin{assumptionV}[On the reference SDE]\label{ass-c:SDE} 
The functions $b : [0,T] \times \R^d \rightarrow \R^d$, $\sigma : [0,T] \times \R^d \rightarrow \R^{d \times d}$, $\psi$, $c : [0,T] \times \R^d \rightarrow \R$ are measurable and locally bounded, and in addition: 
    \begin{enumerate}[label=(\roman*),ref=\roman*]
        \item\label{it:ass-c-SDE:1} uniformly in $t \in [0,T]$, $x \mapsto b_t(x)$ and $x \mapsto \sigma_t(x)$ are Lipschitz continuous;
        \item\label{it:ass-c-SDE:2} there exists $M_\sigma \geq 0$ such that $|\sigma_t(x)| \leq M_\sigma$ for all $(t,x) \in [0,T] \times \R^d$, and $t \mapsto \sigma_t (x)$ is locally Hölder-continuous;
        \item\label{it:ass-c-HJB:2} uniformly in $t \in [0,T]$, $c_t$ and $\psi_t$ are Lipschitz-continuous, and $\sigma_t^{-1}$ is bounded. 
    \end{enumerate}
\end{assumptionV}

We now fix a positive Radon measure $\lambda \in \M_+ ([0,T])$, and we want to make sense of the equation
\begin{equation} \label{eq:HJBLim} 
- \varphi_t + \int_t^T \left(b_s \cdot \nabla \varphi_s - \frac{1}{2} \vert \sigma_s^\top \nabla \varphi_s \vert^2 + \frac{1}{2} \mathrm{Tr}[a_s \nabla^2 \varphi_s] + c_s\right) \d s
+ \int_{[t,T]} \psi_s \lambda ( \d s) = 0.
\end{equation}
If $\lambda(\{t \}) \neq 0$, an arbitrary choice has been made when considering integrals over $[t,T]$ rather than $(t,T]$.
However, the set of atoms of $\lambda$ is at most countable; hence the choice of the interval does not matter if we only require equality Lebesgue-a.e. 
For approximation and stability purposes, we introduce a specific notion of solution, which relies on the following heat semi-group.

Under the assumptions made on $\sigma$ in~\ref{ass-c:SDE}, a consequence of \cite[Theorem 2.1]{rubio2011existence} is that for any $0 < s \leq T$ and any continuous $\varphi : \R^d \rightarrow \R$ with linear growth, the parabolic equation
\[
\begin{cases}
\partial_t \varphi_t + \tfrac{1}{2} \mathrm{Tr} \big[ a_t \nabla^2 \varphi_t \big] = 0, \quad 0 \leq t \leq s, \\
\varphi_s = \varphi,
\end{cases}
\]
has a unique solution $\varphi \in C([0,s] \times \R^d) \cap C^{1,2}((0,s) \times \R^d)$ with linear growth. 
From this, we define the evolution system $(S_{t,s})_{0\leq t\leq s\leq T}$ by
\[ S_{t,s} [ \varphi ] (x) = \varphi_t (x), \]
for any continuous $\varphi : \R^d \rightarrow \R$ with linear growth, $S_{t,s} [ \varphi ]$ being a $C^2$ function with linear growth as soon as $t < s$.

\begin{definition} \label{def:mildForm}
We say that a measurable $\varphi : [0,T] \times \R^d \rightarrow \R$ is a \emph{mild} solution of \eqref{eq:HJBLim}
if for Lebesgue-a.e. $t \in [0,T]$, $x \mapsto \varphi_t (x)$ is $C^1$, $(t,x) \mapsto \nabla \varphi_t (x)$ is bounded measurable, and for a.e. $t \in [0,T]$,
\begin{equation*} 
\varphi_t = \int_t^T S_{t,s} \big[ b_s \cdot \nabla \varphi_s - \tfrac{1}{2} \lvert \sigma_s^\top \nabla \varphi_s \rvert^2 + c_s \big] \d s
+ \int_{[t,T]} S_{t,s} [ \psi_s ] \lambda (\d s).
\end{equation*} 
This implies that $x \mapsto \varphi_t (x)$ is $C^2$ for Lebesgue-a.e. $t$.
\end{definition}

Lebesgue-almost sure uniqueness always holds for \eqref{eq:HJBLim} in the sense of Definition \ref{def:mildForm} because the difference of two solutions solves a classical linear parabolic equation without source term.

\section*{Acknowledgements}
\addcontentsline{toc}{section}{Acknowledgement}

The authors thank Julien Reygner for fruitful discussions throughout this work.
L.-P.C. also thanks Samuel Daudin and Simon Martin for advices on some technical issues. 
The work of G.C. is partially supported by the Agence Nationale de la Recherche through the grants ANR-20-CE40-0014 (SPOT) and ANR-23-CE40-0003 (Conviviality). 

\printbibliography
\addcontentsline{toc}{section}{References}

\end{document}